\tikzset{
	labl/.style={anchor=south, rotate=90, inner sep=.5mm}
}
\newcommand{\subtile}[1] 
{
	\vspace{-0.3cm}
	\begin{center}
		{{\textsc{#1}}}\\
	\end{center}
	\vspace{0.1cm}
}
\newcommand{\imageplain}[2] 
{
	{\includegraphics[scale=#1]{#2.png}}
}
\newcommand{\image}[2] 
{
	\begin{center} 
		\includegraphics[scale=#1]{#2.png}
	\end{center}
}
\newcommand{\hamburger}[4] 
{
	\thispagestyle{empty}
	\vspace*{-2cm}
	\begin{flushright}
		ZMP-HH / #2 \\
		Hamburger Beitr{\"a}ge zur Mathematik Nr. #3 \\
		#4 \\
	\end{flushright}
	\vspace{0.5cm}
	\begin{center}
		\Large \bf
		#1
	\end{center}
	\vspace{0.5cm}
	\begin{center}	
		Azat M. Gainutdinov${}^{\dagger *}$ and Simon Lentner${}^{*}$\footnote{Corresponding author, simon.lentner@uni-hamburg.de} and Tobias Ohrmann${}^{*}$\\
		\vspace{1.0cm}
${}^*$ Algebra and Number Theory, 
		University of Hamburg,\\
		Bundesstra{\ss}e 55, D-20146 Hamburg, Germany\\
		 $^{\dagger}$  Laboratoire de Math\'ematiques et Physique Th\'eorique CNRS/UMR 7350,\\
 F\'ed\'eration Denis Poisson FR2964,
Universit\'e de Tours,\\
Parc de Grammont, 37200 Tours, 
France\\
	\end{center}
	\vspace{0.5cm}
	
}
\newcommand{\xleftrightarrow}[2][]{\ext@arrow 3359\leftrightarrowfill@{#1}{#2}}
\newcommand{\xdashrightarrow}[2][]{\ext@arrow 0359\rightarrowfill@@{#1}{#2}}
\newcommand{\xdashleftarrow}[2][]{\ext@arrow 3095\leftarrowfill@@{#1}{#2}}
\newcommand{\xdashleftrightarrow}[2][]{\ext@arrow
	3359\leftrightarrowfill@@{#1}{#2}}
\def\rightarrowfill@@{\arrowfill@@\relax\relbar\rightarrow}
\def\leftarrowfill@@{\arrowfill@@\leftarrow\relbar\relax}
\def\leftrightarrowfill@@{\arrowfill@@\leftarrow\relbar\rightarrow}
\def\arrowfill@@#1#2#3#4{%
	$\m@th\thickmuskip0mu\medmuskip\thickmuskip\thinmuskip\thickmuskip
	\relax#4#1
	\xleaders\hbox{$#4#2$}\hfill
	#3$%
}
\newcommand{\Hom}{\mbox{Hom}}
\newcommand{\Cent}{\mbox{Cent}}
\newcommand{\id}{\mbox{id}}
\newcommand{\B}{\mathcal{B}} 	
\newcommand{\g}{\mathfrak{g}} 	
\renewcommand{\sl}{\mathfrak{sl}}	
\newcommand{\Z}{\mathbb{Z}}  	
\newcommand{\N}{\mathbb{N}}  	
\newcommand{\C}{\mathbb{C}}  	
\newcommand{\Q}{\mathbb{Q}}  	
\newcommand{\CC}{\mathcal{C}}   
\newcommand{\V}{\mathcal{V}}	
\newcommand{\W}{\mathcal{W}} 
\newcommand{\Beta}{\mathrm{B}}
\newcommand{\Vect}{\mathsf{Vect}}
\newcommand{\Rep}{\mathsf{Rep}}
\theoremstyle{plain}
\newtheorem{theorem}{Theorem}[section]
\newtheorem{conjecture}[theorem]{Conjecture}
\newtheorem{corollary}[theorem]{Corollary}
\newtheorem{definition}[theorem]{Definition}
\newtheorem{example}[theorem]{Example}
\newtheorem{lemma}[theorem]{Lemma}
\newtheorem{proposition}[theorem]{Proposition}
\newtheorem{remark}[theorem]{Remark}
\newtheorem{defprop}[theorem]{Definition-Proposition}
\newtheorem{convention}[theorem]{Convention}
\begin{document}
	
		\enlargethispage{8\baselineskip}
	\hamburger{Modularization of small quantum groups}
	{18-18}{748}{September 2018}
	
	\vspace{1.2cm}
	\begin{center}
		\begin{minipage}{0.9\textwidth}
			\textbf{Abstract. } We construct a large family of ribbon quasi-Hopf algebras related to small quantum groups, with a factorizable $R$-matrix. Our main purpose is to obtain non-semisimple modular tensor categories for quantum groups at even roots of unity, where typically the initial representation category is not even braided. 
Our quasi-Hopf algebras are built from modules over the twisted Drinfeld double via a universal construction, but we also work out explicit generators and relations, and we prove that these algebras are modularizations of the quantum group extensions with $R$-matrices listed in~\cite{LentnerOhrmann2017}.\\		
			As an application, we find one distinguished factorizable quasi-Hopf algebra for any finite root system and any root of unity of even order (resp. divisible by $4$ or $6$, depending on the root length). 
			Under  the same divisibility condition on a rescaled root lattice, a corresponding lattice Vertex-Operator Algebra contains a  VOA $\W$ defined as the kernel of screening operators. We then conjecture that $\W$ representation categories are braided equivalent to the representation categories  of the distinguished factorizable quasi-Hopf algebras.
For $A_1$ root system, our construction specializes to the quasi-Hopf algebras in \cite{GainutdinovRunkel2017,CreutzigGainutdinovRunkel2017}, where the answer is affirmative, similiary for $B_n$ at fourth root of unity in \cite{FarsadGainutdinovRunkel2017a, FlandoliLentner2017}.
		\end{minipage}
	\end{center}
	\vspace{0.5cm}
	
	\newpage
	
	\tableofcontents
	
	\newpage

	\section{Introduction}
	
	A finite-dimensional Hopf algebra (over a field $k$) with $R$-matrix is an algebra with additional structure, such that the (finite) representation category has a tensor product, dualities and a braiding. If the braiding is non-degenerate in the sense that double braiding with any non-trivial object is not the identity, then the Hopf algebra is called \emph{factorizable} \cite{ReshetikhinSemenov-Tian-Shansky1988} and its braided tensor category is also  factorizable~\cite{Shimizu2016}. When the Hopf algebra is additionally ribbon, the name \emph{modular tensor category} is also used. Note that we use this term also for non-semisimple categories \cite{KerlerLyubashenko2001,Shimizu2016}.
	
	Drinfeld and Jimbo \cite{Drinfeld1987,Jimbo1985} defined the \emph{quantum group} $U_q(\g)$ associated to a semisimple finite-dimensional Lie algebra $\g$ as a deformation of the (infinite-dimensional) universal enveloping algebra $U(\g)$ over the field of functions $\Q(q)$, and  the corresponding representation category is an (infinite) braided tensor category.  
	
	Lusztig \cite{Lusztig1990} defined a specialization to $q$ an $\ell$-th root of unity, and inside it discovered the \emph{small quantum group} $u_q(\g)$, which is a finite-dimensional non-semisimple Hopf algebra.

	For $q$ a root of unity of odd order $\ell$ the Hopf algebra $u_q(\g)$ has a factorizable $R$-matrix \cite{Lusztig1993} and its representation theory is closely related to the representation theory of $\g$ in characteristic $\ell$ (if prime). 
	
	If $\ell$ has common divisors with root lengths and/or the determinant of the Cartan matrix, the situation is more complicated -- some are not factorizable, some do not even have $R$-matrices (e.g. \cite{KondoSaito2011} for $\sl_2$ with $\ell$ even). In \cite{LentnerNett2015,LentnerOhrmann2017}, it is shown how this problem is resolved by extending the Cartan part of the quantum group and we have developed tools to compute a list of such quantum group extensions with different $R$-matrices and check which of them are factorizable (unfortunately, not many are).
	
	The idea of this article is to pass from these (non-factorizable) braided tensor categories to their respective \textsl{modularizations} (when possible) and thus to obtain a large family of modular tensor categories for quantum groups if $\ell$ has problematic divisibilities. We recall that in the semi-simple situation, a ribbon category is called modularizable if the ribbon twist is trivial on the subcategory of transparent objects. Such a category can be then modularized, i.e. it admits a dominant ribbon functor onto a modular category, and it was shown~\cite{Bruguieres2000} that such a functor is unique, up to an equivalence. It is however awaits a proper generalization to non-semisimple (finite) braided tensor categories, some steps in this direction were made in~\cite{FuchsGainutdinovSchweigert}. We nevertheless provide explicitly such a modularization functor in the case of our non-factorizable quantum groups.
	This resulted in constructing quasi-Hopf algebras, as we show below related again to small quantum groups but with another choice of the Cartan subalgebra. These quasi-Hopf algebras are factorizable and ribbon by the construction, and in the following we call them \emph{quasi-quantum groups}.
	
	We should note that the idea of constructing quasi-Hopf algebras as quotients of ordinary Hopf algebras appears first in \cite{EtingofGelaki2005}, along these lines \cite{HuangLiuYuYe2015} has classified Nichols algebras in the quasi-setting and \cite{AngionoGalindoPereira2014} has constructed (co)quasi Hopf algebras, without regard to $R$-matrices. 
	
	Here, we give a direct construction of \textsl{quasi-triangular} quasi-Hopf algebras from Nichols algebras in semi-simple braided categories with non-trivial associator. While our proofs rely on the universal properties of this construction, we work out for convenience also generators and relations. Then we prove that for a careful choice of parameters involved (an explicit list is provided) we obtain factorizable quasi-Hopf algebras, whose representation category is indeed the modularization of the respective quantum group extension in \cite{LentnerOhrmann2017} possessing $R$-matrices.\\ 
	
	The main application that drove our work concerns the braided tensor categories arising from Vertex Operator Algebras (VOAs) 
	 defined as kernel of screenings \cite{FuchsHwangSemikhatovEtAl2004,FeiginTipunin2010}. They are defined for any $\g$ and $\ell$ satisfying certain divisibility condition and denoted by $\W(\g,\ell)$. These VOAs are conjectured to be $C_2$-cofinite \cite[Conj.\,3.2]{AdamovicMilas2014} and thus having finite representation categories. For the $\mathfrak{sl}_2$-case, this is proven in \cite{CarquevilleFlohr2006}. Moreover, they posses a  braided tensor category structure~\cite{HuangLepowskyZhang}, which is also  believed to be modular. 
	 Important conjectures link them to the representation categories of the respective small quantum groups $u_q(\g)$. (This is discussed more in Section~\ref{sec:CFT}.) However, it is known that in general this can not be an equivalence of braided tensor categories (the quantum group here does not even have an $R$-matrix). Recently, a factorizable quasi-Hopf algebra variant of $u_q(\g)$ for $\g=\sl_2$  was proposed \cite{GainutdinovRunkel2017,CreutzigGainutdinovRunkel2017} (using ideas of VOA extensions)  as a correct replacement of $u_q(\g)$,   for which conjecturally there is an equivalence of ribbon categories with the corresponding VOA. 
	
	As our last result, we show that our construction indeed returns a surprisingly uniform family of factorizable ribbon quasi-Hopf algebras for every $\g$ with $\ell$ satisfying the divisibility constraints. In the  example $\g=\sl_2$, we get indeed the previously constructed ribbon quasi-Hopf algebra~\cite{CreutzigGainutdinovRunkel2017}.
	
			Another possible application of our construction of quasi-Hopf algebras via the modularization concerns their use in recent developments in low-dimensional topology. For example, a construction~\cite{GeerPatureau-Mirand2013}  of  topological invariants begins with quantum groups at even roots of unity (and uses so-called  modified traces). Here the non-existence of an $R$-matrix for these quantum groups is overcome by passing to the (infinite-dimensional) unrolled quantum group. We conjecture that this is not necessary if working directly with the ``correct'' quasi-quantum group\footnote{We should mention that a second effect of their definition (omitting the $K$-relation) returns a very large graded category and here we just talk about the $0$-grade part. We would assume that these two effects are in fact unrelated, and one could work with a Kac-DeConcini-Procesi version of our quasi-quantum group to get the full category.}.			
			Also, a recent construction~\cite{BeliakovaBlanchetGeer2017} of invariants of $3$-manifolds from the representation category of $u_q(\mathfrak{sl}_2)$ was based on the existence of a monodromy matrix rather than an $R$-matrix. And here again, the use of the corresponding quasi-quantum group modification $\bar{u}_q(\mathfrak{sl}_2)$ with an $R$-matrix could make the construction more straightforward.


	\medskip
	We now summarize the results of this article in more detail (as some parts are very technical, we decided to describe here our construction step by step):

		\medskip
	In Section \ref{sec: preliminaries} we give a brief introduction to quasi-Hopf algebras. The main example underlying our constructions is the quasi-Hopf algebra $k^{G}_{\omega}$ with $R$-matrix associated to an abelian group $G$ and an abelian $3$-cocycle $(\sigma,\omega)$ \cite{MacLane1952}. The corresponding braided tensor category of representations are $G$-graded vector spaces $\Vect_G^{\omega,\sigma}$ with non-trivial associator $\omega$ and braiding $\sigma$. We should mention that we have an additional freedom $\eta: G\to \{\pm1\}$ in the choice of our ribbon structure.
	
	Abelian $3$-cocycles up to abelian coboundary are most conveniently given by a quadratic form $Q$ on $G$, and the associated symmetric bilinear form $B$ on $G$ is non-degenerate iff the braiding is nondegenerate and thus the category is a modular tensor category. Otherwise the radical of $B$ determines the objects with trivial double-braiding (transparent objects).\\
	
	In Section \ref{sec: modularization of Vec} we prove for later use the nice (and rather obvious) fact, that the braided tensor category $\Vect_G^{(\omega,\sigma,\eta)}$ has a modularization iff $Q$ and $\eta$ are trivial on the radical of $Rad(B)$. The corresponding modular tensor category is then $\Vect_{\bar{G}}^{(\bar{\omega},\bar{\sigma},\bar{\eta})}$ with $\bar{G}=G/Rad(B)$ and $(\omega,\sigma,\eta)$ factorized to the quotient. In some sense this is the seed for our modularization of the quantum group extensions, where the group algebra $kG$ plays the role of the Cartan subalgebra.\\
	
	In Section \ref{Quantum groups uq(g,Lambda) and R-matrices} we collect the data and results for our initial small quantum group $u_q(\g,\Lambda)$ and its $R$-matrices: Let $\g$ be a finite-dimensional semisimple complex Lie algebra $\g$, let $q$ be an $\ell$-th root of unity $q$ and let $\Lambda$ be some lattice between the root-lattice $\Lambda_R$ and the weight-lattice $\Lambda_W$ of $\g$. Note that this corresponds to some choice of the Lie group between adjoint and simply-connected form, where $\Lambda/\Lambda_R$ is the corresponding center, realized as a subgroup of the fundamental group of the adjoint form $\Lambda_W/\Lambda_R$. 
	
	The Cartan part of this Hopf algebra is the abelian group $G=\Lambda/\Lambda'$ where $\Lambda'$  is the centralizer of $\Lambda_R$ with respect to $\Lambda$. The lattice $\Lambda'$ encodes the $K$-relations, and it was proven in \cite{LentnerOhrmann2017} that only one specific choice for $\Lambda'$ has a chance to support an $R$-matrix.    
	
	Our set of potential $R$-matrices is now parametrized by a bicharacter $g$ on $H=\Lambda/\Lambda_R$ in the fundamental group. To be precise, $g$ can be defined on two subgroups $H_1\times H_2$ with  same cardinality and $H_1+H_2=H$, and  we have $H_1=H_2=H$ except for the case $\g=D_{2n}=\mathrm{SO}_{4n}$ with non-cyclic fundamental group. 
	
	The essential varying piece of our $R$-matrix (following Lusztig's ansatz) depending on $g$ is  
	$$R_0=\sum_{\mu,\nu\in G} g(\mu,\nu)q^{-(\mu,\nu)}K_\mu\otimes K_\nu$$
	We view $R_0$ as an $R$-matrix for $k^G$, respectively giving braiding in $\Vect_G$. Two non-trivial results in \cite[Cor.\,5.20 and Cor.\,5.21]{LentnerOhrmann2017}  are that the $R$-matrix is factorizable iff $R_0$ is factorizable iff $G$ has no $2$-torsion.
	
	We conclude Section \ref{Quantum groups uq(g,Lambda) and R-matrices} by working out the example $\g=\sl_2$ in the non-trivial case $\Lambda=\Lambda_W$: For $\ell$ odd we have a single $R$-matrix, which is not modularizable, and for $\ell$ even, we have two $R$-matrices, but only one of them is  modularizable. The result of modularization is a modular tensor category with a nontrivial associator.    
	
	In Section \ref{sec: def of u(omega,sigma)} we construct quasi-triangular quasi-Hopf algebras $u(\omega,\sigma)$ associated to abelian $3$-cocycles $(\omega,\sigma)$ on the dual of $G$, and these generalize small quantum groups. The following nice and systematic method seems to be the evident course of action, if one recalls the classification of pointed Hopf algebras \cite{AndruskiewitschSchneider2010} in terms of Nichols algebras \cite{Heckenberger2006}, although the technical details involving the quasi-Hopf structure become as usual somewhat tedious:
	\begin{enumerate}
    \item
	In Section \ref{sec: YDM} we discuss Yetter-Drinfeld modules. As it should be, the definition in \cite{Majid1998} specialized to the quasi-Hopf algebra $k^G_\omega$ is equivalent to modules over the twisted double $D^\omega(G)$ in \cite{DijkgraafPasquierRoche1992} and equivalent to the categorical definition of an object in the Drinfeld center of $\Vect_G^\omega$. For a given set of elements $(g_i)_{1 \leq i \leq n}$ in $G$ (later the simple roots from~$\Lambda$) and an abelian $3$-cocycle $(\omega,\sigma)$ we can construct such a Yetter-Drinfeld module $V$ with basis $(F_i)_{1 \leq i \leq n}$. Here we have effectively embedded the braided tensor category $\Vect_G^{(\omega,\sigma)}$ into the category of Yetter-Drinfeld modules.
	
	\item
	In Section \ref{sec:A Nichols algebra} we consider the Nichols algebra $\B(V)$ in the category $\Vect_G^\omega$. In complete analogy to the non-quasi case we derive certain relations in this Nichols algebra, which are later the analoga of the Serre-relations. 
	
	\item
	In Section \ref{sec: radford biproduct} we recall the quasi-version of the Radford biproduct from \cite{BulacuNauwelaerts2002} and give the algebra- and coalgebra structure of $\B(V)\#k^G_\omega$. In contrast to the non-quasi case the $F_i$ do not become $(g_i,1)$-primitive elements, but have a slightly more complicated coproduct.
	
	\item
	In Section \ref{sec: Drinfeld double} we construct the Drinfeld double of this quasi-Hopf algebra $\B(V)\#k^G_\omega$ following \cite{Schauenburg2002}, which is again a quasi-Hopf algebra. A main technical difficulty is that the dual of $\B(V)\#k^G_\omega$ is a (nonassociative) coquasi-Hopf algebra, so the relations have to be somewhat modified.
	
	\item
	In Section \ref{sec: a quotient} we take a quotient identifying the two copies of $k^G_\omega$. Again it is not surprising that this works, but it is a tedious calculation to see that some nontrivial identification (involving $\omega$) is indeed a well-defined quotient. In the following, we list important relations for the resulting quasi-Hopf algebra $u(\omega,\sigma)$. Here, the greek letters $\chi,\psi,\xi,..$ denote characters on $G$. Summation goes always over all characters in $\widehat{G}$. The elements $\chi_i \in \widehat{G}$ are part of the input data of $u(\omega,\sigma)$. Finally, the $2$-cocycle $\theta$ is defined in Remark \ref{rm: dual of YDM}.
		 \begin{align*}
		 \Delta(F_i)&= K_{\bar{\chi}_i} \otimes F_i \left( \sum_{\chi,\psi}\, \theta(\chi|\bar{\chi}_i,\psi)\omega(\bar{\chi}_i,\psi,\chi)^{-1} \,\delta_\chi \otimes \delta_\psi \ \right) + F_i \otimes 1\left( \sum_{\chi,\psi}\,\omega(\bar{\chi}_i,\chi,\psi)^{-1} \,\delta_\chi \otimes \delta_\psi \right)   \\
		 \Delta(E_i)&= \left( \sum_{\chi,\psi}\, \theta(\psi|\chi\bar{\chi}_i,\chi_i)^{-1}\omega(\psi,\chi,\bar{\chi}_i)^{-1} \,\delta_\chi \otimes \delta_\psi \ \right) E_i \otimes \bar{K}_{\chi_i}  + \left( \sum_{\chi,\psi}\, \omega(\chi,\psi,\bar{\chi}_i)^{-1} \,\delta_\chi \otimes \delta_\psi \right)1\otimes E_i  \\
		 \Delta(K_\chi)&=(K_\chi \otimes K_\chi)  P_{\chi}^{-1} \qquad \Delta(\bar{K}_\chi)=(\bar{K}_\chi \otimes \bar{K}_\chi) P_{\chi}, \qquad P_\chi:=\sum_{\psi,\xi}\,\theta(\chi|\psi,\xi) \,\delta_\psi \otimes \delta_\xi\\ 
		 &[E^a_iK_{\chi_i},F^b_j]_{\sigma} = \delta_{ij}\sigma(\chi_i,\bar{\chi}_i) 
		 \left(1-K_{\chi_i} \bar{K}_{\chi_i}
		 \right)\left( \sum_{\xi}\, \frac{a_i(\xi)b_i(\xi\chi_i)}{\omega(\bar{\chi}_i,\chi_i,\xi)}\, \delta_\xi \right),\text{ where}\\
		 &E^a_i:=E_i\left( \sum_{\xi}\, a_i(\xi)\,\delta_\xi\right)  \qquad F^b_j:=F_j\left( \sum_{\xi}\, b_j(\xi)\, \delta_\xi\right) , \quad\text{with } a_i,b_j \text{ solutions to Eq. \eqref{eq: equation for the drinfeld double}}.\\
		 &K_\chi E_i = \sigma(\chi,\chi_i) E_iK_\chi Q^{-1}_{\chi,\chi_i}, \qquad \bar{K}_\chi E_i = \sigma(\chi_i,\chi) E_i \bar{K}_\chi Q_{\chi,\chi_i}, \qquad Q_{\chi,\psi}:= \sum_{\xi} \, \theta(\chi|\xi,\psi)\,\delta_{\xi}\\
		 &K_\chi F_i = \sigma(\chi,\bar{\chi}_i) F_iK_\chi Q^{-1}_{\chi,\bar{\chi}_i}, \qquad \bar{K}_\chi F_i = \sigma(\bar{\chi}_i,\chi) F_i \bar{K}_\chi Q_{\chi,\bar{\chi}_i}\\
		 &S(F_i)= -\left( \sum_{\psi} \, \omega(\bar{\psi},\bar{\chi}_i,\chi_i\psi)d\sigma(\chi_i,\psi,\bar{\psi})\theta(\bar{\psi}|\psi\chi_i,\bar{\chi}_i)^{-1} \delta_\psi\right) K_{\chi_i}F_i\\
		 &S(E_i)=- E_i\bar{K}_{\chi_i}^{-1}\left( \sum_{\psi} \, \frac{\omega(\bar{\chi}_i\bar{\xi},\chi_i,\xi)}{\omega(\bar{\xi},\bar{\chi}_i,\chi_i)}\, \delta_{\xi}\right)\\
		 & \epsilon(K_\chi)=\epsilon(\bar{K}_\chi)=1, \qquad \epsilon(E_i)=\epsilon(F_i)=0, \qquad 1_{u(\omega,\sigma)}=K_1	
		 \end{align*}
		 
		 \item
	In Section \ref{sec:R-matrix}, using the above Drinfeld double construction, we obtain an $R$-matrix for $u(\omega,\sigma)$ so that it becomes a quasi-triangular quasi-Hopf algebra. 
\end{enumerate}

	In Section \ref{sec: Modularization} we return to the small quantum groups from Section \ref{Quantum groups uq(g,Lambda) and R-matrices} in order to modularize their representation category. We begin with the quantum group $u_q(\Lambda)$ with Cartan part $G$ and an $R$-matrix built from $R_0(g)$ for some bicharacter $g$. Recall that $R_0$ is given by the Killing form $q^{(\mu,\nu)}$ on the image of the root lattice $\Lambda_R/\Lambda'$ and is modified by $g$ on the larger $G=\Lambda/\Lambda'$.
	
	We note that $R_0$ is an $R$-matrix of the somewhat underlying semisimple category $\Vect_{\widehat{G}}^{(1,\sigma)}$, where $\sigma$ is the Fourier transform of $R_0$. Using the notation of Section \ref{sec: def of u(omega,sigma)}, we can identify $u_q(\Lambda)$ with $u(1,\sigma)$. For the choice we made, this algebra has non-trivial transparent modules, and we denote the group of transparent objects by $T$ (the $2$-torsion of $G$). In section \ref{sec: modularization of Vec}, we have derived conditions for $\Vect_{\widehat{G}}$ to be modularizable and if they hold we get a semisimple modular tensor category $\Vect_{\widehat{G}/T}^{(\bar{\omega},\bar{\sigma})}$ for a class of abelian $3$-cocycles $(\bar{\omega},\bar{\sigma})$, which is most conveniently parametrized by a quadratic form $\bar{Q}$ on $\widehat{G}/T$.
	
	Now for the datum $(\widehat{G}/T, (\bar{\omega},\bar{\sigma}))$ we consider the quasi-Hopf algebra $u(\bar{\omega},\bar{\sigma})$ constructed in Section~\ref{sec: def of u(omega,sigma)}. From the modularization functor $\Vect_{\widehat{G}}^{(1,\sigma)} \to \Vect_{\widehat{G}/T}^{(\bar{\omega},\bar{\sigma})}$ we can now step-by-step use the universal properties of our constructions to write down a ribbon functor 
	$$
	\Rep_{u_q(\Lambda)} \to \Rep_{u(\bar{\omega},\bar{\sigma})}\ .
	$$
	 Moreover, we prove this functor is a  modularization and so  the target category is a (non-semisimple) modular tensor category. 
	
%
\medskip	
	Finally, in Section~\ref{sec:CFT} we discuss our main application to VOAs in logarithmic CFT. We observe that if we demand the maximal divisibility $2,4,6|\ell$ (depending on the root lengths) and take the maximal choice $\Lambda=\Lambda_W$, then we indeed always get a quantum group with a uniform description of $\Lambda'$ and for the group of transparent objects $T=\Z_2^{\mathrm{rank}}$. Hence the modularization described above can be applied, and this provides a uniform family of \textsl{factorizable quasi-quantum groups} for any such a pair $(\g,\ell)$.
	We then conjecture that the corresponding family of modular tensor categories is braided and ribbon equivalent to modular tensor categories of the  VOA $\W(\g,\ell)$ constructed in the screening charge approach. We finally remark that on the CFT side we have the same divisibility constraints. 
	
	 Several appendices contain complementary material and technical lemmas.
%

\vskip-3mm
\mbox{}\\
{\bf Acknowledgements.}\;
The authors thank
J\"urgen Fuchs, Ehud Meir, 
Ingo Runkel, and
Christoph Schweigert for helpful and inspiring discussions. Moreover, we thank Alexey M. Semikhatov for his interest in the early stage of this work.

AMG is supported by CNRS and thanks the Humboldt Foundation for a partial financial support. AMG also thanks the Mathematics Department
of Hamburg University for kind hospitality. SL is grateful to RTG 1670 for partial support. TO is grateful to SFB 676 for financial support. 

\medskip	
	\section{Preliminaries}\label{sec: preliminaries}

\subsection{Quasi-Hopf algebras}\label{sec: Quasi-Hopf algebras}

We start with the definition of a quasi-Hopf algebra as introduced in \cite{Drinfeld1989}.

\begin{definition}
	A quasi-bialgebra over a field $k$ is a $k$-algebra $H$  with algebra homomorphisms $\Delta:H\to H\otimes H$ and $\epsilon:H\to k$, together with an invertible \emph{coassociator} $\phi\in H\otimes H\otimes H$, such that the following conditions hold:
	\begin{align*}
		(\id \otimes \Delta) (\Delta(a)) \cdot \phi &= \phi \cdot (\Delta \otimes \id)(\Delta(a)) \\ 
		(\id \otimes \id \otimes \Delta)(\phi) \cdot (\Delta \otimes \id \otimes \id)(\phi) &= (1\otimes \phi) \cdot (\id \otimes \Delta \otimes \id)(\phi) \cdot (\phi \otimes \id)\\
		(\epsilon \otimes \id)\Delta(h)&=h=(\id\otimes \epsilon)\Delta(h)\\
		(\id\otimes \epsilon \otimes \id)(\phi)&=1
	\end{align*}
	An antipode $(S,\alpha,\beta)$ for $H$ consists of algebra anti-automorphism $S:H \to H$, together with elements $\alpha,\beta \in H$, s.t.
	\begin{align*}
		S(h_{(1)})\alpha h_{(2)} = \epsilon(h) \alpha, \qquad h_{(1)}\beta S(h_{(2)})= \epsilon(h) \beta, \\
		X^1\beta S(X^2)\alpha X^3 =1, \qquad S(x^1) \alpha x^2 \beta x^3=1.
	\end{align*}
	Similar to Sweedlers' notation, we used the short-hand notation $\phi=X^1 \otimes X^2 \otimes X^3$. For the inverse, we use small letters $\phi^{-1}=x^1 \otimes x^2 \otimes x^3$. If more than one associator appears in an equation we use letters $X,Y,Z$ and $x,y,z$, respectively. 
\end{definition}
In the first condition, $\phi$ can be understood as a coassociativity constraint for the coproduct $\Delta$. In particular, $\phi=1 \otimes 1 \otimes 1$ implies coassociativity and $H$ becomes an ordinary Hopf algebra. 

\begin{theorem}[Drinfeld]
	The category of representations of a quasi-Hopf algebra is a rigid monoidal category. As in the Hopf case, the tensor product $V\otimes W$ is endowed with the diagonal action and the unit object is the trivial representation $k$. The associator $(U \otimes V) \otimes W \to U \otimes (V \otimes W)$ is given by:
	\begin{align}
		\omega_{U,V,W}: u \otimes v \otimes w \mapsto X^1.u\otimes X^2.v\otimes X^3.w.
 	\end{align}
 	For $V \in \mathsf{Rep}_H$ we define a dual object $V^\vee:=V^*$ with module structure given by $(h.f)(v):=f(S(h).v)$. Evaluation and coevaluation on $V$ are given by $ev_V(f\otimes v):=f(\alpha.v)$ and $coev_V(1)=\beta.e_i\otimes e^i$. Here, $e_i$ is a basis of $V$ and $e^j$ is the corresponding dual basis.
\end{theorem}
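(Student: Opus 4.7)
The plan is to verify the axioms of a rigid monoidal category directly from the quasi-Hopf data, organizing the check so that each axiom translates into exactly one of the defining relations. First I would check that the diagonal formula indeed gives a module structure on $V\otimes W$: this is immediate from the fact that $\Delta\colon H\to H\otimes H$ is an algebra homomorphism, so $\Delta(hh')$ acts on $v\otimes w$ the same way as $\Delta(h)\Delta(h')$. The trivial representation $k$ (via $\epsilon$) is a unit for this operation because $\epsilon$ is an algebra map, and the left/right unit constraints $\lambda_V,\rho_V$ are given by scalar multiplication; they are $H$-linear precisely because $(\epsilon\otimes\id)\Delta = \id = (\id\otimes\epsilon)\Delta$.

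Next I would show that $\omega_{U,V,W}\colon u\otimes v\otimes w\mapsto X^1.u\otimes X^2.v\otimes X^3.w$ is an isomorphism of $H$-modules; since $\phi$ is invertible, bijectivity is automatic, and $H$-linearity is exactly the first quasi-bialgebra identity $(\id\otimes\Delta)\Delta(h)\cdot\phi = \phi\cdot(\Delta\otimes\id)\Delta(h)$. Naturality is free because $\omega_{U,V,W}$ is defined by the universal element $\phi\in H^{\otimes 3}$ acting on arbitrary inputs. The pentagon axiom then reduces, after unpacking the four compositions applied to a generic tensor $u\otimes v\otimes w\otimes x$, to the second quasi-bialgebra identity
\begin{equation*}
(\id\otimes\id\otimes\Delta)(\phi)\cdot(\Delta\otimes\id\otimes\id)(\phi) = (1\otimes\phi)\cdot(\id\otimes\Delta\otimes\id)(\phi)\cdot(\phi\otimes\id),
\end{equation*}
and the triangle axiom reduces to the normalization $(\id\otimes\epsilon\otimes\id)(\phi)=1$.

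For rigidity I would put $V^\vee := V^*$ with $(h.f)(v):=f(S(h).v)$; this is a left action because $S$ is an algebra anti-automorphism. I would then verify that $\mathrm{ev}_V(f\otimes v)=f(\alpha.v)$ and $\mathrm{coev}_V(1)=\beta.e_i\otimes e^i$ are $H$-linear, which is exactly the content of the two ``antipode-on-coproduct'' axioms $S(h_{(1)})\alpha h_{(2)} = \epsilon(h)\alpha$ and $h_{(1)}\beta S(h_{(2)}) = \epsilon(h)\beta$. Finally the two zig-zag (snake) identities for $V$ and $V^\vee$, after inserting the associator $\omega_{V,V^\vee,V}^{\pm1}$ in the appropriate place, collapse to the scalar identities $X^1\beta S(X^2)\alpha X^3 = 1$ and $S(x^1)\alpha x^2\beta x^3 = 1$ applied coordinatewise in the dual basis.

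The main obstacle is the last step: in the Hopf case the zig-zag identities are a one-line computation, but for a quasi-Hopf algebra one must carefully insert $\omega_{V,V^\vee,V}$ (with its three factors of $\phi$) between coevaluation and evaluation, move the $\alpha$ and $\beta$ past $S$ and the relevant $X^i, x^i$, and track how the dual basis sums through the anti-automorphism property of $S$. Once the bookkeeping is organized so that the expression takes the form $\bigl(X^1\beta S(X^2)\alpha X^3\bigr).v$ on one side and $\bigl(S(x^1)\alpha x^2\beta x^3\bigr).f$ on the other, the two remaining antipode axioms finish the proof.
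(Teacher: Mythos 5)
The paper does not give a proof of this theorem; it is stated with attribution to Drinfeld and left as a black box. Your proposal is a correct and complete outline of the standard argument: each of the four quasi-bialgebra identities and the four antipode identities is matched to exactly the monoidal/rigidity axiom it is responsible for ($H$-linearity of the associator from the intertwining relation, pentagon from the $3$-cocycle relation, triangle from the normalization, duality action from anti-multiplicativity of $S$, $H$-linearity of evaluation/coevaluation from the two $\alpha,\beta$ relations, and the snake identities from the two remaining scalar conditions inserted with the appropriate $\phi^{\pm 1}$), which is precisely how Drinfeld's proof proceeds.
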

In fact, every finite tensor category (see Def. \ref{def: non-ssi mtc}) with quasi-fiber functor (i.e. weak monoidal functor to $\Vect_k$) is equivalent to the representation category of a quasi-Hopf algebra (see \cite{EtingofGelakiNikshychEtAl2015}, Thm. 5.13.7). A canonical source of equivalences between representation categories of quasi-Hopf algebras are twists:
\begin{defprop} \cite{HausserNill1999} \label{def: twist}
	Let $H$ be a quasi-bialgebra, and $J\in H \otimes H$ an invertible element, s.t. $(\epsilon \otimes \id)(J) = (\id \otimes \epsilon)(J)=1$. Such an element $J$ is called a twist. Given a twist $J$, we can define a new quasi-bialgebra $H^J$ which is $H$ as an algebra, with the same counit, the coproduct is given by
	\begin{align*}
	\Delta^J(x)=J\Delta(x)J^{-1},
	\end{align*}
	and the associator given by
	\begin{align*}
	\phi^J=U_R \phi U_L^{-1},
	\end{align*}
	where $U_L:=(J \otimes 1)(\Delta \otimes \id)(J)$ and $U_R:=(1 \otimes J)(\id \otimes \Delta)(J)$.
	Moreover, we have
	\begin{align*}
		\alpha^J=S(J^{(-1)})\alpha J^{(-2)} \qquad \beta^J=J^1 \beta S(J^2).
	\end{align*}
	The quasi-bialgebra $H^J$ is called twist equivalent to $H$ by the twist $J=J^1 \otimes J^2$. The inverse of $J$ is denoted by $J^{-1}=J^{(-1)}\otimes J^{(-2)}$. If a twist appears more then once in an equation, we use letters J,K,L,\dots for them.
\end{defprop}

\begin{example} \label{twisted dual group algebra}
	Let $G$ be a finite group, $\omega \in Z^3(G,k^\times)$ be a group 3-cocycle. The dual algebra $k^{G}$ equipped with the coassociator
	\begin{align*}
		\phi= \sum_{g_1,g_2,g_3 \in G} \, \omega(g_1,g_2,g_3) \, \delta_{g_1} \otimes \delta_{g_2} \otimes \delta_{g_3} \qquad (\text{where } \delta_g(h):=\delta_{g,h})
	\end{align*}
	is a quasi-Hopf algebra which we denote by $k^{G}_{\omega}$.\\	
	The category of  representations of $k^{G}_{\omega}$ is identified with the category $\Vect_G^\omega$ of $G$-graded vector spaces with associator $\omega_{g_1,g_2,g_3}:(k_{g_1}\otimes k_{g_2})\otimes k_{g_3} \to k_{g_1}\otimes(k_{g_2}\otimes k_{g_3})$ for simple objects $k_{g_i}$ given by
	\begin{align*}
		\omega_{g_1,g_2,g_3}:1_{g_1} \otimes 1_{g_2} \otimes 1_{g_3} \mapsto \omega(g_1,g_2,g_3) \cdot 1_{g_1} \otimes 1_{g_2} \otimes 1_{g_3}.
	\end{align*}
\end{example}

The following example is due to \cite{DijkgraafPasquierRoche1992}\footnote{However, we use the convention of \cite{Majid1998} for later use.}:
\begin{example}\label{ex: PDR group double}
	Again, let $G$ be a finite group, $\omega \in Z^3(G,k^\times)$ be a group 3-cocycle. Then there is a quasi-Hopf algebra $D^\omega(G)$ with a basis $g\otimes \delta_h$. The coassociator of this quasi-Hopf algebra is given by
	\begin{align*}
		\phi= \sum_{g_1,g_2,g_3 \in G} \, \omega(g_1,g_2,g_3) \, (e\otimes\delta_{g_1}) \otimes (e\otimes\delta_{g_2}) \otimes (e\otimes\delta_{g_3}).
	\end{align*}	
	Product and coproduct are given by
	\begin{align*} 
		(h_1 \otimes \delta_{g_1})\cdot (h_2 \otimes \delta_{g_2})& = \delta_{h_1g_1h_1^{-1},g_2} \frac{\omega(h_2,h_1,h_1^{-1}h_2^{-1}g_2h_1h_2)\omega(g_2,h_2,h_1)}{\omega(h_2,h_2^{-1}g_2h_2,h_1)} (h_2h_1\otimes \delta_{g_2})\\
		\Delta(h \otimes \delta_{g}) &= \sum_{g_1g_2=g}\, \frac{\omega(h,h^{-1}g_1h,h^{-1}g_2h)\omega(g_1,g_2,h)}{\omega(g_1,h,h^{-1}g_2h)} (h \otimes \delta_{g_1}) \otimes (h \otimes \delta_{g_2})
	\end{align*}
	Moreover, there is a compatible $R$-matrix 
	\begin{align*}
		R=\sum_{g \in G}\, (e\otimes \delta_g) \otimes (g \otimes 1_{k^G}),
	\end{align*}
	such that the category of representations is a braided monoidal category.
\end{example}
Modules over $D^\omega(G)$ can be identified with Yetter-Drinfeld modules over $k^G_{\omega}$ with a projective coaction instead of an ordinary coaction (see \cite[Prop. 2.2]{Majid1998} or \cite[Def. 6.1]{Vocke2010}). More generally the following universal construction has been estabilshed in \cite{HausserNill1999a}:
\begin{example}
	Let $H$ be a quasi-Hopf algebra, then there is a Drinfeld double $DH$ which is again a quasi-Hopf algebra with $R$-matrix. For example in our first case $D k^{G}_{\omega} \cong D^\omega(G)$.
	
	It has the universal property that the braided category of $DH$-representation is equivalent to the Drinfeld center of the monoidal category $\mathsf{Rep}_H$ of $H$-representations. Similarly, one can introduce Yetter-Drinfeld modules over a quasi-Hopf algebra, and this braided monoidal category is equivalent to the previous categories. 
\end{example}
For later use, we introduce several important elements which were first defined in \cite{Drinfeld1989}:
For an arbitrary quasi-Hopf algebra there is an invertible element $f=f^1 \otimes f^2 \in H \otimes H$, which we refer to as Drinfeld twist, satisfying
\begin{align}\label{eq: Drinfeld twist}
	f\Delta(S(h))f^{-1}=(S \otimes S)(\Delta^{cop}(h)).
\end{align}
Before we give $f$ and $f^{-1}$ explicitly, we follow \cite{BulacuNauwelaerts2002} by defining elements
\begin{align*}\label{def: p's}
	p_R=p^1 \otimes p^2 &=x^1 \otimes x^2 \beta S(x^3) &\qquad p_L=\tilde{p}^1\otimes \tilde{p}^2:&=X^2S^{-1}(X^1\beta) \otimes X^3 \\
	q_R=q^1 \otimes q^2 &=X^1 \otimes S^{-1}(\alpha X^3)X^2 &\qquad q_L=\tilde{q}^1\otimes \tilde{q}^2:&=S(x^1)\alpha x^2 \otimes x^3,
\end{align*}
satisfying the following useful equalities:
\begin{align}
	\Delta(h_{(1)})p_R(1\otimes S(h_{(2)})) & = p_R(h \otimes 1) &\qquad \Delta(h_{(2)})p_L(S^{-1}(h_{(1)})\otimes 1)&= p_L(1 \otimes h)\\\label{eq:p,q identities}
	(1\otimes S^{-1}(h_{(2)}))q_R\Delta(h_{(1)})&=(h \otimes 1)q_R& \qquad (S(h_{(1)})\otimes 1)q_L\Delta(h_{(2)}) &= (1 \otimes h)q_L.
\end{align}
Furthermore, we define elements
\begin{align}
	\delta = \delta^1 \otimes \delta^2 &= x^1 \beta S(x^3_{(2)}\tilde{p}^2) \otimes x^2S(x^3_{(1)}\tilde{p}^1) \\
	\gamma = \gamma^1 \otimes \gamma^2 &= S(q^2x^1_{(2)})x^2 \otimes S(q^1 x^1_{(1)})\alpha x^3
\end{align}
The Drinfeld twist $f$ and its inverse $f^{-1}$ are then explicitly given by:
\begin{align} \label{def: f's}
	\begin{split}
		f&= (S \otimes S)(\Delta^{cop}(p^1))\gamma \Delta(p^2)\\
		f^{-1}&=\Delta(\tilde{q}^1)\delta(S\otimes S)(\Delta^{cop}(\tilde{q}^2)).
	\end{split}
\end{align}
In addition to Eq. \ref{eq: Drinfeld twist}, the Drinfeld twist satisfies
\begin{align*}
	f\Delta(\alpha)=\gamma \qquad \Delta(\beta)f^{-1}=\delta.
\end{align*}
Let $J\in H \otimes H$ be a twist. The respective elements on the twisted quasi-Hopf algebra $H^J$ are denoted by $p_J,q_J,\delta_J, \dots$. Using the above identities, one can show: 
\begin{align*}\label{eq: identities for twisted drinfeld twists, etc}
	{p_R}_J&=U_L^1p^1 \otimes U_L^2p_R^2S(U_L^3) &  {p_L}_J&=U_R^2\tilde{p}^1S^{-1}(U_R^1)\otimes U_R^3 \tilde{p}^2 \\
	{q_R}_J&=q^1U_L^{(-1)} \otimes S^{-1}(U_L^{(-3)})q^2U_L^{(-2)} & {q_L}_J&=S(U_R^{-1})\tilde{q}^1U_R^{(-2)} \otimes \tilde{q}^2U_R^{(-3)} \\
	f_J &=S(J^{(-2)})f^1K^{(-1)} \otimes S(J^{(-1)})f^2K^{(-2)} & f^{-1}_J &=J^1f^{(-1)}S(K^2) \otimes J^2 f^{(-2)} S(K^1).
\end{align*}

Finally, we introduce the notion of a quasitriangular ribbon quasi-Hopf algebra. We start with the notion of quasi-triangularity:

\begin{definition}
	A quasi-triangular quasi-Hopf algebra is a quasi-Hopf algebra $H$ together with an invertible element $R \in H\otimes H$, the so-called $R$-matrix, s.t. the following conditions are fulfilled:
	\begin{align*}
		R\Delta(h)&=\Delta^{op}(h)R \qquad \forall\, h \in H \\
		(\Delta \otimes \id)&=\phi_{321}R_{13}\phi_{132}^{-1}R_{23}\phi \\
		(\id \otimes \Delta)&=\phi_{231}^{-1}R_{13}\phi_{213}R_{12}\phi^{-1}
	\end{align*}	
\end{definition}

The definition of a quasi-triangular quasi-Hopf algebra has an important symmetry: If $R=R^1\otimes R^2 \in H\otimes H$ is an $R$-matrix for $H$, then so is $R_{21}^{-1} \in H\otimes H$. \\
The following lemma is proven in \cite{BulacuNauwelaerts2003}:

\begin{lemma}\label{lm: Drinfeld element}
	Let $(H,R)$ be a quasi-triangular quasi-Hopf algebra. We define the Drinfeld element $u \in H$ as follows:
	\begin{align*}
		u:=S(R^2p^2)\alpha R^1p^1.
	\end{align*}
	This element is invertible and satisfies $S^2(h)=uhu^{-1}$.
\end{lemma}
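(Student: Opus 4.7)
The plan is to establish the conjugation identity $uh = S^2(h) u$ by a direct computation, and then to exhibit an explicit inverse for $u$ using the dual element $q_L$ and the Drinfeld twist $f$.

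For the conjugation identity, I would start from $uh = S(R^2 p^2)\,\alpha\,(R^1 p^1)\,h$ and push $h$ leftward in two steps. First, applying the identity $p_R(h\otimes 1) = \Delta(h_{(1)}) p_R (1\otimes S(h_{(2)}))$ from \eqref{eq:p,q identities} rewrites $p^1 h \otimes p^2$ as $h_{(1)(1)} p^1 \otimes h_{(1)(2)} p^2 S(h_{(2)})$. Second, quasi-triangularity in the form $R\Delta(h_{(1)}) = \Delta^{\mathrm{op}}(h_{(1)})R$ flips the two innermost legs past $R$, so that after applying $S$ (an anti-algebra map) the expression becomes
$$uh \;=\; S^2(h_{(2)})\,S(R^2 p^2)\,\bigl(S(h_{(1)(1)})\,\alpha\,h_{(1)(2)}\bigr)\,R^1 p^1.$$
The antipode axiom $S(a_{(1)})\alpha a_{(2)} = \epsilon(a)\alpha$, applied with $a = h_{(1)}$ (so that $a_{(1)}\otimes a_{(2)} = h_{(1)(1)}\otimes h_{(1)(2)}$), collapses the inner factor to $\epsilon(h_{(1)})\alpha$, and the counit identity for $\Delta$ then gives $uh = S^2(h)\,u$.

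For invertibility, I would construct an explicit candidate $u^{-1}$ built from the dual element $q_L = S(x^1)\alpha x^2 \otimes x^3$, the inverse $R$-matrix $R^{-1}$, and the Drinfeld twist $f$ of \eqref{def: f's}; roughly, $u^{-1}$ is obtained by repeating the construction of $u$ with the alternative $R$-matrix $R_{21}^{-1}$ and conjugating by $f$ so as to convert $S\otimes S$ into $S\otimes S$ applied to $\Delta^{\mathrm{cop}}$ via \eqref{eq: Drinfeld twist}. The verification $u u^{-1} = u^{-1} u = 1$ is a computation parallel to Stage~1, but this time requiring the hexagon-type axioms for $(\Delta\otimes\id)(R)$ and $(\id\otimes\Delta)(R)$, together with the Drinfeld-twist identities $f\Delta(\alpha) = \gamma$ and $\Delta(\beta)f^{-1} = \delta$, to reduce the telescoping product to $1$.

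The main obstacle is bookkeeping of the coassociator $\phi$. The very reason $p_R$ enters the definition of $u$ (rather than just $\alpha$ and $\beta$) is that the intertwining relation \eqref{eq:p,q identities} absorbs precisely the coassociators that would otherwise be introduced when antipodes are shuffled past iterated coproducts; this is what makes the clean cancellation in Stage~1 possible. For invertibility, the analogous role is played by $q_L$, and the nontrivial point is that the several stray $\phi$'s coming from $f$, from $R$, and from $R^{-1}$ only assemble to the identity after repeated use of the pentagon equation $(\id\otimes\id\otimes\Delta)(\phi)(\Delta\otimes\id\otimes\id)(\phi) = (1\otimes\phi)(\id\otimes\Delta\otimes\id)(\phi)(\phi\otimes 1)$ and the mixed antipode identities $X^1\beta S(X^2)\alpha X^3 = 1 = S(x^1)\alpha x^2\beta x^3$.
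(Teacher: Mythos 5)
The paper does not actually prove this lemma --- it only cites \cite{BulacuNauwelaerts2003} --- so the question is whether your direct argument establishes the claim on its own. Your Stage~1 is correct: from $uh = S(R^2p^2)\alpha R^1 p^1 h$, the identity $p_R(h\otimes 1)=\Delta(h_{(1)})p_R(1\otimes S(h_{(2)}))$ followed by $R\Delta(h_{(1)})=\Delta^{\mathrm{op}}(h_{(1)})R$ rewrite the expression as $S^2(h_{(2)})\,S(R^2p^2)\,\bigl(S(h_{(1)(1)})\alpha h_{(1)(2)}\bigr)\,R^1p^1$, and the antipode axiom $S(a_{(1)})\alpha a_{(2)}=\epsilon(a)\alpha$ collapses the middle factor to $\epsilon(h_{(1)})\alpha$, giving $uh=S^2(h)u$. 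I checked the bookkeeping and it is clean.

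Stage~2, however, is a plan rather than a proof, and this is where the genuine gap lies. You never write down the proposed inverse: ``built from $q_L$, $R^{-1}$ and $f$'' and ``repeating the construction with $R_{21}^{-1}$'' describe an intent but not a formula. The conjugation identity $uh=S^2(h)u$ holds with no hypothesis on $u$ and does not by itself imply that $u$ is invertible, so the entire substance of the lemma sits in Stage~2. In the quasi-Hopf setting the inverse $u^{-1}$ has a substantially more involved explicit form than in the Hopf case, and verifying $uu^{-1}=u^{-1}u=1$ is not a computation ``parallel to Stage~1'': it threads together both hexagon axioms for $R$, the pentagon for $\phi$, the mixed antipode axioms $X^1\beta S(X^2)\alpha X^3 =1= S(x^1)\alpha x^2\beta x^3$, and the Drinfeld-twist identities, in an order that is itself part of the argument. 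Without an explicit candidate for $u^{-1}$ and the ensuing verification, the lemma is not established; as it stands, citing \cite{BulacuNauwelaerts2003} as the paper does remains the only complete route on the table.
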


%
%

We now recall the definition of a ribbon quasi-Hopf algebra. It has been shown in \cite{BulacuNauwelaerts2003} that the following definition of a quasi-triangular ribbon quasi-Hopf algebra is equivalent to the original one given in \cite{AltschuelerCoste1992}.

\begin{definition}\label{dfn: ribbon quasi-hopf algebra}
	A quasi-triangular quasi-Hopf algebra $(H,R)$ is called ribbon if there exists a central element $\nu \in H$, s.t.
	\begin{align*}
		\nu^2 &= uS(u),& 
				S(\nu)&=\nu, \\
		\epsilon(\nu)&=1,& 
		\Delta(\nu)&=(\nu \otimes \nu)(R_{21}R)^{-1}.
	\end{align*}  
\end{definition}

%
%

\subsection{Abelian 3-cocycles on $G$}

From now on, let $G$ be a finite abelian group.

\begin{definition}\label{abelianCohomology}\cite{MacLane1952}\cite{JoyalStreet1993}\footnote{Note that we follow \cite{JoyalStreet1993} and thus have a different convention than \cite{MacLane1952}, this amounts to having $\omega^{-1}$ everywhere.}
	An abelian 3-cocycle $(\omega,\sigma)\in Z^3_{ab}(G)$ is a pair consisting of a ordinary 3-cocycle $\omega \in Z^3(G,\C^\times)$ and an ordinary 2-cochain $\sigma \in C^2(G,\C^\times)$, s.t. the following two equations hold:
	\begin{align}
		\frac{\omega(y,x,z)}{\omega(x,y,z)\omega(y,z,x)} = 	\frac{\sigma(x,y+z)}{\sigma(x,y)\sigma(x,z)} \\
		\frac{\omega(z,x,y)\omega(x,y,z)}{\omega(x,z,y)} = 	\frac{\sigma(x+y,z)}{\sigma(x,z)\sigma(y,z)}
	\end{align}	
	An abelian $3$-coboundary is of the form $d_{ab}\kappa:=(\kappa/\kappa^T,d\kappa^{-1})$ for any ordinary 2-cochain $\kappa \in C^2_{ab}(G):=C^2(G,\C^\times)$. Here, $\kappa^T(x,y):=\kappa(y,x)$.
	The quotient group of abelian cohomology classes is the abelian cohomology group $H^3_{ab}(G)$.
\end{definition}
\begin{theorem}\cite{MacLane1952}\label{3rdAbCohomology and QuadraticForms}
	To any abelian 3-cocycle $(\omega,\sigma)$ there is an associated quadratic form $Q(g):=\sigma(g,g)$ on the group $G$ and the associated symmetric bihomomorphism $\Beta(g,h):=\sigma(g,h)\sigma(h,g)$. We have an identity 
	\begin{align}\label{QuadraticForm and SymmBihom}
		\Beta(g,h)=\frac{Q(g+h)}{Q(g)Q(h)}.
	\end{align}
	This implies, that the symmetric bihomomorphism $\Beta$ characterizes the quadratic form up to a homomorphism $\eta \in \Hom(G, \{\pm 1\})$.\\
	The assignment $\Phi:(\omega,\sigma)\mapsto Q$ yields a group isomorphism  between abelian 3-cohomology classes $H^3_{ab}(G)$ and quadratic forms $QF(G)$ on $G$. 
\end{theorem}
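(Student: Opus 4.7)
The plan is to derive both the identity~\eqref{QuadraticForm and SymmBihom} and the bihomomorphism property of $\Beta$ directly from the two hexagon equations in Definition~\ref{abelianCohomology}, handle the $\eta$-ambiguity as a corollary, and finally invoke the classical Eilenberg--MacLane theorem for the isomorphism~$\Phi$.

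For the identity, I would compute $Q(g+h) = \sigma(g+h, g+h)$ by applying the first hexagon with $(x,y,z) = (g+h, g, h)$ to split the second slot, followed by two applications of the second hexagon to split each resulting first slot. The accumulated $\omega$-factors organize into a cyclic combination that collapses to $1$ by the ordinary 3-cocycle identity on $\omega$, leaving $Q(g+h) = Q(g)Q(h)\Beta(g,h)$. A parallel manipulation of $\Beta(g+h,k)/(\Beta(g,k)\Beta(h,k))$, applying both hexagons to each of the four $\sigma$-factors, shows this ratio equals $1$ for the same reason; hence $\Beta$ is a symmetric bihomomorphism and $Q$ is a quadratic form. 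For the $\eta$-ambiguity: if two quadratic forms $Q_1, Q_2$ share the same $\Beta$, then $\eta := Q_1/Q_2$ is a group homomorphism by~\eqref{QuadraticForm and SymmBihom}, and $\eta(g)^2 = 1$ follows from $Q(g)^2 = \Beta(g,g)$, a consequence of setting $h=-g$ in the identity together with the normalization $Q(0) = 1$ and the even symmetry $Q(-g) = Q(g)$ (the latter deducible from bihomomorphism of $\Beta$ modulo handling a possible sign absorbed into $\eta$).

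Finally, for the isomorphism $\Phi: H^3_{ab}(G) \to QF(G)$: well-definedness on cohomology is immediate, since an abelian coboundary has $\sigma = \kappa/\kappa^T$ and hence $\sigma(g,g) = 1$; the homomorphism property of $\Phi$ follows since $Q$ depends multiplicatively on $\sigma$. Bijectivity is the classical content of~\cite{MacLane1952}: one decomposes $G = \bigoplus_i \Z/n_i\Z$ and constructs (resp.\ recognizes uniquely up to coboundary) an explicit representative abelian 3-cocycle with prescribed values $\sigma(\gamma_i, \gamma_i) = Q(\gamma_i)$ on each cyclic generator $\gamma_i$. \textbf{The main obstacle} is precisely this last step: writing down such a representative requires a careful treatment of the coupling between $\omega$ and $\sigma$ on cyclic summands of even order (where $Q(\gamma_i)$ can be a primitive $2n_i$-th root of unity and $\omega$ cannot be taken trivial), together with the bicharacter cross-terms between distinct cyclic summands. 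The remaining verifications are routine cochain-level computations once this representative is in hand.
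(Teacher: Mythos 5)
The paper itself offers no proof of this theorem; it is cited directly from~\cite{MacLane1952} and used as a black box. So there is no in-paper argument to compare yours against. That said, your sketch of the classical Eilenberg--MacLane argument is sound: applying the first hexagon with $(x,y,z)=(g+h,g,h)$ and then the second hexagon with $(x,y,z)=(g,h,g)$ and $(g,h,h)$ produces
$$Q(g+h)=Q(g)Q(h)\Beta(g,h)\cdot\frac{\omega(g,h,g)\,\omega(h,g,h)\,\omega(g,g+h,h)}{\omega(g+h,g,h)\,\omega(g,h,g+h)},$$
and the residual $\omega$-factor is exactly the 3-cocycle identity for $\omega$ evaluated at $(g,h,g,h)$, hence equals~$1$. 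One small simplification: you do not need to set $h=-g$ in the identity to deduce $Q(g)^2=\Beta(g,g)$; this is immediate from the definitions, since $\Beta(g,g)=\sigma(g,g)^2=Q(g)^2$. The part you flag as the main obstacle --- exhibiting an explicit representative abelian 3-cocycle for a prescribed quadratic form, with the coupling between $\omega$ and $\sigma$ on even-order cyclic summands --- is in fact the content of the paper's own Proposition~\ref{prop: explicit abelian 3-cocycle}, whose proof writes down the representative $\omega(a,b,c)=\prod_i Q(g_i)^{m_i\delta_{a_i+b_i\ge m_i}c_i}$ together with the matching~$\sigma$. So the paper defers the abstract bijectivity claim to MacLane but does supply the explicit construction that realizes surjectivity; consulting that proposition would close the gap you identify.
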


As we shall see in the next section, abelian cohomology classes classify different braiding/tensor structures on the category of $G$-graded vector spaces. 

\begin{example}
	Let $G=\Z_n$ be a cyclic group. By $\zeta_k$ we denote a primitive $k$th root of unity. We have two cases:
	\begin{itemize}
		\item For odd $n$ we have $H^3_{ab}(\Z_n)=\Z_n$ with representatives $(\omega,\sigma)$ for $k=0,\ldots n-1$ given by $\sigma(g^i,g^j)=\zeta_n^{kij},\omega=1$ and the respective quadratic form is given by
		$$Q(g^i)=\zeta_n^{ki^2},\qquad \Beta(g^i,g^j)=\zeta_n^{2kij}$$
		\item For even $n$ we have $H^3_{ab}(\Z_n)=\Z_{2n}$ with
		 quadratic forms for $k=0,\ldots 2n-1$
		 $$Q(g^i)=\zeta_{2n}^{ki^2},\qquad \Beta(g^i,g^j)=\zeta_{2n}^{2kij}$$
		 For even $k$ we have again representatives given by $\sigma(g^i,g^j)=\zeta_n^{(k/2)ij},\omega=1$, but for odd $k$ we have only representatives with $\omega$ in the nontrivial cohomology class of $H^3(\Z_n,\Z_2)$.
	\end{itemize} 
	In particular $G=\Z_4$ has four abelian cohomology classes, two of which have trivial $\omega$, and two of which have nontrivial $\omega$ and nondegenerate $\Beta$.
 \end{example}
 
 \begin{proposition} \label{prop: explicit abelian 3-cocycle}
 	Let $G = \bigoplus_{i=1}^n \, \Z_{m_i}$ be a finite abelian group with generators $g_i$, $i =1, \dots n$. 
 	\begin{enumerate}
 		\item A quadratic form $Q \in QF(G)$ is uniquely determined by elements $0 \leq r_i \leq \gcd(2,m_i)m_i-1$ and $0 \leq r_{kl} \leq \gcd(m_k,m_l)-1$ (for $k < l$), so that
 		\begin{align*}
 			Q(g_i)= \exp \left( \frac{2 \pi i \cdot r_i}{\gcd(2,m_i)m_i} \right) \qquad \Beta(g_k,g_l)=\exp \left( \frac{2 \pi i \cdot r_{kl}}{\gcd(m_k,m_l)} \right) \quad (k<l).
 		\end{align*}
 		\item For a quadratic form $Q \in QF(G)$, the abelian $3$-cocycle $(\omega,\sigma) \in Z^3_{ab}(G)$ given by
 		\begin{align*}
 			\sigma(a,b) :&= \prod_{i=1}^n\, Q(g_i)^{a_ib_i} \, \prod_{1 \leq k < l\leq n} \,\Beta(g_k,g_l)^{a_kb_l} \\
 			\omega(a,b,c):&= \prod_{i=1}^n\, Q(g_i)^{m_i \delta_{a_i +b_i \geq m_i} c_i}
 		\end{align*}
 		satisfies $Q(g)=\sigma(g,g)$.
 	\end{enumerate}
 \end{proposition}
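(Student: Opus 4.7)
I split the argument into (1) the parametrization of $QF(G)$, and (2) the verification of the explicit representative.

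\textbf{Part (1).} The main tool is $Q(ng) = Q(g)^{n^2}$, which follows by induction from $Q(g+h) = Q(g) Q(h) \Beta(g,h)$ and $\Beta(g,g) = Q(g)^2$, both consequences of Theorem~\ref{3rdAbCohomology and QuadraticForms} and~\eqref{QuadraticForm and SymmBihom}. Applied to a generator, $Q(m_i g_i) = 1$ forces $Q(g_i)^{m_i^2} = 1$, while $\Beta(m_i g_i, g_i) = 1$ together with bilinearity forces $Q(g_i)^{2 m_i} = 1$. Hence the order of $Q(g_i)$ divides $\gcd(m_i^2, 2 m_i) = \gcd(2, m_i) m_i$, accounting for $r_i$; the analogous argument on pairs yields $\Beta(g_k, g_l)^{\gcd(m_k, m_l)} = 1$ and hence $r_{kl}$. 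Iterating the defining relation gives the explicit reconstruction
\[
Q\bigl(\textstyle\sum_i a_i g_i\bigr) = \prod_i Q(g_i)^{a_i^2} \prod_{k < l} \Beta(g_k, g_l)^{a_k a_l},
\]
which shows $Q$ is uniquely determined by the data. Conversely, the order bounds make the right-hand side independent of the integer lifts of $a_i$ modulo $m_i$, and the quadratic-form axioms are then verified by direct substitution, giving existence.

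\textbf{Part (2).} The identity $Q(g) = \sigma(g,g)$ is immediate, since setting $b = a$ in the definition of $\sigma$ reproduces the reconstruction formula above. The remaining content is the abelian 3-cocycle condition of Definition~\ref{abelianCohomology}. I exploit the factorization $\sigma = \sigma^{\mathrm{d}} \cdot \sigma^{\mathrm{o}}$, with $\sigma^{\mathrm{d}}(a,b) = \prod_i Q(g_i)^{a_i b_i}$ and $\sigma^{\mathrm{o}}(a,b) = \prod_{k < l} \Beta(g_k, g_l)^{a_k b_l}$, together with $\omega = \prod_i \omega_i$, where $\omega_i(a,b,c) = Q(g_i)^{m_i \delta_{a_i+b_i \geq m_i} c_i}$ involves only the $i$th generator. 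Since $\sigma^{\mathrm{o}}$ is a strict bicharacter, $(1, \sigma^{\mathrm{o}})$ is trivially an abelian 3-cocycle. For each diagonal piece $(\omega_i, \sigma^{\mathrm{d}}_i)$, the problem reduces on the cyclic factor $\Z_{m_i}$ to the well-known Eilenberg--Mac~Lane carry representative of the quadratic form $a \mapsto Q(g_i)^{a^2}$; the relation $Q(g_i)^{\gcd(2, m_i) m_i} = 1$ established in part~(1) is precisely what is needed for $\omega_i$ to satisfy the ordinary 3-cocycle identity, and the two hexagon-type relations follow from case analysis on the carry pattern in $a_i + b_i + c_i$ modulo $m_i$.

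\textbf{Main obstacle.} The one genuinely technical step is the carry-cocycle bookkeeping on a single cyclic factor: one must show that the multiplicative defect $\sigma^{\mathrm{d}}_i(a, b+c)/[\sigma^{\mathrm{d}}_i(a, b)\,\sigma^{\mathrm{d}}_i(a, c)] = Q(g_i)^{2 a_i \lfloor (b_i+c_i)/m_i \rfloor}$ is precisely compensated by the appropriate ratio of $\omega_i$-values, with an analogous computation for the second hexagon. Once this generator-wise check is completed, multiplicativity in $i$ and the triviality of $(1, \sigma^{\mathrm{o}})$ assemble the full abelian 3-cocycle identity.
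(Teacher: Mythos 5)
Your argument follows essentially the same route as the paper's own: both rely on the decomposition $Q(\sum_i a_i g_i) = \prod_i Q(g_i)^{a_i^2}\prod_{k<l}\Beta(g_k,g_l)^{a_k a_l}$, derive the order bound $\gcd(2,m_i)m_i$ on $Q(g_i)$ from constraints at the torsion order of each generator, and treat the verification of the abelian $3$-cocycle axioms as a direct computation. Your derivation of the order bound (via $Q(m_ig_i)=1$ and $\Beta(m_ig_i,g_i)=1$) is arguably cleaner than the paper's use of the relation $Q(g_i)=Q(-g_i)$, and your decomposition of $(\omega,\sigma)$ into a strictly bilinear off-diagonal part and a diagonal ``carry-cocycle'' part per cyclic factor is a nice way to organize the verification the paper leaves implicit.

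One small computational slip in the ``main obstacle'' paragraph: the multiplicative defect you need to compensate is
\[
\frac{\sigma^{\mathrm{d}}_i(a,b+c)}{\sigma^{\mathrm{d}}_i(a,b)\,\sigma^{\mathrm{d}}_i(a,c)}
= Q(g_i)^{-\,m_i\, a_i\, \delta_{b_i+c_i\geq m_i}},
\]
with exponent $-m_i a_i$, not $2a_i$: reducing $b_i+c_i$ modulo $m_i$ in the representative subtracts $m_i$ times the carry bit, and the diagonal $\sigma^{\mathrm{d}}_i$ sees that subtraction multiplied by $a_i$. This is exactly cancelled in the first hexagon by $\omega_i(b,c,a)^{-1}$ (the other two $\omega_i$-factors cancel since $\omega_i$ is symmetric in its first two slots), so the conclusion you state is right, but the stated formula should be corrected. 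The factor $\gcd(2,m_i)$ in the order of $Q(g_i)$ is what one needs for the $3$-cocycle identity $d\omega_i=1$ (the second hexagon is likewise a carry bookkeeping), not for this defect.
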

 
 \begin{proof}
 	From Theorem \ref{3rdAbCohomology and QuadraticForms} it is easy to see that for an element $g=g_1^{x_1}\dots g_n^{x_n} \in G$, $Q(g)$ decomposes as follows:
 	\begin{align*}
 		Q(g)=\prod_{i=1}^n\, Q(g_i)^{x_i^2} \cdot \prod_{i<j}\, \Beta(g_k,g_l)^{x_k x_l}.
 	\end{align*} 
 	As $\Beta$ is a bihomomorphism on the finite abelian group $G$, we have 
 	\begin{align*}
 	\Beta(g_k,g_l)=\exp \left( \frac{2 \pi i \cdot r_{kl}}{\gcd(m_k,m_l)} \right)
 	\end{align*} for some $0 \leq r_{kl} \leq \gcd(m_k,m_l)-1$. From this formula also follows 
 	\begin{align*}
 		Q(g_i)=\exp \left( \frac{2 \pi i \cdot \tilde{r}_i}{m_i^2} \right)
 	\end{align*} for some $0\leq \tilde{r}_i \leq m_i^2-1$. Combining this with the axiom $Q(g_i)=Q(-g_i)$ leads to 
 	\begin{align*}
 		Q(g_i)= \exp \left( \frac{2 \pi i \cdot r_i}{\gcd(2,m_i)m_i} \right)
 	\end{align*}
 	for some $0 \leq r_i \leq \gcd(2,m_i)m_i-1$. It is a straightforward computation to check that the pair $(\omega,\sigma)$ defined in the second part of the proposition satisfies the axioms of an abelian $3$-cocycle.
 	Finally, using the above decomposition of $Q(g)$, it is easy to see that $\sigma(g,g)=Q(g)$.
 \end{proof}

\subsection{Modular structures on $\Vect_G$}

Monoidal structures on $\Vect_G$ were first related to the third (abelian) cohomology of $G$ by Ho\`ang Xu\^an S\`inh, a student of Grothendieck. In her unpublished thesis, the corresponding (braided) monoidal categories are referred to as "Gr-categories".

\begin{theorem}\label{braided monoidal str on VectG}
	Let $(\omega,\sigma) \in Z^3_{ab}(G)$ be an abelian $3$-cocycle on the finite abelian group $G$. This induces a canonical braided monoidal structure on the category $\Vect_G$ of $G$-graded $k$-vector spaces. On simple objects $k_{g_i}$, associator, unitors and braiding are given by:
	\begin{align*}
		\omega_{g_1,g_2,g_3}:(k_{g_1} \otimes k_{g_2}) \otimes k_{g_3} &\to k_{g_1} \otimes (k_{g_2} \otimes k_{g_3}), & \quad 
		1_{g_1} \otimes 1_{g_2} \otimes 1_{g_3} &\mapsto \omega(g_1,g_2,g_3) \cdot 1_{g_1} \otimes 1_{g_2} \otimes 1_{g_3} \\
		l_g:k_0 \otimes k_g &\to k_g, & \quad 1_0 \otimes 1_g  & \mapsto \omega(0,0,g)^{-1} \cdot 1_g
		\\
		r_g:k_g \otimes k_0 &\to k_g, &  1_g \otimes 1_0  & \mapsto \omega(g,0,0) \cdot  1_g \\
		\sigma_{g_1,g_2}:k_{g_1} \otimes k_{g_2} &\to k_{g_2} \otimes k_{g_1}, &\quad 
		1_{g_1} \otimes 1_{g_2}  &\mapsto \sigma(g_1,g_2) \cdot 1_{g_1} \otimes 1_{g_2}.
	\end{align*}
	 The resulting braided monoidal category is denoted by $\Vect_G^{(\omega,\sigma)}$. All braided monoidal structures on $\Vect_G$ are classified up to braided monoidal equivalence by the third abelian cohomology group $H^3_{ab}(G)$ modulo automorphisms on $G$.	
\end{theorem}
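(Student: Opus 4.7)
The plan is to verify the statement in two steps: first, that the given data $(\omega,\sigma)$ define a braided monoidal structure on $\Vect_G$, and second, that every such structure arises from an abelian $3$-cocycle, uniquely up to cohomology and up to $\Aut(G)$.

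For existence, I would evaluate the coherence diagrams on simple objects $k_g$, where every structure morphism is multiplication by a scalar. The pentagon axiom for $\omega$ is then exactly the ordinary $3$-cocycle relation $d\omega = 1$; the two hexagon axioms translate termwise into the two mixed relations defining $Z^3_{\mathrm{ab}}(G)$ in Definition \ref{abelianCohomology}; and the triangle axiom holds once one normalizes $\omega(\cdot,0,\cdot)=1$, which may always be arranged by an abelian coboundary. Because all structure morphisms are diagonal on homogeneous lines and extend $k$-linearly and $G$-gradedly, no further checks on non-simple objects are required.

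For the classification, I would start from an arbitrary braided monoidal structure on $\Vect_G$. Semisimplicity together with $\dim(k_g \otimes k_h) = 1$ forces a multiplication $m : G \times G \to G$ on isomorphism classes of simples, and the unit and associativity constraints render $m$ a group operation on $G$; after composing with an element of $\Aut(G)$ we may identify this with the given addition. A choice of isomorphisms $\mu_{g,h} : k_g \otimes k_h \to k_{g+h}$ then extracts scalars $\omega(g_1,g_2,g_3),\,\sigma(g_1,g_2) \in k^\times$ from the associator and braiding, and the pentagon and hexagons force $(\omega,\sigma)$ to be an abelian $3$-cocycle. A different choice of $\mu_{g,h}$, recorded by $\kappa(g,h)\in k^\times$, changes $(\omega,\sigma)$ precisely by $d_{\mathrm{ab}}\kappa$; conversely a braided monoidal equivalence between two such structures restricts on iso-classes of simples to an automorphism of $G$ and, once this is divided out, reduces to such a rescaling.

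The main obstacle is the classification half: one must show that \emph{every} braided monoidal category with the underlying abelian structure of $\Vect_G$ is skeletally of the above form, keep track of the various coboundary ambiguities so that the cohomology class is genuinely well-defined, and verify that two choices of $\mu_{g,h}$ (and of trivializing automorphism of $G$) cannot yield non-cohomologous cocycles. The existence part is a direct scalar computation, whereas the classification step requires this careful bookkeeping of the normalization data.
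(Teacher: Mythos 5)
Your sketch is broadly correct, but the paper itself does not carry out this argument: the proof given there is a single line, a citation to Exercise 8.4.8 of \cite{EtingofGelakiNikshychEtAl2015}. What you have written is essentially the standard Joyal--Street/Eilenberg--MacLane argument that the reference outlines, so your route is ``spell out the citation'' rather than a genuinely different method. A few places could use more care. First, the claim that $\dim(k_g \otimes k_h)=1$ for an arbitrary braided monoidal structure on $\Vect_G$ is not automatic; one needs either to fix the tensor bifunctor to be the $G$-graded one (in which case the group law $m$ is already the addition on $G$ and the $\Aut(G)$ quotient enters only through the freedom to relabel simples under an equivalence), or to work in the class of \emph{pointed} fusion categories where all simples are invertible, in which case rigidity supplies inverses for the fusion monoid and pointedness forces the one-dimensionality. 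Second, the triangle axiom does not require normalizing $\omega(\,\cdot\,,0,\,\cdot\,)=1$: the theorem supplies the nontrivial unitors $l_g=\omega(0,0,g)^{-1}$ and $r_g=\omega(g,0,0)$ precisely so that the triangle holds for an \emph{arbitrary} $3$-cocycle; the normalization is an optional simplification noted in the remark that follows. Third, for the converse direction of the classification you should make explicit that a braided monoidal equivalence $\Vect_G^{(\omega,\sigma)}\to\Vect_G^{(\omega',\sigma')}$ is the data of an automorphism $f$ of $G$ together with a monoidal structure $\kappa$, and that braidedness of the functor is equivalent to $(\omega',\sigma')=f^*(\omega,\sigma)\cdot d_{\mathrm{ab}}\kappa^{\pm 1}$; this is exactly what the paper later invokes (Exercise 8.4.6 of \cite{EtingofGelakiNikshychEtAl2015}) in the proof of Lemma \ref{RibbonStr on VectG}. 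With these points tightened, your outline is a legitimate proof of the statement.
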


\begin{proof}
	This is Exercise 8.4.8 in \cite{EtingofGelakiNikshychEtAl2015}.
\end{proof}

\begin{remark}
	Since every $3$-cocycle is equivalent to a normalized one, we can choose the unitors to be trivial.
\end{remark}

We recall the definition of a pre-modular category and modularization in the semisimple case:

\begin{definition}\label{dfn:premodular cat (ssi)}
	A fusion category is a rigid semisimple $k$-linear monoidal category $\mathcal{C}$ with only finitely many isomorphism classes of simple objects, such that $\text{End}(\mathbb{I}) \cong k$. A braided fusion category is called pre-modular if it has a ribbon structure, i.e. an element $\theta \in \text{Aut}(\id_\mathcal{C})$, s.t.
			\begin{align}
				\theta_{X\otimes Y}&=(\theta_X \otimes \theta_Y) \circ c_{Y,X} \circ c_{X,Y} \label{eq:ribbon eq 1}\\
				(\theta_X)^*&=\theta_{X^*} \label{eq:ribbon eq 2},
			\end{align}
	where $c$ denotes the braiding in $\mathcal{C}$.
\end{definition}

\begin{definition}
	In a pre-modular category, we have categorical traces and categorical dimensions:
	\begin{align*}
	tr_{\mathcal{C}}(f):&=d_X\circ c_{X,X^*} \circ ((\theta_X \circ f) \otimes \id_{X^*}) \circ b_X: \mathbb{I} \to \mathbb{I} \qquad f \in \text{End}(X) \\
	dim_{\mathcal{C}}(X):&=tr(\id_X) \qquad X \in \mathcal{C},
	\end{align*}
	where $d$ and $b$ denote evaluation and coevaluation in the rigid category $\mathcal{C}$. 
\end{definition}

In the following, we recall the notions of a modular tensor category and of a modularization. The latter was first introduced in \cite{Bruguieres2000}.

\begin{definition}
	Let $\mathcal{C}$ be a pre-modular category. The so-called $S$-matrix of $\mathcal{C}$, $\mathcal{S}=(\mathcal{S}_{XY})_{X,Y \in \mathcal{O}(\mathcal{C})}$, is indexed by the set $\mathcal{O}(\mathcal{C})$ of isomorphism classes of simple objects in $\mathcal{C}$ with entries defined by
	\begin{align*}
	\mathcal{S}_{XY}:=tr_{\mathcal{C}}(c_{Y,X}c_{X,Y}).
	\end{align*}
	A pre-modular category is said to be modular if its $S$-matrix is non-degenerate. \\
	A linear ribbon functor $F:\mathcal{C} \to \mathcal{D}$ between pre-modular categories is said to be a modularization if 
	\begin{enumerate}
		\item it is dominant, i.e. for every object $D \in \mathcal{D}$ we have $\text{id}_D=p\circ i$ for some $i:D\to F(C)$, $p:F(C)\to D$, $C \in \mathcal{C}$. Equivalently, $D$ is a summand of an image under $F$.
		\item $\mathcal{D}$ is modular. 
	\end{enumerate}	
	If such a functor exists, then $\mathcal{C}$ is called modularizable.
\end{definition}

\begin{remark}
	In \cite{Bruguieres2000} it is shown that if a modularization exist it must be unique.
\end{remark}

The following lemma summarizes Paragraph 2.11.6 in \cite{DrinfeldGelakiNikshychEtAl2010}.

\begin{lemma}\label{RibbonStr on VectG}
	For an abelian $3$-cocycle $(\omega,\sigma) \in Z^3_{ab}(G)$, let $\Vect_G^{(\omega,\sigma)}$ be the corresponding braided monoidal category from Thm. \ref{braided monoidal str on VectG}.
	For a character $\eta:G \to \{\pm 1\}$, the twist $\theta_g:k_g \to k_g$ given by multiplication with $Q(g) \cdot \eta(g)$ defines a ribbon structure on $\Vect_G^{(\omega,\sigma)}$. We denote the resulting ribbon catgegory by $\Vect_G^{(\omega,\sigma,\eta)}$. All ribbon structures on $\Vect_G$ up to ribbon category equivalence are classified by elements $(Q,\eta) \in QF(G)\oplus \Hom(G,\{\pm 1\})$ modulo autmorphisms on $G$.
\end{lemma}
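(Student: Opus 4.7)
The plan is to verify the ribbon axioms \eqref{eq:ribbon eq 1}--\eqref{eq:ribbon eq 2} directly on simple objects using the identity \eqref{QuadraticForm and SymmBihom}, then classify all twists by comparing with the ansatz $Q(g)\eta(g)$.

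\emph{Existence.} For the proposed $\theta_g = Q(g)\eta(g)\cdot\id_{k_g}$, the double braiding $c_{h,g}\circ c_{g,h}$ on $k_g\otimes k_h$ acts by $\sigma(g,h)\sigma(h,g) = \Beta(g,h)$, so the right-hand side of \eqref{eq:ribbon eq 1} on $k_g\otimes k_h$ is multiplication by $Q(g)Q(h)\Beta(g,h)\eta(g)\eta(h)$. Applying \eqref{QuadraticForm and SymmBihom} and using that $\eta$ is a homomorphism, this becomes $Q(g+h)\eta(g+h)=\theta_{g+h}$, which verifies \eqref{eq:ribbon eq 1}. For \eqref{eq:ribbon eq 2}, since $(k_g)^*=k_{-g}$, we need $\theta_{-g}=\theta_g$; this follows from $Q(-g)=Q(g)$ (quadratic form) and $\eta(-g)=\eta(g)^{-1}=\eta(g)$ (as $\eta$ is $\pm 1$-valued). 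These are computations on the generators, and extend to $\Vect_G^{(\omega,\sigma)}$ by naturality and semisimplicity.

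\emph{Uniqueness/Classification.} Any natural automorphism $\theta$ of the identity functor on $\Vect_G^{(\omega,\sigma)}$ is specified by scalars $\theta(g)\in k^\times$, one per simple $k_g$. Axiom \eqref{eq:ribbon eq 1} forces
\begin{align*}
\theta(g+h)=\theta(g)\theta(h)\Beta(g,h).
\end{align*}
Dividing by $Q$ and invoking \eqref{QuadraticForm and SymmBihom}, the ratio $\eta:=\theta/Q$ satisfies $\eta(g+h)=\eta(g)\eta(h)$, i.e. $\eta$ is a character of $G$. Axiom \eqref{eq:ribbon eq 2} then gives $\eta(-g)=\eta(g)$, which combined with $\eta(g)\eta(-g)=1$ implies $\eta(g)^2=1$, so $\eta\in\Hom(G,\{\pm1\})$. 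This shows every ribbon twist has the claimed form.

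\emph{Classification up to equivalence.} By Theorem \ref{braided monoidal str on VectG}, the underlying braided monoidal structure on $\Vect_G$ is classified by $Q\in QF(G)$ modulo the action of $\Aut(G)$, and any braided equivalence $\Vect_G^{(\omega,\sigma)}\to\Vect_G^{(\omega',\sigma')}$ is induced by some $\phi\in\Aut(G)$ together with an abelian coboundary. Transporting the twist $\theta_g=Q(g)\eta(g)$ under such an equivalence gives $\theta'_g=Q'(g)\eta'(g)$ with $\eta'=\eta\circ\phi^{-1}$, so the classification up to ribbon equivalence is indeed by $(Q,\eta)\in QF(G)\oplus\Hom(G,\{\pm1\})$ modulo the diagonal $\Aut(G)$-action.

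The only mildly subtle point is the step $\eta(-g)=\eta(g)\Rightarrow\eta(g)^2=1$, which crucially uses that $\eta$ has already been shown to be a homomorphism; everything else is a direct application of the abelian cocycle identities and of Theorem \ref{braided monoidal str on VectG}.
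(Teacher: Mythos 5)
Your proof is correct and takes essentially the same route as the paper: verify the ribbon axioms from the identity $\Beta(g,h)=Q(g+h)/(Q(g)Q(h))$, observe that $Q(-g)=Q(g)$ handles the duality axiom, and handle equivalences via the $\Aut(G)$-action from Theorem \ref{braided monoidal str on VectG}. Where you go further than the paper is in the uniqueness step: the paper notes that two quadratic forms with the same associated bihomomorphism differ by an $\eta\in\Hom(G,\{\pm1\})$, but it does not explicitly argue that an arbitrary natural automorphism $\theta$ of the identity satisfying the ribbon axioms must actually be of the form $Q\eta$. You supply this by reading off $\theta(g+h)=\theta(g)\theta(h)\Beta(g,h)$ from the first axiom, dividing by $Q$ to see $\eta:=\theta/Q$ is a character, and then combining $\eta(-g)=\eta(g)$ (from the duality axiom) with $\eta(-g)=\eta(g)^{-1}$ (from multiplicativity) to get $\eta^2=1$. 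This closes a gap the paper leaves implicit and makes the classification claim fully rigorous; otherwise the two proofs are the same argument.
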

\begin{proof}
	We have already seen that braided monoidal structures on $\Vect_G$ are classified by $H^3_{ab}(G)$ modulo automorphisms on $G$. By Theorem \ref{3rdAbCohomology and QuadraticForms} for every such class there is a unique quadratic form $Q$, which satisfies equation \ref{QuadraticForm and SymmBihom}. This means that the quadratic form defines a twist $\theta$, satisfying equation \ref{eq:ribbon eq 1}. As a quadratic form, $Q$ satisfies $Q(-x)=Q(x)$, which implies equation \ref{eq:ribbon eq 2}. Thus, $\theta$ is a ribbon structure. On the other hand, the theorem says that two quadratic forms have the same associated symmetric bihomomorphism $\frac{Q(x+y)}{Q(x)Q(y)}$ if and only if they differ by a homomorphism $\eta: G\to \{\pm 1\}$, which implies that $Q\eta$ also defines a ribbon structure. From Exercise 8.4.6 in \cite{EtingofGelakiNikshychEtAl2015} we know that a braided monoidal equivalence between  $\Vect_G^{(\omega_1,\sigma_1)}$ and $\Vect_G^{(\omega_2,\sigma_2)}$ is uniquely determined by an automorphism $f:G \to G$ s.t. $Q_1=Q_2 \circ f$ together with some choice of $\kappa: G \times G \to \C$, s.t. $d_{ab}\kappa = (\omega_1,\sigma_1)^{-1}f^*(\omega_2,\sigma_2)$. Given a ribbon structure $\eta_1 \in \Hom(G,\{\pm 1\})$ on $\Vect_G^{(\omega_1,\sigma_1)}$ and $\eta_2=\eta_1 \circ f^{-1}$ on $\Vect_G^{(\omega_2,\sigma_2)}$ it is easy to see that this functor is a ribbon equivalence.
\end{proof}

\begin{remark}\label{when is VecG modular}
	By definition of the associated symmetric bilinear form $\Beta$ of $(\omega, \sigma)\in Z^3_{ab}(G)$, the pre-modular category $\Vect_G^{(\omega,\sigma,\eta)}$ is modular if and only if $\Beta$ is non-degenerate.
\end{remark}
	\section{Modularization of $\mathsf{Vect}_G^{(\omega,\sigma)}$}\label{sec: modularization of Vec}

\begin{proposition}\label{Modularizability of VectG}
	For a given ribbon structure $(\omega, \sigma,\eta)$ on $\Vect_G$, the pre-modular category $\Vect_G^{(\omega,\sigma,\eta)}$ is modularizable if and only if both $Q$ and $\eta$ are trivial on the radical $T:=\text{Rad}(\Beta)$ of the associated symmetric bilinear form $\Beta:G \times G \to \mathbb{C}^\times$ of $(\omega, \sigma) \in Z^3_{ab}(G)$. Explicitly, we construct a functor
	\begin{align*}
		F: \Vect_G^{(\omega,\sigma,\eta)} \to \Vect_{G/T}^{(\bar{\omega},\bar{\sigma},\bar{\eta})}
	\end{align*}
	to a modular category $\Vect_{G/T}^{(\bar{\omega},\bar{\sigma},\bar{\eta})}$, where the triple $(\omega,\sigma,\eta)$ defined on $G$ factors through a triple $(\bar{\omega},\bar{\sigma},\bar{\eta})$ defined on $G/T$.
\end{proposition}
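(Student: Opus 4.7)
The plan is to first identify the subcategory of transparent objects, invoke the Bruguières criterion for modularizability, and then build the modularization explicitly by descent along the quotient map $\pi\colon G\to G/T$.

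First I would determine the transparent subcategory. In $\Vect_G^{(\omega,\sigma,\eta)}$, the double braiding on simples acts by $\sigma(g,h)\sigma(h,g)=\Beta(g,h)$, so $k_g$ is transparent iff $g\in T=\text{Rad}(\Beta)$, and the transparent subcategory is the pointed ribbon subcategory supported on $T$. By the Bruguières modularizability criterion (semisimple case), a necessary and sufficient condition is that the ribbon twist $\theta_t=Q(t)\eta(t)$ equals $1$ for every $t\in T$. To refine this to the separate vanishings $Q|_T=1$ and $\eta|_T=1$ claimed in the proposition, I would use that by Theorem~\ref{3rdAbCohomology and QuadraticForms} and Lemma~\ref{RibbonStr on VectG} the pair $(Q,\eta)$ is only defined up to shifting $Q$ by a homomorphism $G\to\{\pm1\}$ (with compensating change in $\eta$); since every character $T\to\{\pm1\}$ extends to $G$, the product condition $(Q\eta)|_T=1$ allows us to gauge-shift the representative so that $Q|_T$ and $\eta|_T$ become trivial simultaneously.

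Next I would construct $F$. Since $\Beta(t,g)=1$ for $t\in T$, the identity $Q(g+t)=Q(g)Q(t)\Beta(g,t)$ shows that $Q$ descends to a quadratic form $\bar Q$ on $G/T$; similarly $\eta$ descends to $\bar\eta$. By Theorem~\ref{3rdAbCohomology and QuadraticForms}, $\bar Q$ determines an abelian $3$-cohomology class on $G/T$, and I fix a representative $(\bar\omega,\bar\sigma)$, say the explicit one from Proposition~\ref{prop: explicit abelian 3-cocycle}. The pullback $\pi^{*}(\bar\omega,\bar\sigma)$ lies in the same abelian $3$-cohomology class on $G$ as $(\omega,\sigma)$ (both having associated quadratic form $Q$), hence the two cocycles differ by an abelian coboundary $d_{ab}\kappa$ for some $\kappa\in C^2(G,k^\times)$. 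I then define $F$ on objects by $F(k_g):=k_{[g]}$, on hom-spaces by the identity, and equip it with the tensorator
\begin{equation*}
F_{g,h}\colon F(k_g)\otimes F(k_h)\xrightarrow{\;\kappa(g,h)\;} F(k_g\otimes k_h).
\end{equation*}

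Finally I would verify that $F$ is a modularization. The coboundary identity $(\omega,\sigma)=d_{ab}\kappa\cdot \pi^{*}(\bar\omega,\bar\sigma)$ is precisely what is needed so that the monoidal and braided hexagons for $F$ reduce to tautologies once the $\kappa$-tensorator is inserted; ribbon compatibility $\theta_{k_g}=\theta_{F(k_g)}$ is automatic from $(\bar Q\bar\eta)\circ\pi=Q\eta$. Dominance is immediate because every simple $k_{[g]}$ in the target is $F(k_g)$, and the target category is modular by Remark~\ref{when is VecG modular} since the induced bilinear form $\bar\Beta$ on $G/T$ is non-degenerate by the very definition of $T=\text{Rad}(\Beta)$. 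The main technical obstacle I anticipate is the bookkeeping for $\kappa$ and the hexagon checks; working with the explicit generator-wise formulas of Proposition~\ref{prop: explicit abelian 3-cocycle} should make this routine, with $\kappa$ writeable on generators of $G$ and all identities reducing to straightforward manipulations of the exponents.
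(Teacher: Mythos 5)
Your explicit construction of the modularization functor in the second half of the argument is essentially the same as the paper's (descend $Q$ and $\eta$ to $G/T$, pick a representative $(\bar\omega,\bar\sigma)$, find $\kappa$ with $\pi^*(\bar\omega,\bar\sigma)\cdot(\omega,\sigma)^{-1}=d_{ab}\kappa$ using that the two cocycles have the same quadratic form, then use $\kappa$ as tensorator and check dominance and non-degeneracy of $\bar\Beta$). That part is fine.

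The gap is in the necessity direction, where you invoke Bruguières' criterion. You record only half of it: you impose that the ribbon twist be trivial on transparent objects, giving $(Q\eta)|_T=1$, and then try to split this product condition into $Q|_T=1$ and $\eta|_T=1$ by a ``gauge-shift'' of the pair $(Q,\eta)$. That shift does not exist. Given the data $(\omega,\sigma,\eta)$, the quadratic form $Q(g)=\sigma(g,g)$ is \emph{determined}, not a free choice; changing $Q$ by a nontrivial character $G\to\{\pm1\}$ (while compensating in $\eta$) changes the abelian $3$-cohomology class, hence the braided monoidal structure, hence the category whose modularizability you are asking about. Theorem~\ref{3rdAbCohomology and QuadraticForms} says $\Beta$ determines $Q$ only up to such a character, but $(\omega,\sigma)$ determines $Q$ exactly; Lemma~\ref{RibbonStr on VectG} classifies ribbon structures only up to automorphisms of $G$, which is a different ambiguity. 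So the purported redundancy in $(Q,\eta)$ is spurious and the argument does not go through.

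The correct ingredient, used in the paper's proof, is the \emph{second} half of Bruguières' Theorem~3.1: for the transparent subcategory to be Tannakian one also needs the categorical dimensions of transparent simples to be nonnegative. As the paper notes via [DGNO, \S2.11.6], $\dim(\C_g)=\eta(g)$, so this gives $\eta|_T=1$ directly; combined with the twist condition $(Q\eta)|_T=1$ you then get $Q|_T=1$. No gauge argument is needed, and indeed none would be valid.
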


\begin{proof}
	By Paragraph 2.11.6 in \cite{DrinfeldGelakiNikshychEtAl2010} we have $dim(\C_g)=\eta(g)$. Then, the first part of the proposition is an easy application of Theorem 3.1 in \cite{Bruguieres2000}. We now construct the modularization functor explicitly:\\
	Let $\Vect_G^{(\omega, \sigma,\eta)}$ be a pre-modular category satisfying the conditions in Proposition \ref{Modularizability of VectG}.\\
	\\
	We first want to find the modular target category of the modularization functor:\\
	The condition $Q|_T=1$ implies that $Q$ factors to a well-defined quadratic form $\bar{Q}: G/T \to \C^\times$. Let $(\bar{\omega},\bar{\sigma}) \in Z^3_{ab}(G/T)$ denote a representative of $\Phi^{-1}(\bar{Q})$, where the isomorphism $\Phi:H^3_{ab}(G/T) \to QF(G/T)$ was introduced in theorem \ref{3rdAbCohomology and QuadraticForms}. Furthermore, since $\eta:G \to \C^{\times}$ was a character, $\eta|_T=1$ implies that it factors through a character $\bar{\eta}:G/T \to \C^\times$. Hence, we obtain a pre-modular category $\Vect_{G/T}^{(\bar{\omega},\bar{\sigma},\bar{\eta})}$. Since $T$ was defined as the radical of $\Beta$, the new associated symmetric form $\bar{\Beta}:G/T \times G/T \to \C^\times$ is non-degenerate and by remark \ref{when is VecG modular}, $\Vect_{G/T}^{(\bar{\omega},\bar{\sigma},\bar{\eta})}$ is even modular.\\
	\\
	Now, we need to construct a linear ribbon functor $F:\Vect_G^{(\omega, \sigma,\eta)} \to \Vect_{G/T}^{(\bar{\omega},\bar{\sigma},\bar{\eta})}$:\\
	Clearly, the projection $\pi:G \to G/T$ induces a functor $\Vect_G \to \Vect_{G/T}$. We want to endow this functor with a monoidal structure that is compatible with braiding and twist. This amounts to finding a 2-cochain $\kappa:G \times G \to \C$, s.t.
	\begin{align*}
		 \pi^*(\bar{\omega},\bar{\sigma}):=(\bar{\omega}\circ \pi,\bar{\sigma}\circ \pi)=d_{ab}\kappa \cdot (\omega,\sigma).
	\end{align*}
	The second argument in this equation guarantees that the natural transformation induced by $\kappa$ satisfies the axiom of a monoidal structure for $F$, whereas the second argument guarantees that this monoidal structure preserves the braiding.
	Since the associated quadratic form of the abelian $3$-cocycle $(\tilde{\omega},\tilde{\sigma})=(\omega,\sigma)^{-1}\pi^*(\bar{\omega},\bar{\sigma})$ vanishes by assumption and since $\Phi$ is an isomorphism, $(\tilde{\omega},\tilde{\sigma})$ must be an abelian coboundary, hence $\kappa$ exists. Compatibility with the twist is due to $\bar{\eta}\circ \pi = \eta$ and $\bar{Q}\circ \pi = Q$. This functor is clearly dominant, since it sends simple objects to simple objects and all simple objects are in the image.
\end{proof}

\begin{remark}\label{CompareWithBrugieres}
	In \cite{Bruguieres2000}, the modularised category is constructed as the category of modules of a commutative algebra $\mathcal{T}$ inside the non-modular category $\mathcal{C}$. The modularization is then simply the induction functor. Recall that an object $X \in \mathcal{C}$ is called transparent if $c_{Y,X}c_{X,Y}=\id_{X\otimes Y}$ for all $Y \in \mathcal{C}$. As an object, $\mathcal{T}$ is the direct sum of all simple transparent objects. 
	We remark that in our case $\mathcal{C}=\Vect_G^{(\sigma, \omega,\eta)}$ our explicit modularization functor and our modularised category is equivalent to Brugieres' construction for $\mathcal{T}:=\oplus_{t \in T}\, \C_t$.
\end{remark}

	\section{Quantum groups $u_q(\g,\Lambda)$ and $R$-matrices}\label{Quantum groups uq(g,Lambda) and R-matrices}

In this section, we recall the some results from \cite{LentnerOhrmann2017}. To begin with, we collect some data in order to define the small quantum group $u_q(\g,\Lambda)$:\\
In the following, let
\begin{itemize}
	\item $\g$ be a simple complex finite-dimensional Lie algebra with simple roots $\alpha_1,\dots, \alpha_n$ and Killing form $(\alpha_i,\alpha_j)$,
	\item $q$ be a primitive $\ell$th root of unity, where $\ell \in \N$,
	\item $\Lambda_R \subseteq \Lambda \subseteq \Lambda_W$ be an intermediate lattice between the root lattice $\Lambda_R$ and weight lattice $\Lambda_W$ of $\g$, equivalently a subgroup of the fundamental group $H=\Lambda/\Lambda_R \subseteq \pi_1 := \Lambda_W/\Lambda_R$,
	\item $\Lambda'$ be the centralizer of the root lattice with respect to $\Lambda$,
	\begin{align}\label{eq: centralizer formula}
		\Lambda'=\Cent_{\Lambda_R}(\Lambda):=\{ \alpha \in \Lambda_R \, \mid \, q^{(\alpha,\nu)}=1 \quad \forall \nu \in \Lambda  \}
	\end{align}
	and the quotient group $G:=\Lambda/\Lambda'$,
	\item $\Lambda_R \subseteq \Lambda_1,\Lambda_2 \subseteq \Lambda$ sublattices, equivalently subgroups $G_i:=\Lambda_i/\Lambda' \subseteq G$ of common index $d:=|G_i|$, s.t. $G_1+G_2 =G$. Note that in all cases except $\g = D_{2n}$, we have a cyclic fundamental group and thus $\Lambda_1=\Lambda_2=\Lambda$ and $G_1=G_2=G$,
	\item $f:G_1 \times G_2 \to \C^\times$ be a bilinear form,
	\item $H_i:=\Lambda_i/\Lambda_R \subseteq H $.
\end{itemize}

\begin{example}\label{exp:comp of Lambda'}
	We compute $\Lambda'$ for the two extremal cases $\Lambda_W$ and $\Lambda_R$:
	In the first case $\Lambda=\Lambda_W$, we have
	\begin{align*}
		\Lambda_W'=\left\langle  \, \frac{\ell}{gcd(\ell,d_i)} \alpha_i \,|\,1\leq i\leq n \, \right\rangle _{\Z}.
	\end{align*}
	Here, $d_i:= \frac{(\alpha_i,\alpha_i)}{2}$ denotes the length of the simple root $\alpha_i$. In the second case $\Lambda=\Lambda_R$, we have
	\begin{align*}
		\Lambda_R'=\Lambda_R \cap \ell \cdot \Lambda_W^\vee,
	\end{align*} 
	where $\Lambda_W^\vee$ denotes the co-weight lattice, i.e. the $\Z$-span of the fundamental dominant co-weights $\lambda_i^\vee:=\lambda_i/d_i$.
\end{example}

\begin{theorem}\cite{LentnerOhrmann2017}\label{thm: thm from LO}
	For the above data, let $u_q(\g,\Lambda)$ be the so-called extended \textit{small quantum group} with coradical $u_q^0(\g,\Lambda) \cong \C[G]$, as defined for example in \cite{Lentner2016}, Def. 5.3. \\
	Then, the following element defines an $R$-matrix for this quantum group if and only if the bilinear form $f$ is non-degenerate:
	\begin{align*}
		R=R_0(f)\bar{\Theta},
	\end{align*}
	where $\Theta \in u_q^-(\g,\Lambda) \otimes u_q^+(\g,\Lambda)$ is the universal quasi-$R$-matrix constructed by Lusztig (see \cite{Lusztig1993}, Thm. 4.1.2.) and $R_0(f)$ is given by
	\begin{align*}
		R_0(f):= \frac{1}{d}\sum_{\mu \in G_1 \nu \in G_2} \, f(\mu,\nu) K_\mu \otimes K_\nu \in u_q^0(\g,\Lambda) \otimes u_q^0(\g,\Lambda).
	\end{align*}
	The 
	bilinear form $f:G_1 \times G_2 \to \C^\times$ is of the explicit form
	\begin{align}
		f(\mu,\nu)=q^{-(\mu,\nu)}\cdot g([\mu],[\nu]),
	\end{align} 
	where $g:H_1 \times H_2 \to \C^\times$ is another bilinear form. Images under the canonical projections $G_i \to H_i$ are indicated in square brackets.\\
	Moreover, every small quantum group $u_q(\g,\Lambda)$ with $R$-matrix of the form $R_0(f)\bar{\Theta}$ admits a ribbon element.
\end{theorem}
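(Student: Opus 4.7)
The strategy is to split the verification of the $R$-matrix axioms for $R = R_0(f)\bar\Theta$ into independent constraints on the Cartan factor $R_0(f) = \frac{1}{d}\sum f(\mu,\nu)K_\mu \otimes K_\nu$ and on Lusztig's quasi-$R$-matrix $\bar\Theta$. From \cite{Lusztig1993} we know that $\bar\Theta \in u_q^-(\g,\Lambda)\otimes u_q^+(\g,\Lambda)$ is weight-homogeneous and already satisfies the intertwining and hexagon identities up to explicit corrections supported in the Cartan; the role of $R_0(f)$ is precisely to absorb those corrections. The intertwining axiom $R\Delta(K_\lambda)=\Delta^{\mathrm{op}}(K_\lambda)R$ is then automatic, since $\Delta$ is cocommutative on the Cartan and every factor of $R$ is either Cartan or weight-homogeneous.

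The heart of the argument is the intertwining with $E_i$ and $F_i$. Substituting $R=R_0(f)\bar\Theta$ into $R\Delta(E_i)=\Delta^{\mathrm{op}}(E_i)R$ and commuting $E_i$ past $K_\mu \otimes K_\nu$ via $K_\nu E_i K_\nu^{-1}=q^{(\nu,\alpha_i)}E_i$, one finds after invoking Lusztig's corresponding identity for $\bar\Theta$ that the residual obstruction vanishes if and only if $f(\mu,\alpha_i)=q^{-(\mu,\alpha_i)}$ for every simple root $\alpha_i$ and every $\mu\in G_1$. The analogous computation with $F_i$ gives $f(\alpha_i,\nu)=q^{-(\alpha_i,\nu)}$ for every $\nu\in G_2$. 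Extending bilinearly pins down $f$ on the images of $\Lambda_R$ in both arguments, and what remains is precisely a bilinear form on the quotients $H_1\times H_2$; this yields the asserted formula $f(\mu,\nu)=q^{-(\mu,\nu)}g([\mu],[\nu])$. The two hexagons for $R$ now follow from the Lusztig hexagons for $\bar\Theta$ combined with the obvious hexagons for $R_0(f)$, which hold whenever $f$ is bilinear. Finally, invertibility of $R$ is equivalent to invertibility of $R_0(f)$, since $\bar\Theta$ is unipotent in the completed tensor algebra; and $R_0(f)$ is invertible precisely when the matrix $[f(\mu,\nu)]_{\mu\in G_1,\nu\in G_2}$ is invertible, i.e.~when $f$ is non-degenerate. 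This yields the claimed equivalence in both directions.

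For the ribbon element I would follow the standard construction: compute the Drinfeld element $u=S(R^{(2)})R^{(1)}$, verify that $uS(u)$ lies in the group algebra $\C[G]$ as a specific group-like element expressible in terms of $f$ and twice the Weyl vector $2\rho$, and then solve for a central square root $\nu\in \C[G]$ satisfying the remaining ribbon axioms of Definition \ref{dfn: ribbon quasi-hopf algebra}. Since $G=\Lambda/\Lambda'$ is abelian the system reduces to a finite collection of character equations on $G$, whose solvability is a mild parity/divisibility condition that can always be arranged in the setting of the theorem, possibly after a sign choice. The main technical obstacle I anticipate is the bookkeeping around $\bar\Theta$: one has to identify exactly Lusztig's failure of $\bar\Theta$ to be a genuine $R$-matrix on its own and match it against $R_0(f)$ so as to simultaneously pin down the Killing-form normalization $q^{-(\mu,\nu)}$ and expose the residual freedom encoded in $g:H_1\times H_2\to \C^\times$, a calculation that requires careful tracking of the $\Lambda_R$-grading throughout and is the reason the restricted lattice $\Lambda'$ defined via \eqref{eq: centralizer formula} is forced upon us.
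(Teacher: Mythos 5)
This is a recalled theorem attributed to \cite{LentnerOhrmann2017}; the present paper states it without proof, so there is nothing here to compare your argument against line by line. Judged on its own terms, your sketch follows the same broad route as the original construction in the cited reference, but it leaves one genuine gap at the end.

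The intertwining analysis you describe is sound: commuting $E_i$ and $F_i$ past $R_0(f)\bar\Theta$ via $K_\nu E_i K_\nu^{-1}=q^{(\nu,\alpha_i)}E_i$ and invoking Lusztig's quasi-$R$-matrix identity does force $f(\mu,\alpha_i)=q^{-(\mu,\alpha_i)}$ on $\Lambda_R$ in each argument, so the residual freedom is exactly a bicharacter $g$ on $H_1\times H_2$. The invertibility step is also correct and can be made precise: $\bar\Theta$ is unipotent, and in $\C[G]\otimes\C[G]\cong\C^{\widehat G}\otimes\C^{\widehat G}$ the element $R_0(f)$ is invertible exactly when every evaluation $\frac{1}{d}\sum_{\mu,\nu}f(\mu,\nu)\chi(\mu)\psi(\nu)$ is nonzero, which, as Remark~\ref{relation sigma-f} in this paper makes explicit, is precisely non-degeneracy of $f$ as a pairing $G_1\times G_2\to\C^\times$.

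The gap is the ribbon element. The theorem asserts existence unconditionally for every admissible datum with $R$-matrix of the given form, but your sketch ends with ``whose solvability is a mild parity/divisibility condition that can always be arranged\ldots{} possibly after a sign choice.'' That is an acknowledgment of a gap, not a proof. To close it you need to compute the Drinfeld element $u$, show that $uS(u)$ is a specific grouplike in $\C[G]$ (in practice controlled by $K_{2\rho}$ and the bicharacter $f$), and exhibit a central square root $\nu\in\C[G]$ satisfying all conditions of Definition~\ref{dfn: ribbon quasi-hopf algebra}, verifying that such $\nu$ exists for every admissible $(\g,\Lambda,q,g)$ rather than merely up to an unspecified choice. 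In \cite{LentnerOhrmann2017} this is carried out explicitly; as written, your argument does not establish the final sentence of the theorem.
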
	

	In \cite{LentnerOhrmann2017}, the last two authors listed all possible bilinear forms $g$, such that  the corresponding bilinear form $f$ is non-degenerate. Furthermore, they gave necessary and sufficient conditions on $f$ for the corresponding monodromy matrix of the $R$-matrix $R=R_0(f)\bar{\Theta}$ to be factorizable and checked again explicitly when this is  the case.

\begin{convention}
	When $G$ is a finite abelian group, we denote the dual group by $\widehat{G}$. The characters $\chi \in \widehat{G}$ are denoted by greek letters and the product is written multiplicatively. The inverse of a character $\chi \in \widehat{G}$ is denoted by $\bar{\chi}$.
\end{convention}

\begin{remark} \label{relation sigma-f}
	The element $R_0(f) \in \C[G] \otimes \C[G]$ itself is an $R$-matrix of the group algebra $\C[G]$, leading to a braiding $\sigma$ in the category $\mathsf{Rep}_{\C[G]} \cong \mathsf{Rep}G \cong \Vect_{\widehat{G}}$ which is defined on simple objects (i.e. characters) $\C_{\chi},\C_{\psi} \in \Vect_{\widehat{G}}$ by
	\begin{align*}
		\sigma_{\chi,\psi}(1_{\chi} \otimes 1_\psi)&=\left( \frac{1}{d}\sum_{\mu \in G_1 \nu \in G_2} \, f(\mu,\nu) \chi(\mu)\psi(\nu) \right) \cdot 1_{\psi} \otimes 1_\chi \\
		&= \chi \left( f^{-1}(\bar{\psi}|_{G_2})\right) \cdot 1_{\psi} \otimes 1_\chi \\
		&= \sigma(\chi,\psi) \cdot 1_{\psi} \otimes 1_\chi, 
	\end{align*}
	where in the second line the bicharacter $f:G_1 \times G_2 \to \C^\times$ is interpreted as a homomorphism $G_1 \to \widehat{G_2}$. From this, it is clear that $\sigma$ is a bilinear form on $\widehat{G}$ and $(\sigma,1) \in Z_{ab}(\widehat{G})$ is an abelian $3$-cocycle with trivial $3$-cocycle $\omega=1$.
\end{remark}

So far, we introduced the small quantum group $u_q(\g,\Lambda)$ associated to a simple Lie algebra $\g$, an intermediate lattice $\Lambda_R \subseteq \Lambda \subseteq \Lambda_W$ and an $\ell$th root of unity $q$ with $R$-matrix induced by a bilinear form $f$. We now look at the explicit case $\g=\mathfrak{sl}_2$, $\Lambda_1=\Lambda_2=\Lambda_W$.

\begin{example}\label{ex:Azats exampl, pt1}	
	\mbox{}
	\begin{itemize}
		\item The Cartan part $u_{q}(\mathfrak{sl}_2,\Lambda_W)_0$ of this quantum group is given by
		$\C[G]=\C[\Lambda_W/\Cent_{\Lambda_R}(\Lambda_W)]$. Since $\Lambda_R=2\Lambda_W=\Z\cdot \alpha$ and $\Lambda_W'=\ell \Lambda_R$, we have $G \cong \Z_{2\ell}$. Moreover, we have $H_1=H_2 \cong \Z_2$. 
		\item For the defining bilinear form $f:G \times G \to \C^\times$ of the $R_0$-matrix we have two possibilities, namely $f_{\pm}\left(\lambda,\lambda\right)= \pm q^{-(\lambda,\lambda)}=\pm \exp(\frac{-\pi i}{\ell})$, where $\lambda=[\frac{\alpha}{2}]$ is a generator of $G$. From Table~1 in \cite{LentnerOhrmann2017}, we see that in the case $2 \nmid \ell$ only $f_+$ is non-degenerate. In the even case, both choices are allowed.
		\item The radical $\text{Rad}(f\cdot f^T) \subseteq G$  is given by $\ell\Lambda_W / \ell \Lambda_R \cong \Z_2$. Dualizing the representation category of $\C[G]$ with braiding induced by $f_{\pm}$ leads to the braided monoidal category $\Vect_{\widehat{G}}^{(\sigma_{\pm},1)}$, where $\sigma_{\pm}(\chi,\chi):=f_{\pm}(\lambda,\lambda)^{-1}$, where $\chi:=f(\lambda,\_)$ is a generator of $\widehat{G}$. We always use the non-degenerate form $f$ to identify $G$ and $\widehat{G}$. In particular, the radical $T:=\text{Rad}(\Beta) \subseteq \widehat{G}$ of the associated symmetric bihomomorphism $\Beta=\sigma\sigma^T$ defined in Thm. \ref{3rdAbCohomology and QuadraticForms} is isomorphic to, but not equal to the dual of $\text{Rad}(f\cdot f^T)$. It is generated by $\tau := \chi^\ell$. As it is non-trivial for both cases, $\ell$ odd and $\ell$ even, the monodromies $R_21 R$ are not factorizable and so the corresponding braided categories are not modular. 
		\item We now want to check, when the conditions for modularizability given in Prop. \ref{Modularizability of VectG} are satisfied. It is easy to see that the corresponding quadratic form $Q_{\pm}(\chi)=\sigma_{\pm}(\chi,\chi)$ is trivial on $T$ if and only if $\ell$ is even and $f=f_+$ or $\ell$ is odd and $f=f_-$. Combined with the non-degeneracy condition on $f$ from above, this excludes the case $2 \nmid \ell$ for both $f$ and the case $2\mid \ell$ for $f_-$. From now on, we restrict to the case $2 \mid \ell$ and $f=f_+$. Here, both possibilities $\eta_{\pm}(\chi)=\pm 1$ are allowed. We are now looking for an explicit abelian 3-cocycle $(\bar{\omega},\bar{\sigma})\in Z^3_{ab}(\widehat{G}/T)$ corresponding to the pushed down quadratic form $\bar{Q}_{+}([\chi])=Q_{+}(\chi)$ on $\widehat{G}/T$. For concenience, we use the bracket notation $[\_]$ for the quotient map $\widehat{G} \to \widehat{G}/T$. It turns out that the following definition does the job: 
		\begin{align*}
		\bar{\omega}([\chi]^i,[\chi]^j,[\chi]^k):&= q^\frac{i(j+k-[j+k])}{2} \qquad 0 \leq i,j,k \leq \ell-1\\
		\bar{\sigma}([\chi]^i,[\chi]^j):&= q^\frac{ij}{2} \qquad 0 \leq i,j \leq \ell-1.	
		\end{align*} 
		Here, $[j+k]=j+k-\ell\delta_{j+k \geq \ell}$. It is  clear, that $\bar{\sigma}$ is not a bihomomorphism anymore. For further use, we introduce a $2$-cochain
		\begin{align*}
			\zeta_t([\chi]^i,[\chi]^j):=\begin{cases} q^\frac{-tj}{2} &\mbox{if } 		
			i \text{ odd} \\
			1 & \mbox{else} 
			\end{cases} \qquad 0 \leq i,j \leq \ell-1, \quad 2 \nmid t,
		\end{align*}
		leading to an equivalent abelian $3$-cocycle $(\bar{\omega}_t,\bar{\sigma}_t)=(d\zeta_t^{-1}\bar{\omega},\frac{\zeta_t}{\zeta_t^T}\bar{\sigma})$:
		\begin{align*}
		\bar{\omega}_t([\chi]^i,[\chi]^j,[\chi]^k):&=\begin{cases} q^{tk} &\mbox{if } 		i,j \text{ odd} \\
		1 & \mbox{else} 
		\end{cases} \qquad 0 \leq i,j,k \leq \ell-1\\
		\bar{\sigma}_t([\chi]^i,[\chi]^j):&=q^{-\frac{t}{2}(j\delta_{2\nmid i} - i \delta_{2\nmid j})}q^{\frac{ij}{2}} \qquad 0 \leq i,j \leq \ell-1.
		\end{align*}
		Note that $\bar{\omega}_t$ is the inverse of the $3$-cocycle defined in \cite[Eq. (6.43)]{CreutzigGainutdinovRunkel2017}, since we used a different convention for abelian $3$-cocycles.\\
		\item \textbf{Summarizing:} For $\Lambda=\Lambda_W$, we have for $\ell $ odd a single $R$-matrix, which is not modularizable and for $\ell $ even, we have two $R$-matrices, one of which is not modularizable and one of which modularizes to a modular tensor category with $|\Z_\ell|=\ell$ many simple objects. We have two choices for the ribbon structure in this category.
	\end{itemize} 
\end{example}

	\section{Definition of a quasi-Hopf algebra $u(\omega,\sigma)$}\label{sec: def of u(omega,sigma)}

Let $k$ be an algebraically closed field of characteristic zero. In this section we construct a quasi-Hopf algebra $u(\omega,\sigma)$ from the following data:
\begin{itemize}
	\item a finite abelian group $G$ 
	\item an abelian $3$-cocycle $(\omega,\sigma) \in Z^3_{ab}(\widehat{G})$ on its dual
	\item a subset $\{\,\chi_i \in \widehat{G} \, \}_{1 \leq i \leq n} \subseteq \widehat{G}$
\end{itemize}
The following theorem summarizes the results of this chapter:
\begin{theorem} \label{thm:quasi-Hopf algebra}
	Given the above data, there is a quasi-Hopf algebra $u(\omega,\sigma)$ with the following properties:
	\begin{enumerate}
		\item The quasi-Hopf algebra $u(\omega,\sigma)$ contains $k^{\widehat{G}}_{\omega}$ (see Ex. \ref{twisted dual group algebra}) as a quasi-Hopf subalgebra. We introduce the elements
		\begin{align*}
		K_{\chi}:&= \sum_{\psi \in \widehat{G}}\, \sigma(\chi, \psi) \, \delta_{\psi} \in k^{\widehat{G}}\\
		\bar{K}_{\chi}:&= \sum_{\psi \in \widehat{G}}\, \sigma(\psi,\chi) \, \delta_{\psi} \in k^{\widehat{G}}
		\end{align*}
		Since $B=\sigma\sigma^T$ is a bihomomorphism, the element $K_{\chi}\bar{K}_{\chi}$ is grouplike. 
		\item Let $V$ be the $k$-vector space spanned by basis elements $\{F_i\}_{1\leq i \leq n}$ and endowed with the Yetter-Drinfeld module structure over $k^{\widehat{G}}_{\omega}$ from Cor. \ref{cor: our YDM}. In particular, action, coaction and braiding are given by:
		\begin{align*}
		L.F_i:&= \bar{\chi}_i(L)F_i \qquad L \in G \cong \widehat{\widehat{G}} \subseteq k^{\widehat{G}}, \\
		\delta(F_i):&= L_i \otimes F_i, \qquad L_i:=K_{\bar{\chi}_i} \\
		c_{V,V}(F_i \otimes F_j):&=q_{ij} F_j \otimes F_i, \qquad q_{ij}:=\sigma(\bar{\chi}_i,\bar{\chi}_j).
		\end{align*}  
		Let $B(V)$ denote the corresponding Nichols algebra (see Section \ref{sec:A Nichols algebra}). We assume $B(V)$ to be finite-dimensional. The quasi-Hopf algebra $u(\omega,\sigma)$ contains the Radford biproduct $u(\omega,\sigma)^{\leq}:=B(V) \# k^{\widehat{G}}_{\omega}$ (see Section \ref{sec: radford biproduct}) as a quasi-Hopf subalgebra.
		Explicit relations are given in Cor. \ref{cor: rel's Nichols algebra} and Prop. \ref{prop: RelationsRadfordBiproduct}. Under the assumption from Def.~\ref{def: nice 3-cocycle}, they simplify considerably (see Ex. \ref{ex: u(omega,sigma) for nice 3-cocycle}).
		\item The quasi-Hopf algebra $u(\omega,\sigma)$ is a quotient of the Drinfeld double $D(u(\omega,\sigma)^{\leq})$ (see Section \ref{sec: Drinfeld double}) of the Radford biproduct $u(\omega,\sigma)^{\leq}$ \footnote{In contrast to the ($\omega=1$)-case, $D(u(\omega,\sigma)^{\leq})$ contains $u(\omega,\sigma)^{\leq}$ as a quasi-Hopf subalgebra, but not $(u(\omega,\sigma)^{\leq})^*$, which is a coquasi-Hopf algebra}. After defining certain elements $E_i \in \left( u(\omega,\sigma)^{\leq} \right) ^*$ the quasi-quantum group  $u(\omega,\sigma)$ is generated by $E_i$'s, $F_j$'s and $\delta_{\chi}$'s, where $1\leq i,j \leq n$ and $\chi \in \widehat{G}$. Note that the elements $K_\chi$ do not necessarily form a basis of $k^{\widehat{G}}$.
		\item The finite-dimensional quasi-Hopf algebra $u(\omega,\sigma)$ has a canonical quasi-triangular structure defined in Prop. \ref{prop: R-matrix}.
		\item As a vector space, we have $u(\omega,\sigma)\cong B(V)\otimes kG\otimes B(V^*)$. 		
	\end{enumerate}		
	 In the following, we give a list of relations:
	 \begin{align*}
	 \Delta(F_i)&= K_{\bar{\chi}_i} \otimes F_i \left( \sum_{\chi,\psi}\, \theta(\chi|\bar{\chi}_i,\psi)\omega(\bar{\chi}_i,\psi,\chi)^{-1} \,\delta_\chi \otimes \delta_\psi \ \right) + F_i \otimes 1\left( \sum_{\chi,\psi}\,\omega(\bar{\chi}_i,\chi,\psi)^{-1} \,\delta_\chi \otimes \delta_\psi \right)   \\
	 \Delta(E_i)&= \left( \sum_{\chi,\psi}\, \theta(\psi|\chi\bar{\chi}_i,\chi_i)^{-1}\omega(\psi,\chi,\bar{\chi}_i)^{-1} \,\delta_\chi \otimes \delta_\psi \ \right) E_i \otimes \bar{K}_{\chi_i}  + \left( \sum_{\chi,\psi}\, \omega(\chi,\psi,\bar{\chi}_i)^{-1} \,\delta_\chi \otimes \delta_\psi \right)1\otimes E_i  \\
	 \Delta(K_\chi)&=(K_\chi \otimes K_\chi)  P_{\chi}^{-1} \qquad \Delta(\bar{K}_\chi)=(\bar{K}_\chi \otimes \bar{K}_\chi) P_{\chi}, \qquad P_\chi:=\sum_{\psi,\xi}\,\theta(\chi|\psi,\xi) \,\delta_\psi \otimes \delta_\xi\\ 
	 &[E^a_iK_{\chi_i},F^b_j]_{\sigma} = \delta_{ij}\sigma(\chi_i,\bar{\chi}_i) 
	 \left(1-K_{\chi_i} \bar{K}_{\chi_i}
	 \right)\left( \sum_{\xi}\, \frac{a_i(\xi)b_i(\xi\chi_i)}{\omega(\bar{\chi}_i,\chi_i,\xi)}\, \delta_\xi \right),\text{ where}\\
	 &E^a_i:=E_i\left( \sum_{\xi}\, a_i(\xi)\,\delta_\xi\right)  \qquad F^b_j:=F_j\left( \sum_{\xi}\, b_j(\xi)\, \delta_\xi\right) , \quad\text{with } a_i,b_j \text{ solutions to Eq. \ref{eq: equation for the drinfeld double}}.\\
	 &K_\chi E_i = \sigma(\chi,\chi_i) E_iK_\chi Q^{-1}_{\chi,\chi_i}, \qquad \bar{K}_\chi E_i = \sigma(\chi_i,\chi) E_i \bar{K}_\chi Q_{\chi,\chi_i}, \qquad Q_{\chi,\psi}:= \sum_{\xi} \, \theta(\chi|\xi,\psi)\,\delta_{\xi}\\
	 &K_\chi F_i = \sigma(\chi,\bar{\chi}_i) F_iK_\chi Q^{-1}_{\chi,\bar{\chi}_i}, \qquad \bar{K}_\chi F_i = \sigma(\bar{\chi}_i,\chi) F_i \bar{K}_\chi Q_{\chi,\bar{\chi}_i}\\
	 &S(F_i)= -\left( \sum_{\psi} \, \omega(\bar{\psi},\bar{\chi}_i,\chi_i\psi)d\sigma(\chi_i,\psi,\bar{\psi})\theta(\bar{\psi}|\psi\chi_i,\bar{\chi}_i)^{-1} \delta_\psi\right) K_{\chi_i}F_i\\
	 &S(E_i)=- E_i\bar{K}_{\chi_i}^{-1}\left( \sum_{\psi} \, \frac{\omega(\bar{\chi}_i\bar{\xi},\chi_i,\xi)}{\omega(\bar{\xi},\bar{\chi}_i,\chi_i)}\, \delta_{\xi}\right)\\
	 & \epsilon(K_\chi)=\epsilon(\bar{K}_\chi)=1, \qquad \epsilon(E_i)=\epsilon(F_i)=0, \qquad 1_{u(\omega,\sigma)}=K_1	
	 \end{align*}
	 Here, we have omitted the inclusion $\iota: B(V)\#k^{\widehat{G}} \to u(\omega,\sigma)$ and the quotient map $[\_]:D\left(B(V)\#k^{\widehat{G}}_\omega \right) \to u(\omega,\sigma) $. 	 
\end{theorem}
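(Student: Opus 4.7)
The plan is to construct $u(\omega,\sigma)$ in five successive universal steps, following the strategy outlined in the introduction: produce the Yetter-Drinfeld module $V$, form the Nichols algebra $\B(V)$ in $\Vect_{\widehat{G}}^\omega$, build the quasi-Radford biproduct $\B(V)\# k^{\widehat{G}}_\omega$, apply Schauenburg's Drinfeld double, and finally identify the two copies of $k^{\widehat{G}}_\omega$ inside it. Each individual step is a universal construction, so categorical naturality handles most coherence axioms, and what remains is to extract the explicit relations and the $R$-matrix from the formulas at each stage.

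First I would exhibit $V$ as a YD module over $k^{\widehat{G}}_\omega$ with the prescribed coaction $F_i\mapsto K_{\bar{\chi}_i}\otimes F_i$ and action $L.F_i=\bar{\chi}_i(L)F_i$. The projective-coaction compatibility in Majid's quasi-setting reduces to a $2$-cocycle identity on the normalization $\theta$, which is precisely one half of the abelian $3$-cocycle axiom for $(\omega,\sigma)$. The Nichols algebra $\B(V)$ inside $\Vect_{\widehat{G}}^\omega$ is then formal, and under the finite-dimensionality hypothesis it becomes a finite-dimensional braided Hopf algebra object. Feeding it into the Bulacu--Nauwelaerts quasi-Radford biproduct produces $u(\omega,\sigma)^{\leq}$; the coproduct $\Delta(F_i)$ stated in the theorem and the commutation relations $K_\chi F_i=\sigma(\chi,\bar{\chi}_i)F_iK_\chi Q^{-1}_{\chi,\bar{\chi}_i}$ are exactly what the general biproduct formulas collapse to once the coassociator of $k^{\widehat{G}}_\omega$ is substituted in.

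Next I would apply Schauenburg's Drinfeld double to $u(\omega,\sigma)^{\leq}$. The new subtlety, as emphasized in the introduction, is that the linear dual of a quasi-Hopf algebra is only a coquasi-Hopf algebra, so $D(u(\omega,\sigma)^{\leq})$ is not a smash product but carries additional $\omega$- and $\theta$-twists; these twists are the origin of the ratios appearing in $\Delta(E_i)$, $S(E_i)$, and in the relations involving $\bar{K}_\chi$. The dual generators $E_i$ are defined as the duals of $F_i$ inside $\B(V)^*$, and their YD-module structure and braiding are extracted from the universal property of the double. The canonical $R$-matrix on $D(u(\omega,\sigma)^{\leq})$ provides the quasi-triangular structure.

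The main obstacle is the final step, the quotient by the ideal gluing the two internal copies of $k^{\widehat{G}}_\omega$. Unlike the Hopf case, the na\"ive identification is not a quasi-Hopf ideal; one must first twist it by an $\omega$-dependent correction captured by the elements $P_\chi$ and $Q_{\chi,\psi}$, and verifying that this twisted ideal is stable under $\Delta$, $\phi$, $S$, $\alpha$, $\beta$, and $R$ is the key technical calculation. It is precisely this twist that forces $\Delta(K_\chi)=(K_\chi\otimes K_\chi)P_\chi^{-1}$ rather than strict grouplikeness. Once well-definedness of the quotient is established, the PBW decomposition $u(\omega,\sigma)\cong \B(V)\otimes kG\otimes \B(V^*)$ follows from Schauenburg's triangular decomposition of the double together with a dimension count; the $\sigma$-commutator $[E_i^a K_{\chi_i}, F_j^b]_\sigma$ is obtained by specializing the canonical $\B(V)$--$\B(V)^*$ pairing through the quotient map; and the remaining formulas for $S(E_i)$, $S(F_i)$, and the counit follow by transporting the general Drinfeld-double antipode through the quotient using the elements $p$, $q$, $\delta$, $\gamma$ recalled in Section \ref{sec: Quasi-Hopf algebras}.
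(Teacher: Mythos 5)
Your plan follows the paper's construction step for step: Yetter--Drinfeld module, Nichols algebra in $\Vect_{\widehat{G}}^\omega$, quasi-Radford biproduct, Drinfeld double (the paper technically uses the Hausser--Nill diagonal crossed product realization, which is the same object), and finally the quotient identifying the two copies of $k^{\widehat{G}}_\omega$, with the $R$-matrix inherited from the double. One small inaccuracy worth noting: the paper does not twist the ideal by $P_\chi$ or $Q_{\chi,\psi}$; it instead produces central grouplike elements $c_\chi$ carrying the correction $\sigma(\psi,\chi)/\omega(\psi,\chi,\bar\chi)$, shows $I=N^+D$ is a biideal, and invokes Schauenburg's theorem that a biideal in a finite-dimensional quasi-Hopf algebra automatically yields a quotient quasi-Hopf algebra (so one never checks stability under $S,\alpha,\beta,\phi,R$ separately); the $P_\chi$, $Q_{\chi,\psi}$ then appear only afterwards as features of the quotient relations, not as a modification of the ideal itself.
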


\begin{remark}\label{rem: another commutator relation}
	Another interesting form of $E_i,F_j$-commutator is the following: If we set 
	\begin{align*}
	M_{ij}:=\sum_{\xi \in \widehat{G}}\, \omega(\bar{\chi}_j,\xi\chi_j,\bar{\chi}_i)^{-1}\,\delta_{\chi},
	\end{align*}
	then we obtain
	\begin{align*}
	E_{i}F_j - M_{ij}F_jE_i = \delta_{ij}(K_{\bar{\chi}_i}-\bar{K}_{\bar{\chi}_i}^{-1}).
	\end{align*}
\end{remark}

\begin{remark}
	The reader familiar with ordinary quantum groups at roots of unity would certainly expect such a construction of a "quasi-quantum group", since up to the technicalities of quasi-Hopf algebras
	 it is based on the construction of quantum groups as Drinfeld doubles of Nichols algebras \cite{AndruskiewitschSchneider2002}.
\end{remark}

\begin{remark}
	In Thm. \ref{thm:quasi-Hopf algebra}, we assumed the Nichols algebra $B(V)$ to be finite-dimensional. This is a condition on the braiding matrix $q_{ij}=\sigma(\bar{\chi}_i,\bar{\chi}_j)$. More precisely, one can show that the Nichols algebra $B(V)$ associated to the Yetter-Drinfeld module $V$ over $k^{\widehat{G}_{\omega}}$ is finite-dimensional if and only if the Nichols algebra $B(V,q_{ij})$ associated to the braided vector space $(V,q_{ij})$ with diagonal braiding $q_{ij}$ is finite-dimensional. Necessary and sufficient conditions for this are known (see \cite{Heckenberger2009}).
\end{remark}

\begin{lemma} \label{lm: grouplike elements}
	The element $K_\chi$ is grouplike if and only if the $2$-cocycle $\theta(\chi)\in Z^2(\widehat{G}/T)$ from Remark \ref{rm: dual of YDM} is trivial.
\end{lemma}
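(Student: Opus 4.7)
The plan is to extract the lemma directly from the explicit coproduct formula for $K_\chi$ listed in Theorem~\ref{thm:quasi-Hopf algebra}, namely
\[
\Delta(K_\chi) = (K_\chi \otimes K_\chi)\,P_\chi^{-1}, \qquad P_\chi = \sum_{\psi,\xi} \theta(\chi|\psi,\xi)\,\delta_\psi \otimes \delta_\xi .
\]
Everything will reduce to comparing coefficients in the basis $\{\delta_\psi \otimes \delta_\xi\}$ of $k^{\widehat{G}} \otimes k^{\widehat{G}}$.

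First I would record that $K_\chi = \sum_\psi \sigma(\chi,\psi)\,\delta_\psi$ is invertible inside the quasi-Hopf subalgebra $k^{\widehat{G}}_\omega \subseteq u(\omega,\sigma)$, with two-sided inverse $\sum_\psi \sigma(\chi,\psi)^{-1}\delta_\psi$; this is immediate since $\sigma$ takes values in $k^\times$. In particular $K_\chi \otimes K_\chi$ is invertible in $u(\omega,\sigma)^{\otimes 2}$.

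The grouplike condition $\Delta(K_\chi) = K_\chi \otimes K_\chi$ is then equivalent, after left multiplication by $(K_\chi \otimes K_\chi)^{-1}$, to $P_\chi^{-1} = 1\otimes 1$, and hence to $P_\chi = 1 \otimes 1 = \sum_{\psi,\xi} \delta_\psi \otimes \delta_\xi$. Since the $\delta_\psi \otimes \delta_\xi$ are linearly independent, this last equation holds iff $\theta(\chi|\psi,\xi) = 1$ for all $\psi,\xi \in \widehat{G}$, which is precisely the statement that $\theta(\chi)$ is the trivial $2$-cocycle in $Z^2(\widehat{G}/T)$.

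There is essentially no obstacle beyond the invertibility of $K_\chi$ and a careful reading of $P_\chi$. The one point that deserves attention is the meaning of ``trivial'': the argument shows triviality of $\theta(\chi)$ as a $2$-cochain (i.e. $\theta(\chi|\psi,\xi)\equiv 1$), not merely cohomological triviality, since a nontrivial coboundary would give $P_\chi \neq 1\otimes 1$ and hence a non-grouplike $K_\chi$. One should therefore consult Remark~\ref{rm: dual of YDM} to confirm that $\theta(\chi)$ is well-defined on $\widehat{G}/T$ (so that the coefficients $\theta(\chi|\psi,\xi)$ descend to the quotient) and that this pointwise notion of triviality is the one intended in the statement.
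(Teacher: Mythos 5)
Your proof is correct and takes essentially the same approach as the paper's: the paper expands $\Delta(K_\chi)=\sum_{\psi_1,\psi_2}\sigma(\chi,\psi_1\psi_2)\,\delta_{\psi_1}\otimes\delta_{\psi_2}$ and uses the first abelian $3$-cocycle identity to pull out the factor $\sigma(\chi,\psi_1)\sigma(\chi,\psi_2)$, leaving exactly the correction term $\theta(\chi)(\psi_1,\psi_2)^{\pm 1}$ that you identify as $P_\chi^{\mp 1}$, after which the comparison of coefficients is the same as yours. The one step you black-box — citing $\Delta(K_\chi)=(K_\chi\otimes K_\chi)P_\chi^{-1}$ from Theorem~\ref{thm:quasi-Hopf algebra} — is precisely what the paper's proof re-derives inline, so there is no gap; and your remark that ``trivial'' must mean identically $1$ (not merely cohomologically trivial) is accurate and matches the paper's intended reading.
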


\begin{proof}
	We have
	\begin{align*}
	\Delta(K_\chi)= &\sum_{\psi \in \widehat{G}/T}\, \sigma(\chi, \psi) \, \Delta(\delta_{\psi}) \\
	=&\sum_{\psi_1,\psi_2 \in \widehat{G}/T}\, \sigma(\chi, \psi_1\psi_2) \, \delta_{\psi_1} \otimes \delta_{\psi_1} \\
	=&\sum_{\psi_1,\psi_2 \in \widehat{G}/T}\, \theta(\chi) (\psi_1, \psi_2) \sigma(\chi, \psi_1)\sigma(\chi,\psi_2) \, \delta_{\psi_1} \otimes \delta_{\psi_1}.
	\end{align*}
	This proves the claim.
\end{proof}

\begin{definition}\label{def: nice 3-cocycle} For a given datum $(\omega,\sigma,\chi_i \in \widehat{G})$ as above, the abelian $3$-cocycle $(\omega, \sigma) \in Z^3_{ab}(\widehat{G})$ is called nice if the following two conditions are fulfilled:
	\begin{align*}
	\omega(\chi_i,\chi_j,\psi)&=\omega(\chi_j,\chi_i,\psi) \qquad \forall \, \psi \in \widehat{G}\\
	\omega(\bar{\chi}_i,\chi_i,\psi) &=1 \qquad \forall \, \psi \in \widehat{G}.
	\end{align*}	  
\end{definition}

\begin{lemma}
	Let $G = \bigoplus_{i=1}^n \, \Z_{m_i}$ be a finite abelian group with generators $g_i$, $i =1, \dots ,n$. Every abelian $3$-cocycle $(\omega,\sigma) \in Z^3_{ab}(G)$ is cohomologous to a nice abelian $3$-cocycle. An explicit representative is given by:
	\begin{align*}
	\omega(a,b,c):&= \prod_{i=1}^n\, Q(g_i)^{m_i \delta_{a_i +b_i \geq m_i} c_i}.
	\end{align*}
\end{lemma}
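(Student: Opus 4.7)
The plan is to combine the classification of $H^3_{ab}(G)$ with a direct verification on the explicit representative. By Theorem \ref{3rdAbCohomology and QuadraticForms}, the map $\Phi: H^3_{ab}(G) \to QF(G)$ sending $(\omega,\sigma) \mapsto Q$ with $Q(g) = \sigma(g,g)$ is an isomorphism, so the cohomology class of any abelian $3$-cocycle is encoded entirely in its quadratic form. Hence it suffices to show that, for every $Q \in QF(G)$, the explicit formula in the statement (together with the $\sigma$ from Proposition \ref{prop: explicit abelian 3-cocycle}(2)) defines an abelian $3$-cocycle with associated quadratic form $Q$, which is exactly the content of Proposition \ref{prop: explicit abelian 3-cocycle}(2), and then to check that this representative satisfies both niceness conditions from Definition \ref{def: nice 3-cocycle}.

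For the symmetry $\omega(g_i, g_j, \psi) = \omega(g_j, g_i, \psi)$, I would encode $g_i \leftrightarrow (a_k = \delta_{ki})$ and $g_j \leftrightarrow (b_k = \delta_{kj})$ and unpack the characteristic function: whenever $i \neq j$ and $m_k \geq 2$ the indicator $\delta_{a_k + b_k \geq m_k}$ vanishes for every $k$, so both sides reduce to $1$, while the case $i = j$ is symmetric on the nose. For $\omega(g_i^{-1}, g_i, \psi) = 1$, writing $g_i^{-1} = g_i^{m_i-1}$ makes only the $k = i$ factor survive, and a direct substitution gives $Q(g_i)^{m_i c_i}$ where $\psi = g_1^{c_1} \cdots g_n^{c_n}$. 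The claim then reduces to the identity $Q(g_i)^{m_i} = 1$.

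The main obstacle is this last identity. From Proposition \ref{prop: explicit abelian 3-cocycle}(1), $Q(g_i)$ has order dividing $\gcd(2, m_i) m_i$, so $Q(g_i)^{m_i}$ is a sign: it is automatically trivial when $m_i$ is odd, but may equal $-1$ when $m_i$ is even. In that case I would twist the candidate by an abelian $2$-coboundary $d_{ab}\kappa$ supported on diagonal pairs $(g_i^a, g_i^b)$ with $a + b \geq m_i$; such $\kappa$ are free to absorb the sign, do not involve any pair $(g_i, g_j)$ with $i \neq j$, and act on $\omega$ only through $d\kappa^{-1}$, so both niceness conditions can be enforced simultaneously without changing the cohomology class or the associated quadratic form. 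After this cosmetic adjustment is absorbed into the formula, the expression displayed in the lemma becomes a nice representative, as claimed.
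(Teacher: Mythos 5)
You have done more than the paper's one-line proof, which just cites Proposition \ref{prop: explicit abelian 3-cocycle}(2); that proposition establishes the abelian $3$-cocycle property of the displayed $\omega$ (with the matching $\sigma$), but says nothing about niceness. Your verification of the first niceness condition is correct (the explicit $\omega$ is in fact manifestly symmetric in its first two arguments), and so is your computation $\omega(g_i^{-1}, g_i, g_1^{c_1}\cdots g_n^{c_n}) = Q(g_i)^{m_i c_i}$; the second niceness condition therefore reduces to $Q(g_i)^{m_i}=1$. You are also right that this can fail, since $Q(g_i)^{m_i}=(-1)^{r_i}$ when $m_i$ is even, in the parametrization of Proposition \ref{prop: explicit abelian 3-cocycle}(1).

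The repair you sketch, however, does not close this gap, and in fact no repair by a coboundary can. Take $m_i=2$: the only diagonal pair $(g_i^a,g_i^b)$ with $a+b\geq 2$ is $(g_i,g_i)$, but $d\kappa(g_i,g_i,g_i)=\kappa(g_i,0)/\kappa(0,g_i)$ does not depend on $\kappa(g_i,g_i)$, so a cochain supported where you describe cannot alter the offending value. Moreover, for every normalized $2$-cochain $\kappa$ this quotient equals $1$, so $\omega(g_i,g_i,g_i)$ is a cohomology invariant among normalized representatives; on $G=\Z_2$ with $Q(g)=\mathrm{i}$ it equals $-1$. Dropping normalization of $\omega$ is not an option either, since that would destroy $(\mathrm{id}\otimes\epsilon\otimes\mathrm{id})(\phi)=1$ for the associated quasi-Hopf algebra. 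What you have found is therefore a counterexample to the Lemma as stated, not a missing step in its proof: the statement should be restricted to the case $Q(g_i)^{m_i}=1$ for all $i$ (automatic when all $m_i$ are odd), or niceness should be required only with respect to characters $\chi_i$ chosen so that this obstruction vanishes, as happens in the paper's running example where $\chi_1$ is a square.
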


\begin{proof}
	This is the second part of Prop. \ref{prop: explicit abelian 3-cocycle}.
\end{proof}

\begin{example}\label{ex: u(omega,sigma) for nice 3-cocycle}
	Let $(\omega,\sigma) \in Z^3_{ab}(\widehat{G})$ be a nice abelian $3$-cocycle in the sense of Def. \ref{def: nice 3-cocycle}. As it is stated in the previous theorem, the quasi-Hopf algebra $u(\omega,\sigma)$ is generated by elements $E_i,F_j$ and $\delta_{\chi}$ for $1 \leq i,j \leq n$ and $\chi \in \widehat{G}$. We have the same quasi-Hopf algebra relations as for an arbitrary abelian $3$-cocycle, except that the braided commutator simplifies to:
	\begin{align*}
		[ E_iK_{\chi_i}, F_j]_\sigma &= \delta_{ij}\sigma(\chi_i,\bar{\chi}_i) 
		\left(
		1- \bar{K}_{\chi_i} K_{\chi_i}   \right).
	\end{align*}
\end{example}

\begin{example}\label{ex: u(omega,sigma) for omega=1}
	We now fix a datum $(\g,q,\Lambda,\Lambda_1,\Lambda_2,\Lambda',f)$ as in Section \ref{Quantum groups uq(g,Lambda) and R-matrices}. On $\widehat{G}:=\widehat{\Lambda/\Lambda'}$, we define a bihomomorphism $\sigma$ as in Remark \ref{relation sigma-f} and set $\omega =1$. Moreover, we define $\chi_i:=q^{(\alpha_i,\_)}$ for a choice of simple roots $\alpha_i \in \Lambda_R$. From Ex. \ref{ex: u(omega,sigma) for nice 3-cocycle} it is easy to see that in this case, $u(\omega,\sigma)$ turns out to be the ordinary extended small quantum group $u_q(\g,\Lambda)$ as introduced in Section \ref{Quantum groups uq(g,Lambda) and R-matrices}. 
\end{example}

We now prove the previous theorem by constructing the quasi-Hopf algebra $u(\sigma,\omega)$ step-by-step, starting with the Yetter-Drinfeld module $V$ over the quasi-Hopf algebra $k^{\widehat{G}}_{\omega}$.

\subsection{A Yetter-Drinfeld module}\label{sec: YDM}

We start with the definition of a Yetter-Drinfeld module over a quasi-Hopf algebra $H$. 

\begin{definition}\cite{Majid1998}\cite{Schauenburg2002}\label{YetterDrinfeldModule over QHA}
	Let $H$ be a quasi-Hopf algebra. Let $\rho:H\otimes V \to V$ be a left $H$-module and let $\delta:V \to H \otimes V$, $v\mapsto v_{[-1]}\otimes v_{[0]}$ be a linear map, s.t.
	\begin{enumerate}
		\item $(\epsilon \otimes \text{id})\circ \delta = \text{id}$
		\item  $X^1(Y^1.v)_{[-1](1)} Y^2 \otimes X^2(Y^1.v)_{[-1](2)}Y^3 \otimes X^3.(Y^1.v)_{[0]}$\\
		$=X^1v_{[-1]}\otimes (X^2.v_{[0]})_{[-1]}X^3 \otimes(X^2.v_{[0]})_{[0]}$
		\item $h_{(1)}v_{[-1]}\otimes h_{(2)}v_{[0]} = (h_{(1)}.v)_{[-1]}h_{(2)} \otimes  (h_{(1)}.v)_{[0]}$,
	\end{enumerate} 
	where $\phi=X^1\otimes X^2 \otimes X^3 = Y^1\otimes Y^2 \otimes Y^3 $ denotes the associator of $H$. Then, the triple $(V,\rho,\delta)$ is called a Yetter-Drinfeld module over $H$.
\end{definition}

Compared to the case of Yetter-Drinfeld modules over ordinary Hopf algebras, only the coassociativity of the coaction is modified in the previous definition. Obviously, for $\phi=1\otimes 1\otimes 1$ it matches the usual definition. As in this case, we have the following:

\begin{proposition} \cite{Majid1998} \cite{BulacuNauwelaerts2002} 
	Let $H$ be a quasi-Hopf algebra. The category ${}^H_H\mathcal{YD}$ of finite dimensional Yetter-Drinfeld modules over $H$ is a braided monoidal category, with usual tensor product $V \otimes W$ of $H$-modules $V,W \in {}^H_H\mathcal{YD}$. The comodule structure on $V \otimes W$ is given by
	\begin{align*}
		\delta_{V \otimes W}(v \otimes w)=& \,X^1(x^1Y^1\cdot v)_{[-1]}x^2(Y^2\cdot w)_{[-1]}X^3 \\
		&\otimes X^2\cdot (x^1Y^1\cdot v)_{[0]} \otimes X^3x^3(Y^2\cdot w)_{[0]}.
	\end{align*}	
	The associator in ${}^H_H\mathcal{YD}$ is the same as in $\mathsf{Rep}_H$ and the braiding given by 
		\begin{align*}
		c_{V,W}(v \otimes w) = v_{[-1]}.w \otimes v_{[0]}.
		\end{align*}
\end{proposition}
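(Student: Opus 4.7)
The plan is to prove this via the equivalence ${}^H_H\mathcal{YD} \simeq Z(\mathsf{Rep}_H)$ with the Drinfeld center of $\mathsf{Rep}_H$, which is automatically a braided monoidal category, and then to unwind the braided structure on $Z(\mathsf{Rep}_H)$ into the coaction language. Given a YD module $(V,\rho,\delta)$, one defines a half-braiding $c_{V,W}(v\otimes w) := v_{[-1]}\cdot w \otimes v_{[0]}$; the three axioms of Def.~\ref{YetterDrinfeldModule over QHA} translate exactly into $H$-linearity of $c_{V,W}$ (axiom~(3)), into the hexagon-type axiom for the half-braiding with respect to the associator $\phi$ (axiom~(2)), and into the counit normalization (axiom~(1)). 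Conversely, a half-braiding determines $\delta$ by probing with the regular module. Once this dictionary is established the proposition follows by transport of structure.

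First I would carefully set up the above equivalence ${}^H_H\mathcal{YD} \simeq Z(\mathsf{Rep}_H)$ as $k$-linear categories, showing that $c_{V,W}$ is $H$-linear and invertible (using $S$ together with the elements $p_R, q_R$ from Section~\ref{sec: Quasi-Hopf algebras}). Next I would verify that the equivalence is monoidal by showing that the tensor product of two half-braidings, computed via the standard rule $c_{V\otimes W,-} = (c_{V,-}\otimes \id)\circ(\id\otimes c_{W,-})$ conjugated by the associator of $\mathsf{Rep}_H$, produces on the regular module exactly the coaction $\delta_{V\otimes W}$ stated in the proposition: the $Y$'s arise from the associator needed to bring $V\otimes W\otimes H$ into the form $(V\otimes W)\otimes H$ before applying the outer half-braiding, the $x$'s from rebracketing so that $c_{W,H}$ can act, and the $X$'s from reassociating the output $H\otimes V\otimes W$. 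The associator of ${}^H_H\mathcal{YD}$ then automatically coincides with that of $\mathsf{Rep}_H$, and the braiding together with the two hexagons and naturality are inherited from $Z(\mathsf{Rep}_H)$.

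The main technical obstacle will be the verification that the proposed $\delta_{V\otimes W}$ satisfies the quasi-coassociativity axiom~(2) of Def.~\ref{YetterDrinfeldModule over QHA}. Expanding both sides directly yields expressions in $H^{\otimes 4}\otimes V\otimes W$ carrying four instances of $\phi$ per side, which must be reconciled by several applications of the pentagon identity for $\phi$ together with the YD axioms for $V$ and $W$ individually. Going through the Drinfeld center and using naturality of the associator in $\mathsf{Rep}_H$ packages this bookkeeping into abstract diagram chases and is the cleanest way to dispatch it; I would write up the proof along these lines, treating the direct coaction-level verification as a corollary and relegating the explicit pentagon manipulations to an appendix.
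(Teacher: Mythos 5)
The paper gives no proof of this proposition; it is cited to Majid~\cite{Majid1998} and to Bulacu--Nauwelaerts~\cite{BulacuNauwelaerts2002}, so there is no in-paper argument to compare against. Your route through the Drinfeld center $Z(\mathsf{Rep}_H)$ is precisely Majid's in~\cite{Majid1998}, so you are in effect reconstructing the argument the paper delegates to the literature, which is legitimate. The plan is sound, and running $c_{V\otimes W,H}$ at $1\in H$ as you describe does produce the stated coaction; in fact it reveals a small typo in the paper's formula: the first closing associator leg in the leftmost tensor factor should be $Y^3$ (completing the $Y$-triple from $\alpha_{V,W,H}$), not $X^3$, which otherwise duplicates the $X^3$ of the third factor.

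Where the proposal is thin is exactly where the content is, namely the dictionary you assert but do not verify: that axiom~(2) of Def.~\ref{YetterDrinfeldModule over QHA} is \emph{exactly} the center hexagon for the half-braiding $\sigma_{V,H\otimes H}$; that the half-braiding on all of $\mathsf{Rep}_H$ is recovered from its value on the regular module by naturality (this needs that $H$ is a projective generator); and that $c_{V,W}$ is invertible, which in the quasi-Hopf case uses $S$ together with $p_R,q_R$ and is noticeably more delicate than in the Hopf case. These points are the substance of~\cite{Majid1998}. You flag them, which is the right instinct, but a write-up should discharge them rather than defer them to an appendix; as it stands the proposal is a program for a proof rather than a proof.
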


If we plug in the data of the twisted dual group algebra $k^G_\omega$ from Example \ref{twisted dual group algebra} and identify $k^G_\omega$-modules with $G$-graded vector spaces, the above definition simplifies significantly \cite{Majid1998}:

\begin{lemma}\label{lemmaYetterDrinfeldModule} For $G$ a finite abelian group, a Yetter-Drinfeld module over $k^G_\omega$ consists of the following data:
	\begin{itemize}
		\item A set of elements $g_i \in G$ for $1 \leq i \leq n$.
		\item a $G$-graded vector space $V = \bigoplus_{i=1}^n \, k\cdot F_i$ with basis $\{F_i\}_{1 \leq i \leq n}$ in degrees $|F_i|=g_i$.
		\item a map $\rho:kG \otimes V  \to V$, $g\otimes F_i \mapsto g.F_i$, s.t.
		\begin{align}\label{eq:YDM over kG}
			g.(h.F_i) =\frac{\omega(g_i,g,h)\omega(g,h,g_i)}{\omega(g,g_i,h)} (g+h).F_i, \qquad 
			F_i =0.F_i, \qquad
			|g.F_i|=|F_i|.
		\end{align}	
	\end{itemize}
\end{lemma}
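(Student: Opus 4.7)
The plan is to specialize Definition \ref{YetterDrinfeldModule over QHA} directly to $H = k^G_\omega$, trading the coaction for an action of $kG$ by duality and then rewriting each of the three YD axioms. First I would invoke Example \ref{twisted dual group algebra} to identify a left $k^G_\omega$-module with a $G$-graded vector space; picking a basis produces the elements $F_i$ and their degrees $g_i = |F_i|$. Next I would use the canonical pairing $(k^G)^* \cong kG$ to encode the coaction $\delta: V \to k^G_\omega \otimes V$ as a linear map $\rho: kG \otimes V \to V$ via $g . v := (\mathrm{ev}_g \otimes \id)\delta(v)$. This is a bijection between $k^G$-valued coactions and $kG$-actions on $V$, so the remaining task is to translate the three axioms of Def. \ref{YetterDrinfeldModule over QHA} across this dictionary.

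The counit axiom $(\epsilon \otimes \id)\delta = \id$ is immediate: it reads $0.F_i = F_i$ after unwinding the pairing, since $\epsilon(\delta_g) = \delta_{g,0}$. For the Yetter-Drinfeld compatibility (axiom 3), I would test with $h = \delta_g$ applied to a homogeneous $F_i$, using $\Delta(\delta_g) = \sum_{u+v=g} \delta_u \otimes \delta_v$ together with the fact that $\delta_u . F_j = \delta_{u,|F_j|}\, F_j$. Because $k^G_\omega$ is a commutative algebra, both sides of axiom 3 match precisely when the $kG$-action preserves the grading, yielding $|g.F_i| = |F_i|$; commutativity of $G$ is essential to equate the two orderings of tensor factors in $H \otimes H$.

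The substantive step is axiom 2 (quasi-coassociativity). Here I would insert the explicit associator $\phi = \sum \omega(g_1,g_2,g_3)\, \delta_{g_1} \otimes \delta_{g_2} \otimes \delta_{g_3}$ and evaluate both sides on a chosen pair $(g,h) \in G \times G$ in the first two tensor slots, using the grading-preservation established in the previous step to identify where the degree $g_i$ of $F_i$ sits in each of the three positions. The $\omega$ factors coming from $X^1 \otimes X^2 \otimes X^3$ and $Y^1 \otimes Y^2 \otimes Y^3$ collect into the quotient
\[
\frac{\omega(g_i,g,h)\,\omega(g,h,g_i)}{\omega(g,g_i,h)},
\]
which is exactly the projective associativity displayed in \eqref{eq:YDM over kG}. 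Running the reverse implication is automatic: given data $(V, |\cdot|, \rho)$ satisfying the three listed identities, one defines $\delta$ by inverting the duality pairing and checks by reversal of the same calculation that the axioms of Def. \ref{YetterDrinfeldModule over QHA} hold.

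I expect the main obstacle to be the bookkeeping in axiom 2 — keeping straight which of the three tensor positions of $\phi$ (and of $\phi^{-1}$ hidden inside the nested coaction $(Y^1.v)_{[-1]}$, etc.) receives the degree $g_i$ and which receives $g$ or $h$. Once the grading-preservation from axiom 3 is in hand, however, this is a direct match of three copies of $\omega$, and all three of the quoted relations in \eqref{eq:YDM over kG} fall out of a single computation.
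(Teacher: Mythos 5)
The paper itself gives no proof here: Lemma~\ref{lemmaYetterDrinfeldModule} is stated as a citation to Majid, preceded only by the remark that plugging $k^G_\omega$ into Definition~\ref{YetterDrinfeldModule over QHA} ``simplifies significantly.'' Your proposal is exactly the direct unwinding that the paper elides: dualize the coaction to a $kG$-action, read off the $G$-grading from the $k^G_\omega$-module structure, and translate each of the three axioms. This is the right approach, and it is the same strategy one would extract from the cited reference.

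Three small inaccuracies in the sketch are worth naming. First, in axiom~2 of Definition~\ref{YetterDrinfeldModule over QHA} no copy of $\phi^{-1}$ appears --- the left side carries two copies of $\phi$ ($X$ and $Y$), the right side one; there is no ``$\phi^{-1}$ hidden inside the nested coaction.'' Second, $k^G$ is commutative for \emph{any} finite group $G$, so the invocation of abelianness to commute products in $H\otimes H$ is misplaced; what abelianness actually buys is cocommutativity of $\Delta$ (the split $\Delta(\delta_g)=\sum_{u+v=g}\delta_u\otimes\delta_v$ with $u+v$ rather than $uv$ in no fixed order) and, downstream, the braiding on $\Vect_G^\omega$. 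Third, the claim that the $\omega$-factors ``collect into the quotient'' exactly as written in Equation~\eqref{eq:YDM over kG} deserves more care: a direct unwinding of axiom~2 along your lines (or, equivalently, writing out module associativity over $D^\omega(G)$ from Example~\ref{ex: PDR group double}) yields
\[
g.(h.F_i)\;=\;\frac{\omega(h,g,g_i)\,\omega(g_i,h,g)}{\omega(h,g_i,g)}\,(g+h).F_i\;=\;\theta(g_i)(h,g)\,(g+h).F_i\,,
\]
which is the statement in~\eqref{eq:YDM over kG} with the arguments $g$ and $h$ interchanged inside $\omega$. The two formulas differ by the alternation $\mathrm{Alt}(\omega)(g_i,g,h)$, which is trivial precisely when $\omega$ can be completed to an abelian $3$-cocycle $(\omega,\sigma)$ --- the only case used in the paper, and the condition under which $\Vect_G^\omega$ is braided --- but not for an arbitrary $3$-cocycle on a general finite abelian group (e.g.\ the nontrivial ``triple-product'' cocycle on $\Z_2^3$ has $\mathrm{Alt}(\omega)\neq 1$). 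So to produce exactly the paper's displayed coefficient one should either invoke the abelian-cocycle hypothesis, or observe that both versions hold and are interchangeable in context. None of this affects the correctness of your overall plan, but the ``falls out of a single computation'' ending understates the bookkeeping.
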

Eq. \ref{eq:YDM over kG} in the previous lemma looks very similar to the defining relations of an abelian 3-cocycle (see Def. \ref{abelianCohomology}). In particular, we have:
\begin{corollary} \label{cor: our YDM}
	Let $(\sigma,\omega) \in Z^3_{ab}(\widehat{G})$ be an abelian 3-cocycle and $\{\chi_i \}_{i \in I} \subseteq \widehat{G}$ a subset. Then, setting $V := \bigoplus_{i \in I} \, k\cdot F_i$, with homogeneous degrees $|F_i|=\bar{\chi}_i$ and action
	\begin{align*}
	\rho:\; k\widehat{G} \otimes V & \longrightarrow V \\
	  \chi \otimes F_i  &\longmapsto  \sigma(\bar{\chi}_i,\chi) \, F_i,
	\end{align*}
	indeed defines a Yetter-Drinfeld module over $k^{\widehat{G}}_\omega$. Note that substituting $\sigma$ by $(\sigma^T)^{-1}$ also defines a Yetter-Drinfeld structure.
\end{corollary}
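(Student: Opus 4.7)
The plan is to invoke the simplification in Lemma \ref{lemmaYetterDrinfeldModule} and reduce everything to the two hexagon axioms for abelian 3-cocycles recorded in Definition \ref{abelianCohomology}. That lemma reduces the data of a Yetter--Drinfeld module over $k^{\widehat{G}}_\omega$ to a $\widehat{G}$-graded space with a $k\widehat{G}$-action satisfying the normalization $1\cdot F_i = F_i$, the grading-compatibility $|\chi\cdot F_i| = |F_i|$, and the twisted associativity law
\begin{align*}
\chi\cdot(\psi\cdot F_i) \;=\; \frac{\omega(\bar{\chi}_i,\chi,\psi)\,\omega(\chi,\psi,\bar{\chi}_i)}{\omega(\chi,\bar{\chi}_i,\psi)}\,(\chi\psi)\cdot F_i,
\end{align*}
so only these need to be checked.

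First I would dispose of the easy items: the action is diagonal (acts by a scalar on each $F_i$), so it automatically preserves the grading $|F_i|=\bar{\chi}_i$, and normalization of $(\omega,\sigma)$ gives $\sigma(\bar{\chi}_i,1)=1$, hence $1\cdot F_i=F_i$. The substantive step is the twisted associativity. Expanding both sides with the proposed action $\chi\cdot F_i = \sigma(\bar{\chi}_i,\chi)F_i$, the required identity becomes
\begin{align*}
\frac{\sigma(\bar{\chi}_i,\chi\psi)}{\sigma(\bar{\chi}_i,\chi)\,\sigma(\bar{\chi}_i,\psi)} \;=\; \frac{\omega(\chi,\bar{\chi}_i,\psi)}{\omega(\bar{\chi}_i,\chi,\psi)\,\omega(\chi,\psi,\bar{\chi}_i)},
\end{align*}
which is precisely the first hexagon in Definition \ref{abelianCohomology} with $(x,y,z)=(\bar{\chi}_i,\chi,\psi)$.

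For the alternative action $\chi\cdot F_i = \sigma(\chi,\bar{\chi}_i)^{-1}F_i$ (obtained by replacing $\sigma$ by $(\sigma^T)^{-1}$), the same calculation reduces the twisted associativity to
\begin{align*}
\frac{\sigma(\chi\psi,\bar{\chi}_i)}{\sigma(\chi,\bar{\chi}_i)\,\sigma(\psi,\bar{\chi}_i)} \;=\; \frac{\omega(\bar{\chi}_i,\chi,\psi)\,\omega(\chi,\psi,\bar{\chi}_i)}{\omega(\chi,\bar{\chi}_i,\psi)},
\end{align*}
which is the second hexagon with $(x,y,z)=(\chi,\psi,\bar{\chi}_i)$. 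Since there is genuinely no computation beyond recognising the hexagons, the only real obstacle is bookkeeping: matching the arrangement of $\bar{\chi}_i$, $\chi$, $\psi$ in the two hexagon axioms to the positions dictated by Lemma \ref{lemmaYetterDrinfeldModule} (the degrees appearing in positions $1$ and $3$ for the two actions respectively), which explains why the first hexagon pairs with $\sigma$ and the second with $(\sigma^T)^{-1}$.
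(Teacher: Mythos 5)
Your proposal is correct and follows exactly the route the paper itself sketches: the paper presents the corollary without a separate proof environment, merely observing that the YDM condition from Lemma~\ref{lemmaYetterDrinfeldModule} ``looks very similar to the defining relations of an abelian 3-cocycle,'' and your computation verifies that it is precisely the first hexagon with $(x,y,z)=(\bar{\chi}_i,\chi,\psi)$ (and the second hexagon with $(x,y,z)=(\chi,\psi,\bar{\chi}_i)$ for the $(\sigma^T)^{-1}$ variant). The identifications and sign/ordering bookkeeping are all correct.
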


\begin{remark} \label{rm: dual of YDM}
	Note that a dual of $V$ is given by $V^\vee = V^*= \bigoplus_{i \in I} k \cdot F_i^*$, with homogeneous degrees $|F_i|=\chi_i$ and action
	\begin{align*}
		\rho^\vee: \; \chi \otimes F_i^*  \mapsto \theta(\bar{\chi})(\chi\chi_i,\bar{\chi}_i) d\sigma(\chi_i,\chi,\bar{\chi})^{-1}\sigma(\bar{\chi}_i,\bar{\chi}) F_i^*= \sigma(\chi_i,\chi) F_i^*,
	\end{align*}
	where $\theta(\chi)(\psi_1,\psi_2)\in Z^2(G,k^G)$ is the $2$-cocycle defined by
	\begin{align*}
		\theta(\chi)(\psi_1,\psi_2):=\frac{\omega(\chi,\psi_1,\psi_2)\omega(\psi_1,\psi_2,\chi)}{\omega(\psi_1,\chi,\psi_2)}.
	\end{align*}
	The evaluation $V^\vee \otimes V \to k$ is given by $F_i^\vee \otimes F_j \mapsto \delta_{i,j}$.
\end{remark}

\subsection{A Nichols algebra}\label{sec:A Nichols algebra}

We now give relations for the corresponding Nichols algebra $B(V) \in {}_{k^G_\omega}^{k^G_\omega}\mathcal{YD}$ of the Yetter-Drinfeld module $V$ constructed in Corollary \ref{cor: our YDM}.\\
As in the Hopf-case (see \cite{Heckenberger2008}), the Nichols algebra $B(V)= \bigoplus_{n \geq 0} \, B^nV:=T(V)/I$ of $V$ is a quotient of the tensor (Hopf-)algebra $T(V):= \bigoplus_{n \geq 0} \, T^nV$ in ${}_{k^G_\omega}^{k^G_\omega}\mathcal{YD}$ by the maximal Hopf ideal $I$, s.t. $B^1V:=V$ are exactly the primitive elements in $B(V)$. A brief introduction to Nichols algebras in arbitrary abelian braided monoidal categories is given in App. \ref{app: quantum shuffle product}. Details can be found in \cite{BazlovBerenstein2013}.\\ 
Since we are dealing with a non-trivial associator, we have to fix a bracketing $T^nV:=T^{n-1}V \otimes V$ and $T^0V := k$. Accordingly, we define $F_i^n:=F_i^{n-1}F_i$ for primitive generators $F_i \in V \subseteq B(V)$. In order to compare our results with \cite{Rosso1998}, we use the short-hand notation $q_{ij}:=\sigma(\bar{\chi}_i,\bar{\chi}_j)$. In particular, we will see that the resulting relation for the adjoint representation on $B(V)$ depends only on $q_{ij}$ and not on the $3$-cocycle $\omega$.

\begin{lemma}
	For $k,l \in \Z_{\geq 0}$, let $a^i_{k,l} \in k^\times$ denote the elements defined by $F_i^k F_i^l=a^i_{k,l} F_i^{k+l}$.
	They are given explicitly by $a^i_{k,l}=\prod_{r=0}^{l-1}\, \omega(\bar{\chi}_i^k, \bar{\chi}_i^r,\bar{\chi}_i)^{-1}$ and satisfy the following identities:
	\begin{enumerate}
		\item $\frac{a^i_{k,l+m}a^i_{l,m}}{a^i_{k+l,m}a^i_{k,l}}=\omega(\bar{\chi}_i^k, \bar{\chi}_i^l,\bar{\chi}_i^m)^{-1}$
		\item $a^i_{k,l}= \frac{\sigma(\bar{\chi}_i^k,\bar{\chi}_i^l)}{q_{ii}^{kl}}a^i_{l,k}$.
	\end{enumerate}
\end{lemma}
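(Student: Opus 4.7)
My plan rests on systematically applying the associator of the ambient braided category $\Vect^{\omega}_{\widehat{G}}$ to reparenthesize iterated products of $F_i$. Writing $x := \bar{\chi}_i$ for brevity and assuming $\omega$ normalized, so that $\omega(-,1,-) = 1$, all three statements reduce to direct computations.

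For the explicit product formula I would induct on $l$. The convention $F_i^l = F_i^{l-1}F_i$ and one application of the associator yield
\[
F_i^k\cdot F_i^l \;=\; F_i^k\cdot(F_i^{l-1}\cdot F_i) \;=\; \omega(x^k, x^{l-1}, x)^{-1}\,(F_i^k\cdot F_i^{l-1})\cdot F_i,
\]
giving the recursion $a^i_{k,l}=\omega(x^k,x^{l-1},x)^{-1}a^i_{k,l-1}$. Iterating from the base $a^i_{k,0}=1$ produces the claimed formula.

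Identity (1) is a comparison of scalars for the triple product $F_i^k F_i^l F_i^m$: the two bracketings $(F_i^kF_i^l)F_i^m = a^i_{k,l}a^i_{k+l,m}F_i^{k+l+m}$ and $F_i^k(F_i^lF_i^m)=a^i_{l,m}a^i_{k,l+m}F_i^{k+l+m}$ differ by exactly the associator scalar $\omega(x^k,x^l,x^m)$, and a one-line rearrangement yields the identity.

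Identity (2) is the subtler step and is where the main obstacle lies, since its left-hand side is built entirely from $\omega$ while the right-hand side involves the braiding $\sigma$; the bridge must come from the abelian 3-cocycle axioms. I would prove it by induction on $l$. The base $l=1$ reduces, using $a^i_{k,1}=1$ and the explicit formula for $a^i_{1,k}$, to the identity
\[
\sigma(x^k,x) \;=\; \sigma(x,x)^k\prod_{s=1}^{k-1}\omega(x,x^s,x),
\]
which I would verify by a further induction on $k$ from the second abelian 3-cocycle axiom specialized to $(x^s, x, x)$. For the step, instantiating identity (1) at $(l,k,1)$ and at $(l,1,k)$ produces the mirror recursion
\[
a^i_{l+1,k} \;=\; a^i_{l,k}\,\omega(x^l, x^k, x)^{-1}\,a^i_{1,k}\,\omega(x^l, x, x^k),
\]
which, combined with $a^i_{k,l+1} = \omega(x^k,x^l,x)^{-1}a^i_{k,l}$ and the inductive hypothesis, expresses $a^i_{k,l+1}/a^i_{l+1,k}$ in terms of $\sigma(x^k, x^l)/\sigma(x,x)^{kl}$, the base-case value of $a^i_{1,k}$, and an $\omega$-ratio. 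The first abelian 3-cocycle axiom
\[
\frac{\sigma(x^k, x^{l+1})}{\sigma(x^k, x^l)\,\sigma(x^k, x)} \;=\; \frac{\omega(x^l, x^k, x)}{\omega(x^k, x^l, x)\,\omega(x^l, x, x^k)}
\]
then collapses this $\omega$-ratio to exactly the missing $\sigma$-factor, completing the step. The cohomological bookkeeping is what carries the weight of the argument, but each individual manipulation is routine.
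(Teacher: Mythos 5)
Your proof is correct, and its internal structure differs in a worthwhile way from the paper's. For the explicit formula and for identity (1) you lean directly on the associativity constraint of $\Vect^{\omega}_{\widehat{G}}$, reading off $a^i_{k,l} = \omega(x^k,x^{l-1},x)^{-1}a^i_{k,l-1}$ and, for (1), comparing the two bracketings of $F_i^kF_i^lF_i^m$; the paper instead proves (1) by a bare induction on $m$ without making the reparenthesization viewpoint explicit. Your route is more conceptual since it shows (1) is literally the pentagon-free form of associativity-up-to-$\omega$. For identity (2) both you and the paper induct on $l$ and use essentially the same base case, namely iterating the second abelian $3$-cocycle axiom specialized to $(x^s,x,x)$ to obtain $a^i_{1,k}=q_{ii}^k/\sigma(x^k,x)$. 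The inductive steps diverge in an interesting way: the paper first derives a separate ``$k$-shift'' formula
\begin{equation*}
a^i_{k+1,l} = \omega(\bar\chi_i,\bar\chi_i^{k},\bar\chi_i^{l})\,a^i_{k,l}\,\frac{\sigma(\bar\chi_i^{k},\bar\chi_i)\,q_{ii}^{l}}{\sigma(\bar\chi_i^{k+l},\bar\chi_i)}
\end{equation*}
by an ab initio product manipulation and then invokes $\omega\cdot\omega^T=d\sigma^{-1}$ to close the induction, whereas you obtain the mirror recursion $a^i_{l+1,k}=a^i_{l,k}\,\omega(x^l,x^k,x)^{-1}\,a^i_{1,k}\,\omega(x^l,x,x^k)$ by specializing identity (1) at $(l,k,1)$ and $(l,1,k)$ and then close with the first abelian $3$-cocycle axiom at $(x^k,x^l,x)$. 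I verified that your specializations of (1) and the resulting $\omega$-ratio match the first hexagon exactly, so the step closes. Re-using (1) in this way is cleaner than the paper's bespoke shift formula, and it localizes the only genuine input (beyond (1)) to a single instance of the first hexagon axiom. One small note: your normalization hypothesis should read $\omega(\,\cdot\,,1,\,\cdot\,)=\omega(1,\,\cdot\,,\,\cdot\,)=1$, since the degenerate cases $a^i_{0,l}$ use the latter; this is standard and harmless.
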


\begin{proof}
	We prove the first part by induction in $m$. The case $m=0$ is trivial. For
	\begin{align*}
		\frac{a^i_{k,l+m+1}a^i_{l,m+1}}{a^i_{k+l,m+1}a^i_{k,l}}&=\frac{a^i_{k,l+m}a^i_{l,m}}{a^i_{k+l,m}a^i_{k,l}}
		\frac{\omega(\bar{\chi}_i^{l+k},\bar{\chi}_i^m,\bar{\chi}_i)}{\omega(\bar{\chi}_i^k,\bar{\chi}_i^{l+m},\bar{\chi}_i)\omega(\bar{\chi}_i^l,\bar{\chi}_i^m,\bar{\chi}_i)} \\
		&=\frac{\omega(\bar{\chi}_i^{k+l},\bar{\chi}_i^m,\bar{\chi}_i)}{\omega(\bar{\chi}_i^{k},\bar{\chi}_i^l,\bar{\chi}_i^m)\omega(\bar{\chi}_i^k,\bar{\chi}_i^{l+m},\bar{\chi}_i)\omega(\bar{\chi}_i^l,\bar{\chi}_i^m,\bar{\chi}_i)} \\
		&=\omega(\bar{\chi}_i^{k},\bar{\chi}_i^l,\bar{\chi}_i^{m+1})^{-1}.
	\end{align*}
	For the second part, we compute:
	\begin{align*}
		a_{k+1,l}^i&= \prod_{r=0}^{l-1}\, \omega(\bar{\chi}_i^{k+1},\bar{\chi}_i^{r},\bar{\chi}_i)^{-1} \\
		&=\prod_{r=0}^{l-1}\,\frac{\omega(\bar{\chi}_i,\bar{\chi}_i^{k},\bar{\chi}_i^{r+1})}{\omega(\bar{\chi}_i,\bar{\chi}_i^{k},\bar{\chi}_i^{r})\omega(\bar{\chi}_i^k,\bar{\chi}_i^{r},\bar{\chi}_i)\omega(\bar{\chi}_i,\bar{\chi}_i^{k+r},\bar{\chi}_i)} \\
		&=\omega(\bar{\chi}_i,\bar{\chi}_i^{k},\bar{\chi}_i^l)a_{k,l}^i\prod_{r=0}^{l-1}\,\omega(\bar{\chi}_i,\bar{\chi}_i^{k+r},\bar{\chi}_i)^{-1} \\
		&=\omega(\bar{\chi}_i,\bar{\chi}_i^{k},\bar{\chi}_i^l)a_{k,l}^i \prod_{r=0}^{l-1}\, \frac{\sigma(\bar{\chi}_i^{k+r},\bar{\chi}_i)\sigma(\bar{\chi}_i,\bar{\chi}_i)}{\sigma(\bar{\chi}_i^{k+r+1},\bar{\chi}_i)} \\
		&=\omega(\bar{\chi}_i,\bar{\chi}_i^{k},\bar{\chi}_i^l)a_{k,l}^i \frac{\sigma(\bar{\chi}_i^{k},\bar{\chi}_i)q_{ii}^l}{\sigma(\bar{\chi}_i^{k+l},\bar{\chi}_i)}.
	\end{align*}
	We want to prove the second part by induction in $l$. For $l=1$, we obtain
	\begin{align*}
		a^i_{1,k}=\prod_{r=0}^{k-1}\, \omega(\bar{\chi}_i,\bar{\chi}_i^{r},\bar{\chi}_i)^{-1}
		=\prod_{r=0}^{k-1}\, \frac{\sigma(\bar{\chi}_i^{r},\bar{\chi}_i)\sigma(\bar{\chi}_i,\bar{\chi}_i)}{\sigma(\bar{\chi}_i^{r+1},\bar{\chi}_i)}=\frac{q_{ii}^k}{\sigma(\bar{\chi}_i^{k},\bar{\chi}_i)}\cdot 1=\frac{q_{ii}^k}{\sigma(\bar{\chi}_i^{k},\bar{\chi}_i)}a^i_{k,1}.			
	\end{align*}
	For $l+1$ we obtain
	\begin{align*}
		a^i_{k,l+1}&=a^i_{k,l} \omega(\bar{\chi}_i^k,\bar{\chi}_i^l,\bar{\chi}_i)^{-1} \\
		&=\frac{\sigma(\bar{\chi}_i^k,\bar{\chi}_i^l)}{q_{ii}^{kl}}\omega(\bar{\chi}_i^k,\bar{\chi}_i^l,\bar{\chi}_i)^{-1}a^i_{l,k} \\
		&=\frac{\sigma(\bar{\chi}_i^k,\bar{\chi}_i^l)}{q_{ii}^{kl}}\omega(\bar{\chi}_i^k,\bar{\chi}_i^l,\bar{\chi}_i)^{-1}\omega(\bar{\chi}_i,\bar{\chi}_i^l,\bar{\chi}_i^k)^{-1}\frac{\sigma(\bar{\chi}_i^{k+l},\bar{\chi}_i)}{\sigma(\bar{\chi}_i^{l},\bar{\chi}_i)q_{ii}^k}a_{l+1,k}^i \\
		&= \frac{\sigma(\bar{\chi}_i^{l+1},\bar{\chi}_i^k)}{q_{ii}^{k(l+1)}}a_{l+1,k}^i.
	\end{align*}
	In the last line, we used the identity $\omega\cdot \omega^T=d\sigma^{-1}$.
\end{proof}

\begin{lemma} Let $\text{ad}_c(X)(Y)=\mu\circ(\id - c)(X\otimes Y)$ be the adjoint representation of the associative algebra $B(V) \in {}_{k^G_\omega}^{k^G_\omega}\mathcal{YD}$. Here, $c$ denotes the braiding and $\mu$ is the multiplication on $B(V)$. We have 
	\begin{align*}
	\text{ad}^n_c(F_i)(F_j)&= \sum_{k=0}^n \, \mu_n(k)\,\left(F_i^kF_j\right) F_i^{n-k}\, \text{, where}\\
	\mu_n(k)&=(-1)^{n-k}\sigma(\bar{\chi}_i^{n-k},\bar{\chi}_j)\frac{\omega(\bar{\chi}_i^k,\bar{\chi}_i^{n-k},\bar{\chi}_j)}{\omega(\bar{\chi}_i^k,\bar{\chi}_j,\bar{\chi}_i^{n-k})}{q_{ii}}^{\binom{n-k}{2}-\binom{n}{2}}\prod_{r=0}^{n}\frac{\sigma(\bar{\chi}_i,\bar{\chi}_i^r)}{\omega(\bar{\chi}_i,\bar{\chi}_i^r,\bar{\chi}_j)} (a_{k,n-k}^i)^{-1}\binom{n}{k}_{q_{ii}}
	\end{align*}
\end{lemma}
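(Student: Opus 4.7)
The argument goes by induction on $n$. The base case $n=0$ is immediate: $\ad^0_c(F_i)(F_j)=F_j$, and one checks that $\mu_0(0)=1$ after collapsing the normalized-cocycle factors (using $\sigma(1,\bar{\chi}_j)=1$, $\omega(1,1,\bar{\chi}_j)=1$, $a^i_{0,0}=1$, so that only the $r=0$ term in the product survives and equals $\sigma(\bar{\chi}_i,1)/\omega(\bar{\chi}_i,1,\bar{\chi}_j)=1$).

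For the inductive step, apply $\ad_c(F_i)$ to the formula at level $n-1$. Since each monomial $(F_i^kF_j)F_i^{n-1-k}$ is homogeneous of $\widehat{G}$-degree $\bar{\chi}_i^{n-1}\bar{\chi}_j$, the braiding in $\Vect_{\widehat{G}}^\omega$ acts by the scalar $\sigma(\bar{\chi}_i,\bar{\chi}_i^{n-1}\bar{\chi}_j)$, so
$$\ad_c(F_i)\bigl((F_i^kF_j)F_i^{n-1-k}\bigr) = F_i\cdot\bigl((F_i^kF_j)F_i^{n-1-k}\bigr) - \sigma(\bar{\chi}_i,\bar{\chi}_i^{n-1}\bar{\chi}_j)\,\bigl((F_i^kF_j)F_i^{n-1-k}\bigr)\cdot F_i.$$
I then rebracket both products into the target form $(F_i^{k'}F_j)F_i^{n-k'}$ using the associator of $\Vect_{\widehat{G}}^\omega$. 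On the right-multiplication side, $F_i^{n-1-k}\cdot F_i=F_i^{n-k}$ by the recursive definition $F_i^m:=F_i^{m-1}F_i$, so only one reassociation contributing $\omega(\bar{\chi}_i^k\bar{\chi}_j,\bar{\chi}_i^{n-1-k},\bar{\chi}_i)^{-1}$ is needed. On the left-multiplication side, I successively reassociate $F_i\cdot((F_i^kF_j)F_i^{n-1-k})$ into $((F_i\cdot F_i^k)F_j)\cdot F_i^{n-1-k}$ and apply the identity $F_i\cdot F_i^k=(\sigma(\bar{\chi}_i,\bar{\chi}_i^k)/q_{ii}^k)F_i^{k+1}$, which is a direct consequence of part (2) of the previous lemma combined with $a^i_{k,1}=1$.

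Collecting the two contributions to the coefficient of $(F_i^kF_j)F_i^{n-k}$, the remaining combinatorial identity is the standard $q$-Pascal recurrence $\binom{n}{k}_{q_{ii}}=q_{ii}^{\,k}\binom{n-1}{k}_{q_{ii}}+\binom{n-1}{k-1}_{q_{ii}}$, combined with the shift of $a^i_{k-1,n-k}$ to $a^i_{k,n-k}$ via identity (1) of the previous lemma. The principal obstacle is bookkeeping of the $\omega$-factors: one must verify that the reassociators introduced by the two multiplications, together with the braiding scalar $\sigma(\bar{\chi}_i,\bar{\chi}_i^{n-1}\bar{\chi}_j)$, conspire exactly into the ratio $\omega(\bar{\chi}_i^k,\bar{\chi}_i^{n-k},\bar{\chi}_j)/\omega(\bar{\chi}_i^k,\bar{\chi}_j,\bar{\chi}_i^{n-k})$ and the product $\prod_{r=0}^{n}\sigma(\bar{\chi}_i,\bar{\chi}_i^r)/\omega(\bar{\chi}_i,\bar{\chi}_i^r,\bar{\chi}_j)$ appearing in $\mu_n(k)$. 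This reduction repeatedly uses the abelian $3$-cocycle identity $\omega\cdot\omega^T=d\sigma^{-1}$. Once the cocycle manipulations are in place, the underlying combinatorics is identical to the classical Nichols-algebra computation (compare \cite{Rosso1998}), so the non-trivial associator only contributes an explicit cocycle prefactor per summand that matches the stated closed form.
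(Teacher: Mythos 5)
Your proof follows essentially the same approach as the paper's: derive a one-step recursion for the coefficients $\mu_n(k)$ by applying $\ad_c(F_i)$ to the level-$(n-1)$ expansion, decomposing into a left-multiplication contribution (shifting $k\mapsto k+1$ and producing the factor $a^i_{1,k}=\sigma(\bar{\chi}_i,\bar{\chi}_i^k)/q_{ii}^k$) and a right-multiplication contribution, then closing the induction via the $q$-Pascal identity and the $a^i_{k,l}$-identities from the preceding lemma, with the associator/3-cocycle bookkeeping deferred. The paper likewise writes down exactly this recursion for $\mu_n(k)$ (including the boundary values $\mu_n(0)$, $\mu_n(n)$) and then explicitly omits the remaining inductive verification as tedious, so the two proofs are structurally identical in where they carry out calculation and where they wave their hands.
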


\begin{proof}
	For the sake of readability, we use the short-hand notation $i$ for $\bar{\chi}_i$ during this proof. 
	For $n=1$, we obtain $ad_c(F_i)(F_j)=F_iF_j - q_{ij} F_jF_i$. For larger $n$, we first want to find an inductive expression for the coefficients in the following expansion:
	\begin{align*}
		ad_c^n(F_i)(F_j)=\sum_{k=0}^n\,\mu_n(k)\,(F_i^kF_j)F_i^{n-k}.
	\end{align*} 
	To this end, we compute
	\begin{align*}
		ad_c^n(F_i)(F_j)&=ad_c(F_i)(ad_c^{n-1}(F_i)(F_j)) \\
		&=ad_c(F_i)\left( \sum_{k=0}^{n-1}\,\mu_{n-1}(k)\,(F_i^kF_j)F_i^{n-k-1} \right) \\
		&= \sum_{k=0}^{n-1}\,\mu_{n-1}(k)\, \left( q_{ii}^k(\sigma(ki,i)\omega(i,ki,j)\omega(i,ki+j,(n-k-1)i))^{-1} \,(F_i^{k+1}F_j)F_i^{n-k-1} \right. \\
		&-\left. \sigma(i,(n-1)i+j)\omega(ki+j,(n-k-1)i,i)\, (F_i^kF_j)F_i^{n-k} \right).
	\end{align*}
	From this, we obtain
	\begin{align*}
		\mu_n(n)&=\prod_{r=0}^{n-1}\,\frac{\sigma(i,ri)}{q^r\omega(i,ri,j)}=:c_n \\
		\mu_n(0)&=(-1)^n\sigma(ni,j)q_{ii}^{\frac{(n-1)n}{2}}c_n\\
		\mu_n(k)&=\mu_{n-1}(k-1)q_{ii}^{(k-1)}\sigma((k-1)i,i)\omega(i,(k-1)i,j)^{-1}\omega(i,(k-1)i+j,(n-k)i)^{-1} \\&-\mu_{n-1}(k)\sigma(i,(n-1)i+j)\omega(ki+j,(n-k-1)i,1).
	\end{align*}
	This allows us to prove the following formula by induction, even though we are going to omit the proof here, since it is long and tedious without any interesting inputs except an exhaustive use of the abelian $3$-cocycle conditions and $q$-binomial coefficients.\footnote{The hard part was rather to find the formula by tracing down the inductive formula to $n=1$, than to prove it.}
	\begin{align*}
		\mu_n(k) = (-1)^{n-k}c_na_{k,n-k}^{-1}\sigma((n-k)i,j) \frac{\omega(ki,(n-k)i,j)}{\omega((n-k)i,ki,j)}q_{ii}^{\binom{n-k}{2}} \binom{n}{k}_{q_{ii}}.
	\end{align*}
\end{proof}

\begin{proposition} The coproducts of $F_i^n$ and $\text{ad}^n_c(F_i)(F_j)$ are given by
	\begin{align*}
	\Delta(F_i^n)&= \sum_{k=0}^n \, (a^i_{k,n-k})^{-1} \binom{n}{k}_{q_{ii}} \,F_i^k \otimes F_i^{n-k} \\
	\Delta(\text{ad}^n_c(F_i)(F_j))&= \left(\text{ad}^n_c(F_i)(F_j) \right) \otimes 1 +1 \otimes \left(\text{ad}^n_c(F_i)(F_j) \right)\\ 
	&+\sum_{k=0}^n\, \mu_n(k) \frac{\left(q_{ii}^{2k}q_{ij}q_{ji}\right)^{n-k}}{\sigma(\bar{\chi}_i^{n-k},\bar{\chi}_i^k\bar{\chi}_j)} \sum_{m=0}^{n-k-1}\, \frac{\omega(\bar{\chi}_i^{n-k-m},\bar{\chi}^m_i,\bar{\chi}^k_i\bar{\chi}_j)}{\sigma(\bar{\chi}_i^k\bar{\chi}_j,\bar{\chi}_i^m)}(a^i_{n-k-m,m})^{-1}\binom{n-k}{m}_{q_{ii}}\\ &\times \prod_{r=0}^{n-k-m-1}\,\left(1-\frac{q^{k+m}}{q_{ii}^r q_{ij}q_{ji}} \right) \,F_i^{n-k-m} \otimes \left(F_i^kF_j\right) F_i^m
	\end{align*}
\end{proposition}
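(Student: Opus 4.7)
The plan is to prove both formulas by induction on $n$, using the fact that $\Delta$ is an algebra morphism in the braided category ${}_{k^G_\omega}^{k^G_\omega}\mathcal{YD}$ together with the primitivity of $F_i$ and $F_j$. Throughout, the key technical subtlety is that the multiplication on $B(V)\otimes B(V)$ inside the category is not strictly associative: every rebracketing of a monomial produces an associator factor, and these must be matched against the coefficients $a^i_{k,l}$ and $\mu_n(k)$ recorded in the preceding lemmas.

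For the first formula, the base $n=1$ is just $\Delta(F_i)=F_i\otimes 1+1\otimes F_i$. Assuming the formula for $n$, I would compute $\Delta(F_i^{n+1})=\Delta(F_i^n)\cdot\Delta(F_i)$ by expanding the braided product. The braiding contributes $c(F_i^r\otimes F_i)=q_{ii}^r F_i\otimes F_i^r$ to the cross terms, while the reassembly of $F_i^k\cdot F_i$ into $F_i^{k+1}$ (respectively $F_i\cdot F_i^{n-k}$) in each tensor factor contributes exactly the factor needed to produce $(a^i_{k+1,n-k})^{-1}$ (respectively $(a^i_{k,n-k+1})^{-1}$) from $(a^i_{k,n-k})^{-1}$, via the cocycle identity $a^i_{k,l+m}a^i_{l,m}/(a^i_{k+l,m}a^i_{k,l})=\omega(\bar{\chi}_i^k,\bar{\chi}_i^l,\bar{\chi}_i^m)^{-1}$ from the preceding lemma. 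Combining with the $q$-Pascal identity $\binom{n}{k-1}_{q_{ii}}+q_{ii}^k\binom{n}{k}_{q_{ii}}=\binom{n+1}{k}_{q_{ii}}$ closes the induction.

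For the second formula, my plan is to invoke the expansion $\text{ad}^n_c(F_i)(F_j)=\sum_{k=0}^n\mu_n(k)(F_i^kF_j)F_i^{n-k}$ from the previous lemma, apply $\Delta$ termwise using the first formula together with $\Delta(F_j)=F_j\otimes 1+1\otimes F_j$, and then separate the resulting sum according to the bidegree in $V_i$ versus $V_j$. The $F_j$-free part of degree $(n+1,0)$ must assemble into $\text{ad}^n_c(F_i)(F_j)\otimes 1$ (and symmetrically $1\otimes\text{ad}^n_c(F_i)(F_j)$), because $\text{ad}^n_c(F_i)(F_j)$ is the iterated braided commutator of primitive elements. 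The remaining terms are of the form $F_i^{n-k-m}\otimes(F_i^kF_j)F_i^m$; to compute their coefficient, I would move $F_j$ past the successive $F_i$'s using the braidings $c(F_i^p\otimes F_j)=q_{ij}^pF_j\otimes F_i^p$ and $c(F_i^p\otimes F_i^q)=q_{ii}^{pq}F_i^q\otimes F_i^p$. The telescoping factor $\prod_{r=0}^{n-k-m-1}(1-q_{ii}^{k+m}/(q_{ii}^r q_{ij}q_{ji}))$ is the characteristic output of iterated braided-commutator expansions, and the ratio $\omega(\bar{\chi}_i^{n-k-m},\bar{\chi}_i^m,\bar{\chi}_i^k\bar{\chi}_j)/\sigma(\bar{\chi}_i^k\bar{\chi}_j,\bar{\chi}_i^m)$ arises from the required reassociation of the three-fold tensor $F_i^{n-k-m}\cdot F_i^m\cdot(F_i^kF_j)$ together with one application of $\omega\omega^T=d\sigma^{-1}$.

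The main obstacle is the bookkeeping of the associator factors produced by the non-associative product on $B(V)\otimes B(V)$. At every step of the induction, each rebracketing of a word $F_i^aF_jF_i^b$ into its normal form contributes a factor of $\omega$, and the claim is that all such factors reassemble into the prefactors displayed in the statement. I expect this reduces, after systematic application of the abelian $3$-cocycle identities $\omega\omega^T=d\sigma^{-1}$ and the cocycle property of $a^i_{k,l}$, to the same kind of combinatorial identity as in the ordinary diagonal Nichols algebra case treated by Heckenberger and Rosso, only now decorated with $\omega$-factors; the verification is tedious but mechanical.
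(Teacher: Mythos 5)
Your proposal takes essentially the same route as the paper: prove the first formula by induction using $\Delta(F_i^n)=\Delta(F_i^{n-1})\Delta(F_i)$ and the $q$-Pascal identity, and prove the second by applying $\Delta$ termwise to the expansion $\text{ad}^n_c(F_i)(F_j)=\sum_k\mu_n(k)(F_i^kF_j)F_i^{n-k}$, separating the resulting bidegrees, and reassembling via the $q$-binomial theorem and the abelian $3$-cocycle identities. One small correction: the identification of the ``boundary'' terms $\text{ad}^n_c(F_i)(F_j)\otimes 1$ and $1\otimes\text{ad}^n_c(F_i)(F_j)$ follows immediately from the counit axioms of the coalgebra (since $\Delta(x)=x\otimes 1+1\otimes x+\cdots$ for any $x$), not from any special primitivity property of iterated braided commutators, and the bidegree $(n+1,0)$ you attribute to those terms is misstated.
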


\begin{proof}
	During this proof, we use the abbreviation $i$ for $\bar{\chi}_i$.
	We first proof the equation for $\Delta(F_i^n)$. We set $\Delta(F_i^n):=\sum_{k=0}^{n}\,f_n(k) \, F_i^k\otimes F_i^{n-k}$. First, we want to find an inductive relation between the coefficients $f_n(k)$. We have 
	\begin{align*}
		\Delta(F_i^n) &= \Delta(F_i^{n-1})\Delta(F_i) \\
		&=\Delta(F_i^{n-1})(F_i \otimes 1 + 1 \otimes F_i) \\
		&=\sum_{k=0}^{n-1}\,f_{n-1}(k) \, (F_i^k\otimes F_i^{n-k})(F_i \otimes 1 + 1 \otimes F_i)
	\end{align*}
	After computing the product, we obtain
	\begin{align*}
		f_n(k)&=f_{n-1}(k)\omega(k i,(n-1-k) i,i)\\
		&+f_{n-1}(k-1)\frac{\omega((k-1) i,(n-k) i,i)}{\omega((k-1) i,i,(n-k) i)} \sigma((n-k) i,i).
	\end{align*}
	Moreover, we have $f_n(0)=f_{n-1}(0)=1$ and $f_n(n)=f_{n-1}(n-1)=1$. Now, we want to show the following formula by induction:
	\begin{align*}
		f_n(k)=\prod_{r=0}^{(n-k)-1} \, \omega(ki,ri,i)\,\binom{n}{k}_{q_{ii}}.
	\end{align*}
	For $n=1$, we obtain $\binom{1}{0}_{q_{ii}}=1=f_1(0)$ and $\binom{1}{1}_{q_{ii}}=1=f_1(1)$. Now, we assume that the formula holds for $n-1$. Then,
	\begin{align*}
		f_n(k)&=f_{n-1}(k)\omega(k i,(n-1-k) i,i)\\
		&+f_{n-1}(k-1)\frac{\omega((k-1) i,(n-k) i,i)}{\omega((k-1) i,i,(n-k) i)} \sigma((n-k) i,i) \\
		&=\prod_{r=0}^{(n-1-k)-1} \, \omega(ki,ri,i)\,\binom{n-1}{k}_{q_{ii}}\omega(k i,(n-1-k) i,i) \\
		&+\prod_{r=0}^{(n-k)-1} \, \omega((k-1)i,ri,i)\,\binom{n-1}{k-1}_{q_{ii}}\frac{\omega((k-1) i,(n-k) i,i)}{\omega((k-1) i,i)} \sigma((n-k) i,(n-k) i,i) \\
		&=\prod_{r=0}^{(n-k)-1} \, \omega(ki,ri,i)\,\binom{n-1}{k}_{q_{ii}} \\
		&+ \prod_{r=0}^{(n-k)} \,\frac{\omega(ki,ri,i)\omega(i,(k-1)i,(r+1)i)}{\omega(i,(k-1)i,ri)\omega(i,(k+r-1)i,i)} \frac{\sigma((n-k) i,i)}{\omega((k-1)i,i,(n-k)i)}\binom{n-1}{k-1}_{q_{ii}} \\
		&=\prod_{r=0}^{(n-k)-1} \, \omega(ki,ri,i)\,\binom{n-1}{k}_{q_{ii}} \\
		&+ \prod_{r=0}^{(n-k)-1} \,\omega(ki,ri,i)\frac{\omega(ki,(n-k)i,i)\omega(i,(k-1)i,(n-k+1)i)}{\omega(i,(k-1)i,i)} \frac{q_{ii}^{(n-k)}\sigma(ki,i)}{\sigma(ni,i)} \\
		&\times \frac{\sigma((n-k) i,i)}{\omega((k-1)i,i,(n-k)i)}\binom{n-1}{k-1}_{q_{ii}}\\
		&=\prod_{r=0}^{(n-k)-1} \omega(ki,ri,i)\left(\binom{n-1}{k}_{q_{ii}} - \right. \\
		&+\left. \frac{\omega(ki,(n-k)i,i)\omega(i,ki,(n-k)i)}{\omega(ki,i,(n-k)i)} \frac{\sigma(ki,i)\sigma((n-k)i,i)}{\sigma(ni,i)}q_{ii}^{(n-k)}\binom{n-1}{k-1}_{q_{ii}}\right) \\
		&=\prod_{r=0}^{(n-k)-1}\, \omega(ki,ri,i)\left(\binom{n-1}{k}_{q_{ii}} - q_{ii}^{(n-k)}\binom{n-1}{k-1}_{q_{ii}}\right) \\
		&=\prod_{r=0}^{(n-k)-1}\,\omega(ki,ri,i)\binom{n}{k}_{q_{ii}},	
	\end{align*}
	where we used the abelian $3$-cocycle conditions exhaustively.	\\
	Instead of giving a rigorous proof for the second part, which would take a few pages, we describe instead what we did. First, we computed the coefficients $A$, $B$ of the following expression:
	\begin{align*}
		\Delta\left( (F_i^kF_j)F_i^{(n-k)}\right) &= \left( \Delta\left( F_i^k\right) \Delta(F_j)\right) \Delta\left( F_i^{(n-k)}\right) \\
		&=\sum_{l=0}^k\sum_{m=0}^{n-k}\,f_k(l)f_{n-k}(m)\left(A\, \left(F_i^lF_j \right)F_i^m \otimes F_i^{n-m-l}\right.  \\&+ \left. B \,F_i^{m+l}\otimes \left(F_i^{(k-l)}F_j \right)F_i^{n-k-m}   \right).
	\end{align*}
	Then we plugged this in $\Delta(\text{ad}^n_c(F_i)(F_j))$, using the expansion for $\text{ad}^n_c(F_i)(F_j)$ from the previous lemma:
	\begin{align*}
		\Delta(\text{ad}^n_c(F_i)(F_j)) &= \sum_{k=0}^n\sum_{l=0}^{k}\sum_{m=0}^{n-k}\,\mu_n(k)\binom{k}{l}_{q_{ii}}\binom{n-k}{m}_{q_{ii}}(a^i_{l,k-l}a^i_{m,n-k-m})^{-1} \\
		&\times\left(A\, \left(F_i^lF_j \right)F_i^m \otimes F_i^{n-m-l}+ B \,F_i^{m+l}\otimes \left(F_i^{(k-l)}F_j \right)F_i^{n-k-m}   \right)
	\end{align*}	
	After changing the order of summation and plugging in our expression for the coefficients $\mu_n(k)$ from the previous lemma, we see that the $B$-summand cancels completely, whereas the $A$-summand can be brought in to a form, where we can apply the $q$-binomial theorem. This gives the above result.
\end{proof}

From the above coproducts we can read off the following relations:

\begin{corollary}\label{cor: rel's Nichols algebra} For any $n \in \N$,
	\begin{enumerate}
		\item  $F_i^n\neq0$ if and only if $(n)_{q_{ii}}! \neq 0$.
		\item  $\text{ad}^n_c(F_i)(F_j) \neq 0$ if and only if $(n)_{q_{ii}}!\prod_{r=0}^{n-1} \, \left( 1-q_{ii}^rq_{ij}q_{ji} \right) \neq 0 $.
	\end{enumerate}
\end{corollary}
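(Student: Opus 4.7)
My plan is to read off both statements from the coproduct formulas in the preceding proposition, using the defining property of a Nichols algebra that the only primitive elements live in degree $1$.

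For the vanishing (``only if'') directions: suppose the indicated scalar is zero. In part~(1), let $N$ be the smallest integer with $(N)_{q_{ii}}!=0$; then every middle $q$-binomial $\binom{N}{k}_{q_{ii}}$ with $0<k<N$ vanishes, so the formula for $\Delta(F_i^N)$ collapses to $F_i^N\otimes 1+1\otimes F_i^N$. Thus $F_i^N$ is primitive in the image of $T(V)$ in the category of Yetter–Drinfeld modules, and being of degree $N\ge 2$ it must be zero in $\B(V)$; higher powers vanish since $F_i^n$ factors through $F_i^N$ in $T(V)$. In part~(2) the same mechanism applies: a look at $\Delta(\mathrm{ad}_c^n(F_i)(F_j))$ shows that the only contributions outside $x\otimes 1+1\otimes x$ carry the factor $\prod_{r=0}^{n-k-m-1}(1-q_{ii}^{k+m}q_{ij}q_{ji}/q_{ii}^r)$ combined with the $q_{ii}$-binomials hidden in $\mu_n(k)$, and when $(n)_{q_{ii}}!\prod_{r=0}^{n-1}(1-q_{ii}^rq_{ij}q_{ji})=0$ one checks that every such term must vanish; hence $\mathrm{ad}_c^n(F_i)(F_j)$ becomes primitive of degree $n+1\ge 2$, and therefore is zero in $\B(V)$.

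For the non-vanishing (``if'') directions, I would argue via the skew derivations (equivalently, the dual pairing with $\B(V^\vee)$ described in Remark~\ref{rm: dual of YDM} and Appendix~\ref{app: quantum shuffle product}). Define $\partial_i:\B(V)\to\B(V)$ as the left pre-dual of left multiplication by $F_i^\vee$ under the Nichols pairing; it is a braided skew derivation determined on generators by $\partial_i(F_j)=\delta_{ij}$. A direct computation using the coproduct of $F_i^n$ gives
\begin{equation*}
\partial_i(F_i^n)\;=\;(n)_{q_{ii}}\,(a^i_{1,n-1})^{-1}\,F_i^{n-1},
\end{equation*}
so iterating yields $\partial_i^{\,n}(F_i^n)=(n)_{q_{ii}}!\cdot c$ for an explicit nonzero scalar $c$; when $(n)_{q_{ii}}!\neq 0$ this forces $F_i^n\neq 0$ in $\B(V)$. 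For part~(2), applying $\partial_j$ once and then $\partial_i^{\,n}$ to $\mathrm{ad}_c^n(F_i)(F_j)$ and tracking the scalars produced by the coproduct formula yields, up to a nonzero normalization, exactly $(n)_{q_{ii}}!\prod_{r=0}^{n-1}(1-q_{ii}^rq_{ij}q_{ji})$, giving the desired non-vanishing.

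The main obstacle is the careful handling of the associator in the ``if'' direction: in the ordinary Hopf setting skew derivations are standard, but in the coquasi framework of $\B(V)\in {}^{k^G_\omega}_{k^G_\omega}\mathcal{YD}$ one must verify that $\partial_i$ is well defined on $\B(V)$ (and not merely on $T(V)$), which amounts to checking compatibility with the $\omega$-twisted associator on tensor products. This is a routine but tedious bookkeeping exercise, conveniently packaged by realizing $\partial_i$ as a component of the $\B(V)$–$\B(V^\vee)$ Hopf pairing from Appendix~\ref{app: quantum shuffle product}, where the associator contributions cancel pairwise against the $2$-cocycle $\theta$ of Remark~\ref{rm: dual of YDM}. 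Once this is in place, the scalars appearing above are literally those read off from the coefficients in the coproduct, so the corollary follows.
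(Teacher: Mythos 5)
Your overall strategy — reading off the relations from the coproduct formulas, using ``primitive in degree $\geq 2$ $\Rightarrow$ zero in $\B(V)$'' for the vanishing direction and the nondegenerate Hopf pairing / skew derivations for the non-vanishing direction — is indeed the standard Nichols-algebra argument and matches the paper's (implicit) intent; your ``if'' directions, the well-definedness of $\partial_i$ via the pairing of Remark~\ref{rm: dual of YDM}, and the ``only if'' for part (1) are all sound.

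There is, however, a real gap in the ``only if'' direction of part (2). Your claim that when $(n)_{q_{ii}}!\prod_{r=0}^{n-1}(1-q_{ii}^rq_{ij}q_{ji})=0$ \emph{every} middle term of $\Delta(\text{ad}_c^n(F_i)(F_j))$ vanishes (so that the element is primitive in $T(V)$) is false in general. Already for the minimal $n$ at which $(n)_{q_{ii}}=0$ while $\prod_{r=0}^{n-1}(1-q_{ii}^rq_{ij}q_{ji})\neq 0$, the $(k,m)=(0,0)$ middle term $F_i^n\otimes F_j$ carries coefficient proportional to $\prod_{r=0}^{n-1}(1-q_{ii}^rq_{ij}q_{ji})\neq 0$ and does \emph{not} vanish in $T(V)$. (Concretely, for $n=2$ with $q_{ii}=-1$ and $q_{ii}q_{ij}q_{ji}\neq1\neq q_{ij}q_{ji}$, a direct expansion gives the $F_i^2\otimes F_j$-coefficient $(1-q_{ij}q_{ji})(1-q_{ii}q_{ij}q_{ji})\neq 0$.) One must instead argue primitivity only for the \emph{image} in $\B(V)$: for $k+m>0$ the coefficient contains the $q$-multinomial $\binom{n}{k,\,m,\,n-k-m}_{q_{ii}}$, which vanishes when $q_{ii}$ is a primitive $n$-th root of unity, and the surviving $(0,0)$ term dies because $F_i^n=0$ in $\B(V)$ by part (1). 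You also need to restrict to the \emph{minimal} $n$ at which the scalar first vanishes and then propagate to larger $n$ via $\text{ad}_c^{n'}(F_i)(F_j)=\text{ad}_c^{\,n'-n}(F_i)\bigl(\text{ad}_c^n(F_i)(F_j)\bigr)$; without this minimality reduction the coefficient analysis above does not apply. In short: ``primitive in $T(V)$'' should be replaced by ``primitive in $\B(V)$ after invoking part (1)'', together with an induction on $n$.
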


\begin{remark}\label{rm: nichols alg rel's dont depend on omega}
	Note that the above relations do not depend on the $3$-cocycle $\omega$ and are identical with the ones given in \cite{Rosso1998}, Lemma 14.
\end{remark}

\subsection{A Radford biproduct}\label{sec: radford biproduct}

For a general quasi-Hopf algebra $H$ and a braided Hopf algebra $B \in {}^H_H \mathcal{YD}$ in the category of Yetter-Drinfeld modules over $H$, the Radford biproduct $B\# H$ was defined in \cite[Sec. 3]{BulacuNauwelaerts2002}. It is again a quasi-Hopf algebra.\\
The Nichols algebra $B(V)$ constructed in the previous section is a Hopf algebra in the category ${}^{k^{\widehat{G}}_\omega}_{k^{\widehat{G}}_\omega} \mathcal{YD}$ of Yetter-Drinfeld modules over $k^{\widehat{G}}_\omega$ and thus the definition in \cite{BulacuNauwelaerts2002} applies. We collect the relevant relations in the following proposition:

\begin{proposition}\label{prop: RelationsRadfordBiproduct}
	As a vector space, the quasi-Hopf algebra $B(V)\# k^{\widehat{G}}_{\omega}$ is given by $B(V) \otimes k^{\widehat{G}}_{\omega}$. A general product of generators is given by
	\begin{align*}
	(F_{i}\#\delta_{\psi_1})(F_{j}\#\delta_{\psi_2})=\delta_{\psi_1\chi_j,\psi_2}\omega(\bar{\chi}_i,\bar{\chi}_j,\psi_2)^{-1}(F_i F_j) \# \delta_{\psi_2}.
	\end{align*}
	The inclusion $k^{\widehat{G}}_{\omega} \hookrightarrow B(V)\# k^{\widehat{G}}_{\omega}$, $\delta_\psi \mapsto 1 \# \delta_\psi$ is a homomorphism of quasi-Hopf algebras. This legitimises the short-hand notation $\delta_\chi=1\# \delta_\chi \in  B(V)\# k^{\widehat{G}}_{\omega}$. Moreover, we set $F_i=F_i \# 1_{k^{\widehat{G}}_{\omega}}$. Then, the following algebra relations hold:
	\begin{itemize}
		\item $\delta_\chi \cdot (F_i \# \delta_\psi) = \delta_{\chi\chi_i,\psi}(F_i \# \delta_\psi)$,
		in particular $\delta_\chi\cdot F_i = F_i \# \delta_{\chi\chi_i}$
		\item $(F_i \# \delta_\chi) \cdot \delta_\psi = \delta_{\chi,\psi} (F_i \# \delta_\psi)$,
		in particular $F_i \cdot \delta_\chi = F_i \# \delta_\chi$
		\item $F_iF_j=(F_iF_j)\#\left(\sum_{\chi \in \widehat{G}} \, \omega(\bar{\chi}_i,\bar{\chi}_j,\chi)^{-1} \,\delta_\chi  \right) $
	\end{itemize}
	The comultiplication is given by
	\begin{align*}
	\Delta(F_i \# \delta_\psi) = \sum_{\psi' \in G} \; \frac{\omega(\psi',\bar{\chi}_i,\psi\bar{\psi}')}{\omega(\bar{\chi}_i,\psi',\psi\bar{\psi}')}  \sigma(\bar{\chi}_i,\psi') \delta_{\psi'} \otimes (F_i \# \delta_{\psi\bar{\psi}'}) + \sum_{\psi' \in \widehat{G}}\;  \omega(\bar{\chi}_i,\psi',\psi\bar{\psi}')^{-1} (F_i \# \delta_{\psi'}) \otimes \delta_{\psi\bar{\psi}'},
	\end{align*}
	in particular
	\begin{align*}
	\Delta(F_i)= &\sum_{\chi,\psi \in \hat{G}} \; \frac{\omega(\chi,\bar{\chi}_i,\psi)}{\omega(\bar{\chi}_i,\chi,\psi)}  \sigma(\bar{\chi}_i,\chi) \delta_{\chi} \otimes (F_i \# \delta_{\psi}) + \sum_{\chi,\psi \in \widehat{G}}\;  \omega(\bar{\chi}_i,\chi,\psi)^{-1} (F_i \# \delta_{\chi}) \otimes \delta_{\psi}.
	\end{align*}
	The antipode is given by
	\begin{align*}
		S(F_i \# \delta_\psi) = -\omega(\psi,\bar{\chi}_i,\chi_i\bar{\psi}) \sigma(\bar{\chi}_i,\psi) (F_i \# \delta_{\chi_i\bar{\psi}}).
	\end{align*}
\end{proposition}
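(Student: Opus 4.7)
The plan is to specialize the general quasi-Hopf Radford biproduct formulas from Bulacu–Nauwelaerts~\cite{BulacuNauwelaerts2002} to the case $B=\B(V)$ and $H=k^{\widehat{G}}_\omega$, using the explicit Yetter–Drinfeld data of Section~\ref{sec: YDM}. The underlying vector space identification $\B(V)\#k^{\widehat{G}}_\omega\cong \B(V)\otimes k^{\widehat{G}}_\omega$ is immediate from that construction, so the content is the multiplication, the comultiplication, and the antipode formulas. The two features that make everything computable are: first, the algebra $k^{\widehat{G}}$ is commutative and its product is given by idempotent collapse $\delta_\psi\delta_{\psi'}=\delta_{\psi,\psi'}\delta_\psi$; second, the YD action of $k^{\widehat{G}}$ on $V$ is diagonal in the degree basis, namely $\delta_\psi\cdot F_i=\delta_{\psi,\bar{\chi}_i}F_i$, with coaction $\delta(F_i)=K_{\bar{\chi}_i}\otimes F_i$. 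Together with the concrete associator $\phi=\sum\omega(\chi_1,\chi_2,\chi_3)\,\delta_{\chi_1}\otimes\delta_{\chi_2}\otimes\delta_{\chi_3}$, this reduces all triple sums appearing in the BN formulas to a single surviving term whose coefficient is one evaluation of $\omega$.

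First I would establish the multiplication. Applying the BN smash product formula to $(F_i\#\delta_{\psi_1})(F_j\#\delta_{\psi_2})$ gives a sum over components of $\Delta(\delta_{\psi_1})$ acting on $F_j$; the action constraint forces the first tensor factor to equal $\bar{\chi}_j$, and the associator corrections, after collapsing idempotents, contribute exactly the factor $\omega(\bar{\chi}_i,\bar{\chi}_j,\psi_2)^{-1}$. In particular, setting one factor equal to $1_{k^{\widehat{G}}}=\sum_\psi\delta_\psi$ produces the bullet-point relations $\delta_\chi F_i=F_i\delta_{\chi\chi_i}$ and $F_i\delta_\chi=F_i\#\delta_\chi$ automatically, and the identity $F_iF_j=(F_iF_j)\#\sum_\chi\omega(\bar{\chi}_i,\bar{\chi}_j,\chi)^{-1}\delta_\chi$ records the failure of $\B(V)\hookrightarrow\B(V)\#k^{\widehat{G}}_\omega$ to be algebra-multiplicative in the naive way. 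The fact that $\delta_\psi\mapsto 1\#\delta_\psi$ is a quasi-Hopf embedding is immediate since it is induced by the unit object in ${}^H_H\mathcal{YD}$.

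Next I would compute $\Delta(F_i\#\delta_\psi)$ from the BN comultiplication, which schematically reads $\Delta_{\B\#H}(b\#h)=\bigl(b_{[1]}\#b_{[2]_{[-1]}}h_{(1)}\bigr)\otimes\bigl(b_{[2]_{[0]}}\#h_{(2)}\bigr)$ twisted by several copies of $\phi^{\pm 1}$. Because $F_i$ is primitive in $\B(V)$, the braided coproduct splits as $\Delta_{\B}(F_i)=F_i\otimes 1+1\otimes F_i$, producing the two summands in the claimed formula. In the first summand, the coaction $\delta(F_i)=K_{\bar{\chi}_i}\otimes F_i$ contributes a factor $\sigma(\bar{\chi}_i,\chi)$ (since $K_{\bar{\chi}_i}$ acts on $\delta_\chi$ by $\sigma(\bar{\chi}_i,\chi)$), while the associator corrections give the $\omega$-quotient $\omega(\chi,\bar{\chi}_i,\psi)/\omega(\bar{\chi}_i,\chi,\psi)$; the second summand arises from the primitive piece $1\otimes F_i$ and only picks up $\omega(\bar{\chi}_i,\chi,\psi)^{-1}$. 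The specialization to $\Delta(F_i)$ (i.e.\ $\psi$ summed out via the unit) is then immediate.

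Finally I would obtain the antipode via the BN formula $S_{\B\#H}(b\#h)=\bigl(1\#\alpha_{\B\#H}S_H(h)\bigr)\cdot\bigl(1\#S_{\B}(b_{[-1]})\bigr)\cdot\bigl(S_{\B}(b_{[0]})\#1\bigr)$ (up to the appropriate $\alpha,\beta,p,q$ elements), noting that for $k^{\widehat{G}}_\omega$ one has $\alpha=\sum_\psi\omega(\psi,\bar{\psi},\psi)\delta_\psi$ and $\beta=1$, so these only contribute a cocycle-quotient. Using $S_\B(F_i)=-F_i$ on the primitive generator and $\delta(F_i)=K_{\bar{\chi}_i}\otimes F_i$, the composition yields the stated $-\omega(\psi,\bar{\chi}_i,\chi_i\bar\psi)\sigma(\bar{\chi}_i,\psi)(F_i\#\delta_{\chi_i\bar\psi})$. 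The main obstacle throughout is bookkeeping: the BN formulas are laden with several $\phi^{\pm 1}$-insertions and implicit Sweedler sums, and one must carefully track which three degrees enter each $\omega$ when idempotents collapse. A sanity check is that in each formula the naive Hopf limit $\omega=1$ recovers the usual (ordinary) Radford biproduct, which should — and does — coincide with the classical small quantum group multiplication and coproduct on generators.
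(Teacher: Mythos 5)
Your proposal follows essentially the same route the paper implicitly takes: the proposition is stated without proof, as a direct specialization of the Bulacu--Nauwelaerts Radford biproduct construction to the Yetter--Drinfeld data of Corollary~\ref{cor: our YDM}, and your outline tracks precisely the ingredients (diagonal $k^{\widehat{G}}$-action by degree projection, coaction $\delta(F_i)=K_{\bar{\chi}_i}\otimes F_i$, primitivity of $F_i$, orthogonal-idempotent collapse, associator bookkeeping) that make the general formulas compute to single surviving terms. One minor caution on the bookkeeping you yourself flag: with the convention $\beta=1$, the normalization $X^1\beta S(X^2)\alpha X^3=1$ for $k^{\widehat{G}}_\omega$ forces $\alpha=\sum_\psi\omega(\psi,\bar\psi,\psi)^{-1}\delta_\psi$ rather than the stated $\sum_\psi\omega(\psi,\bar\psi,\psi)\delta_\psi$, but this sign does not change the approach or the final antipode formula once carried through consistently.
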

In the later chapters we will also need the following formula:
	\begin{align*}
	(\Delta \otimes \id) \circ \Delta (F_i) =& \sum_{\psi_1,\psi_2,\psi_3 \in G} \;  \alpha^i_1(\psi_1,\psi_2,\psi_3) (F_i \# \delta_{\psi_1}) \otimes \delta_{\psi_2} \otimes \delta_{\psi_3}\\
	+ & \; \alpha^i_2(\psi_1,\psi_2,\psi_3) \delta_{\psi_1} \otimes (F_i\#\delta_{\psi_2})  \otimes \delta_{\psi_3} \\
	+ &\;\alpha^i_3(\psi_1,\psi_2,\psi_3) \delta_{\psi_1} \otimes \delta_{\psi_2}  \otimes (F_i\#\delta_{\psi_3})	,
	\end{align*}
	where 
	\begin{align*}
		\alpha_1^i(\psi_1,\psi_2,\psi_3) &= \omega(\bar{\chi}_i,\psi_1\psi_2,\psi_3)^{-1}\omega(\bar{\chi}_i,\psi_1,\psi_2)^{-1}\\
		\alpha_2^i(\psi_1,\psi_2,\psi_3) &=\frac{\omega(\psi_1,\bar{\chi}_i,\psi_2)}{\omega(\bar{\chi}_i,\psi_1\psi_2,\psi_3)\omega(\bar{\chi}_i,\psi_1,\psi_2)} \sigma(\bar{\chi}_i,\psi_1)\\
		\alpha_3^i(\psi_1,\psi_2,\psi_3) &= \frac{\omega(\psi_1\psi_2,\bar{\chi}_i,\psi_3)}{\omega(\bar{\chi}_i,\psi_1\psi_2,\psi_3)} \sigma(\bar{\chi}_i,\psi_1\psi_2).			
	\end{align*}
We now want to consider twists of the Radford biproduct as defined in Def. \ref{def: twist}. We are mainly interested in twists coming from elements 
	\begin{align*}
		J=\sum_{\chi,\psi \in \widehat{G}} \; \zeta(\chi,\psi) \, \delta_\chi \otimes \delta_\psi \in k^{\widehat{G}}_\omega \otimes k^{\widehat{G}}_\omega
	\end{align*}
	in the group part of $(B(V)\# k^{\widehat{G}}_{\omega})^{\otimes 2}$. The corresponding coproduct $\Delta^J(F_i)$ is then given by:
\begin{align*}
\Delta^J(F_i) &= \sum_{\chi,\psi \in \widehat{G}} \; \frac{\omega(\chi,\bar{\chi}_i,\psi)}{\omega(\bar{\chi}_i,\chi,\psi)} \sigma(\bar{\chi}_i,\chi) \frac{\zeta(\chi,\psi\bar{\chi}_i)}{\zeta(\chi,\psi)} \delta_\chi \otimes (F_i \# \delta_\psi) \\
&+ \sum_{\chi,\psi \in \widehat{G}} \; \omega(\bar{\chi}_i,\chi,\psi)^{-1} \frac{\zeta(\chi\bar{\chi}_i,\psi)}{\zeta(\chi,\psi)} (F_i \# \delta_\chi) \otimes \delta_\psi.
\end{align*}
Not surprisingly, the corresponding associator is given by
\begin{align*}
\phi^J= \sum_{\psi_1,\psi_2,\psi_3} \; \omega(\psi_1,\psi_2,\psi_3) d\zeta(\psi_1,\psi_2,\psi_3)\, \delta_{\psi_1} \otimes \delta_{\psi_2} \otimes \delta_{\psi_3}.
\end{align*}

\begin{lemma}\label{lm: iso betwwen twist of biproducts}
	The identity on $k^{\widehat{G}}_{\omega d\zeta}$ extends to an isomorphism of quasi-Hopf algebras defined by: 
	\begin{align*}
	f_\zeta:\left( B(V)\# k^{\widehat{G}}_\omega\right) ^J &\longrightarrow B(V)\# k^{\widehat{G}}_{{\omega d\zeta}}\\
	F_i &\longmapsto \sum_{\chi \in \widehat{G}} \; \zeta(\bar{\chi}_i,\chi) \, F_i \# \delta_\chi
	\end{align*}
\end{lemma}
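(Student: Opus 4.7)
The plan is to verify, in order, that $f_\zeta$ (a) is a well-defined algebra homomorphism, (b) intertwines the coproducts, (c) transports the coassociator $\phi^J$ to that of the target, and (d) intertwines the antipodes; bijectivity will then follow at once by constructing $f_{\zeta^{-1}}$ as a two-sided inverse using $(\omega d\zeta)\cdot d(\zeta^{-1}) = \omega$. The guiding principle is that twisting by $J \in k^{\widehat{G}}\otimes k^{\widehat{G}}$ leaves both the algebra structure and the Nichols algebra relations (which by Remark~\ref{rm: nichols alg rel's dont depend on omega} do not depend on $\omega$) intact, and only modifies the coassociator of the group part by the coboundary $d\zeta$. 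The rescaling $F_i \mapsto \sum_\chi \zeta(\bar\chi_i,\chi)\,F_i\#\delta_\chi$ is tailored precisely to absorb the discrepancy between $\Delta^J$ and the standard coproduct of $B(V)\#k^{\widehat{G}}_{\omega d\zeta}$.

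For (a), the commutation relations $\delta_\chi \cdot F_i = F_i \# \delta_{\chi\chi_i}$ and $F_i \cdot \delta_\chi = F_i \# \delta_\chi$ are transported automatically since $f_\zeta$ is the identity on $k^{\widehat{G}}$ and merely rescales each graded component of $V$. The only nontrivial check at the level of generators is the product $F_iF_j$: computing $f_\zeta(F_i)\cdot f_\zeta(F_j)$ via the multiplication rule from Prop.~\ref{prop: RelationsRadfordBiproduct} and matching it with the corresponding product in the target (where the biproduct multiplication now involves $\omega d\zeta$) reduces, after reindexing, to the defining identity of $d\zeta$ as the coboundary of $\zeta$. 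Higher Nichols algebra relations, being independent of $\omega$, follow by multiplicativity.

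The heart of the proof is (b). Applying $f_\zeta \otimes f_\zeta$ to the explicit formula for $\Delta^J(F_i)$ displayed just above the lemma and comparing with $\Delta(f_\zeta(F_i))$, computed in the target from the biproduct formula in Prop.~\ref{prop: RelationsRadfordBiproduct} with $\omega$ replaced by $\omega d\zeta$, each side decomposes as a sum of two ``primitive-like'' terms of the forms $\delta_\chi \otimes (F_i \# \delta_\psi)$ and $(F_i \# \delta_\chi) \otimes \delta_\psi$. Equality of coefficients in each term will reduce to the definition of $d\zeta$ after a shift of summation index. Step (c) is immediate: by the explicit formula above the lemma, $\phi^J = \sum \omega\cdot d\zeta\,\delta\otimes\delta\otimes\delta$, which is identically the coassociator in $k^{\widehat{G}}_{\omega d\zeta}^{\otimes 3}$, and $f_\zeta$ restricts to the identity on $k^{\widehat{G}}$.

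For (d), one compares $f_\zeta \circ S^J$ with $S \circ f_\zeta$ using the explicit antipode formula from Prop.~\ref{prop: RelationsRadfordBiproduct} together with the twist formulas $\alpha^J = S(J^{(-1)})\alpha J^{(-2)}$ and $\beta^J = J^1 \beta S(J^2)$ from Definition-Proposition~\ref{def: twist}. The principal obstacle throughout will be bookkeeping: every identity ultimately reduces to the abelian $3$-cocycle conditions on $(\omega,\sigma)$ combined with the definition of $d\zeta$, but the accumulation of $\omega$-, $\sigma$-, and $\zeta$-factors makes careful organization of the cocycle manipulations the main technical challenge rather than anything conceptually difficult.
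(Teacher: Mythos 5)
Your proposal takes a genuinely different route from the paper. The paper's proof is a one-line citation: ``This is a special case of Thm.\ 5.1.\ in \cite{BulacuNauwelaerts2002},'' which is a general structural theorem about how the Radford biproduct interacts with twisting of the ambient quasi-Hopf algebra; no calculation is performed. Your proposal, by contrast, is a direct verification of all the quasi-Hopf axioms (algebra map, coalgebra map, coassociator, antipode, plus $\alpha$ and $\beta$), with bijectivity supplied by $f_{\zeta^{-1}}$. Both routes are legitimate: yours gives concrete insight into how $f_\zeta$ transports each piece of structure, at the cost of a long cocycle bookkeeping exercise; the paper's appeal to a universal result is shorter and avoids the calculation entirely.

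There is, however, one genuine gap in your outline that would surface as soon as you actually tried to close step (b) or (d). The target $B(V)\#k^{\widehat{G}}_{\omega d\zeta}$ requires $V$ to be a Yetter--Drinfeld module over $k^{\widehat{G}}_{\omega d\zeta}$, and by Lemma~\ref{lemmaYetterDrinfeldModule} (resp.\ Cor.~\ref{cor: our YDM}) this forces the action to come from some $\sigma'$ with $(\omega d\zeta,\sigma')\in Z^3_{ab}(\widehat{G})$. Keeping the original $\sigma$ is not an option: $(\omega d\zeta,\sigma)$ fails the hexagon relations of Definition~\ref{abelianCohomology}. The correct action uses $\sigma'=\sigma\cdot\zeta^T/\zeta$, so the coproduct and antipode formulas from Prop.~\ref{prop: RelationsRadfordBiproduct} in the target carry $\sigma'$, not $\sigma$. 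Your plan compares $\Delta^J(F_i)$ against ``the standard coproduct of $B(V)\#k^{\widehat{G}}_{\omega d\zeta}$'' without saying which $\sigma$ that target uses, and the coefficients will not match if you silently carry $\sigma$ through. Relatedly, your appeal to Remark~\ref{rm: nichols alg rel's dont depend on omega} is correct but incomplete: what you actually need is that the braided vector space $(V,c_{V,V})$ is unchanged, and that holds because $\zeta/\zeta^T$ is alternating so $q_{ii}$ and $q_{ij}q_{ji}$ — the only data entering the Nichols relations of Cor.~\ref{cor: rel's Nichols algebra} — are invariant under $\sigma\mapsto\sigma'$. Once you make the target's YD datum $(\omega d\zeta,\sigma')$ explicit, the rest of your plan goes through; each coefficient match reduces, as you anticipate, to the abelian $3$-cocycle identities plus the definitions of $d\zeta$ and $\zeta/\zeta^T$.
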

\begin{proof}
	This is a special case of Thm. 5.1. in \cite{BulacuNauwelaerts2002}.
\end{proof}

\begin{example}\label{ex: azads exampl, pt2}
	We continue with the example from Sec. \ref{Quantum groups uq(g,Lambda) and R-matrices}. The corresponding Yetter-Drinfeld module for the group $\widehat{G}/T$ is given by $V:= k\cdot F$, where $|F|=[\bar{\chi}]^{2}$. For concenience, we use the bracket notation $[\_]$ for the quotient map $\widehat{G} \to \widehat{G}/T$. Using the abelian $3$-cocycle $(\bar{\omega},\bar{\sigma})$ on $\widehat{G}/T$ as defined in Example \ref{ex:Azats exampl, pt1}, we obtain for the twisted coproduct on $B(V)\#k^{\widehat{G}/T}_{\bar{\omega}}$:
	\begin{align*}
	\Delta^J(F)&= \sum_{i,j = 0}^{\ell -1} \; q^{\frac{i(j-2-[j-2])}{2}}q^{-i} \frac{\zeta([\chi]^i,[\chi]^{[j-2]})}{\zeta([\chi]^i,[\chi]^j)} \delta_{[\chi]^i} \otimes F \# \delta_{[\chi]^j} + \sum_{i,j = 0}^{\ell -1} \; \frac{\zeta([\chi]^{[i-2]},[\chi]^j)}{\zeta([\chi]^i,[\chi]^j)} F\#\delta_{[\chi]^i} \otimes \delta_{[\chi]^j} \\
	&= K^{-1} \otimes F \cdot \sum_{i,j=0}^{\ell-1} q^{\frac{i(j-2-[j-2])}{2}} \frac{\zeta([\chi]^i,[\chi]^{[j-2]})}{\zeta([\chi]^i,[\chi]^j)} \delta_{[\chi]^i} \otimes \delta_{[\chi]^j} \\
	&+ F \otimes 1 \cdot \sum_{i,j = 0}^{\ell -1} \; \frac{\zeta([\chi]^{[i-2]},[\chi]^j)}{\zeta([\chi]^i,[\chi]^j)} \delta_{[\chi]^i} \otimes \delta_{[\chi]^j},
	\end{align*}	
	where $K^{-1}:=\sum_{i=0} q^{-i} \delta_{[\chi]^i}$ and $[k]:=k'$ so that $k =k'(mod \ell)$ and $0 \leq k \leq \ell-1$. If we now set $\zeta=\zeta_t^{-1}$ from Example \ref{ex:Azats exampl, pt1}\footnote{Again, we have to take the inverse because we want to obtain the same coproduct as in \cite{CreutzigGainutdinovRunkel2017} although we used a different convention for abelian $3$-cocycles.}, we obtain
	\begin{align*}
	\Delta^J(F)&= K^{-1}\left( q^t \sum_{2 \nmid i} \delta_{[\chi]^i} +\sum_{2 \mid i} \delta_{[\chi]^i}\right) \otimes F + F \otimes 1 \\
	\phi^J &= \sum_{i,j,k=0}^{\ell-1} \; q^{-tk\delta_{2\nmid i}\delta_{2 \nmid j}} \delta_{[\chi]^i} \otimes \delta_{[\chi]^j} \otimes \delta_{[\chi]^k}=\left( \sum_{2 \nmid i} \delta_{[\chi]^i}\right)  \otimes\left( \sum_{2 \nmid j} \delta_{[\chi]^j}\right) \otimes \left( \sum_{k=0}^{\ell -1} \;q^{-tk} \delta_{[\chi]^k}\right).
	\end{align*}	
\end{example}

\subsubsection{Dualization of $B(V)\# k^{\widehat{G}}_{\omega}$} \label{sec: dualization of radford biprod}

Since $B(V)\#k^{\widehat{G}}_\omega$ is a finite-dimensional quasi-Hopf algebra, the dual space $\left( B(V)\#k^{\widehat{G}}_\omega\right) ^*$ carries the structure of a coquasi-Hopf algebra, which is the dual analogue of a quasi-Hopf algebra. In particular, a coquasi-Hopf algebra is a coassociative coalgebra and has an algebra structure which is only associative up to an element $ \Psi \in (H\otimes H \otimes H)^*$. \\
\\
We define dual elements $\tilde{E}_i:=(F_i \# \delta_1)^*$ and $\tilde{K}_\psi:=\delta_\psi^*$ in $(B(V)\#k^{\widehat{G}}_\omega)^*$. We find the following relations for these elements:
\begin{itemize}
	\item $\Delta(\tilde{E}_i)=\tilde{K}_i^{-1} \otimes \tilde{E}_i + \tilde{E}_i \otimes 1$
	\item $\Delta(\tilde{K}_\psi) = \tilde{K}_\psi \otimes \tilde{K}_\psi$
	\item $\tilde{K}_\psi  \tilde{E}_i = \sigma(\bar{\chi}_i,\psi) (F_i \# \delta_\psi)^* = \sigma(\bar{\chi}_i,\psi) \tilde{E}_i \tilde{K}_\psi$.
\end{itemize}

From the last relation we see immediately that the product cannot be associative, since $\sigma$ is not a bihomomorphism on $\widehat{G}$. 

\subsection{A Drinfeld double}

From now on, we assume the Nichols algebra $B(V)$ and hence $B(V)\#k^{\widehat{G}}_{\omega}$ to be finite dimensional.

\subsubsection{The general case}
In the following, we recall the definition of the Drinfeld double $D(H)$ of a finite-dimensional quasi-Hopf algebra as introduced in \cite{HausserNill1999,HausserNill1999a}. As an algebra, Hausser and Nill defined the Drinfeld double as a special case of a diagonal crossed product $H^* \bowtie_{\delta} M$ (see \cite{HausserNill1999}, Def. 10.1) with a so-called two-sided coaction $(\delta,\Psi)$ (see \cite{HausserNill1999}, Def. 8.1) of the quasi-Hopf algebra $H$ on the algebra $M=H$ as input data. In this case, we have
\begin{align*}
\delta&= (\Delta \otimes \id) \circ \Delta:H \to H^{\otimes 3} \\
\Psi&= \left((\id \otimes \Delta \otimes \id)(\phi) \otimes 1 \right) (\phi \otimes 1 \otimes 1) 
\left( (\delta \otimes \id \otimes \id) (\phi^{-1}) \right).
\end{align*}
The multiplication on $D(H)$ is given by
\begin{align}\label{eq: product formula}
(\varphi \bowtie m)(\psi \bowtie n) := \left( (\Omega^1 \rightharpoonup \varphi \leftharpoonup \Omega^5)(\Omega^2m_{(1)(1)} \rightharpoonup \psi \leftharpoonup S^{-1}(m_{(2)})\Omega^4) \right) \bowtie \Omega^3 m_{(1)(2)}n,
\end{align}
where
\begin{align*}
	\Omega=(\id \otimes \id \otimes S^{-1} \otimes S^{-1})((1\otimes 1\otimes 1 \otimes f)\Psi^{-1}) \in H^{\otimes 5}.
\end{align*}
The Drinfeld twist $f \in H \otimes H$ is defined in Eq. \ref{def: f's}.
We summarize some of the main results in \cite{HausserNill1999,HausserNill1999a} in the following theorem, even though we state them in a more explicit way:
\begin{theorem}[Hausser, Nill]\label{thm: maps iota and Gamma}
	We have an algebra inclusion $\iota:H \to D(H)$ and a linear map $\Gamma :H^* \longrightarrow D(H)$ given by
	\begin{align*}
	\iota :H &\longrightarrow D(H) & \Gamma :H^* &\longrightarrow D(H)\\
	h &\longmapsto (1 \bowtie h) & \varphi &\longmapsto \left( p^1_{(1)} \rightharpoonup \varphi \leftharpoonup S^{-1}(p^2) \right) \bowtie p^1_{(2)}, 
	\end{align*}
	s.t. the algebra $D(H)$ is generated by the images $\iota(H)$ and $\Gamma(H^*)$. In particular, we have
	\begin{align*}
		\varphi \bowtie h = \iota(q^1)\Gamma(\varphi \leftharpoonup q^2) \iota(h).
	\end{align*} 
	The elements $p_R=p^1 \otimes p^2, q_R=q^1 \otimes q_2 \in H\otimes H$ were defined in Eq. \ref{def: p's}.
\end{theorem}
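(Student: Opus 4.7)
My strategy is to verify the four claims in order: (i) $\iota$ is a unital algebra homomorphism, (ii) $\iota$ is injective, (iii) the identification formula $\varphi \bowtie h = \iota(q^1)\,\Gamma(\varphi \leftharpoonup q^2)\,\iota(h)$ holds, and (iv) the generation statement follows formally from (iii). The essential tools throughout are the explicit product formula~\eqref{eq: product formula} for the diagonal crossed product, the identities~\eqref{eq:p,q identities} relating $p_R$ and $q_R$, the defining property of the Drinfeld twist $f$, and the counit axioms for the coassociator.

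For (i) I would substitute $\varphi = \psi = \epsilon$ into~\eqref{eq: product formula}. Every expression of the form $x \rightharpoonup \epsilon \leftharpoonup y$ collapses to $\epsilon(x)\,\epsilon(y)\,\epsilon$, so the product reduces to
\begin{align*}
(\epsilon \bowtie h)(\epsilon \bowtie h') &= \epsilon(\Omega^1)\,\epsilon(\Omega^5)\,\epsilon(\Omega^2 h_{(1)(1)})\,\epsilon\bigl(S^{-1}(h_{(2)})\,\Omega^4\bigr)\, \epsilon \bowtie \Omega^3\, h_{(1)(2)}\, h'.
\end{align*}
Using $\epsilon \circ S^{-1} = \epsilon$, the counit normalisations of $f$ and $\phi$, and the identity $\epsilon(h_{(1)(1)})\,h_{(1)(2)} \otimes h_{(2)} = 1 \otimes h$, this simplifies to $\epsilon \bowtie hh'$, as required. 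Injectivity in~(ii) is immediate from the vector-space identification $D(H) \cong H^* \otimes H$.

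For (iii) I would first compute $\iota(q^1)\,\Gamma(\varphi \leftharpoonup q^2)$ by applying the product formula with second factor equal to $\bigl(p^1_{(1)} \rightharpoonup (\varphi \leftharpoonup q^2) \leftharpoonup S^{-1}(p^2)\bigr) \bowtie p^1_{(2)}$. The $\epsilon$-collapse from the first factor eliminates most $\Omega^i$-slots, leaving in the surviving $H^*$-component a $\Delta(q^1)$-twisted contraction against the $p_R$ supplied by $\Gamma$. Applying the first identity of~\eqref{eq:p,q identities} with $h = q^1$, namely $\Delta(q^1_{(1)})\,p_R\,(1 \otimes S(q^1_{(2)})) = p_R(q^1 \otimes 1)$, and pairing the surviving $q^1$ against $q^2$ through the antipode axioms, these expressions telescope to the identity, giving $\iota(q^1)\,\Gamma(\varphi \leftharpoonup q^2) = \varphi \bowtie 1$. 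Multiplying on the right by $\iota(h) = \epsilon \bowtie h$ and performing an analogous $\epsilon$-collapse in the remaining $H$-slot then produces $\varphi \bowtie h$.

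The main obstacle is step (iii): one must reconcile two independent occurrences of the auxiliaries $(p_R, q_R)$---one built into the definition of $\Gamma$, the other entering via the $\Omega$-factor of the product formula---so that the combination telescopes under~\eqref{eq:p,q identities}. The rest of the argument is systematic use of counit normalisations for $f$ and $\phi$. Once (iii) is established, assertion (iv) is immediate: $D(H) = H^* \otimes H$ is linearly spanned by the elements $\varphi \bowtie h$, and each such element is by~(iii) a product of three factors drawn from $\iota(H) \cup \Gamma(H^*)$.
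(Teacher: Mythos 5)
The statement you are proving is not actually proved in the paper: it is stated under the heading \emph{Theorem (Hausser, Nill)} and the surrounding text says explicitly that the authors are ``summariz[ing] some of the main results in \cite{HausserNill1999,HausserNill1999a} \dots even though we state them in a more explicit way.'' So there is no paper proof to compare you against; your proposal is a de novo argument. It is also a genuinely different route from the Hausser--Nill approach that the paper relies on elsewhere (cf.\ the proof of Prop.~\ref{prop: isos of QHA's induce isos of Drinfeld doubles}), which establishes the map $\Gamma$, the generation statement, and the identification formula through the universal property of $\lambda\rho$-intertwiners and ``normal elements'' rather than by direct manipulation of $\Omega$. Your direct-computation strategy is closer in spirit to a first-principles verification, which is useful, but it trades structural transparency for a heavy book-keeping burden.

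Your steps (ii) and (iv) are fine. Steps (i) and (iii), however, lean on several claimed simplifications that are not established and are not trivially true. First, the ``$\epsilon$-collapse'' kills only legs~1 and~5 of $\Omega$ when the left factor has a trivial $H^*$-component; legs 2, 3, 4 survive and have to be dealt with by a genuine normalisation argument for $\Omega$, i.e.\ showing $\epsilon(\Omega^1)\,\epsilon(\Omega^2)\,\Omega^3\,\epsilon(\Omega^4)\,\epsilon(\Omega^5)=1$ for step (i) and $\Omega^1\otimes\epsilon(\Omega^2)\,\Omega^3\,\epsilon(\Omega^4)\otimes\Omega^5 = 1\otimes 1\otimes 1$ for the sub-step $(\varphi\bowtie 1)\iota(h)=\varphi\bowtie h$ in step (iii); these are \emph{not} analogous to one another (they apply $\epsilon$ to different legs) and both require unpacking the definition of $\Psi^{-1}$ together with the counit normalisations of $\phi$ and $f$. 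Second, the telescoping claim in (iii) has a sign-of-antipode mismatch: both the product formula and the definition of $\Gamma$ use $S^{-1}$, while the $p_R$-identity you quote, $\Delta(h_{(1)})\,p_R\,(1\otimes S(h_{(2)}))=p_R(h\otimes 1)$, uses $S$. You would instead need the $q_R$-side identity $(1\otimes S^{-1}(h_{(2)}))\,q_R\,\Delta(h_{(1)})=(h\otimes 1)q_R$ or a carefully chosen $p_L/q_L$ version, and after the telescope you additionally need a $p_R$--$q_R$ cancellation of the form $q^1_{(1)}p^1\otimes q^1_{(2)}p^2 S(q^2)=1\otimes 1$ (or its $S^{-1}$-flavoured counterpart), which is a standard Hausser--Nill lemma but is \emph{not} among the identities~\eqref{eq:p,q identities} the paper records. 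In short: the plan is plausible and the target identities are correct, but the heart of (iii) is currently a black box whose interior needs the missing $\Omega$-normalisations and a $p_R$--$q_R$ cancellation lemma to close.
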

Without going into detail here, in \cite{HausserNill1999}, Ch. 11, Hausser and Nill showed that it is possible to define coproduct and antipode on the diagonal crossed product $D(H)$, such that it becomes a quasi-Hopf algebra.
The above theorem implies that it is sufficient to define the coproduct on $D(H)$ on elements $\iota(h)$ and $\Gamma(\varphi)$. It is given by
\begin{align*}
\Delta(\iota(h)) :&= (\iota \otimes \iota)(\Delta(h)) \\
\Delta(\Gamma(\varphi)) :&=\left( \varphi \otimes \id_{D(H)\otimes D(H)}\right)\left(\phi^{-1}_{312} \Gamma_{13} \phi_{213} \Gamma_{12} \phi^{-1} \right), \label{eq: coprod on Gamma(varphi)}
\end{align*}
where the inclusions in Eq. \ref{eq: coprod on Gamma(varphi)} are understood. The element $\Gamma \in H \otimes D(H)$ is defined by $\Gamma:= e_i \otimes \Gamma(e^i)$, where $e_i$ and $e^i$ are a dual pair of bases on $H$ and $H^*$.

\begin{remark} \label{rm: definition of coproduct different from [HN]}
	Note, that the definition of $\Delta(\Gamma(\varphi))$ in  Eq. \ref{eq: coprod on Gamma(varphi)} is different from the one in \cite{HausserNill1999a}, Thm. 3.9. Using their terminology, we checked both coherency and normality for the resulting $\lambda \rho$-intertwiner 
	\begin{align*}
	T:=\phi^{-1}_{312} \Gamma_{13} \phi_{213} \Gamma_{12} \phi^{-1},
	\end{align*}
	but we don't know if the definition in Eq. (11.4) in \cite{HausserNill1999} is false or just another possibility. We needed the one in Eq. \ref{eq: coprod on Gamma(varphi)} in order to make the next Proposition work.
\end{remark}
The antipode on $D(H)$ is defined by
\begin{align}
S(\iota(h)):&= \iota(S(h)) \\ \label{eq: antipode on Gamma}
S(\Gamma(\varphi)):&=(1 \bowtie f^1)\left(p^1_{(1)}g^{(-1)} \rightharpoonup \varphi \circ S^{-1} \leftharpoonup f^2 S^{-1}(p^2) \right) \bowtie p^1_{(2)}g^{(-2)}. 
\end{align}
As in Eq. \ref{eq: coprod on Gamma(varphi)}, we gave the action of $S$ on $\Gamma(\varphi)$ explicitly instead of defining it in terms of generating matrices as in \cite{HausserNill1999a}, Thm. 3.9.\\
The associator on $D(H)$ as well as the elements $\alpha$ and $\beta$ are simply inherited from $H$ by the inclusion $\iota:H \to D(H)$, which becomes then an inclusion of quasi-Hopf algebras.\\
Unit and counit are given by:
\begin{align*}
	1_{D(H)}:=1_{H^*} \bowtie 1_H, \qquad \epsilon_{D(H)}(\iota(h)):=\epsilon_H(h), \qquad \epsilon_{D(H)}\left( \Gamma(\varphi) \right) := \varphi(1_H). 
\end{align*}
Finally, we recall that a two-sided coaction $(\delta, \Psi)$ of $H$ on an algebra $M$ can be twisted by an element $U\in H \otimes M \otimes H$, giving rise to a twist-equivalent two-sided coaction $(\delta',\Psi')$ on $M$ (see \cite{HausserNill1999}, Dfn. 8.3):
\begin{align}\label{eq: twisted two-sided coaction}
\begin{split}
\delta'(h) :&= U\delta(h)U^{-1} \\
\Psi':&=(1 \otimes U \otimes 1)(\id \otimes \delta \otimes \id)(U)\Psi(\Delta \otimes \id_M \otimes \Delta)(U^{-1})
\end{split}
\end{align}  
In \cite{HausserNill1999} Prop. 10.6.1., Hausser and Nill show that twist equivalent two-sided coactions give rise to equivalent diagonal crossed products $H^* \bowtie_{\delta} M$ and $H^* \bowtie_{\delta'} M$.\\
\\
On the other hand, for any twist $J \in H \otimes H$, the pair $(\delta,\Psi^J)$ is a two-sided coaction of the twisted quasi-Hopf algebra $H^J$ (see Def. \ref{def: twist}) on $M$, where 
\begin{align}\label{eq: twisted two-sided coaction pt2}
\Psi^J:=\Psi(J^{-1} \otimes J^{-1}).
\end{align}

Again, in \cite{HausserNill1999} Prop. 10.6.2., Hausser and Nill show that for two-sided coactions $(\delta,\Psi)$ of $H$ and $(\delta,\Psi^J)$ of $H^J$, we get $H^* \bowtie M = (H^J)^* \bowtie M$ with trivial identification.

\begin{proposition}\label{prop: isom between twists of drinfeld doubles}
	Let $J\in H\otimes H$ be a twist on $H$ and $\tilde{J}:= (\iota \otimes \iota)(J) \in D(H)\otimes D(H)$ the corresponding twist on $D(H)$.
	For $U_L= (J \otimes 1)(\Delta \otimes \id)(J) \in H\otimes H \otimes H$, the following map is an isomorphism of quasi-Hopf algebras:
	\begin{align*}
	F_J: D(H)^{\tilde{J}} & \longrightarrow D(H^J) \\
	\varphi \bowtie a& \longmapsto \left( U_L^1 \rightharpoonup \varphi \leftharpoonup S^{-1}\left( U_L^3\right) \right)  \bowtie U_L^2a
	\end{align*}	
\end{proposition}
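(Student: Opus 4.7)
The plan is to reduce the statement to the twist-equivalence theorems for diagonal crossed products of \cite{HausserNill1999}, Prop.~10.6.1 and 10.6.2, both recalled in the discussion immediately preceding this proposition. Both $D(H)^{\tilde{J}}$ and $D(H^J)$ have the underlying vector space $H^*\otimes H$, but with different diagonal-crossed-product structures; the goal is to identify the change-of-coaction that relates them and to read off $F_J$ from the corresponding universal isomorphism.

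First I would use Equation \ref{eq: twisted two-sided coaction pt2} together with \cite{HausserNill1999}, Prop.~10.6.2 to rewrite $D(H)^{\tilde{J}}=H^*\bowtie_{(\delta,\Psi)}H$ with its quasi-Hopf structure twisted by $\tilde{J}=(\iota\otimes\iota)(J)$ as the diagonal crossed product $(H^J)^*\bowtie_{(\delta,\Psi^J)}H$ with $\Psi^J=\Psi\cdot(J^{-1}\otimes J^{-1})$, now regarded as a two-sided coaction of the quasi-Hopf algebra $H^J$ on $H$. On the other hand, $D(H^J)$ is by definition the diagonal crossed product $(H^J)^*\bowtie_{(\delta_J,\Psi_J)}H$ built from the canonical two-sided coaction $\delta_J=(\Delta^J\otimes\id)\circ\Delta^J$ of $H^J$ and the corresponding $\Psi_J$ assembled from $\phi^J$.

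Next I would show that the two two-sided coactions $(\delta,\Psi^J)$ and $(\delta_J,\Psi_J)$ of $H^J$ on $H$ are twist-equivalent in the sense of Equation \ref{eq: twisted two-sided coaction} via $U_L\in H^J\otimes H\otimes H^J$. The coaction identity $\delta_J(h)=U_L\,\delta(h)\,U_L^{-1}$ follows by expanding $\Delta^J=J\Delta(\cdot)J^{-1}$ and regrouping $J$-factors using the defining relation $U_L=(J\otimes 1)(\Delta\otimes\id)(J)$. The harder identity $\Psi_J=(1\otimes U_L\otimes 1)\cdot(\id\otimes\delta\otimes\id)(U_L)\cdot\Psi^J\cdot(\Delta\otimes\id\otimes\Delta)(U_L^{-1})$ requires combining the pentagon for $\phi$, the definition $\phi^J=U_R\phi U_L^{-1}$, and the relation between $U_L$ and $U_R$ coming from the hexagon for $J$. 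Granting this, \cite{HausserNill1999}, Prop.~10.6.1 produces an algebra isomorphism between the two crossed products whose explicit form, once unwound, is exactly $\varphi\bowtie a\mapsto(U_L^1\rightharpoonup\varphi\leftharpoonup S^{-1}(U_L^3))\bowtie U_L^2 a$; bijectivity is then automatic and the inverse is the analogous formula using $U_L^{-1}$.

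Finally I would verify that $F_J$ intertwines the remaining quasi-Hopf data by checking on the two families of generators from Theorem \ref{thm: maps iota and Gamma}. On $\iota(h)$ all statements reduce to the fact that the identity on the underlying algebra gives the twist equivalence $H\rightsquigarrow H^J$. On $\Gamma(\varphi)$ one combines the coproduct formula \ref{eq: coprod on Gamma(varphi)} and the antipode formula \ref{eq: antipode on Gamma} with the transformation rules for $p_R,q_R,f$ under the twist $J$ listed right after Definition~\ref{def: twist}; here one uses that, under $F_J$, the pair $(p_R,q_R)$ on $D(H)^{\tilde J}$ is carried to $(p_R)_J,(q_R)_J$, which matches the corresponding data for $D(H^J)$. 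The main obstacle I expect is the $\Psi$-twist identity above: it is a lengthy pentagon manipulation, and I anticipate this combinatorial bookkeeping is the only genuinely non-routine part of the argument, while the comultiplication and antipode checks should follow mechanically once the right form of $U_L$ has been isolated.
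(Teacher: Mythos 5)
Your proposal is correct and matches the paper's own strategy: both proofs reduce the algebra-isomorphism claim to the twist-equivalence theorems for diagonal crossed products (\cite{HausserNill1999}, Prop.~10.6.1 and 10.6.2), identifying $D(H)^{\tilde J}$ with $(H^J)^*\bowtie_{(\delta,\Psi^J)}H$ and then relating the two-sided coactions via $U_L$; and both proofs then verify that $F_J$ respects coproduct and antipode by checking on the generators $\iota(h)$ and $\Gamma(\varphi)$ from Theorem \ref{thm: maps iota and Gamma}, using that $F_J$ restricts to the identity on $H$. The only cosmetic difference is the order of the two twisting operations — the paper applies the $U_L$-twist of Eq.~\eqref{eq: twisted two-sided coaction} first (over $H$) and then the $J$-twist of Eq.~\eqref{eq: twisted two-sided coaction pt2}, whereas you $J$-twist first and then $U_L$-twist over $H^J$; both routes require the same unstated pentagon/hexagon verification that the resulting $\Psi$ is the canonical $\Psi_J$ for $D(H^J)$, which you correctly flag as the single non-routine step.
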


\begin{proof}
	As in Eq. \ref{eq: twisted two-sided coaction}, the element $U_L \in H^{\otimes 3}$ defines a twisted two-sided coaction $(\delta',\Psi')$ of $H$ on $H$. On the other hand, we can twist $(\delta',\Psi')$ in the sense of Eq. \ref{eq: twisted two-sided coaction pt2} via the twist $J \in H \otimes H$, giving rise to a two-sided coaction $(\delta',\Psi')$ of $H^J$ on the algebra $H$. This defines a diagonal crossed product $(H^J) \bowtie_{\delta'} H$, serving as the underlying algebra of $D(H^J)$. The fact that the map $F_J$ is an algebra isomorphism follows then simply from \cite{HausserNill1999} Prop. 10.6. Note that Hausser and Nill showed this for crossed products of the form $M \bowtie H^*$, but this is no problem due to Thm. 10.2 in \cite{HausserNill1999} relating $M \bowtie H^*$ and $H^* \bowtie M$. Using their terminology, we simply choose the left $\delta$-implementer $\tilde{L}=L' \prec U_L$ instead of the right $\delta$-implementer $\tilde{R}=U_L^{-1} \succ R'$ in \cite{HausserNill1999}, Eq. (10.46).\\
	Next, we want to show $(F_J \otimes F_J) \circ \Delta = \Delta \circ F_J$. By Thm. \ref{thm: maps iota and Gamma} it remains to prove this for elements of the form $\iota(h)$ and $\Gamma(\varphi)$. It is easy to see that $F_J$ is the identity on $H$, thus
	\begin{align*} 
	(F_J \otimes F_J)\circ \Delta_{D(H)^{\tilde{J}}}(\iota(h)) &= (F_J \otimes F_J)(\tilde{J}\Delta(\iota(h))\tilde{J}^{-1})\\
	&= (F_J \otimes F_J)(\tilde{J}(\iota(h_{(1)})\otimes \iota(h_{(2)})\tilde{J}^{-1})\\
	&= (F_J \otimes F_J)(\iota(J^1h_{(1)}K^{(-1)}) \otimes \iota(J^2h_{(2)}K^{(-2)}))\\
	&= \iota(J^1h_{(1)}K^{(-1)}) \otimes \iota(J^2h_{(2)}K^{(-2)}) = \Delta_{D(H^J)}\circ F_J(\iota(h)).		
	\end{align*}
	In order to show $(F_J \otimes F_J) \circ \Delta (\Gamma(\varphi)) = \Delta \circ F_J(\Gamma(\varphi))$, we use the identities 
	\begin{align*}
	\Gamma_J:=\Gamma_{D(H^J)}(\varphi)&=(1 \bowtie J^1)F_J(\Gamma(K^{(-1)}\rightharpoonup \varphi \leftharpoonup J^2))(1 \bowtie K^{(-2)}) \\
	F_J \circ \Gamma(\varphi) &= (1 \bowtie K^{(-1)}) \Gamma_J(J^1 \rightharpoonup \varphi \leftharpoonup K^{(-2)})(1 \bowtie J^2).		
	\end{align*}
	Then, 
	\begin{align*}
	\Delta_{D(H^J)}\circ F_J(\Gamma(\varphi)) &= \Delta_{D(H^J)}((1 \bowtie K^{(-1)}) \Gamma_J(J^1 \rightharpoonup \varphi \leftharpoonup K^{(-2)})(1 \bowtie J^2))  \\
	&= (\iota \otimes \iota)(\Delta(K^{(-1)}))\Delta_{D(H)^J}(\Gamma_J(J^1 \rightharpoonup \varphi \leftharpoonup K^{(-2)}))(\iota \otimes \iota)(\Delta(J^2)) =(\star) \\
	\end{align*}
	Using the definition in Eq. \ref{eq: coprod on Gamma(varphi)} we can show
	\begin{align*}
	\Delta_{D(H)}(\Gamma(\varphi)) &= (1 \bowtie x^1X^1)\Gamma(y^1 \rightharpoonup \varphi_{(2)})(1 \bowtie y^2) \\
	&\otimes (1 \bowtie x^2)\Gamma(X^2 \rightharpoonup \varphi_{(1)} \leftharpoonup x^3)(1 \bowtie X^3y^3).
	\end{align*}
	Hence,
	\begin{align*}
	(F_J \otimes F_J)\circ \Delta_{D(H)^{\tilde{J}}} (\Gamma(\varphi)) &= (1 \bowtie J^1 x^1X^1) F_J\Gamma(y^1 \rightharpoonup \varphi_{(2)})(1 \bowtie y^2 K^{(-1)}) \\
	& \otimes (1 \bowtie J^2x^2)F_J \Gamma (X^2 \rightharpoonup \varphi_{(1)} \leftharpoonup x^3)(1 \bowtie X^3 y^3 K^{(-2)})
	\end{align*}
	Using the identity 
	\begin{align*}
	a_{(1)}F_J \Gamma(b_{(1)} \rightharpoonup \varphi a_{(2)})b_{(2)}= F_J\Gamma((ab)_{(1)} \rightharpoonup \varphi)(ab)_{(2)},
	\end{align*}
	a simple but tedious calculation shows that this equals $(\star)$.\\
	For the antipode, it is again sufficient to show $S \circ F_J = F_J \circ S$ on generators $\iota(h)$ and $\Gamma(\varphi)$, where the former is trivial by the same argument as above. Using the definition in Eq. \ref{eq: antipode on Gamma}, we obtain
	\begin{align}
	F_J \circ S(\Gamma(\varphi))=(1 \bowtie f^1)\left( (U^1p^1_{(1)}g^{(-1)} \rightharpoonup \varphi \circ S^{-1} \leftharpoonup f^2 S^{-1}(U^3p^2)) \bowtie U^2p^1_{(2)}g^{(-2)}\right) 
	\end{align}  
	and
	\begin{align*}
	S \circ F_J(\Gamma(\varphi)) &= (1 \bowtie S(J^2)f^1_J) \\
	& \times \left( \left(  {p^1_J}_{(1)(1)}g_J^{(-1)}S(K^{(-2)}) \rightharpoonup \varphi \circ S^{-1} \leftharpoonup S(J^1)f_J^2S^{-1}(p_J^2) \right)  \bowtie {p^1_J}_{(1)(2)}g_J^{(-2)}S(K^{(-1)})\right) .
	\end{align*}
	The Drinfeld twist $f=g \in H \otimes H$ is defined in \ref{def: f's}. Using the identities in \ref{eq: identities for twisted drinfeld twists, etc}, a simple but tedious calculation shows that both terms are equal.\\
	Finally, since $F_J$ is the identity on $H \subseteq D(H)$ it is easy to see that $F_J$ preserves the associator and the elements $\alpha$ and $\beta$, which are inherited from $H$. This proves the proposition.  
\end{proof}

The following proposition shows that retractions of quasi-Hopf algebras induce monomorphisms between the Drinfeld doubles:

\begin{proposition}\label{prop: isos of QHA's induce isos of Drinfeld doubles}
	Let $i:K \to H$ be a split monomorphism of quasi-Hopf algebras, i.e. there exists a homomorphism of quasi-Hopf algebras $p:H\to K$, s.t. $p\circ i=\id_K$. Then the map $\Phi_{i}:D(K) \to D(H)$ defined by
	\begin{align*}
		\Phi_{i}(\iota_K(h)) &:= \iota_{H}(i(h)) \\
		\Phi_{i}(\Gamma_K(\varphi)) &:= \Gamma_{H}(\varphi \circ p)
	\end{align*}
	is a monomorphism of quasi-Hopf algebras with left inverse $\Phi_p:D(H) \to D(K)$ being a coalgebra homomorphism defined by
	\begin{align*}
		\Phi_{p}(\iota_H(h)) &:= \iota_{K}(p(h)) \\
		\Phi_{p}(\Gamma_H(\varphi)) &:= \Gamma_{K}(\varphi \circ i).
	\end{align*}
\end{proposition}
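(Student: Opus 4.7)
The plan is to verify the structural properties on the generators $\iota_K(K) \cup \Gamma_K(K^*)$ of $D(K)$, using that by Theorem~\ref{thm: maps iota and Gamma} every element of the double has the form $\iota_K(q^1)\Gamma_K(\varphi \leftharpoonup q^2)\iota_K(h)$. The observation driving everything is that, since $i\colon K\to H$ and $p\colon H\to K$ are morphisms of quasi-Hopf algebras, they intertwine each of the structural elements $\phi,\alpha,\beta,f,g,p_R,q_R$ that enter the product, coproduct, and antipode formulas~\ref{eq: product formula},~\ref{eq: coprod on Gamma(varphi)} and~\ref{eq: antipode on Gamma}, i.e.\ $i^{\otimes n}$ sends the ingredients assembled from $K$ to the corresponding ingredients of $H$, and dually for $p$.

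First I would show $\Phi_i$ is an algebra homomorphism. The crucial auxiliary identity is $i(h) \rightharpoonup (\varphi\circ p) = (h\rightharpoonup \varphi)\circ p$ and symmetrically for $\leftharpoonup$; both follow directly from $p$ being an algebra map with $p\circ i=\id_K$, since $(i(h)\rightharpoonup \varphi\circ p)(y) = \varphi(p(y \cdot i(h))) = (h\rightharpoonup \varphi)(p(y))$. Substituted into~\ref{eq: product formula} together with $i^{\otimes 5}(\Omega_K) = \Omega_H$, these identities force $\Phi_i$ to preserve products of arbitrary generators.

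Next I would check that $\Phi_i$ is a coalgebra homomorphism. On $\iota_K(h)$ this is immediate since $i$ is a coalgebra map. On $\Gamma_K(\varphi)$ the coproduct formula~\ref{eq: coprod on Gamma(varphi)} is preserved because $i^{\otimes 3}$ sends $\phi_K$ to $\phi_H$ and because the canonical element $\Gamma_K\in K\otimes D(K)$ is mapped under $i\otimes\Phi_i$ to the analogous element of $H\otimes D(H)$, by the very defining formula of $\Phi_i$ on $\Gamma$'s. Counit compatibility is immediate from evaluation at $1$. Antipode compatibility uses the same transport of $p_R, f, g$ by $i$ together with the intertwining property for $\rightharpoonup,\leftharpoonup$, applied to~\ref{eq: antipode on Gamma}. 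Finally $\Phi_i$ preserves the coassociator and $\alpha,\beta$ automatically, since these are inherited from $K$ (resp.\ $H$) via $\iota_K,\iota_H$.

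To close, I would define $\Phi_p$ by the symmetric formulas and compute $\Phi_p\circ\Phi_i(\iota_K(h)) = \iota_K(p(i(h))) = \iota_K(h)$ and $\Phi_p\circ\Phi_i(\Gamma_K(\varphi)) = \Gamma_K(\varphi\circ p\circ i) = \Gamma_K(\varphi)$; since these generate $D(K)$ as an algebra, $\Phi_p\circ\Phi_i=\id$, proving injectivity of $\Phi_i$. That $\Phi_p$ itself is a coalgebra map is obtained by repeating the previous paragraph with the roles of $i$ and $p$ swapped: the transpose $\psi\mapsto \psi\circ i$ is an algebra map because $i$ is a coalgebra map, so the coproduct and counit formulas transfer verbatim. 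I expect the main obstacle to be the purely computational verification of the algebra axiom for $\Phi_i$ from~\ref{eq: product formula} with $\Omega \in H^{\otimes 5}$, which is conceptually forced by functoriality but tedious to spell out; a cleaner route could be to invoke the universal property of the diagonal crossed product as a two-sided coaction from Prop.~10.6 of \cite{HausserNill1999}, which would make $\Phi_i$ appear as the morphism induced by $(i, i^{\otimes 3})$. Note finally that $\Phi_p$ is not in general an algebra map: the reason is that $i(p(a))\neq a$ for $a\in H\setminus i(K)$, so the action $i(p(a))\rightharpoonup (\psi\circ i)$ produced by applying $\Phi_p$ to a product cannot be rewritten as $(a\rightharpoonup \psi)\circ i$.
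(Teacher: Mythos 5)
Your plan is essentially correct and it essentially covers both the route the paper actually takes and a more computational alternative. For the algebra-homomorphism part, the paper does not verify multiplicativity by hand from the $\Omega$-formula \eqref{eq: product formula}; it instead invokes the universal property of the diagonal crossed product (Theorem II of \cite{HausserNill1999}), producing $\Phi_i$ by checking that $T:=e_j\otimes \Gamma_H(e^j\circ p)\in K\otimes D(H)$ is a normal coherent ($\lambda\rho$-intertwining) element — which requires only the $p_R,q_R$ identities \eqref{eq:p,q identities} together with $p\circ i=\mathrm{id}_K$. This is exactly the ``cleaner route'' you flag at the end of your proposal. Your direct computation is workable: once one checks (as you should) that the generator formulas for $\Phi_i$ are consistent with $\Phi_i(\varphi\bowtie m)=(\varphi\circ p)\bowtie i(m)$, the identities $i(h)\rightharpoonup(\varphi\circ p)=(h\rightharpoonup\varphi)\circ p$, $(\varphi\circ p)\leftharpoonup i(h)=(\varphi\leftharpoonup h)\circ p$, and $i^{\otimes 5}(\Omega_K)=\Omega_H$ do force multiplicativity; the universal-property argument simply packages all of this and avoids re-verifying well-definedness and consistency.

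One statement you make is inaccurate as written: $(i\otimes\Phi_i)(\Gamma_K)$ is \emph{not} the canonical element $\Gamma_H\in H\otimes D(H)$. It is $\sum_j i(e_j^K)\otimes\Gamma_H(e^{j,K}\circ p)$, which lives in $i(K)\otimes D(H)$ and (in the first factor) represents the idempotent $i\circ p$ rather than the identity of $H$. What actually makes the coproduct formula \eqref{eq: coprod on Gamma(varphi)} transfer is the \emph{expanded} Sweedler form of $\Delta(\Gamma(\varphi))$ (the $T$ of Remark \ref{rm: definition of coproduct different from [HN]}, written out in the paper's proof in terms of $\phi^{\pm1}$, $\varphi_{(1)}$, $\varphi_{(2)}$), together with the transport identities $i^{\otimes3}(\phi_K^{\pm1})=\phi_H^{\pm1}$, $(\varphi\circ p)_{(1)}\otimes(\varphi\circ p)_{(2)}=(\varphi_{(1)}\circ p)\otimes(\varphi_{(2)}\circ p)$, and the harpoon identities above. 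Your conclusion is right, but the mechanism is the functional $\varphi$ (resp.\ $\varphi\circ p$) collapsing the first tensor slot consistently, not equality of canonical elements. The rest of your outline — antipode via transport of $p_R,f,g$; coassociator and $\alpha,\beta$ preserved because they are inherited through $\iota$; $\Phi_p$ being a coalgebra but not algebra map (the obstruction being $i\circ p\neq\mathrm{id}_H$) — matches the paper and is sound.
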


\begin{proof}
	The fact that $\Phi_{i}$ is an algebra isomorphism follows from the universal property of the map $\Gamma$ (see Thm. II in \cite{HausserNill1999}). In their terminology, we choose a normal element $T:=e_i \otimes \Gamma_{H}(e^i \circ p) \in K \otimes D(H)$. Using the identities given in Eq. \ref{eq:p,q identities} and $p\circ i =\id$, it is a straight-forward calculation to check that this element satifies both conditions (6.2) and (6.3) in the first part of the theorem. \\
	Since $i$ is a homomorphism of quasi-Hopf algebras, $\Phi_{i}$ preserves coproduct and antipode on elements of the form $\iota_K(h)$ as well as the associator $\phi$ and the elements $\alpha$ and $\beta$. For the image of $\Gamma_K$ we have
	\begin{align*}
		(\Phi_{i} \otimes \Phi_{i}) \circ \Delta_{D(K)} (\Gamma_K(\varphi)) &= (\Phi_{i} \otimes \Phi_{i})((\varphi \otimes \id)(T_K)) \\
		&=(\Phi_{i} \otimes \Phi_{i})(x^1X^1 \Gamma_K(y^1 \rightharpoonup \varphi_{(2)})y^2\otimes x^2 \Gamma_K(X^2 \rightharpoonup \varphi_{(1)} \leftharpoonup x^3)X^3y^3) \\
		&=i(x^1X^1) \Gamma_{H}((y^1 \rightharpoonup \varphi_{(2)})\circ p)i(y^2) \\
		&\otimes i(x^2) \Gamma_{H}((X^2 \rightharpoonup \varphi_{(1)} \leftharpoonup x^3) \circ p )i(X^3y^3)) \\
		&=\tilde{x}^1\tilde{X}^1 \Gamma_{H}(\tilde{y}^1 \rightharpoonup (\varphi \circ p)_{(2)}) \tilde{y}^2 \otimes \tilde{x}^2 \Gamma_{H}(\tilde{X}^2 \rightharpoonup (\varphi \circ p)_{(1)} \leftharpoonup \tilde{x}^3) \tilde{X}^3\tilde{y}^3 \\
		&= (\varphi \circ p \otimes \id)(T_{H}) \\
		&= \Delta_{D(H)} (\Gamma_{H}(\varphi \circ p)) = \Delta_{D(H)}(\Phi_{i}(\Gamma_K(\varphi))),
	\end{align*} 
	where $T$ is the element defined in Remark \ref{rm: definition of coproduct different from [HN]} and $\phi_H=\tilde{X}_1\otimes \tilde{X}_2\otimes \tilde{X}_3=\tilde{Y}_1\otimes \tilde{Y}_2\otimes \tilde{Y}_3=i\otimes i\otimes i(\phi_K)$ and $\Phi_H^{-1}=\tilde{x}_1\otimes \tilde{x}_2\otimes \tilde{x}_3$. We omit the proof that $\Phi_{i}$ preserves the antipode on the image of $\Gamma$, since it is completely analogous.\\
	Using $p\circ i=\id_K$, it is easy to see that $\Phi_p$ is a left inverse of $\Phi_i$. By analogous arguments as above, $\Phi_p$ is a coalgebra homomorphism.
\end{proof}

\subsubsection{The Drinfeld double of $B(V) \# k^G_\omega$}\label{sec: Drinfeld double}

In order to state the defining relations for the Drinfeld double of $B(V) \# k^{\widehat{G}}_\omega$, we compute the element $\Omega \in (B(V) \# k^{\widehat{G}}_\omega)^{\otimes 5}$ from the previous subsection for this case:

\begin{lemma}
	In the case $H=B(V) \# k^{\widehat{G}}_\omega$, the element $\Omega \in (B(V) \# k^{\widehat{G}}_\omega)^{\otimes 5}$ is given by
	\begin{align*}
	\Omega = \sum_{\chi_i} \, f(\chi_1,\chi_2,\chi_3,\chi_4,\chi_5) \, &(1 \# \delta_{\chi_1}) \otimes (1 \# \delta_{\chi_2}) \otimes (1 \# \delta_{\chi_3}) \otimes (1 \# \delta_{\chi_4}) \otimes (1 \# \delta_{\chi_5}),\text{ where} \\ 
	f(\chi_1,\chi_2,\chi_3,\chi_4,\chi_5) &= \frac{\omega(\chi_1\chi_2\chi_3,\bar{\chi}_4,\bar{\chi}_5)\omega(\chi_5,\chi_4,\bar{\chi}_4)}{\omega(\chi_1,\chi_2\chi_3,\bar{\chi}_4)\omega(\chi_1,\chi_2,\chi_3)\omega(\chi_4\chi_5,\bar{\chi}_4,\bar{\chi}_5)}. 
	\end{align*}
\end{lemma}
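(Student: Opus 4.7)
The key simplification is that the quasi-Hopf structural data $\phi$, $\alpha$, $\beta$, and the antipode $S$ of $H = B(V) \# k^{\widehat{G}}_\omega$ all lie in (or preserve) the Cartan subalgebra $1 \# k^{\widehat{G}}_\omega$, and the coproduct restricted to this subalgebra maps into $k^{\widehat{G}}_\omega \otimes k^{\widehat{G}}_\omega$. Consequently the Drinfeld twist $f$ from Eq.~\ref{def: f's}, the auxiliary elements $p_R, q_R, \delta, \gamma$ from Section~\ref{sec: Quasi-Hopf algebras}, and therefore $\Omega$ itself, all lie in tensor powers of $k^{\widehat{G}}_\omega$. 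Hence $\Omega$ is automatically of the form $\sum c(\chi_1,\ldots,\chi_5)\,\delta_{\chi_1} \otimes \cdots \otimes \delta_{\chi_5}$ for some scalar function $c$, and the claim reduces to computing $c$.

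The plan is to unwind the definition of $\Omega$ in three stages. First, I expand
\[
\Psi = \bigl((\id \otimes \Delta \otimes \id)(\phi) \otimes 1\bigr)(\phi \otimes 1 \otimes 1)\bigl((\delta \otimes \id \otimes \id)(\phi^{-1})\bigr)
\]
using $\phi = \sum \omega(\psi_1,\psi_2,\psi_3)\,\delta_{\psi_1} \otimes \delta_{\psi_2} \otimes \delta_{\psi_3}$. Since the algebra product on $k^{\widehat{G}}$ is diagonal ($\delta_\chi \delta_\psi = \delta_{\chi,\psi} \delta_\chi$), the three factors collapse to a single sum over characters with coefficient assembled as a product/quotient of three $\omega$-values, and multiplicative inversion of that coefficient yields $\Psi^{-1}$. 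Second, I compute the Drinfeld twist $f \in k^{\widehat{G}}_\omega \otimes k^{\widehat{G}}_\omega$ using the specialisations of $p_R,q_R,\gamma,\delta$ to $k^{\widehat{G}}_\omega$; a short calculation using $S(\delta_\chi) = \delta_{\bar\chi}$ and the normalisation of $\omega$ produces an explicit $\omega$-expression, and this is the source of the factor $\omega(\chi_5,\chi_4,\bar\chi_4)$ in the target formula. Third, I multiply $\Psi^{-1}$ by $1 \otimes 1 \otimes 1 \otimes f$ in the appropriate slots and apply $S^{-1}$ to the last two tensor factors, thereby turning $\chi_4,\chi_5$ into $\bar\chi_4,\bar\chi_5$ in the positions indicated by the statement.

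The main obstacle is purely bookkeeping: a number of $\omega$-values need to be combined, and the $3$-cocycle identity
\[
\omega(\chi_1,\chi_2,\chi_3)\,\omega(\chi_1, \chi_2\chi_3, \chi_4)\,\omega(\chi_2, \chi_3, \chi_4) = \omega(\chi_1\chi_2, \chi_3, \chi_4)\,\omega(\chi_1, \chi_2, \chi_3\chi_4)
\]
has to be applied repeatedly to bring the coefficient into the stated asymmetric form. As a sanity check, one can verify the normalisation $\Omega = 1^{\otimes 5}$ when $\omega \equiv 1$ (all five $\omega$-values in the claimed formula reduce to $1$), and check the compatibility of $\Omega$ with the associativity of the product~\eqref{eq: product formula}; both follow automatically from the general theory of \cite{HausserNill1999a}, so the only new content of the lemma is the explicit form of $c$, which will emerge from the above three-stage computation. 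No new ideas beyond these manipulations are required.
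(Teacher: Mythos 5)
The paper states this lemma without proof, so there is no paper argument to compare against; your outline is the natural direct computation and is, as far as I can tell, what the authors must have done. Your key structural observation — that the coassociator, $\alpha$, $\beta$, the antipode, and hence the auxiliary elements $p_R$, $q_R$, $\gamma$, $\delta$, the Drinfeld twist $f$, and finally $\Psi$ and $\Omega$ all live in tensor powers of the Cartan subalgebra $1\# k^{\widehat{G}}_\omega$ — is correct and is exactly what reduces the lemma to a scalar computation. Your three stages (expand $\Psi$ from the diagonal product and coproduct on $k^{\widehat{G}}_\omega$; compute $f$ from its definition; multiply by $1^{\otimes 3}\otimes f$ and apply $S^{-1}$ to the last two slots with $S^{-1}(\delta_\chi)=\delta_{\bar{\chi}}$) are all the right steps, and the $\omega\equiv 1$ sanity check is a good control.

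One small imprecision worth flagging: you attribute only the numerator factor $\omega(\chi_5,\chi_4,\bar{\chi}_4)$ to the Drinfeld twist. Carrying out your stage one, $\Psi^{-1}$ contributes precisely the three factors $\omega(\chi_1\chi_2\chi_3,\bar{\chi}_4,\bar{\chi}_5)$, $\omega(\chi_1,\chi_2\chi_3,\bar{\chi}_4)^{-1}$, $\omega(\chi_1,\chi_2,\chi_3)^{-1}$ after the substitution $\chi_4\mapsto\bar{\chi}_4$, $\chi_5\mapsto\bar{\chi}_5$; so the Drinfeld twist must supply \emph{both} remaining factors, $\omega(\chi_5,\chi_4,\bar{\chi}_4)$ in the numerator and $\omega(\chi_4\chi_5,\bar{\chi}_4,\bar{\chi}_5)$ in the denominator — i.e.\ $f=\sum_{\psi_1,\psi_2}\tfrac{\omega(\bar{\psi}_2,\bar{\psi}_1,\psi_1)}{\omega(\bar{\psi}_1\bar{\psi}_2,\psi_1,\psi_2)}\,\delta_{\psi_1}\otimes\delta_{\psi_2}$. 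Once you compute $f$ in this form, the statement falls out without any further use of the $3$-cocycle identity (contrary to your expectation that it would need to be applied repeatedly at the end), though the cocycle identity will of course be used while assembling $f$ itself from $\gamma$, $\Delta^{\mathrm{cop}}(p^1)$ and $\Delta(p^2)$.
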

In particular, we obtain $f(\psi_1,\psi_2,\chi,\psi_2,\psi_1)^{-1}=\theta(\chi)(\psi_1,\psi_2) d\nu(\psi_1,\psi_2)(\chi)$, where $\theta \in Z^2(\widehat{G},k^{\widehat{G}})$ is the $2$-cocycle from Remark \ref{rm: dual of YDM} and $\nu(\psi)(\chi):=\omega(\psi,\bar{\chi},\chi)$. From this, it can be seen that $D(k^{\widehat{G}}_\omega)$ is indeed isomorphic to the double $D^\omega(\widehat{G})$ in the sense of \cite{DijkgraafPasquierRoche1992}.\\

Before we are going to derive the braided commutator relations, we define:
\begin{align*}
	E_i:&=\Gamma\left((F_i \#\delta_1)^* \right), \qquad F_j:=\iota(F_j),\\
	\bar{L}_\chi:=\iota(L_{\chi}),& \qquad L_\chi:=\iota(\bar{L}_{\chi}), \qquad \bar{K}_{\chi}:=\Gamma((1\#\delta_{\bar{\chi}})^*)\bowtie \left( \sum_{\psi}\, \theta(\psi|\chi,\bar{\chi}) \, \iota(\delta_\psi)\right) .
\end{align*}

\begin{lemma}
	Let $a_i,b_j : \widehat{G} \to k^\times$ be solutions to the equation
	\begin{align}\label{eq: equation for the drinfeld double}
		\frac{a_i(\psi\bar{\chi}_j)b_j(\psi)}{a_i(\psi)b_j(\psi\chi_i)}=\frac{\omega(\chi_i,\bar{\chi}_j,\psi)}{\omega(\bar{\chi}_j,\chi_i,\psi)}.
	\end{align}			
	For $\chi,\psi \in \widehat{G}$, we set
	\begin{align*}
		E_i^\chi := c_\chi E_iL_{\bar{\chi}_i}\iota\left( \sum_{\xi}\,a_i(\xi)\, (1\# \delta_\xi) \right)  \qquad
		F_j^\psi := c_\psi F_j\iota\left( \sum_{\xi}\,b_j(\xi)\, (1\# \delta_\xi) \right)  .
	\end{align*}
	Here, the elements $c_\chi \in \left( k^{\widehat{G}}\right)^*\otimes k^{\widehat{G}} $ are defined by
	\begin{align*}
		c_{\chi}:= \bar{K}_{\bar{\chi}}\bar{L}_{\chi}^{-1}=(1\# \delta_\chi)^*\bowtie \left( \sum_{\psi \in \widehat{G}} \, \frac{\sigma(\psi,\chi)}{\omega(\psi,\chi,\bar{\chi})}\, (1\# \delta_\psi) \right).
	\end{align*}
	Let $[ E_i^\chi, F_j^\psi]_\sigma:=E_i^\chi F_j^\psi - \sigma(\chi_i,\bar{\chi}_j)F^\psi_jE^\chi_i$ denote the braided commutator in $D\left( B(V) \# k^{\widehat{G}}_\omega\right) $. Then we have
	\begin{align}
		[ E_i^\chi, F_j^\psi]_\sigma &= \delta_{ij}\sigma(\chi_i,\bar{\chi}_i) c_{\chi\psi}
		\left(1-\bar{K}_{\chi_i} L_{\bar{\chi}_i}
		   \right)\iota \left( \sum_{\xi}\, \frac{a_i(\xi)b_i(\xi\chi_i)}{\omega(\bar{\chi}_i,\chi_i,\xi)} (1\# \delta_\xi) \right)
	\end{align}
\end{lemma}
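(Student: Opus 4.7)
The plan is to expand both products $E_i^\chi F_j^\psi$ and $F_j^\psi E_i^\chi$ using the diagonal crossed product formula \eqref{eq: product formula} together with the identity $\varphi \bowtie h = \iota(q^1)\Gamma(\varphi \leftharpoonup q^2)\iota(h)$ from Theorem \ref{thm: maps iota and Gamma}. Since $F_j = \iota(F_j)$ sits in the $H$-part and $E_i = \Gamma((F_i\#\delta_1)^*)$ in the $\Gamma(H^*)$-part, the braided commutator really measures how $F_j$ drags past $\Gamma$, so the essential inputs are $\Delta(F_j)$ and $S(F_j)$ from Proposition \ref{prop: RelationsRadfordBiproduct}, together with the explicit $\Omega$ just computed. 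The scalars $c_\chi, c_\psi$ and the $L_{\bar{\chi}_i}, \bar{K}_{\chi_i}$ decorations have been built into $E_i^\chi, F_j^\psi$ precisely so that, after all manipulations, the commutator acquires a clean normal form.

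First, I would compute $E_i \cdot F_j$ via \eqref{eq: product formula} with $\varphi=(F_i\#\delta_1)^*$ and $m=F_j$. The coproduct $\Delta(F_j)$ has two summands from Proposition \ref{prop: RelationsRadfordBiproduct} (one with $F_j$ in the left tensor factor and one with $F_j$ in the right), and the dual pairing against $(F_i\#\delta_1)^*$ picks out a matching component in each, producing respectively an $F_j E_i$-type and an $E_i F_j$-type term, both decorated by characters coming from $\Omega$ and from the associator factors in $\Delta(F_j)$. Analogously, I would compute $F_j \cdot E_i$ by pushing $\iota(F_j)$ past $\Gamma((F_i\#\delta_1)^*)$ using the two-sided coaction $\delta = (\Delta\otimes\id)\circ\Delta$ recorded after \eqref{eq: product formula}; the same two structural summands reappear, but carrying different $\omega$-decorations.

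Second, I would form the braided combination with its prefactor $\sigma(\chi_i,\bar{\chi}_j)$ and show that the ``off-diagonal'' contributions cancel identically once the auxiliary functions $a_i, b_j$ satisfy \eqref{eq: equation for the drinfeld double}: that equation is the exact ratio of $\omega$-cocycles left over after the other cancellations, so the modified generators $E_i^\chi, F_j^\psi$ absorb the $\omega$-obstruction. This forces the commutator to vanish whenever $i\neq j$. For $i=j$, the pairing $\langle (F_i\#\delta_1)^*,F_i\rangle = 1$ leaves two surviving ``diagonal'' summands; after applying the $\sigma(\chi_i,\bar{\chi}_i)$-weighted braiding factor they carry opposite signs and combine into the claimed $(1-\bar{K}_{\chi_i}L_{\bar{\chi}_i})$-factor, with the trailing scalar $\omega(\bar{\chi}_i,\chi_i,\xi)^{-1}$ arising from the rebracketings dictated by $\Omega$.

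The main obstacle is purely bookkeeping: the computation must track $\omega$- and $\theta$-factors emerging from (i) the coquasi-Hopf dual of $B(V)\#k^{\widehat{G}}_\omega$ from Subsection \ref{sec: dualization of radford biprod}, (ii) the five-fold associator $\Omega$ computed above, (iii) the non-grouplike coproduct of $\bar{K}_\chi$ which involves the $2$-cocycle $\theta$ of Remark \ref{rm: dual of YDM}, and (iv) the actions $\leftharpoonup, \rightharpoonup$ on dual elements which shuffle further $\omega$-factors through. The key identities making everything collapse are the abelian $3$-cocycle conditions, the pentagon identity for $\omega$, the relation $\omega\cdot\omega^T = d\sigma^{-1}$, and finally \eqref{eq: equation for the drinfeld double}, which is tailored to annihilate the only residual obstruction to the telescoping.
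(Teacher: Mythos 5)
Your proposal is correct in strategy and uses the same tools as the paper's proof: expand both products via the general multiplication formula \eqref{eq: product formula}, keep track of the $\omega$-, $\theta$-, and $\Omega$-coefficients, isolate the condition under which the off-diagonal $EF$/$FE$-contributions cancel (which is exactly eq.~\eqref{eq: equation for the drinfeld double}), and combine the two surviving diagonal summands into the $(1-\bar{K}_{\chi_i}L_{\bar{\chi}_i})$-factor. One small imprecision in your description: it is $F_j^\psi E_i^\chi$, not $E_i^\chi F_j^\psi$, whose computation requires the iterated coproduct of $F_j$ (because $F_j$ sits in the $m$-slot of \eqref{eq: product formula} only when it is the left factor). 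The product $E_i F_j$ keeps $F_j$ in the $n$-slot and so produces only the single $(F_i\#\delta_\chi)^*\bowtie(F_j\#\delta_\psi)$-type term, while $F_j E_i$ produces that type plus the two Cartan-valued summands. The paper proceeds slightly more constructively: it starts from \emph{general} parametrized elements $E_i^\lambda=\sum\lambda_i(\chi,\psi)(F_i\#\delta_\chi)^*\bowtie(1\#\delta_\psi)$, $F_j^\mu=\sum\mu_j(\chi,\psi)(1\#\delta_\chi)^*\bowtie(F_j\#\delta_\psi)$, computes both products, reads off the necessary and sufficient condition for the commutator to land in the Cartan part, and then shows that the specific choice $\lambda_i(\chi,\psi)=\delta_{\chi,\xi}a_i^\xi(\psi)$, $\mu_j(\chi,\psi)=\delta_{\chi,\xi}b_j^\xi(\psi)$ (with $a_i^\xi,b_j^\xi$ suitable $\sigma,\omega$-twists of $a_i,b_j$) reduces that condition to \eqref{eq: equation for the drinfeld double}. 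Your verification-style plan reaches the same end and is essentially the same proof once the bookkeeping is carried out.
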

\begin{proof}
	For $\lambda_i,\mu_j:\widehat{G}\times \widehat{G} \to k$ arbitrary non-zero maps, we define elements
	\begin{align*}
		E_i^{\lambda}:&= \sum_{\chi,\psi} \, \lambda_i(\chi,\psi) (F_i \# \delta_\chi)^* \bowtie (1\#\delta_\psi) \\
		F_j^{\mu}:&= \sum_{\chi,\psi} \, \mu_j(\chi,\psi) (1 \# \delta_\chi)^* \bowtie (F_j\#\delta_\psi).
	\end{align*}
	Using the general multiplication formula \ref{eq: product formula} for the Drinfeld double, we obtain
	\begin{align*}
		E_i^\lambda F_j^\mu &= \sum_{\chi,\psi} \left( \sum_{\xi}\, r_{\chi,\psi}(\xi) \mu_j(\chi \bar{\xi},\psi) \lambda_i(\xi,\psi \bar{\chi}_j) \right) \,(F_i \# \delta_\chi)^* \bowtie (F_j \# \delta_\psi), \text{ where}\\
		r_{\chi,\psi}(\xi)&=  \frac{f(\xi, \chi\bar{\xi},\psi\bar{\chi}_j,\chi\bar{\xi},\xi \bar{\chi}_i)}{\omega(\bar{\chi}_i,\xi,\chi\bar{\xi})}  
	\end{align*}
	For $F_j^\mu E_i^\lambda$, we obtain
	\begin{align*}
		F_j^\mu E_i^\lambda &= \delta_{ij} 
		\left( \sum_{\chi,\psi} \lambda_i(\chi,\psi) \left(  \sum_{\xi} \, \alpha_1^i(\chi,\psi,\chi_i\bar{\chi}) f(\xi,\chi\bar{\chi}_i,\psi,\chi\bar{\chi}_i,\xi) \mu_j(\xi,\psi\chi_i)  \, (1\# \delta_{\chi\xi\bar{\chi}_i})^*  \right)  \bowtie (1 \# \delta_\psi) \right.\\
		&-\left. \sum_{\chi,\psi} \lambda_i(\chi,\psi) \left(  \sum_{\xi} \, \frac{\alpha_3^i(\chi,\psi,\chi_i\bar{\chi}) f(\xi,\chi,\psi,\chi,\xi) }{\omega(\chi,\bar{\chi}_i,\chi_i\bar{\chi})\sigma(\bar{\chi}_i,\chi)}\mu_j(\xi,\psi\chi_i)   \, (1\# \delta_{\chi\xi})^*  \right) \bowtie (1 \# \delta_\psi)  \right)\\
		&+ \sum_{\chi,\psi} \left( \sum_{\xi} \, s_{\chi,\psi}(\xi) \mu_j(\chi \bar{\xi},\psi\chi_i)\lambda_i(\xi,\psi)   \right)   \, (F_i \# \delta_{\chi})^* \bowtie (F_j \# \delta_{\psi}),\text{ where} \\
		s_{\chi,\psi}(\xi)&= r_{\chi,\psi}(\xi) \alpha_2^j(\xi,\psi,\chi_i\bar{\xi}) \frac{\sigma(\chi_i,\xi\bar{\chi})\omega(\chi \psi \bar{\xi} \bar{\chi}_j,\chi_i,\xi \bar{\chi})}{\omega(\chi \psi \bar{\xi} \bar{\chi}_j,\xi \bar{\chi},\chi_i)\omega(\chi \bar{\xi},\psi \bar{\chi}_j,\chi_i)}.
	\end{align*}
	The functions $\alpha^i_k$ were defined after Prop. \ref{prop: RelationsRadfordBiproduct}. For the following choices of $\mu_j$ and $\lambda_i$, the above formulas simply significantly:
	\begin{align*}
		\lambda_i(\chi,\psi) = \delta_{\chi,\xi}a_i^\xi(\psi) \qquad \qquad
		\mu_j(\chi,\psi) = \delta_{\chi,\xi}b_j^\xi(\psi),
	\end{align*}
	where $a_i^\xi,b_j^\xi:\widehat{G} \to k^\times$.	Or goal is to find functions $\lambda_i,\mu_j:\widehat{G} \times \widehat{G} \to k$, s.t. that the braided commutator $[E_i^\lambda,F_j^\mu]_\sigma$ takes values in $\left( k^{\widehat{G}}\right)^* \bowtie k^{\widehat{G}} $. The necessary and sufficient condition for this is
	\begin{align*}
		0 &=\sum_{\xi} \, r_{\chi,\psi}(\xi) \\
		&\times \left( \mu_j(\chi \bar{\xi},\psi) \lambda_i(\xi,\psi \bar{\chi}_j)
		-\sigma(\chi_i,\bar{\chi}_j) \mu_j(\chi \bar{\xi},\psi\chi_i)\lambda_i(\xi,\psi)  \frac{\alpha_2^j(\xi,\psi,\chi_i\bar{\xi})\sigma(\chi_i,\xi\bar{\chi})\omega(\chi \psi \bar{\xi} \bar{\chi}_j,\chi_i,\xi \bar{\chi})}{\omega(\chi \psi \bar{\xi} \bar{\chi}_j,\xi \bar{\chi},\chi_i)\omega(\chi \bar{\xi},\psi \bar{\chi}_j,\chi_i)}\right),
	\end{align*}
	At least for the above choices of $\lambda_i$ and $\mu_j$, this is equivalent to the existence of solutions $a_i,b_j:\widehat{G} \to k^\times$ to the equation:
	\begin{align*}
		\frac{a_i(\psi\bar{\chi}_j)b_j(\psi)}{a_i(\psi)b_j(\psi\chi_i)}=\frac{\omega(\chi_i,\bar{\chi}_j,\psi)}{\omega(\bar{\chi}_j,\chi_i,\psi)}.
	\end{align*}
	This can be seen by setting
	\begin{align*}
		a_i^\xi(\psi)=\frac{\sigma(\psi,\xi)\sigma(\chi_i,\psi)}{\omega(\psi,\xi,\chi_i\bar{\xi})}a_i(\psi) \qquad
		b_j^\xi(\psi)= \frac{\omega(\bar{\xi},\xi,\psi\bar{\chi}_j)}{\sigma(\psi\bar{\chi}_j\xi,\bar{\xi})}b_j(\psi)
	\end{align*}
	and using the abelian $3$-cocycle conditions. Plugging in these solutions for $\lambda_i$ and $\mu_j$ in the braided commutator and using that $c_{\chi\psi}=c_\chi c_\psi$ yields the claimed result.
\end{proof}

\begin{corollary}\label{cor: commutation rel for twisted generator}
	Let $\omega \in Z^3(\widehat{G})$ be a nice $3$-cocycle. Then, the braided commutator in $D\left( B(V)\#k^{\widehat{G}}_{\omega d\zeta} \right)$ is given by:
	\begin{align}
		[ E_i^\chi, F_j^\psi]_\sigma &= \delta_{ij}\sigma(\chi_i,\bar{\chi}_i) c_{\chi\psi}
		\left(1-\bar{K}_{\chi_i} L_{\bar{\chi}_i}
		\right).
	\end{align}
\end{corollary}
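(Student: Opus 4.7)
The plan is to reduce the corollary to the previous lemma in two steps: first handle the untwisted case with $\zeta$ trivial, then transport the resulting identity through the twist equivalence established earlier in the section.

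In the untwisted case (cocycle equal to the nice $\omega$ itself), the first condition of Definition \ref{def: nice 3-cocycle} makes the right-hand side of equation \eqref{eq: equation for the drinfeld double} equal to $1$, so the constant functions $a_i \equiv b_i \equiv 1$ form an admissible choice. The second niceness condition $\omega(\bar{\chi}_i,\chi_i,\xi)=1$ then forces the sum
\[
\sum_{\xi}\,\frac{a_i(\xi)b_i(\xi\chi_i)}{\omega(\bar{\chi}_i,\chi_i,\xi)}\,\delta_\xi
\]
appearing in the previous lemma to collapse to $\sum_{\xi}\delta_\xi = 1$, and plugging in yields the claimed commutator relation in $D\!\left(B(V)\#k^{\widehat{G}}_\omega\right)$.

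To pass to $\omega d\zeta$, combine the isomorphism $(B(V)\#k^{\widehat{G}}_\omega)^{J} \cong B(V)\#k^{\widehat{G}}_{\omega d\zeta}$ of Lemma \ref{lm: iso betwwen twist of biproducts} with Proposition \ref{prop: isom between twists of drinfeld doubles} to obtain a quasi-Hopf algebra isomorphism $D\!\left(B(V)\#k^{\widehat{G}}_\omega\right)^{\tilde J} \cong D\!\left(B(V)\#k^{\widehat{G}}_{\omega d\zeta}\right)$, where $\tilde J = (\iota \otimes \iota)(J)$. Since twisting preserves the underlying associative algebra of a quasi-Hopf algebra, the commutator identity from the first step is a pure algebra identity and transports verbatim through this isomorphism. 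Under the isomorphism, the twisted generators $E_i^\chi$ and $F_j^\psi$ on the $\omega d\zeta$-side correspond to analogous elements on the $\omega$-side whose data $a_i,b_j$ are rescaled by explicit $\zeta$-factors coming from $f_\zeta$ and $F_J$; absorbing these rescalings produces solutions $a_i^{d\zeta}, b_j^{d\zeta}$ to equation \eqref{eq: equation for the drinfeld double} relative to $\omega d\zeta$ that in addition satisfy the collapse condition $a_i^{d\zeta}(\xi)\,b_i^{d\zeta}(\xi\chi_i) = (\omega d\zeta)(\bar{\chi}_i,\chi_i,\xi)$.

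The main obstacle is precisely this last identification. Since only the ratio $a_i(\xi)b_i(\xi\chi_i)/(\omega d\zeta)(\bar{\chi}_i,\chi_i,\xi)$ enters the simplified commutator, the task reduces to propagating the factor $\zeta$ together with the universal elements $U_L$ and the Drinfeld twist $f$ through the two isomorphisms, and verifying a single coboundary-type identity for $\zeta$. This is tedious but routine, and no conceptual difficulty is expected beyond the bookkeeping.
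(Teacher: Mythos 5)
The first half of your plan — set $a_i=b_j\equiv 1$, invoke the first niceness condition to check they solve Eq.~\eqref{eq: equation for the drinfeld double}, and the second niceness condition to collapse the correction factor $\iota\bigl(\sum_\xi a_i(\xi)b_i(\xi\chi_i)\omega(\bar\chi_i,\chi_i,\xi)^{-1}\delta_\xi\bigr)$ to~$1$ — is precisely the paper's own proof, word for word. That is all the paper does: it establishes the simplified commutator in $D\bigl(B(V)\#k^{\widehat G}_\omega\bigr)$ for nice $\omega$, and says nothing about the $d\zeta$ appearing in the statement. So you have, in effect, proved the same thing the paper proves and then recognised that the statement as written asks for more.

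Where you genuinely depart from the paper is the transport step: you propose to pass from $\omega$ to $\omega d\zeta$ by composing Lemma~\ref{lm: iso betwwen twist of biproducts} with Proposition~\ref{prop: isom between twists of drinfeld doubles} to get an algebra isomorphism $D\bigl(B(V)\#k^{\widehat G}_\omega\bigr)^{\tilde J}\cong D\bigl(B(V)\#k^{\widehat G}_{\omega d\zeta}\bigr)$, and then push the identity through. This is the right idea and it is an honest improvement over the paper, which never explains how its two-line argument reaches the twisted double at all. However, the last paragraph of your proposal is where the actual content of that step lives, and you have left it as ``tedious but routine.'' It is not entirely routine: when you push the image of $E_i^\chi$ (which lives in the $\Gamma$-part) through $F_J$ and $f_\zeta$ and then try to re-express it in the template $c_\chi E_i L_{\bar\chi_i}\iota\bigl(\sum_\xi a_i^{d\zeta}(\xi)\delta_\xi\bigr)$ of the preceding Lemma for $\omega d\zeta$, the resulting $a_i^{d\zeta}$ and $b_j^{d\zeta}$ must be shown to satisfy \emph{both} Eq.~\eqref{eq: equation for the drinfeld double} relative to $\omega d\zeta$ \emph{and} the collapse condition $a_i^{d\zeta}(\xi)\,b_i^{d\zeta}(\xi\chi_i)=(\omega d\zeta)(\bar\chi_i,\chi_i,\xi)$. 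A naive guess such as $a_i^{d\zeta}(\xi)=\zeta(\chi_i,\xi)$, $b_j^{d\zeta}(\xi)=\zeta(\bar\chi_j,\xi)$ (which is what $f_\zeta$ suggests on the $\iota$-side) already produces a residual factor of $\zeta(\chi_i,\bar\chi_i)/\zeta(\bar\chi_i,\chi_i)$ between the two conditions, so one must actually track the additional rescaling that $F_J$ contributes via $U_L$, $J^{(\pm 1)}$, and the Drinfeld twist $f$. Until this bookkeeping is carried out, the reduction to ``a single coboundary-type identity for $\zeta$'' is an assertion, not a proof. State the identity, and verify it; that closes the gap.
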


\begin{proof}
	In case of a nice $3$-cocycle, we can set $a_i,b_j=1$. Together with the second niceness condition this implies
	\begin{align*}
		\iota \left( \sum_{\xi}\, \frac{a_i(\xi)b_i(\xi\chi_i)}{\omega(\bar{\chi}_i,\chi_i,\xi)} (1\# \delta_\xi) \right)=1.
	\end{align*}
\end{proof}

The remaining relations simplify drastically after taking the quotient in Sec. \ref{sec: a quotient}, so we wait until then before we state them.

\subsection{A quotient of the Drinfeld double}\label{sec: a quotient}

In this subsection we define the small quasi-quantum group as a quotient of the Drinfeld double $D:=D\left( B(V)\#k^{\widehat{G}}_\omega\right)$ (see Section \ref{sec: Drinfeld double}) by the biideal $I \subseteq D$ induced by the following map: 

\begin{proposition}
	Let $D:=D\left( B(V)\#k^{\widehat{G}}_\omega\right)$ be the Drinfeld double from Section \ref{sec: Drinfeld double}. Moreover, let $k\widehat{G}$ be the group algebra of $\widehat{G}$. 
	\begin{enumerate}
		\item The following map is an algebra inclusion into the center of $D$:
		\begin{align*}
		j: k\widehat{G} &\longrightarrow Z(D) \\
		\chi &\longmapsto c_{\chi}=(1\# \delta_\chi)^*\bowtie \left( \sum_{\psi \in \widehat{G}} \, \frac{\sigma(\psi,\chi)}{\omega(\psi,\chi,\bar{\chi})}\, (1\# \delta_\psi) \right).
		\end{align*}
		Moreover, we have $\Delta\circ j=j\otimes j \circ \Delta$ and $\epsilon = \epsilon\circ j$.
		\item Set $N^+:=ker\left( \epsilon|_{j\left( k\widehat{G}\right) } \right)$ and 
		\begin{align*}
			I:=N^+D.
		\end{align*}
		Then, $I \subseteq D$ is a biideal, i.e. a two-sided ideal and coideal.
		\item The quotient $D/I$ is a quasi-Hopf algebra with quasi-Hopf structure induced by the quotient map $D \twoheadrightarrow D/I$.
	\end{enumerate}
\end{proposition}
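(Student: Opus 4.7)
The plan is to prove the three parts in order, exploiting the explicit commutation relations between the $c_\chi$'s and the generators $\iota(F_i)$, $E_i = \Gamma((F_i \# \delta_1)^*)$, and $\iota(\delta_\psi)$ that are already implicit in the Drinfeld-double computations of Section \ref{sec: Drinfeld double}.

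For part (1), I would first verify that $j$ is an algebra homomorphism by a direct computation with the diagonal-crossed-product multiplication formula \eqref{eq: product formula}: the element $\Omega$ contributes $\omega$-factors whose product telescopes via the abelian-$3$-cocycle identities to reproduce $\sigma(\psi,\chi\psi')/\omega(\psi,\chi\psi',\bar\chi\bar\psi')$ from the product $c_\chi c_{\psi'}$. Injectivity is immediate since the $c_\chi$ have linearly independent projections onto the $k^{\widehat{G}}$-factor. Centrality is the real technical step: writing $c_\chi = \bar K_{\bar\chi}\bar L_\chi^{-1}$, commutation with every $\iota(\delta_\psi)$ is automatic, while commutation with the $\iota(F_i)$ and the $E_j$ reduces to the relations $K_\chi E_i = \sigma(\chi,\chi_i) E_iK_\chi Q^{-1}_{\chi,\chi_i}$, $\bar K_\chi E_i = \sigma(\chi_i,\chi) E_i\bar K_\chi Q_{\chi,\chi_i}$, and their $F$-analogues stated in Theorem \ref{thm:quasi-Hopf algebra}; in the ratio $\bar K_{\bar\chi}\bar L_\chi^{-1}$ the $\sigma$-factors cancel by the $\sigma/\sigma^T = B$ bihomomorphism and the $Q$-corrections cancel by the $2$-cocycle identity for $\theta(\chi)$. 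Finally, $\Delta\circ j = (j\otimes j)\circ\Delta$ amounts to checking that each $c_\chi$ is grouplike and $\epsilon(c_\chi)=1$: the $\theta$-factor appearing in the definition of $\bar K_\chi$ is tailored exactly to compensate the non-cocommutativity of $\Gamma((1\#\delta_\chi)^*)$ coming from \eqref{eq: coprod on Gamma(varphi)}, whereas the $\omega^{-1}$ normalization in $c_\chi$ adjusts for the associator, so that the coproduct and counit reduce to those of $k\widehat{G}$.

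For part (2), once $j(k\widehat{G}) \subseteq Z(D)$ the right ideal $I = N^+ D$ is automatically two-sided, and $DN^+ = N^+D$. To see it is a coideal, observe that the generators $c_\chi - 1$ of $N^+$ satisfy
\begin{align*}
\Delta(c_\chi - 1) = (c_\chi - 1)\otimes c_\chi + 1\otimes (c_\chi - 1) \in I \otimes D + D \otimes I,
\end{align*}
and this containment is preserved under right multiplication by $D$ using $\Delta(ab) = \Delta(a)\Delta(b)$. Moreover $\epsilon(c_\chi - 1) = 0$, so $\epsilon(I) = 0$. For part (3), the algebra and coalgebra structures on $D/I$ descend by (2). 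The associator $\phi$, the elements $\alpha,\beta$, and the unit $1$ all lie in $\iota(B(V)\#k^{\widehat{G}}_\omega)$ and pass to the quotient unchanged, so the quasi-bialgebra axioms and the antipode axioms are satisfied modulo checking $S(I) \subseteq I$. For this last point, one computes $S(c_\chi)$ from \eqref{eq: antipode on Gamma} and sees that, up to a central invertible factor coming from $\alpha,\beta,f$, one has $S(c_\chi) \in c_{\bar\chi}\cdot Z(D)^\times$; since $c_{\bar\chi} - 1 = -c_{\bar\chi}(c_\chi - 1) \in I$, it follows that $S(N^+) \subseteq I$, whence $S(I) = S(D)\cdot S(N^+) \subseteq D\cdot N^+ = I$.

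The main obstacle I anticipate is the centrality of $c_\chi$ in part (1): although the $Q_{\chi,\cdot}$ and $\sigma$ factors in the commutation relations of Theorem \ref{thm:quasi-Hopf algebra} make the computation algorithmic, verifying that they combine with the $\omega$-factors built into $c_\chi$ to yield a truly central element demands careful bookkeeping with the two defining identities of an abelian $3$-cocycle, and is essentially the reason why the explicit definition $c_\chi = (1\#\delta_\chi)^* \bowtie \sum_\psi \frac{\sigma(\psi,\chi)}{\omega(\psi,\chi,\bar\chi)}(1\#\delta_\psi)$ is forced upon us rather than just $\bar K_{\bar\chi}\bar L_\chi^{-1}$ na\"ively.
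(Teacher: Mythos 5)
Your overall architecture matches the paper's, but there are two places where the argument as written does not quite close.

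For centrality in part (1), you reduce to the commutation relations
$K_\chi E_i = \sigma(\chi,\chi_i) E_iK_\chi Q^{-1}_{\chi,\chi_i}$, etc., as stated in Theorem \ref{thm:quasi-Hopf algebra}. But those relations are asserted for the quotient $u(\omega,\sigma)=D/I$, and establishing that $I$ is a well-defined ideal is exactly what the present proposition is for. You are also silently conflating the two ``Cartan copies'' in $D$: the element $\bar L_\chi = \iota(L_\chi)$ lives in the $\iota$-side and the element $\bar K_\chi$ involves a $\Gamma$-factor; they only coincide after the quotient. The paper instead does the centrality check directly in $D$, verifying $c_\chi \Gamma((F_i\#\delta_\xi)^*) = \Gamma((F_i\#\delta_\xi)^*) c_\chi$ and $c_\chi\,\iota(F_i\#\delta_\xi) = \iota(F_i\#\delta_\xi)\,c_\chi$ with the explicit $\theta$ and $\sigma$ factors. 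Your reduction is plausible in spirit, but to make it rigorous you would first have to establish the $K$-commutation relations already in $D$ before the quotient, with the factors $L$ and $\bar L$ kept separate — at which point you are essentially redoing the direct computation anyway.

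For part (3), your direct check that $S(I)\subseteq I$ has a genuine gap. You claim $S(c_\chi)$ equals $c_{\bar\chi}$ up to a central invertible factor $z$ coming from $\alpha,\beta,f$, and then conclude from $c_{\bar\chi}-1\in I$. But $S(c_\chi)-1 = z(c_{\bar\chi}-1) + (z-1)$, and the second term $z-1$ need not lie in $I$: the ideal $I$ is generated by $N^+ = \ker(\epsilon|_{j(k\widehat{G})})$, not by all augmentation-zero central elements. You would need to show either that $z=1$ exactly (i.e.\ $j$ commutes with the antipodes on the nose, which is an extra computation you do not perform), or that $z\in j(k\widehat{G})$. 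The paper avoids this entirely by citing Schauenburg's Theorem 2.1 in \cite{Schauenburg2005}: for a finite-dimensional quasi-Hopf algebra, a quotient by a biideal is automatically a quasi-Hopf algebra, so $S$ descends for free. That citation is not just cosmetic — it is exactly what spares one from the $z$-factor bookkeeping that is missing from your argument.

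Parts (1)'s algebra-homomorphism computation and part (2) are essentially what the paper does.
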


\begin{proof}
	We first show that $j$ is an algebra homomorphism. We have
	\begin{align*}
		c_{\chi \psi} &= (1\#\delta_{\chi\psi})^*\bowtie \left( \sum_{\xi \in \widehat{G}}\, \frac{\sigma(\xi,\psi\chi)}{\omega(\xi,\chi\psi,\bar{\chi}\bar{\psi})}\, (1\# \delta_\xi) \right) \\
		&=(1\#\delta_{\chi})^*(1\#\delta_{\psi})^*\bowtie\left( \sum_{\xi \in \widehat{G}}\, \theta(\xi|\chi,\psi)^{-1} \frac{\sigma(\xi,\psi)\sigma(\xi,\chi)}{\omega(\xi,\chi\psi,\bar{\chi}\bar{\psi})}\, (1\# \delta_\xi) \right)	\\
		&=\sum_{\xi \in \widehat{G}}\, \theta(\xi|\chi,\psi)^{-1} \frac{\sigma(\xi,\psi)\sigma(\xi,\chi)}{\omega(\xi,\chi\psi,\bar{\chi}\bar{\psi})}\, (1\#\delta_{\chi})^*(1\#\delta_{\psi})^*\bowtie(1\# \delta_\xi) \\
		&=\sum_{\xi,\nu \in \widehat{G}}\,  \frac{\sigma(\xi,\psi)\sigma(\xi,\chi)}{\omega(\xi,\chi,\bar{\chi})\omega(\xi,\psi,\bar{\psi})}\, ((1\#\delta_{\chi})^*\bowtie(1\# \delta_\xi))((1\#\delta_{\psi})^*\bowtie(1\# \delta_\nu))  \\
		&=c_\chi c_\chi.
	\end{align*}
	
	Obviously, $j$ preserves the unit. We continue with the coproduct:
	\begin{align*}
		\Delta(c_\chi)&= \Delta(\Gamma(1\#\delta_{\chi})^*))\left(\sum_{\psi \in \widehat{G}} \, \sigma(\psi,\chi)\, \Delta(1\# \delta_\psi) \right) \\
		&= \Gamma((1\#\delta_{\chi})^*)\otimes\Gamma((1\#\delta_{\chi})^*) \left(\sum_{\psi \in \widehat{G}} \, \theta(\chi|\psi,\chi)^{-1} \sigma(\psi\xi,\chi)\, (1\# \delta_\psi)\otimes (1\# \delta_\xi) \right)\\
		&=\Gamma((1\#\delta_{\chi})^*)\otimes\Gamma((1\#\delta_{\chi})^*) \left(\sum_{\psi \in \widehat{G}} \, \sigma(\psi,\chi)\sigma(\xi,\chi)\, (1\# \delta_\psi)\otimes (1\# \delta_\xi) \right)\\
		&=c_{\chi}\otimes c_{\chi}.
	\end{align*}
	The counit is again trivial. Next, we check that $j$ takes values in the center. It suffices to show that $c_\chi$ commutes with elements of the form $\Gamma((F_i\#\delta_{\chi})^*)$ and $\iota(F_j\#\delta_{\psi})$:
	\begin{align*}
		c_{\chi}\Gamma((F_i\#\delta_{\xi})^*)&=\Gamma((1\#\delta_{\chi})^*)\bowtie \left( \sum_{\psi \in \widehat{G}} \, \sigma(\psi,\chi)\, (1\# \delta_\psi)\Gamma((F_i\#\delta_{\xi})^*) \right)\\
		&=\Gamma((1\#\delta_{\chi})^*)\bowtie \left( \sum_{\psi \in \widehat{G}} \, \sigma(\psi,\chi)\, \Gamma((F_i\#\delta_{\xi})^*)(1\# \delta_{\bar{\chi}_i\psi}) \right)\\
		&=\Gamma((1\#\delta_{\chi})^*)\Gamma((F_i\#\delta_{\xi})^*)\bowtie \left( \sum_{\psi \in \widehat{G}} \, \sigma(\psi\chi_i,\chi)\, (1\# \delta_{\psi}) \right)\\
		&=\sigma(\chi_i,\chi)^{-1}\Gamma((F_i\#\delta_{\xi})^*)\Gamma((1\#\delta_{\chi})^*)\bowtie \left( \sum_{\psi \in \widehat{G}} \, \theta(\chi|\psi,\chi_i)\sigma(\psi\chi_i,\chi)\, (1\# \delta_{\psi}) \right)\\
		&=\Gamma((F_i\#\delta_{\xi})^*)\Gamma((1\#\delta_{\chi})^*)\bowtie \left( \sum_{\psi \in \widehat{G}} \, \sigma(\psi,\chi)\, (1\# \delta_{\psi}) \right)\\
		&=\Gamma((F_i\#\delta_{\xi})^*)c_{\chi}.
	\end{align*}
	On the other hand,
	\begin{align*}
		c_{\chi}(F_i\#\delta_\xi)&=\Gamma((1\#\delta_{\chi})^*)\bowtie \left( \sum_{\psi \in \widehat{G}} \, \sigma(\psi,\chi)\, (1\# \delta_\psi)(F_i\#\delta_{\xi}) \right) \\
		&=\Gamma((1\#\delta_{\chi})^*)\bowtie \left( \sum_{\psi \in \widehat{G}} \, \sigma(\psi,\chi)\, (F_i\#\delta_{\xi})(1\# \delta_{\psi\chi_i}) \right) \\
		&=\Gamma((1\#\delta_{\chi})^*)\bowtie(F_i\#\delta_{\xi}) \left( \sum_{\psi \in \widehat{G}} \, \sigma(\psi\bar{\chi}_i,\chi)\, (1\# \delta_{\psi}) \right) \\
		&=\sigma(\bar{\chi}_i,\chi)^{-1}(F_i\#\delta_{\xi})\Gamma((1\#\delta_{\chi})^*) \left( \sum_{\psi \in \widehat{G}} \, \theta(\chi|\psi,\bar{\chi}_i)^{-1}\sigma(\psi\bar{\chi}_i,\chi)\, (1\# \delta_{\psi}) \right) \\
		&=(F_i\#\delta_{\xi})\Gamma((1\#\delta_{\chi})^*) \left( \sum_{\psi \in \widehat{G}} \, \sigma(\psi,\chi)\, (1\# \delta_{\psi}) \right) \\
		&=(F_i\#\delta_\xi)c_{\chi}.
	\end{align*}	
	This proves the first part of the proposition. We now come to the second part. 
	Since $j(k\widehat{G}) \subseteq Z(D)$, we have $DN^+=N^+D$, hence $I\subseteq D$ is an both-sided ideal.
	As a kernel of a coalgebra homomorphism, $N^+ \subseteq D$ is a coideal and so is $I=HN^+$. The fact that we are dealing with non-coassociative coalgebras plays no role so far. We have shown that $I$ is a biideal.\\
	As it is stated in \cite{Schauenburg2005}, Section 2, this is equivalent to $D/I$ being a quotient quasi-bialgebra. Since $D$ is a finite dimensional quasi-Hopf algebra, we can apply Thm. 2.1 in \cite{Schauenburg2005} in order to prove that $D/I$ is a quasi-Hopf algebra.
\end{proof}

\begin{definition}
	Let $G$ be a finite abelian group, $(\omega,\sigma) \in Z^3_{ab}(\widehat{G})$ an abelian $3$-cocycle on the dual group $\widehat{G}$ and $\{\, \chi_i \in \widehat{G} \, \}_{1\leq i \leq n} \subseteq \widehat{G}$ a subset of $\widehat{G}$, s.t. the corresponding Nichols algebra $B(V)$ is finite dimensional. Then we define the small quasi-quantum group corresponding to that data to be the quotient
	\begin{align*}
		u(\omega,\sigma):=D\left( B(V)\#k^{\widehat{G}}_\omega\right)/I.
	\end{align*}
\end{definition}

It is clear that in the quotient $u(\omega,\sigma)$, we have $\bar{L}_{\chi}=\bar{K}_{\bar{\chi}}$. In order to get rid of $L$'s, we also define $K_\chi:= L_{\bar{\chi}}$. The relations for $u(\omega,\sigma)$ are collected in Thm. \ref{thm:quasi-Hopf algebra}.

\begin{example}
	We continue with Exp. \ref{ex: azads exampl, pt2}. As we have quasi-Hopf inclusions $u(\bar{\omega},\bar{\sigma})^{\leq} \subseteq u(\bar{\omega},\bar{\sigma})$ and $\left( u(\bar{\omega},\bar{\sigma})^{\leq}\right)^J  \subseteq u(\bar{\omega},\bar{\sigma})^J $, where the twist $J$ is defined in Exp. \ref{ex: azads exampl, pt2}, the coproducts of the generator $F$ in $u(\bar{\omega},\bar{\sigma})$ and $u(\bar{\omega},\bar{\sigma})^J$ agree with the ones in Exp. \ref{ex: azads exampl, pt2}.  Since $|F|=[\bar{\chi}]^2$, it is easy to see that $(\bar{\omega},\bar{\sigma})$ is a nice abelian $3$-cocycle in the sense of Def. \ref{def: nice 3-cocycle} and $Q_{\bar{|F|},|F|}=1=Q_{\bar{|F|},\bar{|F|}}$, where $Q_{\chi,\psi}=\sum_{\xi}\,\theta(\chi|\psi,\xi)\,\delta_\xi$. Moreover, we have $K_{\psi}=\bar{K}_{\psi}$ for all $\psi \in \widehat{G}/T$. Setting $K:=K_{\bar{|F|}}$, the algebra relations from Thm. \ref{thm:quasi-Hopf algebra} then simplify as follows:
	\begin{align*}
		K^\ell=1\, \qquad KE=q^2EK, \qquad KF=q^{-2}FK, \qquad EF-FE=K^{-1}-K.
	\end{align*}
	Moreover, from \ref{cor: rel's Nichols algebra} we obtain the relations:
	\begin{align*}
		F^{\ell/2}=0, \qquad E^{\ell/2}=0.
	\end{align*}
	These relations don't change if we go from $u(\bar{\omega},\bar{\sigma})$ to $ u(\bar{\omega},\bar{\sigma})^J$.
\end{example}

\begin{proposition}\label{prop: split monomorphism}
	The following map is a monomorphism of quasi-Hopf algebras, which is split as a coalgebra homomorphism:
	\begin{align*}
		\xi:k^{\widehat{G}}_\omega &\longrightarrow u(\omega,\sigma) \\
		\delta_\chi & \longmapsto \left[ \iota(1\#\delta_\chi) \right]. 
	\end{align*}
\end{proposition}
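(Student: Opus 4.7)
The plan is to factor $\xi$ as the composition
$$k^{\widehat{G}}_\omega \;\hookrightarrow\; B(V)\#k^{\widehat{G}}_\omega \;\xrightarrow{\iota}\; D\bigl(B(V)\#k^{\widehat{G}}_\omega\bigr) \;\twoheadrightarrow\; u(\omega,\sigma),$$
in which the first arrow is the quasi-Hopf inclusion of Prop.~\ref{prop: RelationsRadfordBiproduct}, the second is the Hausser--Nill quasi-Hopf inclusion of Thm.~\ref{thm: maps iota and Gamma}, and the third is the canonical quasi-Hopf quotient by $I$. Each factor is a morphism of quasi-Hopf algebras, so $\xi$ is automatically one, and only the existence of a coalgebra left inverse remains.

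For this I would use the triangular decomposition $u(\omega,\sigma)\cong B(V)\otimes k^{\widehat{G}}\otimes B(V^*)$ of Thm.~\ref{thm:quasi-Hopf algebra}(5) to define a linear map
$$p\colon u(\omega,\sigma)\longrightarrow k^{\widehat{G}}_\omega,\qquad F\otimes\delta_\chi\otimes E\;\longmapsto\;\epsilon_{B(V)}(F)\,\epsilon_{B(V^*)}(E)\,\delta_\chi,$$
that is, the projection onto the middle $k^{\widehat{G}}$-slot followed by the counits on $B(V)$ and $B(V^*)$. By construction $p\circ\xi=\mathrm{id}$, so once $p$ is shown to be coalgebraic, injectivity of $\xi$ follows and the splitting claim is realised simultaneously.

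The coalgebra property of $p$ is the manifestation of the fact that $k^{\widehat{G}}_\omega$ is the coradical of $u(\omega,\sigma)$: for Drinfeld doubles of Radford biproducts over cosemisimple bases the coradical filtration is inherited from the tensor-degree filtration of $B(V)\otimes B(V^*)$, and $p$ is precisely the canonical coalgebra retraction onto degree zero. To verify it hands-on I would proceed by induction on the PBW-degree. On the generator $\delta_\chi$ the identity $\Delta(\delta_\chi)=\sum_{\chi_1\chi_2=\chi}\delta_{\chi_1}\otimes\delta_{\chi_2}$ obviously lies in the image of $\xi\otimes\xi$ and is preserved by $p\otimes p$. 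On the generators $F_i$ and $E_i$ the coproducts displayed in Thm.~\ref{thm:quasi-Hopf algebra} have every summand containing an $F_i$-, respectively $E_i$-factor in one of the two tensor slots, so $(p\otimes p)\Delta(F_i)=0=\Delta(p(F_i))$ and similarly for $E_i$.

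The principal obstacle is the inductive step for mixed PBW monomials $F_I\,\delta_\chi\,E_J$, where the braided commutators of Thm.~\ref{thm:quasi-Hopf algebra}, namely $[E_i^aK_{\chi_i},F_j^b]_\sigma=\delta_{ij}\sigma(\chi_i,\bar\chi_i)(1-K_{\chi_i}\bar K_{\chi_i})(\ldots)$, produce genuine Cartan contributions that survive $p$. One has to show that the coproduct of such a Cartan contribution equals the image under $p\otimes p$ of the coproduct of the left-hand side; using the explicit formulas for $\Delta(K_\chi)$ and $\Delta(\bar K_\chi)$ together with the $2$-cocycle $\theta$ controlled by $\omega$, this reduces to a finite cocycle identity already embedded in the very construction of $u(\omega,\sigma)$. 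Once this is verified, $p$ is a well-defined coalgebra retraction of $\xi$, completing the proof.
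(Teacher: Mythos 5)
Your route differs from the paper's in the crucial step. The paper obtains the coalgebra splitting \emph{functorially}: the split Radford retraction $p\colon B(V)\#k^{\widehat{G}}_\omega\to k^{\widehat{G}}_\omega$ gives, by Prop.~\ref{prop: isos of QHA's induce isos of Drinfeld doubles}, a coalgebra retraction $\Phi_p$ between Drinfeld doubles with a companion quasi-Hopf monomorphism $\Phi_i$; both descend through the quotients by the central ideals $\tilde I$ and $I$, and then $D(k^{\widehat{G}}_\omega)/\tilde I$ is identified with $k^{\widehat{G}}_\omega$ itself by a surjectivity plus Nichols--Zoeller dimension count. You instead define the retraction $p$ directly from the triangular decomposition of Thm.~\ref{thm:quasi-Hopf algebra}(5) and try to verify the coalgebra property by hand. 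Your $p$ is, after unwinding, the same map as $f^{-1}\circ[\Phi_p]$; the factorization of $\xi$ is fine, and $p\circ\xi=\mathrm{id}$ is clear.

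The gap is in the verification that $p$ is a coalgebra map. The heuristic appeal -- ``$p$ is the canonical coalgebra retraction onto degree zero of the coradical filtration'' -- is not a valid general principle: for a pointed coalgebra, projecting onto the coradical along a chosen complement is not automatically a coalgebra map, and $u(\omega,\sigma)$ does \emph{not} admit a Hopf-algebra projection onto $k^{\widehat G}_\omega$ (precisely because the mixed $E,F$ relations produce Cartan terms), so one cannot argue via a Radford-biproduct structure on $u(\omega,\sigma)$ itself. The promised induction over PBW monomials is only sketched; the crux -- showing $(p\otimes p)\Delta(F_I\,\delta_\chi\,E_J)=\Delta(p(F_I\,\delta_\chi\,E_J))$ for mixed monomials -- is dismissed as ``a finite cocycle identity already embedded in the construction'' without being identified or checked. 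In the quasi-Hopf setting this step is genuinely delicate: the coproduct is multiplicative but only quasi-coassociative, rewriting into PBW order uses the mixed commutation relations carrying the $2$-cocycle $\theta(\cdot\,|\,\cdot,\cdot)$, and the Cartan contributions $K_\chi,\bar K_\chi$ that survive $p$ are not grouplike (their coproducts carry the correction $P_\chi^{\pm1}$). The paper's functorial route through $\Phi_p$ avoids having to verify any of this by hand, which is exactly what makes its proof close.
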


\begin{proof}
	It is clear that the inclusion $i:k^{\widehat{G}}_\omega \to B(V)\#k^{\widehat{G}}_\omega$, $\delta_\chi \mapsto 1\#\delta_\chi$ is a split monomorphism, with left inverse denoted by $p$. We saw in Prop. \ref{prop: isos of QHA's induce isos of Drinfeld doubles} that it must therefore induce a quasi-Hopf algebra monomorphism $\Phi_i:D\left(k^{\widehat{G}}_\omega \right)  \to D\left( B(V)\#k^{\widehat{G}}_\omega \right) $ with left inverse $\Phi_p$ being a coalgebra homomorphism. Since the inclusion $j:k\widehat{G} \to D\left( B(V)\#k^{\widehat{G}}_\omega \right)$ factors through $D\left(k^{\widehat{G}}_\omega \right)$, the following diagram commutes:
	\begin{equation*}
		\begin{tikzcd}
			D\left(k^{\widehat{G}}_\omega \right) \arrow[hookrightarrow]{r}{\Phi_i}\arrow[twoheadrightarrow]{d} &D\left( B(V)\#k^{\widehat{G}}_\omega \right) \arrow{r}{\Phi_p}\arrow[twoheadrightarrow]{d} &D\left(k^{\widehat{G}}_\omega\right) \arrow[twoheadrightarrow]{d}\\
			D\left(k^{\widehat{G}}_\omega \right)/\tilde{I} \arrow[hookrightarrow]{r}{[\Phi_i]}& D\left( B(V)\#k^{\widehat{G}}_\omega \right)/I \arrow{r}{[\Phi_p]}& D\left(k^{\widehat{G}}_\omega\right)/\tilde{I},
		\end{tikzcd}
	\end{equation*}
	where $\tilde{I}:=N^+D\left(k^{\widehat{G}}_\omega \right)$. If we show, that the map $f:k^{\widehat{G}}_\omega \to D\left(k^{\widehat{G}}_\omega \right)/\tilde{I}$ given by $\delta_\chi \mapsto [\iota(\delta_\chi)]$ is an isomorphism, we can define a left inverse of $\xi$ by $f^{-1}\circ[\Phi_p]$.\\
	It is clear that $f$ is surjective, since for an arbitrary element $\sum_{\chi,\psi}\, a(\chi,\psi) \Gamma((1\#\delta_{\chi})^*)\iota(\delta_\psi) \in D\left(k^{\widehat{G}}_\omega\right)$, we obtain
	\begin{align*}
		\left[ \sum_{\chi,\psi}\, a(\chi,\psi) \Gamma((1\#\delta_{\chi})^*)\iota(\delta_\psi)\right] &=
		\left[ \sum_{\chi,\psi}\, a(\chi,\psi)c_{\chi}\bar{K}_{\chi}^{-1}\iota(\delta_\psi)\right]  
		=\left[ \sum_{\chi,\psi}\, \frac{a(\chi,\psi)}{\sigma(\psi,\chi)}c_{\chi}\iota(\delta_\psi)\right]  \\
		&=\left[ \sum_{\chi,\psi}\, \frac{a(\chi,\psi)}{\sigma(\psi,\chi)}\iota(\delta_\psi)\right] 
		=f\left( \sum_{\chi,\psi}\, \frac{a(\chi,\psi)}{\sigma(\psi,\chi)}\delta_\psi\right) ,
	\end{align*}
	where we used that $[c_\chi]=[1]$ holds in the quotient. Moreover, we have
	\begin{align*}
		dim\left(D\left(k^{\widehat{G}}_\omega\right)/\tilde{I} \right)=dim\left(D\left(k^{\widehat{G}}_\omega\right)\right)/dim\left(k\widehat{G}\right)=|G|
	\end{align*} 
	by the quasi-Hopf algebra version of the Nichols-Zoeller theorem (see \cite{Schauenburg2004}). Since $dim\left( k^{\widehat{G}}_\omega\right)=|G|$, $f$ must be an isomorphism.
\end{proof}

In the following, we will usually omit the map $\xi$.

\begin{remark}
	\begin{enumerate}
		\item We can identify the group part $\left( k^{\widehat{G}}\right) ^*\otimes k^{\widehat{G} }\subseteq D$ with $D^\omega(\widehat{G})$ from Exp. \ref{ex: PDR group double} via
		\begin{align*}
			D^\omega(\widehat{G}) &\longrightarrow \left( k^{\widehat{G}}\right) ^*\otimes k^{\widehat{G}} \subseteq D \\
			\chi \otimes \delta_{\psi} &\longmapsto \Gamma\left( (1\#\delta_\chi)^* \right)\iota(\delta_{\psi}).
		\end{align*}		
		\item Note that the elements $K_\chi$ and $\bar{K}_\psi$ do not necessarily generate the group part $k^{\widehat{G}} \subseteq u(\omega,\sigma)$ and are not grouplike in general. 
	\end{enumerate}
\end{remark}

\subsection{The $R$-matrix of $u(\omega,\sigma)$}\label{sec:R-matrix}
By Theorem 3.9 in \cite{HausserNill1999a}, the Drinfeld double $D(H)$ of a quasi-Hopf algebra $H$ has the structure of a quasi-triangular quasi-Hopf algebra. Using their formula, we compute the $R$-matrix for the case $H=B(V)\# k^{\widehat{G}}_\omega$:
\begin{align*}
\tilde{R}&= \sum_{b \in \mathcal{B}} \sum_{\chi \in \widehat{G}} \, \iota (b \# \delta_\chi)  \otimes \Gamma\left(  (b \# \delta_{\chi})^* \right).
\end{align*} 
Here, $\mathcal{B}$ is a basis of the Nichols algebra $B(V)$. In order to match our results with the Hopf case described in \cite{Lusztig1993}, we will work with the reverse $R$-matrix
\begin{align*}
	R:=\left( \tilde{R}^T\right)^{-1}.
\end{align*}

The following Lemma is an easy exercise:

\begin{lemma}
	Let $H$ and $H'$ be quasi-Hopf algebras and $\varphi:H \to H'$ a surjective homomorphism of quasi-Hopf algebras. If $R \in H\otimes H$ is an $R$-matrix in $H$, then $R':=(\varphi \otimes \varphi)(R)$ is an $R$-matrix in $H'$. Moreover, if $\nu \in H$ is a ribbon in $(H,R)$, then $\nu':=\varphi(\nu)$ is a ribbon element in $(H',R')$.
\end{lemma}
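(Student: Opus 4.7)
The plan is to verify each of the defining axioms for $R'$ and $\nu'$ by applying $\varphi$ (or $\varphi \otimes \varphi$, etc.) to the corresponding axiom in $H$ and using that $\varphi$ preserves all the quasi-Hopf structure data: multiplication, unit, coproduct, counit, antipode $S$, associator $\phi$, and the elements $\alpha, \beta$. Surjectivity enters only when we need the resulting identity to hold for \emph{all} $h' \in H'$ rather than just for those in the image of~$\varphi$ (which of course is already all of $H'$, so surjectivity is mainly used to write $h' = \varphi(h)$ in the quasi-cocommutativity relation).

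First I would handle the $R$-matrix claim. Invertibility of $R'$ is immediate: since $\varphi$ is an algebra homomorphism, $(\varphi\otimes\varphi)(R^{-1})$ is a two-sided inverse of $R'$. For the quasi-cocommutativity axiom, given $h' = \varphi(h)$ I apply $\varphi \otimes \varphi$ to $R\Delta(h) = \Delta^{op}(h)R$ in $H\otimes H$ and use $(\varphi\otimes\varphi)\circ\Delta = \Delta'\circ\varphi$ to conclude $R'\Delta'(h') = \Delta'^{op}(h')R'$. For the two hexagon identities, I apply $\varphi^{\otimes 3}$ to
\[
(\Delta\otimes\id)(R)=\phi_{321}R_{13}\phi_{132}^{-1}R_{23}\phi,
\qquad (\id\otimes\Delta)(R)=\phi_{231}^{-1}R_{13}\phi_{213}R_{12}\phi^{-1},
\]
using that $\varphi^{\otimes 3}(\phi) = \phi'$ by the definition of a quasi-Hopf algebra homomorphism. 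This gives the hexagons for $R'$ directly.

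Next I would verify the ribbon axioms for $\nu' = \varphi(\nu)$. Centrality is immediate: for any $h' = \varphi(h)$ we have $\nu'h' = \varphi(\nu h) = \varphi(h\nu) = h'\nu'$. The key observation for the identity $\nu'^{\,2} = u'S'(u')$ is that the Drinfeld element $u = S(R^2 p^2)\alpha R^1 p^1$ (see Lemma \ref{lm: Drinfeld element}) is built entirely from $R$, $S$, $\alpha$, and $p_R = x^1 \otimes x^2\beta S(x^3)$, all of which are preserved by $\varphi$ (since $\varphi$ sends $\phi^{-1}$ to $\phi'^{-1}$ and $\beta$ to $\beta'$). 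Hence $\varphi(u) = u'$ and therefore $\varphi(\nu^2) = \varphi(uS(u)) = u'S'(u')$, which equals $\nu'^{\,2}$ since $\varphi$ is an algebra map. The remaining ribbon axioms $S'(\nu') = \nu'$ and $\epsilon'(\nu') = 1$ follow by applying $\varphi$ and $\epsilon' = \epsilon\circ\varphi^{-1}|_{\mathrm{im}\,\varphi}$ respectively to $S(\nu) = \nu$ and $\epsilon(\nu) = 1$; and finally $\Delta'(\nu') = (\nu'\otimes\nu')(R'_{21}R')^{-1}$ is obtained by applying $\varphi\otimes\varphi$ to the corresponding identity $\Delta(\nu) = (\nu\otimes\nu)(R_{21}R)^{-1}$ in $H\otimes H$.

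I do not foresee any genuine obstacle: the entire argument is bookkeeping that uses only that $\varphi$ is a morphism of quasi-Hopf algebras (so all the structural data transport under $\varphi$) together with surjectivity for quantification over $h'\in H'$. This is why the authors flag it as an easy exercise.
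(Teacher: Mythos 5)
Your proof is correct and is exactly the routine transport-of-structure argument the authors intend when they call the lemma an easy exercise (the paper itself gives no proof). One small presentational remark: the phrase $\epsilon' = \epsilon\circ\varphi^{-1}|_{\mathrm{im}\,\varphi}$ is awkward since $\varphi$ need not be injective; the correct way to phrase what you use is simply $\epsilon'\circ\varphi=\epsilon$ (counit preservation), from which $\epsilon'(\nu')=\epsilon(\nu)=1$ follows immediately, and this does not affect the validity of the argument.
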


By the previous Lemma, we can transport the $R$-matrix from $D(H)$ to an $R$-matrix of $u(\omega,\sigma)$. By abuse of notation, we will denote this $R$-matrix also by $R$.

\begin{remark}
	We saw in Section \ref{sec: YDM} that instead of $\sigma$, we could have taken $(\sigma^T)^{-1}$ in order to define our Yetter-Drinfeld module. If we would have defined $u(\omega,\sigma)$ as an algebra over $k_{\sigma}:=k\left( \sigma(\chi,\psi)| \chi,\psi \in \widehat{G} \right)$, then the (well-defined) involution $k_{\sigma} \to k_{\sigma}$ given by $\sigma(\chi,\psi) \mapsto \sigma(\psi,\chi)^{-1}$ would induce an involution $i:u(\omega,\sigma) \to u(\omega,(\sigma^T)^{-1})$ with $i(F_i)=F_i$, $i(E_i)=E_i$ and $i(K_\chi)=\bar{K}_{\chi}^{-1}$.
\end{remark}

From now on, we will omit the quotient map $[\_]:D\left(B(V)\#k^{\widehat{G}}_\omega \right) \to u(\omega,\sigma)$.

\begin{proposition}\label{prop: R-matrix}
	We define elements in $u(\omega,\sigma)^{\otimes 2}$:
	\begin{align*}
		\Theta:&= \sum_{b \in \mathcal{B}}\, \left( \Gamma((b\#\delta_1)^*) \otimes \iota(b)\right)\gamma_{|b|},
		 \quad \text{where} \quad \gamma_{|b|}:= \sum_{\chi,\psi \in \widehat{G}} \, \omega(\chi|\bar{b}|,|b|,\psi) \delta_\chi \otimes \delta_\psi\\
		R_0:&=\sum_{\chi,\psi \in \widehat{G}}\, \sigma(\chi,\psi)\, \delta_\chi \otimes \delta_\psi.
	\end{align*}
	They have the following properties:
	\begin{enumerate}
		\item $\tilde{R}$ decomposes as $\tilde{R}=\Theta^T(R_0^T)^{-1}$, in particular we have $R=R_0\Theta^{-1}$ for the $R$-matrix of $u(\omega,\sigma)$.
		\item Let $\bar{\Delta}$ denote the coproduct in $u(\omega,(\sigma^T)^{-1})$. Then
		\begin{align*}
			\Delta^{op}(h)R_0=R_0\bar{\Delta}(h).
		\end{align*}
		\item The element $\Theta$ is a quasi-$R$-matrix in the sense of \cite{Lusztig1993}, i.e.
		\begin{align*}
			\Delta(h)\Theta = \Theta\bar{\Delta}(h).
		\end{align*}
		Moreover, we have
		\begin{align*}
			\Theta\Delta(h) = \bar{\Delta}(h)\Theta \qquad \text{and} \qquad \Theta^2=1.
		\end{align*}
		\item The Drinfeld element $u \in u(\omega,\sigma)$ (see Lemma \ref{lm: Drinfeld element}) is given by
		\begin{align*}
			u=u_0 \left( \sum_{b \in \mathcal{B}}\,\iota(S_{B(V)}^2(b))\bar{K}_{|b|}\Gamma((b\#\delta_1)^*)\right) 
			= u_0 \left( \sum_{b \in \mathcal{B}}\,\lambda_b\,\iota(b) \bar{K}_{|b|}\Gamma((b\#\delta_1)^*)\right),		
		\end{align*}
		where $u_0 := \sum_{\chi} \,\sigma(\chi,\bar{\chi})\,\delta_{\chi}$ and
		\begin{align*}
			\lambda_b= \prod_{k<l}\,\Beta(\bar{\chi}_{i_k},\bar{\chi}_{i_l}) \qquad \text{for} \qquad b=(\dots(F_{i_1}F_{i_2})\dots)F_{i_n}.
		\end{align*}
	\end{enumerate}
\end{proposition}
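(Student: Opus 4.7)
The plan is to prove the four parts in order, leveraging the explicit Drinfeld double formula for $\tilde R$ and the factorization of the basis of $H := B(V)\#k^{\widehat{G}}_\omega$ as $\{b\#\delta_\chi\}$ with $b\in\mathcal B$ and $\chi\in\widehat G$. The underlying intuition is that $\tilde R$ should split into a ``Nichols part'' coming from the $b$-sum and a ``Cartan part'' coming from the $\chi$-sum; the former yields $\Theta$, the latter $R_0$, with the $\omega$-dependent corrections $\gamma_{|b|}$ accounting for the non-coassociativity.

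For part $(1)$, my first step is to write $\iota(b\#\delta_\chi)=\iota(b)\,\iota(\delta_\chi)$, which is immediate because $\iota$ is an algebra homomorphism. The harder factor is $\Gamma((b\#\delta_\chi)^*)$: using Theorem~\ref{thm: maps iota and Gamma} to rewrite it via $\Gamma(\varphi)=(p^1_{(1)}\rightharpoonup\varphi\leftharpoonup S^{-1}(p^2))\bowtie p^1_{(2)}$, and dualizing the coproduct of $H$ from Proposition~\ref{prop: RelationsRadfordBiproduct}, I expect $\Gamma((b\#\delta_\chi)^*)$ to decompose as a product of $\Gamma((b\#\delta_1)^*)$ and $\Gamma((1\#\delta_\chi)^*)$ up to an $\omega$-factor involving $|b|$, $\bar{|b|}$, and $\chi$. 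Summing over $\chi$, the $\Gamma((1\#\delta_\chi)^*)$ factors combine with $\iota(\delta_\chi)$ precisely into $R_0^{-1}$ (viewing $R_0^{-1}=\sum_{\chi,\psi}\sigma(\chi,\psi)^{-1}\delta_\chi\otimes\delta_\psi$), while the residual $\omega$-factor assembles into $\gamma_{|b|}$. Transposing yields $R=R_0\Theta^{-1}$.

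For part $(2)$, the claim is that $R_0$ intertwines $\Delta$ with the opposite coproduct from $u(\omega,(\sigma^T)^{-1})$. My plan is to verify this on generators $E_i,F_j,K_\chi,\bar{K}_\chi$ by direct computation using the relations in Theorem~\ref{thm:quasi-Hopf algebra}; the key observation is that $R_0=\sum_\chi\delta_\chi\otimes K_\chi$, so multiplication by $R_0$ on either side swaps the roles of the two tensor factors in a way that exactly exchanges $\sigma(\chi,\psi)$ with $\sigma(\psi,\chi)^{-1}$, which is the only change in going from $\Delta$ to $\bar\Delta$. For part $(3)$, the intertwining $\Delta(h)\Theta=\Theta\bar\Delta(h)$ follows formally by combining $(1)$ with $(2)$ and the fact that $\tilde R$ intertwines $\Delta$ and $\Delta^{op}$ (Drinfeld double axiom). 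The mirror relation $\Theta\Delta(h)=\bar\Delta(h)\Theta$ then follows from applying the same argument to the reverse $R$-matrix $R_{21}^{-1}$. Finally, $\Theta^2=1$ is a reformulation of the fact that $\Theta$ is the canonical element for the Nichols-algebra pairing paired with its dual; concretely, multiplying out $\Theta\cdot\Theta$ and using $\iota(b)\Gamma((b'\#\delta_1)^*)$ together with the completeness of the dual basis collapses the double sum to the identity.

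For part $(4)$, I plug the decomposition $R=R_0\Theta^{-1}$ into $u=S(R^2p^2)\alpha R^1p^1$ and separate the contributions. The Cartan factor yields $u_0=\sum_\chi\sigma(\chi,\bar\chi)\delta_\chi$ after using the explicit formulas for $p_R$, $\alpha$, $S$ on $k^{\widehat G}_\omega$. The Nichols factor produces $\sum_b\iota(S_{B(V)}^2(b))\bar K_{|b|}\Gamma((b\#\delta_1)^*)$; the intertwining via $\bar K_{|b|}$ is what carries over the ``Drinfeld twist'' in the quasi-Hopf setting. The second equality $S_{B(V)}^2(b)=\lambda_b\,b$ is a standard fact about Nichols algebras of diagonal type: one proves by induction on the length of $b=(\cdots(F_{i_1}F_{i_2})\cdots)F_{i_n}$ that $S^2$ acts by the squared self-braiding scalar, giving exactly $\lambda_b=\prod_{k<l}\Beta(\bar\chi_{i_k},\bar\chi_{i_l})$.

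The main obstacle, as in most quasi-Hopf computations, will be tracking the $\omega$-cocycle factors with care. In particular, the emergence of $\gamma_{|b|}$ in $(1)$ requires matching a fairly intricate product of $\omega$-factors coming from the dualization of the Radford biproduct coproduct and the formula for $\Gamma$, and the separation of Cartan versus Nichols contributions in $(4)$ needs the antipode and the elements $p_R,q_R$ to split compatibly across the two parts. Once the bookkeeping is organized (e.g.\ using the identity $\omega\cdot\omega^T=d\sigma^{-1}$ and the $2$-cocycle $\theta$ from Remark~\ref{rm: dual of YDM}), the remaining verifications are systematic applications of the abelian $3$-cocycle axioms.
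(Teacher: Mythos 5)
Your overall architecture matches the paper: decompose $\Gamma((b\#\delta_\chi)^*)$ into $\Gamma((b\#\delta_1)^*)$ times a Cartan piece to get (1), check (2) on generators, derive (3) formally from (1)--(2) and the Drinfeld double axioms, and obtain (4) via the Drinfeld element formula plus a scalar computation of $S_{B(V)}^2$. But two of your steps in part (3) rest on ideas that either do not work as stated or omit the key point.

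For the mirror relation $\Theta\Delta(h)=\bar\Delta(h)\Theta$, applying the $R$-matrix axioms to $R_{21}^{-1}=\Theta_{21}(R_0^T)^{-1}$ gives you an intertwining statement about $\Theta_{21}$, not about $\Theta$ itself; nothing you have written relates these two. The paper's argument is genuinely different: it observes that replacing $\sigma$ by $(\sigma^T)^{-1}$ in the construction leaves $\Theta$ unchanged but sends $R_0\mapsto R_0^{-1}$, so that $\Theta^T R_0$ is an $R$-matrix in $u(\omega,(\sigma^T)^{-1})$, and the intertwining property of that $R$-matrix transposes to the mirror relation for $\Theta$. You would need that symmetry observation, not the reverse $R$-matrix.

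For $\Theta^2=1$, your sketch (``multiplying out $\Theta\cdot\Theta$ and using completeness of the dual basis'') does not address why the double sum should collapse. In the ordinary Lusztig setting the analogous direct computation shows $\Theta\cdot\bar\Theta=1$, where $\bar\Theta$ is the bar-conjugate and is in general a \emph{different} element from $\Theta$; a direct expansion of $\Theta\cdot\Theta$ involves the quantum shuffle products $\Gamma((b\#\delta_1)^*)\Gamma((b'\#\delta_1)^*)$ and has no reason to collapse to the identity a priori. The paper's argument sidesteps all of this: once you have \emph{both} intertwining relations $\Delta(h)\Theta=\Theta\bar\Delta(h)$ and $\Theta\Delta(h)=\bar\Delta(h)\Theta$, the second says that $\Theta^{-1}$ satisfies the first, so $\Theta$ and $\Theta^{-1}$ are both quasi-$R$-matrices for the same pair $(\Delta,\bar\Delta)$; uniqueness of the quasi-$R$-matrix (the same argument as in Lusztig's book) then forces $\Theta=\Theta^{-1}$. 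Without the uniqueness ingredient your proof of $\Theta^2=1$ has a genuine gap. The rest of your plan, including tracking the $\omega$-cocycle corrections into $\gamma_{|b|}$ and proving $S_{B(V)}^2(b)=\lambda_b\,b$ by induction on PBW length, is essentially what the paper does (the paper works with $\tilde R$ rather than $R$ in (4), but that is a minor simplification, not a different idea).
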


\begin{proof}
	We start with (1): We have a general product formula
	\begin{align*}
		\Gamma((b\#\delta_\chi)^*)\Gamma((b'\#\delta_\psi)^*) &= \sum_{\pi \in \widehat{G}}\, 
		\frac{\omega(\pi|\bar{b}'||\bar{b}|,|b|,\chi)\omega(\pi|\bar{b}'|,|b'|,\psi)}{\omega(\pi|\bar{b}'||\bar{b}|,|b|,|b'|)\omega(\pi|\bar{b}'||\bar{b}|,|b||b'|,\chi\psi)} \frac{\sigma(\pi,\chi\psi)}{\sigma(\pi,\psi)\sigma(\pi|b'|,\chi)} \\
		&\times \Gamma\left( (b*b' \#\delta_{\chi\psi})^*\right) \iota(\delta_\pi),
	\end{align*}
	where $b*b'$ is the quantum shuffle product in $B(V)$ as introduced Appendix \ref{app: quantum shuffle product}.
	In particular, we have 
	\begin{align*}
		\Gamma((b\#\delta_\chi)^*)&=\Gamma((b\#\delta_1)^*)\Gamma((1\#\delta_\chi)^*)\left(\sum_{\psi \in \widehat{G}}\,\omega(\psi|\bar{b}|,|b|,\chi)\, \iota(\delta_{\psi}) \right) \\
		&=\Gamma((b\#\delta_1)^*)\bar{K}_{\chi}^{-1}\left(\sum_{\psi \in \widehat{G}}\,\omega(\psi|\bar{b}|,|b|,\chi)\, \iota(\delta_{\psi}) \right),
	\end{align*}
	where we used in the second line that $c_{\chi}=1$ holds in the quotient $u(\omega, \sigma)$. Thus,
	\begin{align*}
		\tilde{R}&= \sum_{b \in \mathcal{B}} \sum_{\chi \in \widehat{G}} \, \iota (b \# \delta_\chi)  \otimes \Gamma\left(  (b \# \delta_{\chi})^* \right) \\
		&=\sum_{b \in \mathcal{B}} \sum_{\chi \in \widehat{G}} \, \iota (b \# \delta_\chi)  \otimes \Gamma((b\#\delta_1)^*)\bar{K}_{\chi}^{-1}\left(\sum_{\psi \in \widehat{G}}\,\omega(\psi|\bar{b}|,|b|,\chi)\, \iota(\delta_{\psi}) \right)\\
		&=\sum_{b \in \mathcal{B}} \, (\iota (b)  \otimes \Gamma((b\#\delta_1)^*))\gamma_{|b|}\left(\sum_{\chi,\psi \in \widehat{G}}\, \sigma(\psi,\chi)^{-1} \iota(\delta_{\chi})\otimes \iota(\delta_{\psi}) \right)\\
		&=\Theta^T(R_0^T)^{-1}.
	\end{align*}
	We now prove (2): It is sufficient to prove the formula for $h=E_i,F_j,\iota(\delta_\chi)$, since these elements generate $u(\omega, \sigma)$. We will only show the computation for $E_i$, since for $\delta_{\chi}$ it is trivial and for $F_j$ it is very similar.
	\begin{align*}
		\Delta^{op}(E_i)R_0 &=\left( \sum_{\chi,\psi}\, \theta(\psi|\chi\bar{\chi}_i,\chi_i)^{-1}\omega(\psi,\chi,\bar{\chi}_i)^{-1} \,  \delta_\psi \otimes \delta_\chi \ \right)(\bar{K}_{\chi_i} \otimes E_i )R_0  \\
		&+ \left( \sum_{\chi,\psi}\, \omega(\chi,\psi,\bar{\chi}_i)^{-1} \, \delta_\psi \otimes \delta_\chi  \right) (E_i \otimes 1)R_0\\
		&=\left( \sum_{\chi,\psi}\, \theta(\psi|\chi\bar{\chi}_i,\chi_i)^{-1}\sigma(\psi,\chi_i)\sigma(\psi,\chi\bar{\chi}_i)\omega(\psi,\chi,\bar{\chi}_i)^{-1} \,  \delta_\psi \otimes \delta_\chi \ \right)(1 \otimes E_i ) \\
		&+ \left( \sum_{\chi,\psi}\,\sigma(\psi,\bar{\chi}_i)\omega(\chi,\psi,\bar{\chi}_i)^{-1} \, \delta_\psi \otimes \delta_\chi  \right) (E_i \otimes 1)\\
		&=\left( \sum_{\chi,\psi}\, \sigma(\psi,\chi)\omega(\psi,\chi,\bar{\chi}_i)^{-1} \,  \delta_\psi \otimes \delta_\chi \ \right)(1 \otimes E_i ) \\
		&+ \left( \sum_{\chi,\psi}\,\theta(\chi|\psi\bar{\chi}_i,\chi_i)^{-1} \frac{\sigma(\psi,\chi)}{\sigma(\chi_i,\chi)}\omega(\chi,\psi,\bar{\chi}_i)^{-1} \, \delta_\psi \otimes \delta_\chi  \right) (E_i \otimes 1)\\
		&=R_0\left( \left( \sum_{\chi,\psi}\, \omega(\chi,\psi,\bar{\chi}_i)^{-1} \,  \delta_\chi \otimes \delta_\psi \ \right)(1 \otimes E_i )\right. \\
		&\left. + \left( \sum_{\chi,\psi}\,\theta(\psi|\chi\bar{\chi}_i,\chi_i)^{-1} \omega(\psi,\chi,\bar{\chi}_i)^{-1} \, \delta_\chi \otimes \delta_\psi  \right) (E_i \otimes K_{\chi_i}^{-1})\right) =R_0\bar{\Delta}(E_i).
	\end{align*}
	We continue with (3): Since $R=R_0\Theta^{-1}$ is an $R$-matrix, we have
	\begin{align*}
		R_0\Theta^{-1}\Delta(h)=\Delta^{op}(h)R_0\Theta^{-1}=R_0\bar{\Delta}(h)\Theta^{-1},
	\end{align*}
	where we used (2) in the second equation. This proves the first claim of (2). If we would have used $(\sigma^T)^{-1}$ instead of $\sigma$ in our construction of $u(\omega,\sigma)$, the element $\Theta$ would be exactly the same, whereas $R_0$ would change to $R_0^{-1}$. Since $\Theta^T R_0$ is then an $R$-matrix in $u(\omega,(\sigma^T)^{-1})$, we have
	\begin{align*}
		\bar{\Delta}^{op}(h)\Theta^TR_0=\Theta^TR_0\bar{\Delta}(h)=\Theta^T\Delta^{op}(h)R_0,
	\end{align*} 
	which implies $\bar{\Delta}(h)\Theta=\Theta\Delta(h)$. In particular, $\Theta^{-1}$ is a quasi-$R$-matrix as well. By an analogous argument as given in \cite{Lusztig1993} for the quasi-Hopf case, a quasi-$R$-matrix is unique, hence $\Theta^{-1}=\Theta$. \\
	Finally, we prove (4): We want to compute the Drinfeld element for the $R$-matrix $R$, but it is easier to compute it in terms of the $R$-matrix $\tilde{R}=\tilde{R}^1\otimes \tilde{R}^2$. Using graphical calculus it is not hard to find the following formula for $u$:
	\begin{align*}
		u=S^2(\tilde{q}^2\tilde{R}^1\tilde{p}^1)\tilde{q}^1\tilde{R}^2\tilde{p}^2.
	\end{align*} 
	After simplifying, we obtain
	\begin{align*}
		u= \sum_{b \in \mathcal{B}} \, S^2(\iota(b))u_0K_{|b|}^{-1}\Gamma((b\#\delta_1)^*).
	\end{align*}
	The square of the antipode is given by
	\begin{align*}
		S^2(\iota(b)) &= S\left(\sum_{\chi \in \widehat{G}}\, \omega(\bar{\psi}|\bar{b}|,|b|,\psi)\sigma(|b|,\bar{\psi}|\bar{b}|)\, (S_{B(V)}(b) \#\delta_{\psi}) \right)\\
		&= u_0\iota\left( S^2_{B(V)}(b)\right)K_{|b|}\bar{K}_{|b|}u_0^{-1}.
	\end{align*}
	Hence, 
	\begin{align*}
		u=u_0 \left(\sum_{b \in \mathcal{B}}\, \iota(S^2_{B(V)}(b))\bar{K}_{|b|}\Gamma((b\#\delta_1)^*) \right).
	\end{align*}
	The antipode in the Nichols algebra $S_{B(V)}(b)$ is given by
	\begin{align*}
		S_{B(V)}(b)=(-1)^{tr|b|} \mu_{b}\, b^T,
	\end{align*}
	where
	\begin{align*}
		\mu_b&=\,\prod_{j=1}^{n-1}\,\sigma\left(\prod_{k=1}^{j} \,\bar{\chi}_{i_k},\bar{\chi}_{i_{j+1}} \right) \,\prod_{j=2}^{n-1} \omega\left( \prod_{k=j+1}^{n} \,\bar{\chi}_{i_k},\bar{\chi}_{i_j}, \prod_{l=1}^{j-11} \,\bar{\chi}_{i_l}\right) \quad \text{for} \quad
		b=(\dots(F_{i_1}F_{i_2})\dots)F_{i_n}.		
	\end{align*}
	A tedious calculation shows that $\mu_b\mu_{b^T}=\lambda_b$, hence $S^2_{B(V)}(b)=\lambda_b\,b$. This proves the claim.
\end{proof}

\begin{remark}
	Since $(\omega,\sigma) \in Z^3_{ab}(\widehat{G})$ is an abelian $3$-cocycle, it is clear that 
	\begin{align*}
		R_0=\sum_{\chi,\psi \in \widehat{G}} \, \sigma(\chi,\psi)\, \delta_\chi \otimes \delta_\psi
	\end{align*}
	is an $R$-matrix for the quasi-Hopf algebra $k^{\widehat{G}}_{\omega}$, so that the monomorphism $\xi:k^{\widehat{G}}_{\omega} \to u(\omega,\sigma)$ from Prop. \ref{prop: split monomorphism} becomes a homomorphism of quasitriangular quasi-Hopf algebras.
\end{remark}

	\section{Modularization}\label{sec: Modularization}

In the following definition due to \cite{Shimizu2016}, we specify the class of categories we want to consider in this chapter.

\begin{definition}\label{def: non-ssi mtc}
	By a (braided) finite tensor category $\mathcal{C}$, we mean a $k$-linear category that is equivalent to $\mathsf{Rep}_A$ for some finite dimensional $k$-algebra $A$. Here, we assume $k$ to be algebraically closed. Moreover, $\mathcal{C}$ should be monoidal, rigid (and braided). A functor $F:\mathcal{C} \to \mathcal{D}$ is called a (braided) tensor functor if it is (braided) $k$-linear monoidal.	
\end{definition}

The following proposition due to Lyubashenko is proven in \cite{KerlerLyubashenko2001}.

\begin{proposition}\label{prop: coend is hopf alg}
	Let $\mathcal{C}$ be a finite braided tensor category. In this case, the coend $\mathbb{F}_{\mathcal{C}}= \int^{X \in \mathcal{C}} X^\vee \otimes X$ exists and has the canonical structure of a Hopf algebra in $\mathcal{C}$. Moreover, there is a symmetric Hopf pairing $\omega_{\mathcal{C}}:\mathbb{F}_{\mathcal{C}} \otimes \mathbb{F}_{\mathcal{C}} \to \mathbb{I}$ on $\mathbb{F}_{\mathcal{C}}$. 
\end{proposition}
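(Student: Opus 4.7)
The plan is to use the universal property of the coend at every step. \emph{Existence} is guaranteed by the assumption $\CC \simeq \Rep_A$ for a finite-dimensional algebra $A$: picking a projective generator $P$, one can realize $\mathbb{F}_\CC$ explicitly as the cokernel in $\CC$ of the two canonical maps $P^\vee \otimes \mathrm{End}(P) \otimes P \rightrightarrows P^\vee \otimes P$ induced by left- and right-multiplication, which exists since $\CC$ has finite colimits. Let $i_X \colon X^\vee \otimes X \to \mathbb{F}_\CC$ denote the resulting dinatural structure maps.

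For the \emph{Hopf algebra structure}, each operation will be defined by prescribing its composite with $i_X$ (resp.\ $i_X \otimes i_Y$, $i_X \otimes i_Y \otimes i_Z$) and then verifying dinaturality, so that the universal property of $\mathbb{F}_\CC$ (resp.\ of $\mathbb{F}_\CC^{\otimes n}$, using that tensor products of coends are again coends in a braided finite tensor category) produces a unique extension. Concretely: the unit is $u = i_{\mathbb{I}} \colon \mathbb{I}^\vee \otimes \mathbb{I} \cong \mathbb{I} \to \mathbb{F}_\CC$; the counit is fixed by $\epsilon \circ i_X = \mathrm{ev}_X$; the comultiplication by $\Delta \circ i_X = (i_X \otimes i_X)\circ (\id_{X^\vee} \otimes \mathrm{coev}_X \otimes \id_X)$. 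The multiplication is the more delicate piece: using the canonical isomorphism $(Y \otimes X)^\vee \cong X^\vee \otimes Y^\vee$ together with the braiding $c$ to shuffle tensor factors, one produces a morphism $X^\vee \otimes X \otimes Y^\vee \otimes Y \to (Y \otimes X)^\vee \otimes (Y \otimes X)$ to be postcomposed with $i_{Y \otimes X}$. The antipode is defined on $i_X$ via the rigidity isomorphism $X^{\vee\vee} \cong X$ and a braiding-twisted version of $i_{X^\vee}$.

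The Hopf algebra axioms — associativity, coassociativity, compatibility of $\mu$ and $\Delta$, and the antipode equations — then translate into equalities of pairs of morphisms out of $(X^\vee \otimes X)\otimes(Y^\vee \otimes Y)$ and $(X^\vee \otimes X)\otimes(Y^\vee \otimes Y)\otimes(Z^\vee \otimes Z)$; iterating the universal property, each axiom reduces to a string-diagram identity in $\CC$ that follows from naturality of the braiding, the duality (zigzag) equations, and the hexagon axioms. This step is largely routine but tedious, and the principal obstacle is bookkeeping: the braiding enters $\mu$ and $S$ in several places and one must keep careful track of which strand crosses over which. A graphical string-diagram calculus makes this manageable.

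Finally, for the \emph{Hopf pairing}, I would set $\omega_\CC \circ (i_X \otimes i_Y)\colon X^\vee \otimes X \otimes Y^\vee \otimes Y \to \mathbb{I}$ to be the composite of the double braiding $c_{Y,X} \circ c_{X,Y}$, threaded through the middle $X \otimes Y$ tensorands via the duality identifications, with $\mathrm{ev}_X \otimes \mathrm{ev}_Y$. Naturality of $c$ gives dinaturality in both $X$ and $Y$, so the pairing descends to $\mathbb{F}_\CC \otimes \mathbb{F}_\CC$. Symmetry $\omega_\CC = \omega_\CC \circ \tau_{\mathbb{F}_\CC, \mathbb{F}_\CC}$ reduces, upon invoking the universal property, to the invariance of the monodromy $c_{Y,X} \circ c_{X,Y}$ under the obvious swap, which is immediate from naturality of the braiding. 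The Hopf-pairing axioms follow by unwinding the definitions of $\mu$, $\Delta$, and $\omega_\CC$ into braid diagrams and applying the hexagon and duality identities; as before, the challenge is diagrammatic organization rather than any deep step. The whole construction proceeds exactly as in \cite{KerlerLyubashenko2001} with no new difficulty arising from the bare finiteness assumption.
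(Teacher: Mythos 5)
Your proposal is essentially correct and follows the same route the paper does: the proposition is attributed to Lyubashenko and justified by citing \cite{KerlerLyubashenko2001}, and your sketch simply unfolds that citation (existence via finite colimits and a projective generator, structure maps by the universal property of $\mathbb{F}_{\mathcal{C}}$, the pairing via the monodromy). One caveat worth sharpening: there is no canonical ``rigidity isomorphism'' $X^{\vee\vee}\cong X$ in a merely rigid category — the natural isomorphism you need for the antipode is the Drinfeld isomorphism built from the braiding and the (co)evaluations, and the cited reference actually invokes a ribbon structure at this step; passing to merely braided finite tensor categories requires substituting the Drinfeld isomorphism, which the paper itself flags in Remark~\ref{rm: dont need ribbon} (where, for $\mathsf{Rep}_H$, the antipode on $\mathbb{F}=\underline{H^*}$ is induced by the Drinfeld element of $H$). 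Your phrase ``a braiding-twisted version of $i_{X^\vee}$'' is pointing at the right construction, but ``no new difficulty arising from the bare finiteness assumption'' slightly understates this: the issue is not finiteness but the absence of a ribbon structure.
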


\begin{definition}
	Let $\mathcal{C}$ be a finite braided tensor category. If moreover $\mathcal{C}$ is a ribbon category, we call $\mathcal{C}$ premodular. In \cite{Shimizu2016}, Shimizu showed that the following conditions are equivalent.
	\begin{enumerate}
		\item The Hopf pairing $\omega_{\mathcal{C}}:\mathbb{F}_{\mathcal{C}}\otimes \mathbb{F}_{\mathcal{C}} \to \mathbb{I}$ on the coend $\mathbb{F}_{\mathcal{C}}= \int^{X \in \mathcal{C}} X^\vee \otimes X$ is non-degenerate.
		\item Every transparent object is isomorphic to the direct sum of finitely many copies of the unit object $\mathbb{I} \in \mathcal{C}$. Equivalently, the M\"uger center $\mathcal{C}'$ of $\mathcal{C}$, which is defined as the full subcategory of transparent objects, is braided equivalent to $\mathsf{Vect}_k$. 
	\end{enumerate}
	A premodular category satisfying these conditions is called modular. In the semisimple case, we refer to $\mathcal{C}$ as a (pre)modular fusion category.
\end{definition} 

\begin{remark}\label{rm: dont need ribbon}
	In order to define a Hopf structure on the coend $\mathbb{F}$, the ribbon structure is usually used to define the antipode. For the case of $\mathcal{C}=\mathsf{Rep}_H$, where $H$ is a quasi-triangular quasi-Hopf algebra, the antipode on $\mathbb{F}=\underline{H^*}$ is induced by the Drinfeld element $u\in H$ (see App. \ref{app:Factorizability}) and $H$ does not necessarily have to be ribbon. 
\end{remark}
	We suggest a definition for a modularization in the non-semisimple case
	 (we also refer to a similar definition in~\cite{FuchsGainutdinovSchweigert}):
\begin{definition}\label{def: non ssi modularization}
	A ribbon (i.e. twist-preserving braided tensor) functor between premodular categories $F:\mathcal{C} \to \mathcal{D}$ is a modularization of $\mathcal{C}$, if $\mathcal{D}$ is modular and
	\begin{align*}
	F \left( \mathbb{F}_\mathcal{C}/Rad(\omega_\mathcal{C}) \right)  \cong \mathbb{F}_\mathcal{D}.
	\end{align*} 
	as braided Hopf algebras.
\end{definition} 

\begin{remark}
	We should point out here, that there are other approaches in order to define a non-semisimple modularization. For example, the notion of a dominant functor still makes sense in this case. It is therefore tempting to define a modularization of $\mathcal{C}$ as an exact sequence $\mathcal{C}' \to \mathcal{C} \xrightarrow{F} \mathcal{D}$ of tensor categories in the sense of \cite{BruguieresNatale2011}, where $\mathcal{C}'$ is the M\"uger center of $\mathcal{C}$ and $\mathcal{D}$ is modular. At least in the setting of Thm. \ref{thm:modularzation}, this coincides with Def. \ref{def: non ssi modularization}. It would also be interesting to generalize the actual construction of a modularization in \cite{Bruguieres2000} to the non-semisimple case. A step in this direction has been made in \cite{BruguieresNatale2011}, where the authors show that a dominant functor $F:\mathcal{C} \to \mathcal{D}$ with exact right adjoint is equivalent to the free module functor $\mathcal{C} \to \text{mod}_{\mathcal{C}}(A)$ for some commutative algebra $A$ in the center of $\mathcal{C}$. One of the reasons for choosing the given definition is its closeness to one of the equivalent definitions of a modular tensor category. We conjecture that it reduces to the definition of Bruguier\`es in the semisimple case. 
	The mentioned problems are studied in~\cite{FuchsGainutdinovSchweigert}.
\end{remark}

Let $u:=(u_q(\g,\Lambda),R_0\bar{\Theta})$ be the quasi-triangular ribbon Hopf algebra as described in Section \ref{Quantum groups uq(g,Lambda) and R-matrices} with Cartan part $u^0=\C[G]$, where $G=\Lambda/\Lambda'$. In particular, the category $\mathsf{Rep}_u$ is a non-semisimple premodular category. As we have seen before, we have an equivalence
\begin{align*}
\mathsf{Rep}_{(u^0,R_0)} \longrightarrow \Vect_{\widehat{G}}^{(1,\sigma)},
\end{align*}  
where $\sigma:\widehat{G} \times \widehat{G} \to \C^\times$ is given as in Remark \ref{relation sigma-f}. \\
\\
\textbf{Assumption:} From now on, we assume that $\Vect_{\widehat{G}}^{(1,\sigma)}$ is modularizable,
i.e. the quadratic form $Q$ associated to $\sigma$ is trivial on $T:=Rad(\sigma\sigma^T)$. Let $\Vect_{\widehat{G}/T}^{(\bar{\sigma},\bar{\omega})}$ be the modularized category from Prop. \ref{Modularizability of VectG}.\\
\\
The aim of this section is to modularize the category $\mathsf{Rep}_u$. To this end, we first define a quasi-Hopf algebra $\bar{u}$ and an algebra monomorphism $M:\bar{u} \to u$. Then we show that restriction along this algebra inclusion defines a modularization functor $\mathcal{F}:\mathsf{Rep}_u \to \mathsf{Rep}_{\bar{u}}$.\\
Let $\bar{u}:= (u(\bar{\omega},\bar{\sigma}),\bar{R})$ denote the quasi-Hopf algebra from Thm. \ref{thm:quasi-Hopf algebra}, associated to the data $(\widehat{\bar{G}},\bar{\sigma},\bar{\omega},\chi_i:=q^{(\alpha_i, \_)}|_{\bar{G}})$. Here, $\bar{G}:=Ann(T) \subseteq G$ is the subgroup introduced in App. \ref{sec: group theoretic part} and 
\begin{align*}
	\bar{\omega}(\chi,\psi,\xi)=\sigma(s(\xi),r(\chi,\psi))\frac{\eta(r(\chi,\psi\xi),r(\psi,\xi))}{\eta(r(\chi\psi,\xi),r(\chi,\psi))}, \qquad \bar{\sigma}(\chi,\psi)=\sigma(s(\chi),s(\psi))
\end{align*}
is the abelian $3$-cocycle on $\widehat{G}/T$ associated to an arbitrary set-theoretic section $s:\widehat{G}/T \to \widehat{G}$ of the canonical projection $\pi:\widehat{G} \to \widehat{G}/T$ as defined in Lemma \ref{lm: explicit 3 cocycle omegabar}. The $2$-cocycle $r$ was defined as $r(\chi,\psi):= s(\chi)s(\psi)s(\chi\psi)^{-1}$. Note that $\widehat{\bar{G}}\cong \widehat{G}/T$. In particular, $\bar{u}$ has all the necessary structure to endow $\mathsf{Rep}_{\bar{u}}$ with a finite braided tensor structure. \\
\\
We now state the main result of this chapter:

\begin{theorem} \label{thm:modularzation}
	\begin{enumerate}
		\item The category $\mathsf{Rep}_{\bar{u}}$ is a modular tensor category.
		\item The restriction along the algebra monomorphism $M$ from Prop. \ref{prop: algebra inclusion M} defines a modularization $\mathcal{F}:\mathsf{Rep}_{u} \to \mathsf{Rep}_{\bar{u}}$ in the sense of Def. \ref{def: non ssi modularization}. The monoidal structure of this functor is given by
		\begin{align*}
		\tau_{V,W} : \mathcal{F}(V)\otimes \mathcal{F}(W) &\longrightarrow \mathcal{F}(V\otimes W) \\
		v\otimes w &\longmapsto \sum_{\chi,\psi \in \widehat{G}}\,\kappa(\chi,\psi)\, \delta_{\chi}.v \otimes \delta_{\psi}.w,
		\end{align*}
		where the $2$-cochain $\kappa \in C^2(\widehat{G})$ is defined in Lemma \ref{lm: explicit 3 cocycle omegabar}. It satisfies $\pi^*(\bar{\omega},\bar{\sigma})=(1,\sigma)$. The natural duality isomorphism is given by
		\begin{align*}
		\xi_V:\mathcal{F}(V^\vee) &\longrightarrow \mathcal{F}(V)^\vee \\
		f &\longmapsto (f \leftharpoonup S(\kappa^1)\kappa^2),
		\end{align*}
		where $\kappa=\kappa^1\otimes \kappa^2=\sum_{\chi,\psi}\, \kappa(\chi,\psi)\, \delta_\chi \otimes \delta_\psi$ by abuse of notation.
	\end{enumerate}	
\end{theorem}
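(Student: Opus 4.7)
The plan is to split the proof in two parts, matching the two assertions of the theorem, and within each to mirror the step-by-step inductive construction of $\bar u = u(\bar\omega,\bar\sigma)$ from Section~\ref{sec: def of u(omega,sigma)}. For (1), I would prove modularity of $\mathsf{Rep}_{\bar u}$ by showing factorizability of $\bar u$. By Proposition~\ref{prop: R-matrix} applied to the datum $(\widehat G/T, \bar\omega,\bar\sigma)$, the $R$-matrix decomposes as $\bar R=\bar R_0\bar\Theta^{-1}$, so factorizability reduces to factorizability of the Cartan piece $\bar R_0\in k^{\widehat G/T}_{\bar\omega}\otimes k^{\widehat G/T}_{\bar\omega}$. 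The associated symmetric bihomomorphism $\bar\Beta$ on $\widehat G/T$ is non-degenerate by construction (it is the quotient of $\Beta$ by its radical $T$), so $\bar R_0$ is factorizable, and the Nichols factor $\bar\Theta$ is factorizable by the quasi-analogue of the argument in \cite[Cor.\,5.21]{LentnerOhrmann2017}. Combined, $\bar u$ is factorizable, hence by Shimizu's characterization recalled before Definition~\ref{def: non ssi modularization}, the representation category $\mathsf{Rep}_{\bar u}$ is modular.

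For part (2), I would build $\mathcal{F}$ and its monoidal structure $\tau$ in the same order as Section~\ref{sec: def of u(omega,sigma)}: start on the Cartan, then pass to the Nichols algebra, then to the Radford biproduct, and finally to the Drinfeld double quotient. Restricted to the Cartan, $\mathcal{F}$ together with $\tau$ must coincide with the modularization $\Vect_{\widehat G}^{(1,\sigma)}\to\Vect_{\widehat G/T}^{(\bar\omega,\bar\sigma)}$ of Proposition~\ref{Modularizability of VectG}, and this is exactly what determines $\kappa$: the hexagon and pentagon axioms for $\tau$ are equivalent to the abelian-cochain identity $d_{ab}\kappa\cdot(1,\sigma)=\pi^*(\bar\omega,\bar\sigma)$ used in that proof, which simultaneously accounts for the associator and the braiding. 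The duality isomorphism is then forced by naturality: on $u$ the quasi-antipode data are $(\alpha,\beta)=(1,1)$, while on $\bar u$ the corresponding elements, expressed via $\kappa$ after twisting the Radford biproduct (Lemma~\ref{lm: iso betwwen twist of biproducts}), give precisely $\xi_V(f)=f\leftharpoonup S(\kappa^1)\kappa^2$. Compatibility with the $R$-matrices is automatic from Proposition~\ref{prop: R-matrix} once compatibility holds on Cartans, because the quasi-$R$-matrix $\bar\Theta$ is determined by its uniqueness property and hence must match $\Theta$ under $\mathcal{F}$. The ribbon property descends because both $Q$ and the chosen $\eta$ are assumed trivial on $T$, so the twist factors through $\pi$.

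The last thing to verify is the coend condition of Definition~\ref{def: non ssi modularization}. Using the identification of $\mathbb F_{\mathsf{Rep}_u}$ with the dual $u^*$ equipped with the adjoint action and Hopf pairing induced by the monodromy $R_{21}R$, the radical of $\omega_{\mathcal C}$ is controlled on the Cartan part by $\mathrm{Rad}(\Beta)^\vee=T^\vee$; the same happens globally because the triangular decomposition $u\cong B(V)\otimes kG\otimes B(V^*)$ lifts to a decomposition of the coend, and the $E,F$-parts contribute a non-degenerate pairing by factorizability of $\bar\Theta$. Quotienting and applying $\mathcal F$ produces a Hopf algebra map $\mathcal F(\mathbb F_{\mathsf{Rep}_u}/\mathrm{Rad}(\omega_{\mathcal C}))\to \mathbb F_{\mathsf{Rep}_{\bar u}}$ whose restriction to the Cartan is the canonical surjection $k[\widehat G]\twoheadrightarrow k[\widehat G/T]$; surjectivity is then clear and injectivity follows by a dimension count using the quasi-Hopf Nichols-Zoeller theorem (as in the proof of Proposition~\ref{prop: split monomorphism}).

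The main obstacle I anticipate is the simultaneous verification that the cochain $\kappa$, which is designed purely from the Cartan modularization datum, produces a $\tau$ that is coherent with \emph{all} the extra quasi-Hopf structures on $\bar u$ (Drinfeld double, biproduct twist, antipode, pivotal/ribbon element) at once. Concretely, one has to check compatibility of $\tau$ with the formulas of Theorem~\ref{thm:quasi-Hopf algebra} for $\Delta(F_i),\Delta(E_i),S(F_i),S(E_i)$ and with the $R$-matrix; each of these is a nontrivial but mechanical abelian-cocycle computation, reducible to the identity $d_{ab}\kappa\cdot(1,\sigma)=\pi^*(\bar\omega,\bar\sigma)$ and the niceness of $(\bar\omega,\bar\sigma)$. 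Once these compatibilities are in place, the universal properties invoked at each stage of Section~\ref{sec: def of u(omega,sigma)} extend $\mathcal F$ uniquely, and modularity of the target together with the coend identification complete the proof.
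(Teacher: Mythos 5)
Your plan identifies the right skeleton (Cartan cochain identity $d_{ab}\kappa\cdot(1,\sigma)=\pi^*(\bar\omega,\bar\sigma)$, step-by-step extension, relation to the $\Vect_G$ modularization), but the route to part~(1) has a genuine gap and the ribbon transport is under-argued.

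For part~(1), you propose to prove modularity by showing $\bar u$ is factorizable, decomposing $\bar R=\bar R_0\bar\Theta^{-1}$ and arguing each factor contributes a non-degenerate piece, citing a ``quasi-analogue'' of \cite[Cor.\,5.21]{LentnerOhrmann2017}. That corollary lives in the ordinary Hopf-algebra setting, and no quasi-Hopf version is established here. More fundamentally, factorizability of $R_0$ and separate non-degeneracy of the $\Theta$-pairing do not combine automatically to give factorizability of the total $R$: the monodromy $R_{21}R$ genuinely mixes the Cartan and Nichols blocks, and in LentnerOhrmann the result that factorizability of $R$ is equivalent to that of $R_0$ is itself a theorem, not a formal consequence of the decomposition. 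The paper avoids this entirely: it proves part~(2) first, uses Lemma~\ref{lm: radical of hopf pairing for u(omega,sigma)} (which invokes Shimizu's identification of M\"uger centers of $A$-Yetter--Drinfeld categories, not the LentnerOhrmann corollary) to identify the M\"uger center of $\mathsf{Rep}_u$ with $\Vect_T$ and the radical of $\omega_{\mathsf{Rep}_u}$ with $kT$, and then deduces modularity of $\mathsf{Rep}_{\bar u}$ from the exact sequence $\bar u\to u^J\to k^T$ together with the fact that $\mathcal{F}$ preserves the Hopf pairings and kills $kT$. If you want to keep your order of argument, you would need to supply the missing quasi-factorizability statement, which is not at hand.

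On the ribbon structure: saying ``the twist factors through $\pi$ because $Q$ and $\eta$ are trivial on $T$'' settles the semisimple Cartan layer but not the full non-semisimple category. The paper's argument transports $\theta$ along $\mathcal{F}$ by observing that a twist in a finite tensor category is determined by its value on a projective generator, that $\mathcal{F}(u)$ is a projective generator in $\mathsf{Rep}_{\bar u}$, that $\mathcal{F}$ is faithful (being restriction), and that the twist axiom holds automatically because $\mathcal{F}$ is braided monoidal. That chain of reasoning is what is needed here; the Cartan-level triviality alone does not produce a natural transformation on all of $\mathsf{Rep}_{\bar u}$. Similarly, your coend verification (``the $E,F$-parts contribute a non-degenerate pairing by factorizability of $\bar\Theta$'') leans again on the unestablished factorizability of $\bar\Theta$; the paper instead gets the coend condition by combining the M\"uger-center identification with Prop.~\ref{prop: left adjoint with inverse allows for modularization}, which requires only that $\mathcal{F}$ is a cocontinuous left-exact braided tensor functor into a modular category. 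Your description of the monoidal structure $\tau$, the role of Prop.~\ref{rm: M is quasi-Hopf homomorphism to u^J } for the duality isomorphism, and the $R$-matrix compatibility via the explicit computation of $(M\otimes M)(\bar\Theta)$ are otherwise in line with the paper.
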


The rest of this chapter is devoted to the proof of this theorem. We will need the following proposition.

\begin{proposition}\label{prop: left adjoint with inverse allows for modularization}
	Let $F:\mathcal{C} \to \mathcal{D}$ be a cocontinuous left exact braided tensor functor between finite braided tensor categories.
	Let $\mu:(\_)^\vee\otimes (\_) \Rightarrow \mathbb{F}_{\mathcal{C}}$ and $\nu: (\_)^\vee \otimes (\_) \to \mathbb{F}_\mathcal{D}$ denote the coends in $\mathcal{C}$ and $\mathcal{D}$. Then there is a braided Hopf algebra epimorphism $p:F(\mathbb{F}_{\mathcal{C}}) \to \mathbb{F}_{\mathcal{D}}$ in $\mathcal{D}$, s.t. the Hopf pairings on the coends are related as follows:
	\begin{align*}
	\omega_{_{\mathcal{D}}}\circ (p \otimes p) = F(\omega_{\mathcal{C}}) \circ \tau_{\mathbb{F}_{\mathcal{C}},\mathbb{F}_{\mathcal{C}}}.
	\end{align*}
	If $\mathcal{D}$ is a modular tensor category, then $F$ is a modularization in the sense of Def. \ref{def: non ssi modularization}, with isomorphism $F(\mathbb{F}_{\mathcal{C}}/Rad(\omega_{\mathcal{C}})) \cong \mathbb{F}_\mathcal{D}$ induced by $p$. 
\end{proposition}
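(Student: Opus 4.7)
The plan is to construct $p$ from the universal property of $\mathbb{F}_\mathcal{D}$, then transport the Hopf algebra structure and pairing compatibility through $F$, and finally use non-degeneracy of $\omega_\mathcal{D}$ in the modular case to descend $p$ to the stated isomorphism.

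First, I would use that $F$ is a (rigid) braided tensor functor, which supplies canonical natural isomorphisms $F(X^\vee)\cong F(X)^\vee$ and hence $F(X^\vee\otimes X)\cong F(X)^\vee\otimes F(X)$ compatible with dinaturality. Since $F$ is cocontinuous, it preserves this coend, so
\[
F(\mathbb{F}_\mathcal{C})\;\cong\;\int^{X\in\mathcal{C}} F(X)^\vee\otimes F(X),
\]
with dinatural maps $F(\mu_X)$. The family $\nu_{F(X)}:F(X)^\vee\otimes F(X)\to \mathbb{F}_\mathcal{D}$ is dinatural in $X\in\mathcal{C}$ (by dinaturality of $\nu$ and functoriality of $F$), so the universal property of the coend gives a unique morphism $p:F(\mathbb{F}_\mathcal{C})\to \mathbb{F}_\mathcal{D}$ with $p\circ F(\mu_X)=\nu_{F(X)}$ for every $X\in\mathcal{C}$.

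Next, I would verify that $p$ is a morphism of braided Hopf algebras in $\mathcal{D}$ and intertwines the Hopf pairings. All structure morphisms on a coend (multiplication, comultiplication, unit, counit, antipode, Hopf pairing of Proposition~\ref{prop: coend is hopf alg}) are defined universally out of tensor product, duality, braiding and twist of the ambient category. Since $F$ preserves each of these pieces of structure, applying $F$ to the defining diagrams in $\mathcal{C}$ and composing with $p$ produces the corresponding diagrams in $\mathcal{D}$; the uniqueness clause in the coend's universal property then forces $p$ to respect every operation. For the pairing this gives
\[
\omega_\mathcal{D}\circ(p\otimes p)\circ\bigl(F(\mu_X)\otimes F(\mu_Y)\bigr)\;=\;F(\omega_\mathcal{C})\circ\tau_{\mathbb{F}_\mathcal{C},\mathbb{F}_\mathcal{C}}\circ\bigl(F(\mu_X)\otimes F(\mu_Y)\bigr)
\]
for every $X,Y\in\mathcal{C}$, which extends to all of $F(\mathbb{F}_\mathcal{C})\otimes F(\mathbb{F}_\mathcal{C})$ since $\otimes$ preserves coends in each variable in a finite tensor category.

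Now assume $\mathcal{D}$ is modular, so $\omega_\mathcal{D}$ is non-degenerate and $\mathrm{Rad}(\omega_\mathcal{D})=0$. The pairing identity forces $p(F(\mathrm{Rad}(\omega_\mathcal{C})))\subseteq\mathrm{Rad}(\omega_\mathcal{D})=0$, and since $F$ is exact (being cocontinuous and left exact) one has $F(\mathbb{F}_\mathcal{C}/\mathrm{Rad}(\omega_\mathcal{C}))=F(\mathbb{F}_\mathcal{C})/F(\mathrm{Rad}(\omega_\mathcal{C}))$, so $p$ descends to a braided Hopf algebra map $\bar p:F(\mathbb{F}_\mathcal{C}/\mathrm{Rad}(\omega_\mathcal{C}))\to \mathbb{F}_\mathcal{D}$. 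Injectivity of $\bar p$ comes again from the pairing: if $\bar p(x)=0$ then $F(\omega_\mathcal{C})(x,F(y))=0$ for all $y$, placing $x$ in (the image of) $F(\mathrm{Rad}(\omega_\mathcal{C}))$ and hence zero in the quotient. Surjectivity uses that $\mathrm{Im}(p)$ is a braided Hopf subalgebra of $\mathbb{F}_\mathcal{D}$ on which $\omega_\mathcal{D}$ restricts non-degenerately; combined with the coend/modularity characterisation of $\mathbb{F}_\mathcal{D}$ this forces $\mathrm{Im}(p)=\mathbb{F}_\mathcal{D}$. Hence $\bar p$ is an isomorphism of braided Hopf algebras, which is exactly what Definition~\ref{def: non ssi modularization} asks of a modularization.

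The main obstacle I expect is the precise identification $\ker(p)=F(\mathrm{Rad}(\omega_\mathcal{C}))$ together with surjectivity of $p$: both need to be extracted from the abstract hypotheses, without recourse to the concrete representation-theoretic picture where $p$ is restriction of linear functionals along the inclusion $M:\bar u\to u$ and hence visibly surjective. Making this rigorous will require presenting $\mathrm{Rad}(\omega_\mathcal{C})$ as a suitable coend or equalizer, transporting that description through the exact functor $F$, and invoking non-degeneracy of $\omega_\mathcal{D}$ in the correct universal form. The remaining steps are essentially bookkeeping once the correct dinatural diagrams are in place.
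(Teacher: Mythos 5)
Your construction of $p$ via the universal property of the coend, and the verification of the pairing identity on the dinatural generators $F(\mu_X)\otimes F(\mu_Y)$, match the paper's argument essentially verbatim, including the use of the structure isomorphisms $F(X^\vee\otimes X)\cong F(X)^\vee\otimes F(X)$ of the rigid monoidal functor. The paper is equally terse about $p$ being epi and about $p$ respecting the full braided Hopf structure, so no objection there.

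The genuine difference is in the modular part, and you correctly diagnose it yourself: your treatment of $\ker(\bar p)$ is element-flavoured ("if $\bar p(x)=0$ then $F(\omega_\mathcal{C})(x,F(y))=0$ for all $y$\,...") and does not actually pin down $\ker(p)=F(\mathrm{Rad}(\omega_\mathcal{C}))$ in a way that works in an abstract abelian category, nor does it give a real proof of surjectivity of $p$. The paper closes exactly the gap you flag, and in exactly the way you propose: it uses the identification $\mathrm{Rad}(\omega_\mathcal{C})=\ker(\omega_\mathcal{C}^!)$, where $\omega_\mathcal{C}^!:\mathbb{F}_\mathcal{C}\to\mathbb{F}_\mathcal{C}^\vee$ is the adjunct of the pairing (this is Lemma 5.2.1 in \cite{KerlerLyubashenko2001}). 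Left exactness of $F$ gives $F(\ker(\omega_\mathcal{C}^!))=\ker(\omega_{F(\mathbb{F}_\mathcal{C})}^!)$, and the pairing identity you already established factors the adjunct as $\omega_{F(\mathbb{F}_\mathcal{C})}^! = p^\vee\circ\omega_\mathcal{D}^!\circ p$. When $\mathcal{D}$ is modular, $\omega_\mathcal{D}^!$ is an isomorphism, so $p^\vee\circ\omega_\mathcal{D}^!$ is a monomorphism, which immediately forces $\ker(\omega_{F(\mathbb{F}_\mathcal{C})}^!)=\ker(p)$. Combined with $p$ epi and cocontinuity of $F$ (which gives $F(\mathbb{F}_\mathcal{C}/\ker\omega_\mathcal{C}^!)\cong F(\mathbb{F}_\mathcal{C})/\ker p$), this yields the stated isomorphism in one stroke, replacing your separate injectivity/surjectivity discussion. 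So your proposal is on the right track and correctly locates the missing step; the fix is the adjunct-kernel calculation above, which you would need to supply explicitly.
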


\begin{proof}
	We first note that since $F$ is cocontinuous, it preserves the coend and hence $F(\mathbb{F}_\mathcal{C})$ is the coend over the functor $F((\_)^\vee\otimes (\_))$ with dinatural transformation $F(\mu)$. We also have a Hopf pairing on $F(\mathbb{F}_\mathcal{C})$ given by $\omega_{F(\mathbb{F}_\mathcal{C})}=F(\omega_{\mathcal{C}})\circ \tau_{\mathbb{F}_{\mathcal{C}},\mathbb{F}_{\mathcal{C}}}$, where $\tau$ denotes the monoidal structure on $F$. Let  $m:F((\_)^\vee\otimes (\_)) \Rightarrow F(\_)^\vee\otimes F(\_)$ denote the natural isomorphism constructed from the structure isomorphisms of the monoidal dual preserving functor $F$. Then,
	\begin{align*}
	\zeta_V:=\nu_{F(V)}\circ m_V:F(V^\vee \otimes V) \to \mathbb{F}_{\mathcal{D}}
	\end{align*}
	defines a dinatural transformation $F((\_)^\vee \otimes (\_)) \to \mathbb{F}_{\mathcal{D}}$. Hence, due to the universal property of $F(\mathbb{F}_\mathcal{C})$, there is a unique morphism $p:F(\mathbb{F}_\mathcal{C}) \to \mathbb{F}_{\mathcal{D}}$, s.t.
	\begin{align*}
	\nu_{F(V)}\circ m_V = p \circ F(\mu_V).
	\end{align*}
	Again by the universal property, $p$ must be an epimorphism. If we can show that
	\begin{align*}
	\omega_{\mathcal{D}}\circ(p \otimes p)\circ(F(\mu_V) \otimes F(\mu_W))= \omega_{F(\mathbb{F}_\mathcal{C})}\circ (F(\mu_V) \otimes F(\mu_W)),
	\end{align*}
	for all $V,W \in \mathcal{C}$, then $\omega_{\mathcal{D}}\circ(p \otimes p)=\omega_{F(\mathbb{F}_\mathcal{C})}$ holds again by the universal property of $F(\mathbb{F}_\mathcal{C})$. We have
	\begin{align*}
	\omega_{\mathcal{D}}\circ(p \otimes p)\circ(F(\mu_V) \otimes F(\mu_W)) &= \omega_{\mathcal{D}}\circ(\nu_{F(V)}\circ m_V \otimes \nu_{F(W)}\circ m_W) \\
	&={\omega_{\mathcal{D}}}_{F(V),F(W)}\circ(m_V \otimes m_W) \\
	&=\left( F({\omega_{\mathcal{C}}}_{V,W})\circ \tau_{V^\vee \otimes V,W^\vee \otimes W} \circ (m_V\otimes m_W)^{-1} \right) \circ  (m_V \otimes m_W) \\
	&=F({\omega_{\mathcal{C}}}_{V,W})\circ \tau_{V^\vee \otimes V,W^\vee \otimes W} \\
	&=\omega_{F(\mathbb{F}_{\mathcal{C}})}\circ (F(\mu_V ) \otimes F(\mu_W)),
	\end{align*}
	where ${\omega_{\mathcal{D}}}_{X,Y}$ and ${\omega_{\mathcal{C}}}_{V,W}$ denote the bi-dinatural transformations given, morally speaking, by $(ev \otimes ev)\circ (\id \otimes c^2\otimes \id)$. We used the equality
	\begin{align*}
	{\omega_{\mathcal{D}}}_{F(V),F(W)}=F({\omega_{\mathcal{C}}}_{V,W})\circ \tau_{V^\vee \otimes V,W^\vee \otimes W} \circ (m_V\otimes m_W)^{-1},
	\end{align*}
	which holds since $F$ is a braided tensor functor. The fact that $p$ is a morphism of braided Hopf algebras follows from the very same arguments. We now prove the second part of the statement. By Lemma 5.2.1 in \cite{KerlerLyubashenko2001}, we can identify $Rad(\mathcal{\omega}_{\mathcal{C}})$ with the kernel of the adjunct $\omega_\mathcal{C}^!:\mathbb{F}_{\mathcal{C}} \to \mathbb{F}_{\mathcal{C}}^\vee$. Since $F$ is left exact, we have $F(ker(\omega_{\mathbb{F}^!_{\mathcal{C}}}))=ker(\omega^!_{F(\mathbb{F}_{\mathcal{C}})})$. As we have seen, $\omega^!_{F(\mathbb{F}_{\mathcal{C}})}$ is given by $p^\vee \circ \omega_{\mathbb{F}^!_{\mathcal{D}}}\circ p$. If $\mathcal{D}$ is modular, then $p^\vee \circ \omega_{\mathbb{F}^!_{\mathcal{D}}}$ is a monomorphism and hence $ker(\omega^!_{F(\mathbb{F}_{\mathcal{C}})})=ker(p)$. As a cocontinuous functor, $F$ preserves quotients and hence $F(\mathbb{F}_{\mathcal{C}}/ker(\omega_\mathcal{C}^!)) \cong F(\mathbb{F}_{\mathcal{C}})/ker(p) \overset{p}{\cong} \mathbb{F}_{\mathcal{D}}$.
\end{proof}

We recall that the braided monoidal category $\Vect_{\widehat{G}}^{(\omega,\sigma)}$ is embedded into the braided monoidal category of Yetter-Drinfeld modules over $k^{\widehat{G}}_{\omega}$ with $k^{\widehat{G}}_{\omega}$-coaction on simple objects $\C_{\chi}$ given by $1_{\chi} \mapsto L_{\bar{\chi}} \otimes 1_{\chi}$. From now on, we will treat $\Vect_{\widehat{G}}^{(\omega,\sigma)}$ as a braided monoidal subcategory of Yetter-Drinfeld modules over $k^{\widehat{G}}_{\omega}$.\\
\\
Let $F: \Vect_{\widehat{G}}^{(\sigma,1)} \to \Vect_{\widehat{G}/T}^{(\bar{\sigma},\bar{\omega})}$ be the braided monoidal modularization functor from Section \ref{Modularizability of VectG} and $\tau_{V,W}:F(V)\otimes F(W)\to F(V\otimes W)$ the corresponding monoidal structure. For the moment, we forget about the ribbon structure. Moreover, let $V=\oplus_i\,F_i\C \in \Vect_{\widehat{G}}^{(\sigma,\omega)}$ be the Yetter-Drinfeld module with $|F_i|=\bar{\chi}_i=q^{-(\alpha_i,\_)}$. 
If $B(V)$ is the Nichols algebra of $V$, then we clearly have a braided algebra structure on $F(B(V))$ with multiplication given by $m_{F(B(V))}=F(m_{B(V)})\circ \tau_{B(V),B(V)}$. Also, we have a braided algebra isomorphism
$S:B(F(V)) \longrightarrow F(B(V))$ induced by the map $D:= \oplus_{n\geq 0}\, D^n $, where $D^n:F(V)^{\otimes n} \to F(V^{\otimes n})$ is inductively defined by
\begin{align*}
D^n:=\tau_{V^{\otimes n-1},V}\circ (D^{n-1}\otimes \id_{F(V)}), \qquad D^0:=\id_{\C}.
\end{align*}

From now on, we assume that the set-theoretic section $s:\widehat{G}/T \to \widehat{G}$ of the projection $\pi:\widehat{G}\to \widehat{G}/T$ has the property $s([|b|])=|b|$ for homogeneous vectors $b \in B(V)$, where we use the notation $\pi(\chi)=[\chi]$ for the sake of readability.

\begin{lemma}
	Let $\kappa:\widehat{G}\times \widehat{G} \to \C^{\times}$ be such that $\pi^*\bar{\omega}=d\kappa^{-1}$ and $\pi^*\bar{\sigma}=\kappa/\kappa^T \sigma$. The following map is an algebra inclusion:
	\begin{align*}
	U:B(F(V))\#k^{\widehat{G}/T}_{\bar{\omega}} &\longrightarrow B(V) \# k^{\widehat{G}}\\
	b\#\delta_{\chi} &\longmapsto \sum_{\tau \in T}\,\kappa(|b|,s(\chi)\tau)\,S(b)\#\delta_{s(\chi)\tau}.
	\end{align*}	
	Here, $b=(\dots(F_{i_1}F_{i_2})\dots)F_{i_n}$ is a PBW-basis element of the Nichols algebra $B(F(V))$. Moreover, we used the assumption $s([|b|])=|b|$ implicitly.
\end{lemma}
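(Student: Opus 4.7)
The plan is to realise $U$ as a composition of three algebra maps, each constructed from results already in the paper:
\[ B(F(V))\#k^{\widehat{G}/T}_{\bar{\omega}} \xrightarrow{\ S\#\id\ } F(B(V))\#k^{\widehat{G}/T}_{\bar{\omega}} \xrightarrow{\ \id\#\pi^{*}\ } F(B(V))\#k^{\widehat{G}}_{\pi^{*}\bar{\omega}} \xrightarrow{\ f_{\kappa^{-1}}^{-1}\ } B(V)\#k^{\widehat{G}}. \]
The first arrow smashes the braided algebra isomorphism $S$ (induced by the monoidal structure $\tau$ of $F$) with the identity on $k^{\widehat{G}/T}_{\bar{\omega}}$, and is an isomorphism. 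The second uses that, since $\pi^{*}\bar{\omega}=\bar{\omega}\circ\pi$ by definition, the pullback
\[ \pi^{*}\colon k^{\widehat{G}/T}_{\bar{\omega}} \hookrightarrow k^{\widehat{G}}_{\pi^{*}\bar{\omega}}, \qquad \delta_{[\chi]}\mapsto \sum_{\tau\in T}\delta_{s([\chi])\tau}, \]
is a quasi-Hopf algebra inclusion, independent of the chosen section $s$. Under the assumption $s([|b|])=|b|$, the $\widehat{G}/T$-grading on $F(B(V))$ lifts through $s$ to the original $\widehat{G}$-grading on $B(V)$, so this map extends to an inclusion of Radford biproducts, realising $F(B(V))\#k^{\widehat{G}/T}_{\bar{\omega}}$ as the subalgebra of $T$-invariant elements.

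The third arrow uses Lemma \ref{lm: iso betwwen twist of biproducts}: since $\pi^{*}\bar{\omega}=d\kappa^{-1}$, taking $\omega=1$ and $\zeta=\kappa^{-1}$ yields a quasi-Hopf isomorphism $f_{\kappa^{-1}}\colon (B(V)\#k^{\widehat{G}})^{J}\xrightarrow{\sim} B(V)\#k^{\widehat{G}}_{\pi^{*}\bar{\omega}}$. The twist $J$ does not affect the algebra structure, so its inverse is an algebra isomorphism $B(V)\#k^{\widehat{G}}_{\pi^{*}\bar{\omega}}\xrightarrow{\sim} B(V)\#k^{\widehat{G}}$ acting on generators by $F_i\#\delta_\chi \mapsto \kappa(\bar{\chi}_i,\chi)\,F_i\#\delta_\chi$; this is precisely the factor compensating for the identification of $F(B(V))$ with $B(V)$ as an underlying vector space in the previous step. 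The composition is therefore an algebra homomorphism, and evaluating it on $F_i\#\delta_{[\chi]}$ gives $\sum_{\tau}\kappa(\bar{\chi}_i,s([\chi])\tau)F_i\#\delta_{s([\chi])\tau}$, matching the stated formula on generators. Injectivity is automatic since Steps 1 and 3 are isomorphisms and Step 2 is an inclusion.

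The main obstacle is to verify that, on a general PBW basis element $b=(\dots(F_{i_1}F_{i_2})\dots)F_{i_n}$, the composite indeed reproduces the formula $\sum_{\tau}\kappa(|b|,s([\chi])\tau)\,S(b)\#\delta_{s([\chi])\tau}$ with a single $\kappa$-factor in the group argument. Each internal bracketing contributes a $\kappa$-scalar inside $S(b)$ and a generator-wise $\kappa$-scalar from $f_{\kappa^{-1}}^{-1}$; these must amalgamate, via repeated application of the $2$-cochain identity $d\kappa=(\pi^{*}\bar{\omega})^{-1}$, into a single factor depending only on the total degree $|b|=\bar{\chi}_{i_1}\cdots\bar{\chi}_{i_n}$. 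This bookkeeping is routine but lengthy, and is the same type of abelian cocycle manipulation used in the proof of Lemma \ref{lm: iso betwwen twist of biproducts}.
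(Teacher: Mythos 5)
Your decomposition idea is appealing and genuinely different from the paper's argument (the paper verifies the algebra axiom directly by expanding both sides of $U((b\#\delta_\chi)(b'\#\delta_\psi))$ and invoking $S(bb')=\kappa(|b|,|b'|)S(b)S(b')$ together with $\pi^*\bar\omega=d\kappa^{-1}$). However, as written it does not amount to a proof, and the gap is precisely where you defer to ``routine but lengthy'' manipulation.

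First, the middle arrow is not clearly well-defined. To form the Radford biproduct $F(B(V))\#k^{\widehat{G}}_{\pi^*\bar{\omega}}$ you need $F(B(V))$, with its grading lifted along $s$, to be an \emph{algebra in} ${}^{k^{\widehat{G}}_{\pi^*\bar{\omega}}}_{k^{\widehat{G}}_{\pi^*\bar{\omega}}}\mathcal{YD}$. Lifting the $\widehat{G}/T$-grading to a $\widehat{G}$-grading (which uses $s([|b|])=|b|$) only supplies the coaction; you must also specify the $k^{\widehat{G}}$-action and check the Yetter-Drinfeld compatibility with $\pi^*\bar{\omega}$, and you must check that the product on $F(B(V))$ --- which is $F(m_{B(V)})\circ\tau$ and involves $\kappa$ through the monoidal structure $\tau$ of $F$ --- coincides with the Nichols algebra multiplication in $\Vect_{\widehat{G}}^{\pi^*\bar{\omega},\pi^*\bar{\sigma}}$ that appears in the target of Lemma \ref{lm: iso betwwen twist of biproducts}. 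None of this is automatic, and the phrase ``realising $F(B(V))\#k^{\widehat{G}/T}_{\bar{\omega}}$ as the subalgebra of $T$-invariant elements'' is a description of the intended result, not an argument.

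Second, even granting well-definedness, the claim that $f_{\kappa^{-1}}^{-1}(b\#\delta_\chi)=\kappa(|b|,\chi)\,b\#\delta_\chi$ for a general bracketed PBW element $b$ is \emph{false} without the compensating factor coming from $S$. A direct check on $b=F_iF_j$ gives
$f_{\kappa^{-1}}^{-1}\bigl((F_iF_j)\#\delta_\chi\bigr)=\pi^*\bar{\omega}(\bar{\chi}_i,\bar{\chi}_j,\chi)\,\kappa(\bar{\chi}_i,\chi\bar{\chi}_j)\,\kappa(\bar{\chi}_j,\chi)\,(F_iF_j)\#\delta_\chi$,
which differs from $\kappa(\bar{\chi}_i\bar{\chi}_j,\chi)(F_iF_j)\#\delta_\chi$ by exactly the factor $\kappa(\bar{\chi}_i,\bar{\chi}_j)$ that the relation $S(bb')=\kappa(|b|,|b'|)S(b)S(b')$ is designed to absorb. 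So the identity $U=f_{\kappa^{-1}}^{-1}\circ(\id\#\pi^*)\circ(S\#\id)$ on all of $B(F(V))\#k^{\widehat{G}/T}_{\bar{\omega}}$ is equivalent to precisely the cocycle bookkeeping in the paper's proof; deferring it does not reduce the difficulty. You have only verified agreement on the generators $F_i\#\delta_{[\chi]}$, and since the content of the Lemma is that the displayed formula (on \emph{all} PBW basis elements) is multiplicative, agreement on generators does not finish the proof. To make your route work you would need to prove, by induction on the length of $b$ and using $d\kappa^{-1}=\pi^*\bar\omega$, that the composite reproduces $\sum_{\tau}\kappa(|b|,s([\chi])\tau)\,S(b)\#\delta_{s([\chi])\tau}$, and at that point you have essentially redone the paper's verification.
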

\begin{proof}
	We know that $\kappa(1,\chi)=1$, $S(1)=1$ and 
	\begin{align*}
		\sum_{\chi \in \widehat{G}/T} \sum_{\tau \in T}\, \delta_{s(\chi)\tau} =\sum_{\psi \in \widehat{G}} \, \delta_{\psi} =1_{k^{\widehat{G}}}.
	\end{align*}
	Hence, $U$ preserves the unit. Moreover, we have
	\begin{align*}
		U((b\#\delta_\chi)(b'\#\delta_\psi)) &= \delta_{\chi,[|b'|]\psi} \,\bar{\omega}([|b|],[|b'|],\psi)^{-1}U(bb' \#\delta_{\psi}) \\
		&=\delta_{\chi,[|b'|]\psi}\,\sum_{\tau \in T}\, \bar{\omega}([|b|],[|b'|],\psi)^{-1}\kappa(|b||b'|,s(\psi)\tau)\, S(bb')\#\delta_{s(\psi)\tau} \\
		&=\delta_{\chi,[|b'|]\psi}\,\sum_{\tau \in T}\, \bar{\omega}([|b|],[|b'|],\psi)^{-1}\kappa(|b||b'|,s(\psi)\tau)\kappa(|b|,|b'|)\, S(b)S(b')\#\delta_{s(\psi)\tau},
	\end{align*}
	where we used $S(bb')=\kappa(|b|,|b'|)\, S(b)S(b')$ in the last line.
	On the other hand, we have
	\begin{align*}
		U(b\#\delta_\chi)U(b'\#\delta_\psi)&= \sum_{\tau,\tau' \in T}\, \kappa(|b|,s(\chi)\tau')\kappa(|b'|,s(\psi)\tau)\,(S(b)\#\delta_{s(\chi)\tau'})(S(b')\#\delta_{s(\psi)\tau})\\
		&= \sum_{\tau,\tau' \in T}\, \delta_{s(\chi)\tau',|b'|s(\psi)\tau}\, \kappa(|b|,s(\chi)\tau')\kappa(|b'|,s(\psi)\tau)\,S(b)S(b')\#\delta_{s(\psi)\tau}\\
		&=\delta_{\chi,[|b'|]\psi}\,\sum_{\tau\in T}\,  \kappa(|b|,|b'|s(\psi)\tau)\kappa(|b'|,s(\psi)\tau)\,S(b)S(b')\#\delta_{s(\psi)\tau}.
	\end{align*}
	Since $\psi=[s(\psi)\tau]$ and $\pi^*\bar{\omega}=(d\kappa)^{-1}$, we have an equality.
\end{proof}

\begin{remark}\label{rem: left inverse of algebra inclusion}
	Note that the above algebra homomorphism $U$ has a linear left inverse, given by
	\begin{align*}
		Q: B(V) \# k^{\widehat{G}} &\longrightarrow B(F(V))\#k^{\widehat{G}/T}_{\bar{\omega}}\\
		b\#\delta_\psi &\longmapsto \kappa(|b|,\psi)^{-1} \, S^{-1}(b)\#s^*\delta_{\psi}.
	\end{align*}
	It is easy to see that this map preserves the unit. Moreover we have 
	\begin{align*}
		Q((b\#\delta_{\chi})(b'\#\delta_{\psi}))=Q(b\#\delta_{\chi})Q(b'\#\delta_{\psi})
	\end{align*}
	if and only if $s([|b'|\psi])=|b'|\psi$.
\end{remark}

We now show that there is an algebra homomorphism between the corresponding small quantum groups of $\bar{u}^{\leq}:=B(F(V))\#k^{\widehat{G}/T}_{\bar{\omega}}$ and $u^{\leq }:=B(V) \# k^{\widehat{G}}$ extending $U$:

\begin{proposition}\label{prop: algebra inclusion M}
	The following map defines an algebra inclusion:
	\begin{align*}
	M:\bar{u} &\longrightarrow u \\
	\Gamma_{\bar{u}}((b\#\delta_1)^*) &\longmapsto \Gamma_u((S(b)\#\delta_1)^*)\left( \sum_{\chi \in \widehat{G}}\,\kappa(\chi|\bar{b}|,|b|)^{-1}\, \delta_{\chi} \right)\\
	\iota_{\bar{u}}(b\#\delta_\chi) &\longmapsto \iota_u(U(b\#\delta_\chi)) .
	\end{align*}
	Here, $\Gamma$ and $\iota$ are the maps introduced Thm. \ref{thm: maps iota and Gamma}.
\end{proposition}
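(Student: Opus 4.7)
The plan is to construct $M$ first as a homomorphism of the ambient diagonal crossed products and then to verify that it descends along the quotient $\bar u = D(\bar u^\leq)/\bar I$, where $\bar u^\leq = B(F(V))\#k^{\widehat G/T}_{\bar\omega}$ and $u^\leq = B(V)\#k^{\widehat G}$. The restriction of $M$ to the Radford biproduct part $\iota_{\bar u}(\bar u^\leq)$ agrees with $\iota_u\circ U$ and is thus a well-defined algebra monomorphism by the previous lemma and the fact that $\iota_u$ is an inclusion of algebras (Theorem~\ref{thm: maps iota and Gamma}). It therefore suffices to verify multiplicativity of $M$ on the generators involving $\Gamma_{\bar u}$.

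For this I would use the explicit product formula \ref{eq: product formula} for the diagonal crossed product in both Drinfeld doubles. Concretely:
(i) for two elements of the form $\Gamma_{\bar u}((b\#\delta_1)^*)\Gamma_{\bar u}((b'\#\delta_1)^*)$ one expands using the general product formula from Section~\ref{sec: Drinfeld double} and checks equality with $M(\Gamma_{\bar u}(\cdot))M(\Gamma_{\bar u}(\cdot))$ in $u$; here the key identity is $\pi^*\bar\omega = d\kappa^{-1}$ and $\pi^*\bar\sigma = (\kappa/\kappa^T)\sigma$, which ensures that the pullback of the $5$-tensor $\Omega$ in $D(\bar u^\leq)$ matches $\Omega$ in $D(u^\leq)$ up to a coboundary absorbed by the correction factor $\kappa(\chi|\bar b|,|b|)^{-1}$ placed in the definition of $M$ on $\Gamma$-generators;
(ii) for mixed products $\Gamma_{\bar u}(\varphi)\iota_{\bar u}(a)$ one uses the decomposition $\varphi\bowtie h = \iota(q^1)\Gamma(\varphi\leftharpoonup q^2)\iota(h)$ from Theorem~\ref{thm: maps iota and Gamma} and the fact that $U$ is compatible with the $p_R,q_R$ elements of the two quasi-Hopf algebras (the elements of $\bar u^\leq$ and $u^\leq$ arising from $\bar\omega$ and $\omega=1$ differ precisely by the boundary $d\kappa$). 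Once multiplicativity is established, one verifies that $M$ annihilates the biideal $\bar I$ by observing $M(c_\chi^{\bar u}) = c_{s(\chi)}^u \cdot (\text{factor from }\kappa)$, which becomes $1$ after passing to the quotient $u = D(u^\leq)/I$, since $[c_\psi^u]=1$ for all $\psi\in\widehat G$.

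Injectivity follows from the triangular decomposition $(5)$ of Theorem~\ref{thm:quasi-Hopf algebra}: as vector spaces $\bar u \cong B(F(V))\otimes k\widehat G/T\otimes B(F(V)^\vee)$ and $u\cong B(V)\otimes k\widehat G\otimes B(V^\vee)$, and $M$ respects this tensor decomposition. The three factors are mapped injectively: the first via the braided algebra isomorphism $S\colon B(F(V))\to F(B(V))$ and the inclusion of homogeneous components; the middle via $\delta_\chi\mapsto \sum_{\tau\in T}\kappa(|b|,s(\chi)\tau)\,\delta_{s(\chi)\tau}$, which is clearly injective (the $\delta_{s(\chi)\tau}$ for different $\chi$ have disjoint support); and the third factor analogously. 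The main obstacle will be step (i) above, since the $\Gamma$-$\Gamma$ product involves a fivefold expression in $\bar\omega$ whose pullback under $\pi$ must be matched term-by-term, after separating out $d\kappa$, with the corresponding expression for $\omega=1$ in $u$; this is a lengthy but mechanical manipulation of the abelian $3$-cocycle conditions, entirely analogous to the computations in Section~\ref{sec: Drinfeld double}.
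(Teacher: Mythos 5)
Your proposal is broadly feasible but takes a genuinely different architectural route from the paper's. The paper works entirely at the quotient level $\bar u\to u$, reducing multiplicativity to exactly two relations — on $\Gamma$-$\Gamma$ products and on mixed products $\iota(\cdot)\Gamma(\cdot)$ — and the key concrete step for the latter is to reduce to $M(\bar F_j\bar E_i)=M(\bar F_j)M(\bar E_i)$ and then use the explicit commutator from Remark~\ref{rem: another commutator relation} (i.e.\ $E_iF_j - M_{ij}F_jE_i = \delta_{ij}(K_{\bar\chi_i}-\bar K_{\bar\chi_i}^{-1})$) to convert $F_jE_i$ into $E_iF_j$ plus Cartan terms; the $\kappa$-factors then cancel via $\pi^*\bar\omega=d\kappa^{-1}$. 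Your plan instead lifts to the diagonal crossed products $D(\bar u^\leq)\to D(u^\leq)$ and argues descent along the biideal, relying on a compatibility of the $p_R,q_R$ (equivalently $\Omega$) elements under $U$. That compatibility is plausible — it should hold up to $d\kappa$-corrections absorbed by your $\kappa(\chi|\bar b|,|b|)^{-1}$ factor — but it is precisely the hardest unstated assertion in your outline and would take real work to verify (one has to match a fivefold $\Omega$-expression, not just the triple $\bar\omega$); the paper sidesteps this entirely by exploiting $c_\chi=1$ in the quotient, which kills the extra $\kappa$-$\Omega$ terms you would have to track. Your plan does add something the paper's proof glosses over: an explicit injectivity argument via the triangular decomposition $u\cong B(V)\otimes k\widehat G\otimes B(V^\vee)$, which is a clean way to see the ``inclusion'' part of the statement. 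On balance, your route is more uniform but pays a technical cost in step (ii), whereas the paper's reduction-to-commutator trick is the efficient move you would do well to adopt.
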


\begin{proof}
	It is not hard to see that elements of the form $\Gamma_{\bar{u}}((b\#\delta_1)^*)$ and $\iota_{\bar{u}}(b\#\delta_\chi)$ generate the algebra $\bar{u}$, since we have for a general small quasi-quantum group $u(\omega,\sigma)$
	\begin{align*}
		\Gamma((b\#\delta_\chi)^*)=\Gamma((b\#\delta_1)^*)\left( \sum_{\psi}\, \frac{\omega(\psi|\bar{b}|,|b|,\chi)}{\sigma(\psi,\chi)}\delta_{\psi}\right) c_{\chi}
		=\Gamma((b\#\delta_1)^*)\left( \sum_{\psi}\, \frac{\omega(\psi|\bar{b}|,|b|,\chi)}{\sigma(\psi,\chi)}\delta_{\psi}\right),
	\end{align*}
	where we used that $c_\chi=1$ holds in the quotient. Moreover,  the elements $\Gamma((b\#\delta_1)^*)\iota(\tilde{b}\#\delta_\chi) \in u(\omega,\sigma)$ form a basis. Hence, we need to show that $M$ preserves products of the form
	\begin{align*}
		\left( \Gamma((b\#\delta_1)^*)\iota(\tilde{b}\#\delta_\chi)\right) \cdot\left( \Gamma((b'\#\delta_1)^*)\iota(\tilde{b}'\#\delta_\psi)\right) 
	\end{align*}
	Since we know that $M|_{\bar{u}^{\leq}}=U$ is a quasi-Hopf inclusion, for this it is sufficient to prove the following relations:
	\begin{itemize}
		\item $M\left( \Gamma((b\#\delta_1)^*)\Gamma((b'\#\delta_1)^*)\right) =M\left( \Gamma((b\#\delta_1)^*)\right) M\left( \Gamma((b'\#\delta_1)^*)\right) $,
		\item $M\left(\iota(\tilde{b}\#\delta_\chi) \Gamma((b\#\delta_1)^*) \right) =M\left(\iota(\tilde{b}\#\delta_\chi)\right) M\left(  \Gamma((b\#\delta_1)^*) \right) $.
	\end{itemize}
	We start with the first relation. We have a general formula for the product $\Gamma((b\#\delta_1)^*)\Gamma((b'\#\delta_1)^*)$ given by
	\begin{align*}
		\Gamma((b\#\delta_1)^*)\Gamma((b'\#\delta_1)^*)=\sum_{\xi \in {\widehat{G}/T}}\, \bar{\omega}(\xi[|\bar{b}'||\bar{b}|],[|b|],[|b'|])^{-1}\, \Gamma((b*b'\#\delta_1)^*)\delta_{\xi},
	\end{align*}
	where $b*b'\in B(F(V))$ denotes the quantum shuffle product as introduced in Appendix \ref{app: quantum shuffle product}. Hence,
	\begin{align*}
		M\left( \Gamma((b\#\delta_1)^*)\Gamma((b'\#\delta_1)^*)\right) &=M\left( \sum_{\xi \in {\widehat{G}/T}}\, \bar{\omega}(\xi[|\bar{b}'||\bar{b}|],[|b|],[|b'|])^{-1}\, \Gamma((b*b'\#\delta_1)^*)\iota(\delta_{\xi}) \right) \\
		&= \sum_{\xi \in {\widehat{G}/T}}\, \bar{\omega}(\xi[|\bar{b}'||\bar{b}|],[|b|],[|b'|])^{-1}\, M\left( \Gamma((b*b'\#\delta_1)^*)\right) M\left( \delta_{\xi}\right) \\
		&=\sum_{\chi \in \widehat{G}}\, \bar{\omega}([\chi|\bar{b}'||\bar{b}|],[|b|],[|b'|])^{-1}\kappa(\chi|\bar{b}||\bar{b}'|,|b||b'|)^{-1}\,  \Gamma((S(b*b')\#\delta_1)^*)\iota\left(  \delta_{\chi} \right) \\
		&=\sum_{\chi \in \widehat{G}}\, \bar{\omega}([\chi|\bar{b}'||\bar{b}|],[|b|],[|b'|])^{-1}\kappa(\chi|\bar{b}||\bar{b}'|,|b||b'|)^{-1}\kappa(|b|,|b'|)^{-1} \\
		&\times \,\Gamma((S(b)*S(b')\#\delta_1)^*)\iota\left(  \delta_{\chi} \right), 
	\end{align*}
	where we used $S(b*b')=\kappa(|b|,|b'|)^{-1}S(b)*S(b')$ in the last equality. On the other hand, we have
	\begin{align*}
		M\left( \Gamma((b\#\delta_1)^*)\right) M\left( \Gamma((b'\#\delta_1)^*)\right) &=\sum_{\xi,\psi \in \widehat{G}} \kappa(\xi|\bar{b}|,|b|)^{-1}\kappa(\psi|\bar{b'}|,|b'|)^{-1} \\
		&\times \,
		 \Gamma((S(b)\#\delta_1)^*)\delta_{\xi}\Gamma((S(b')\#\delta_1)^*)\delta_{\psi}\\
		&=\sum_{\psi \in \widehat{G}} \kappa(\psi|\bar{b}'||\bar{b}|,|b|)^{-1}\kappa(\psi|\bar{b'}|,|b'|)^{-1}  \, \Gamma((S(b)*S(b')\#\delta_1)^*)\delta_{\psi}.
	\end{align*}
	Since $\pi^*\bar{\omega}=d\kappa^{-1}$, we have an equality. For the second relation, it suffices to prove $M(\bar{F}_j\bar{E}_i)=M(\bar{F}_j)M(\bar{E}_i)$, where the bars indicate that the generators live in $\bar{u}$. By Remark \ref{rem: another commutator relation}, we have
	\begin{align*}
		M\left( \bar{F}_j\bar{E}_i\right) &=M\left( \left(\bar{E}_i\bar{F}_j-\delta_{ij}\left( K_{[\bar{\chi}_i]}-\bar{K}_{[\bar{\chi}_i]}^{-1}\right)  \right)\iota\left( \sum_{\xi \in \widehat{G}/T} \, \bar{\omega}([\bar{\chi}_j],\xi[\chi_i],[\bar{\chi}_i]) \delta_{\xi} \right)  \right) \\
		&=\left(M(\bar{E}_i)M(\bar{F}_j)-\delta_{ij}\left( M\left( K_{[\bar{\chi}_i]}\right) -M\left( \bar{K}_{[\bar{\chi}_i]}^{-1}\right) \right)  \right)\iota\left( \sum_{\xi \in \widehat{G}} \, \bar{\omega}([\bar{\chi}_j],[\xi\chi_i],[\bar{\chi}_i])\, \delta_{\xi} \right) \\
		&=\left(E_iF_j-\delta_{ij}\left(  K_{\bar{\chi}_i} -\bar{K}_{\bar{\chi}_i}^{-1} \right)  \right)\iota\left( \sum_{\xi \in \widehat{G}} \, \frac{\kappa(\bar{\chi}_j,\xi)}{\kappa(\xi\bar{\chi}_j\chi_i,\bar{\chi}_i)}\bar{\omega}([\bar{\chi}_j],[\xi\chi_i],[\bar{\chi}_i]) \,\delta_{\xi} \right) \\
		&=\left(E_iF_j-\delta_{ij}\left(  K_{\bar{\chi}_i} -\bar{K}_{\bar{\chi}_i}^{-1} \right)  \right)\iota\left( \sum_{\xi \in \widehat{G}} \, \frac{\kappa(\bar{\chi}_j,\xi\chi_i)}{\kappa(\xi\chi_i,\bar{\chi}_i)}\,\delta_{\xi} \right) \\
		&=F_jE_i\,\iota\left( \sum_{\xi \in \widehat{G}} \, \frac{\kappa(\bar{\chi}_j,\xi\chi_i)}{\kappa(\xi\chi_i,\bar{\chi}_i)}\,\delta_{\xi} \right) =M(\bar{F}_j)M(\bar{E}_i).
	\end{align*}
	Finally, it is easy to see that $M$ preserves the unit. This proves the claim.
\end{proof}

The following proposition is a tedious but straight forward calculation:

\begin{proposition}\label{rm: M is quasi-Hopf homomorphism to u^J }
	The algebra homomorphism $M$ from Prop. \ref{prop: algebra inclusion M} is a homomorphism of quasitriangular quasi-Hopf algebras if we replace $u$ by its twisted version $u^J$ (see Def. \ref{def: twist}), where 
	\begin{align*}
		J=\sum_{\chi \in \widehat{G}} \, \kappa(\chi,\psi)\,\delta_{\chi}\otimes \delta_{\psi}.
	\end{align*}
\end{proposition}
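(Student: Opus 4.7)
The plan is to check, piece by piece, that $M$ intertwines all structure maps: coproduct, counit, associator, antipode (with $\alpha,\beta$), and $R$-matrix, on a generating set. Algebra compatibility is already Prop.~\ref{prop: algebra inclusion M}, and preservation of the counit is immediate from the formulas ($\epsilon$ vanishes on $\Gamma$'s and $F_i$'s, and equals $1$ on $\delta_\chi$'s, both sides). So the task is to verify the remaining identities on the two types of generators, $\iota_{\bar u}(b\#\delta_\chi)$ and $\Gamma_{\bar u}((b\#\delta_1)^*)$, where it suffices by the algebra property to test on the ``primitive'' cases $b=F_i$ and on pure Cartan elements $\delta_\chi$.

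First I would dispose of the subalgebra $\bar u^{\le}\subseteq\bar u$. The restriction $M|_{\bar u^\le}=U$ factors through the twist isomorphism of Lemma~\ref{lm: iso betwwen twist of biproducts}: since $\pi^*\bar\omega=d\kappa^{-1}$ with $\omega=1$, the map $f_\kappa$ there identifies $\bar u^{\le}=B(F(V))\#k^{\widehat G/T}_{\bar\omega}$, pulled back along $\pi$, with $(u^{\le})^J$. Composing with the identity $(u^{\le})^J\hookrightarrow u^J$ (which is a quasi-bialgebra inclusion by the very definition of the twisting) gives exactly $U$ landing in $u^J$, and this composition is a quasi-bialgebra map because both constituents are. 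Thus on $\bar u^{\le}$ the coproduct, associator and antipode are handled automatically; in particular $(M\otimes M\otimes M)(\phi_{\bar u})=\phi_{u^J}$ holds because $\phi_{u^J}$ is by construction the pullback of $\bar\omega\in Z^3(\widehat G/T)$ under $\pi$, and $\alpha^J=\alpha$, $\beta^J=\beta$ hold because $J$ is grouplike-normalized.

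The heart of the argument is the case of the dual generators $\Gamma_{\bar u}((F_i\#\delta_1)^*)$, i.e. the $\bar E_i$. For the coproduct, I would expand $\Delta_{\bar u}(\bar E_i)$ using the formula in Thm.~\ref{thm:quasi-Hopf algebra}, apply $M\otimes M$, and compare with
\begin{equation*}
\Delta_{u^J}(M(\bar E_i))=J\,\Delta_u(M(\bar E_i))\,J^{-1},
\end{equation*}
where the right hand side is expanded using the known $\Delta_u(E_i)$ together with the defining correction $M(\bar E_i)=E_i\bigl(\sum_\chi\kappa(\chi\bar\chi_i,\chi_i)^{-1}\delta_\chi\bigr)$. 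The nontrivial $2$-cocycles $\theta$ appearing in $\Delta(\bar E_i)$ are exactly the nonassociative defects of $\bar\sigma$, and they match the deformations produced by conjugation with $J$ precisely because $\bar\sigma=(\kappa/\kappa^T)\sigma$ and $\bar\omega\circ\pi=\omega\cdot d\kappa^{-1}$. The antipode identity $M\circ S_{\bar u}=S_{u^J}\circ M$ is then checked in the same way, using the formulas $S(\bar E_i)$ from the theorem and the twisted antipode $S_{u^J}(x)=J^1S(J^{(-1)}x J^{(-2)})S(J^2)$ applied to $M(\bar E_i)$; the coboundary identity satisfied by $\kappa$ makes the two expressions collapse to the same sum of characters.

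Finally, for the $R$-matrix I would use the decomposition $R=R_0\Theta^{-1}$ of Prop.~\ref{prop: R-matrix}. Because $M$ restricted to the Cartan part is the pullback along $\pi$ and $\bar\sigma=(\kappa/\kappa^T)\sigma$, one has $(M\otimes M)(\bar R_0)=J_{21}R_0J^{-1}$, which is exactly how the Cartan part of the $R$-matrix transforms under the twist $J$. For the quasi-$R$-matrix piece, one exploits the uniqueness statement in (the proof of) Prop.~\ref{prop: R-matrix}: both $(M\otimes M)(\bar\Theta^{-1})$ and the corresponding component of $R^J:=J_{21}RJ^{-1}$ are quasi-$R$-matrices for the same twisted coproduct $\Delta^J$ on $M(\bar u)\subseteq u^J$, so they agree. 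Putting the two pieces together yields $(M\otimes M)(\bar R)=R^J$, completing the verification. The main obstacle is purely bookkeeping: tracking the $\omega$-, $\theta$-, and $\kappa$-factors in the comultiplication of $\bar E_i$ and showing that they are precisely absorbed by the twist $J$; this is the ``tedious but straightforward'' computation the authors refer to, and the key conceptual ingredient that makes it work is the single identity $\pi^*(\bar\omega,\bar\sigma)=d_{ab}\kappa\cdot(1,\sigma)$.
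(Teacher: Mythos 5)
The paper gives no proof of this proposition, only the remark that it is a ``tedious but straightforward calculation,'' so there is nothing to compare against; your job is to fill in the gap, and your plan is roughly the right shape. Your architecture --- verify on generators, split into the Radford biproduct $\bar u^{\le}$ where the quasi-bialgebra structure comes for free from the twisting lemma, then handle the dual generators $\bar E_i$ by hand, and track everything through the single identity $\pi^*(\bar\omega,\bar\sigma)=d_{\mathrm{ab}}\kappa\cdot(1,\sigma)$ --- is what the authors clearly have in mind, and it is corroborated by the partial calculations they \emph{do} carry out in the proof of Thm.~\ref{thm:modularzation} (where $(M\otimes M)(\bar\Theta)$ and $(M\otimes M)(\bar R_0)$ are computed). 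Two things need repair before the outline becomes a proof.

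First, the $R$-matrix bookkeeping is off by an inverse. With $\pi^*\bar\sigma=(\kappa/\kappa^T)\sigma$ and $R_0=\sum_{\chi,\psi}\sigma(\chi,\psi)\,\delta_\chi\otimes\delta_\psi$ one computes directly
\begin{align*}
(M\otimes M)(\bar R_0)
=\sum_{\chi,\psi}\frac{\kappa(\chi,\psi)}{\kappa(\psi,\chi)}\,\sigma(\chi,\psi)\,\delta_\chi\otimes\delta_\psi
= J_{21}^{-1}\,R_0\,J ,
\end{align*}
not $J_{21}R_0J^{-1}$; these differ by $(\kappa/\kappa^T)^2$. Similarly, the associator comparison you quote does not close for the stated twist: $\phi_{u^J}=d\kappa$ (since $\omega_u=1$), while the pullback of $\phi_{\bar u}$ is $\pi^*\bar\omega=d\kappa^{-1}$. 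Together with the $R_0$ computation this shows that, with the paper's convention $\Delta^J=J\Delta J^{-1}$ and $\kappa$ as in Lemma~\ref{lm: explicit 3 cocycle omegabar}, the statement is consistent only with the twist $J^{-1}$, i.e.\ one should take $J=\sum\kappa(\chi,\psi)^{-1}\delta_\chi\otimes\delta_\psi$ (equivalently, replace $\kappa$ by $\kappa^{-1}$ throughout). You inherited this from the paper's own statement, but a proof needs to flag and resolve it; otherwise the associator, $R_0$, and twisted coproduct identities cannot all be made to hold simultaneously.

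Second, the uniqueness argument for the quasi-$R$-matrix $\Theta$ is too quick as written. The characterization $\Delta(h)\Theta=\Theta\bar\Delta(h)$ pins $\Theta$ down uniquely when it holds for \emph{all} $h$, via the grading recursion driven by the $E_i$ and $F_j$. But $M$ is not surjective: its image does not contain the full Cartan $k^{\widehat G}$, and the image of the coproduct relation only gives you the intertwiner equation against elements of $M(\bar u)$. The argument can be salvaged --- the recursion really only uses the $E_i$, $F_j$ and the nilpotent grading, and $M(\bar E_i),M(\bar F_i)$ agree with $E_i,F_i$ up to diagonal Cartan factors that commute past the grading --- but that observation must be made explicit. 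Alternatively, and more robustly, do what the paper does in the proof of Thm.~\ref{thm:modularzation}: compute $(M\otimes M)(\bar\Theta)$ directly by pushing the Nichols-basis sum through the formulas $M(\iota(b))$, $M(\Gamma((b\#\delta_1)^*))$ and using $\pi^*\bar\omega=d\kappa^{-1}$; the answer there is $(M\otimes M)(\bar\Theta)=J^{-1}\Theta J$, which combines with the corrected $R_0$ identity to give $(M\otimes M)(\bar R)=J_{21}^{-1}RJ$, i.e.\ the $R$-matrix of $u$ twisted by $J^{-1}$, consistent with the convention fix above.
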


\begin{lemma}\label{lm: radical of hopf pairing for u(omega,sigma)}
	Let $\mathcal{C}$ denote the representation category of $u(1,\sigma)$. Then the M\"uger center $\mathcal{C}'$ is equivalent to $\mathsf{Vect}_T$, where $T=Rad(B)$ is the radical of the associated bihomomorphism $B=\sigma \sigma^T$.
\end{lemma}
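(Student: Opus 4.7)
The plan is to show the two inclusions $\mathsf{Vect}_T \hookrightarrow \mathcal{C}'$ and $\mathcal{C}' \hookrightarrow \mathsf{Vect}_T$ explicitly, using the decomposition $R=R_0\Theta^{-1}$ from Proposition~\ref{prop: R-matrix} and the fact that when $\omega=1$ the quasi-Hopf algebra $u(1,\sigma)$ is an ordinary Hopf algebra (namely $u_q(\g,\Lambda)$, by Example~\ref{ex: u(omega,sigma) for omega=1}), so standard highest-weight arguments apply.

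First I would embed $\mathsf{Vect}_T$ into $\mathcal{C}$: a $T$-graded vector space $X=\bigoplus_{\chi\in T}X_\chi$ becomes a $u(1,\sigma)$-module with $k^{\widehat{G}}$-action given by the grading (and supported on $T\subseteq\widehat{G}$) and with $E_i,F_i$ acting as zero; the relations of Theorem~\ref{thm:quasi-Hopf algebra} are then satisfied trivially. To see that every such $X$ lies in $\mathcal{C}'$, observe that in each summand of $\Theta=\sum_{b\in\mathcal{B}}\Gamma((b\#\delta_1)^*)\otimes \iota(b)\gamma_{|b|}$ with $b\neq 1$, one factor is built from the positive-part generators $E_i$ and the other from the negative-part generators $F_j$; since both kill $X$, we have $\Theta^{\pm 1}|_{X\otimes Y}=\id=\Theta^{\pm 1}|_{Y\otimes X}$ for every $Y\in\mathcal{C}$. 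Thus both braidings reduce to the Cartan part $R_0$, and the monodromy on $X_\chi\otimes Y_\psi$ equals $B(\chi,\psi)\cdot\id=\id$ because $\chi\in T$.

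For the converse, let $X\in\mathcal{C}'$. Testing transparency against the one-dimensional modules $\C_\psi$ for $\psi\in\widehat{G}$: on $\C_\psi$ the nilpotent generators again act as zero, so the argument above gives monodromy $B(|x|,\psi)\cdot\id$ on $X_{|x|}\otimes\C_\psi$. Transparency forces $B(|x|,\psi)=1$ for all $\psi$, hence every weight of $X$ lies in $T$. It remains to force $E_i,F_i$ to act as zero. For this I would test against a two-dimensional highest-weight module $V_i$ with highest weight vector $v_0$ killed by $E_i$ and with $v_1:=F_iv_0\neq 0$ (obtained as a suitable quotient of the standard Verma-like module $u(1,\sigma)\otimes_{u^{\geq}}\C_\chi$, which exists for appropriate $\chi$ just as in the classical case). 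In $c_{V_i,X}\circ c_{X,V_i}$, the factor $c_{V_i,X}(v_0\otimes x)$ reduces to $R_0$ because $E_i v_0=0$ kills every $b\neq 1$ contribution in $\Theta^{-1}$; but $c_{X,V_i}(x\otimes v_0)$ picks up the leading correction from the $b=F_i$ summand of $\Theta^{-1}$, producing a term proportional to $(E_ix)\otimes v_1$. Comparing the two sides modulo the unambiguous leading weight component forces $E_ix=0$ for all $x$; applying the same argument to the dual module $V_i^\vee$ (or symmetrically using a module with free $E_i$-action) forces $F_ix=0$. Hence $X\in\mathsf{Vect}_T$, and the braiding it inherits from $\mathcal{C}$ is that of $\Vect^{(1,\sigma|_T)}_T$, which is symmetric since $B|_T=1$.

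The main obstacle is the middle step of the converse: producing the right finite-dimensional test modules $V_i$ and cleanly extracting the conclusion $E_ix=0$ (resp.\ $F_ix=0$) from the monodromy equation, because in $\Theta^{-1}$ the higher PBW-terms in the Nichols algebra also contribute and one has to arrange an induction on the $\N$-filtration of $B(V)$ (or use a highest-weight argument on $X$) to peel off the leading $b=F_i$ contribution. Everything else is a direct unwinding of Proposition~\ref{prop: R-matrix} together with the defining relations of Theorem~\ref{thm:quasi-Hopf algebra}.
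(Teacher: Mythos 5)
Your approach is genuinely different from the paper's. The paper proves the lemma in one step by invoking a general result of Shimizu (Thm.~6.2 of \cite{Shimizu2016}): for a braided Hopf algebra $A$ in a finite braided category $\mathcal{D}$, the M\"uger center of the Yetter--Drinfeld category $\mathcal{YD}(\mathcal{D})^A_A$ is braided equivalent to the M\"uger center of $\mathcal{D}$ itself. Applied with $\mathcal{D}=\Vect_{\widehat G}^{(1,\sigma)}$ and $A=B(V)$, using $\mathcal{YD}(\mathcal{D})^A_A\cong \Rep_u$ (Shimizu, Ch.~6.5), one simply reads off $\mathcal{C}'\cong\mathcal{D}'\cong\Vect_T$ from Prop.~\ref{Modularizability of VectG}. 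By contrast, you unwind the monodromy $R_{21}R$ directly from the decomposition $R=R_0\Theta^{-1}$ of Prop.~\ref{prop: R-matrix}. What each buys: Shimizu's theorem is maximally slick and covers all $(\g,\ell,\Lambda)$ uniformly with no PBW bookkeeping, whereas your argument is elementary and self-contained, and the forward inclusion $\Vect_T\hookrightarrow\mathcal{C}'$ plus the step that a transparent $X$ has its weights in $T$ (testing against the one-dimensional $\C_\psi$) are both clean and correct.

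The real gap is the one you name yourself: extracting $E_ix=0$ and $F_ix=0$ from the monodromy against two-dimensional test modules. You have not shown that the ``leading correction'' from the $b=F_i$ summand of $\Theta$ cannot be cancelled by higher PBW terms or by the Cartan part $R_0$; this requires the filtration argument you allude to but do not supply, so as written the converse is incomplete. Note, though, that a much shorter route usually works at this stage: once you know $X$ has weights only in $T$, then since $E_i$ (resp.\ $F_i$) shifts weights by $\chi_i$ (resp.\ $\bar\chi_i$), and $T\chi_i\cap T=\emptyset$ whenever $\chi_i\notin T$, weight considerations alone already force $E_iX=F_iX=0$, with no braiding computation at all. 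The caveat is that $\chi_i\notin T$ is not a formal consequence of the hypotheses (e.g.\ $B(\chi_i,\chi_i)=q_{ii}^2$ can be $1$ when $q_{ii}=-1$, as in the $B_n$, $\ell=4$ case), so this shortcut needs a separate verification that no simple root character $\chi_i$ is transparent; in the genuinely degenerate situations neither the weight shortcut nor your unworked filtration argument is obviously available, which is precisely where Shimizu's theorem shows its strength.
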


\begin{proof}
	In the case $u(1,\sigma)=(u_q(\g),R_0)$ with simple Lie algebra $\g$ and $R_0$ coming from a symmetric non-degenerate bihomomorphism $f$, the lemma is a direct consequence of \cite[Cor. 5.20]{LentnerOhrmann2017}. The general case $u(1,\sigma)$ for an arbitrary non-degenerate bihomomorphism $f$ follows from Thm. 6.2. in \cite{Shimizu2016}. The theorem says that the M\"uger center $\mathcal{D}'$ of a finite braided category $\mathcal{D}$ is equivalent to the M\"uger center $({\mathcal{YD}(\mathcal{D})^A_A})'$ of the category $\mathcal{YD}(\mathcal{C})^A_A$ of Yetter-Drinfeld modules over a braided Hopf algebra $A$ in $\mathcal{D}$. In our case, we can set $\mathcal{D}=\mathsf{Vect}_{\hat{G}}^{(1,\sigma)}$, $A=B(V)$. As it is pointed out in Chapter 6.5 in \cite{Shimizu2016}, we have ${\mathcal{YD}(\mathcal{D})^A_A} \cong \mathsf{Rep}_u$ in this case. Moreover, by Prop. \ref{Modularizability of VectG} we have $\mathcal{D}'\cong \mathsf{Vect}_T$, which proves the claim.
\end{proof}

We now proof the main theorem:
\begin{proof}[Proof of Thm. \ref{thm:modularzation}]
	We first prove the second part of the theorem. Since the restriction functor is the identity on morphisms, it is additive, linear and even exact. In order to show that $\tau$ is a monoidal structure, we choose $u\otimes v \otimes w \in (\mathcal{F}(U)\otimes \mathcal{F}(V))\otimes \mathcal{F}(W)$. For $\bar{\alpha}_{X,Y,Z}$ and $\alpha_{U,V,W}$ denoting the associators in $\mathsf{Rep}_{\bar{u}}$ and $\mathsf{Rep}_u$, respectively, we obtain
	\begin{align*}
		&\tau_{U,V\otimes W}\circ (\id_{\mathcal{F}(U)}\otimes \tau_{V,W})\circ \bar{\alpha}_{\mathcal{F}(U),\mathcal{F}(V),\mathcal{F}(W)}(u\otimes v\otimes w)\\
		&=\sum_{\xi_k \in \widehat{G}/T}\sum_{\psi_l \in \widehat{G}}\, \kappa(\psi_1,\psi_2\psi_3)\kappa(\psi_2,\psi_3)\bar{\omega}(\xi_1,\xi_2,\xi_3)\,\delta_{\psi_1}M(\delta_{\xi_1}).u \otimes \delta_{\psi_2}M(\delta_{\xi_2}).v \otimes \delta_{\psi_3}M(\delta_{\xi_3}).w \\
		&=\sum_{\psi_l \in \widehat{G}}\, \kappa(\psi_1,\psi_2\psi_3)\kappa(\psi_2,\psi_3)\bar{\omega}([\psi_1],[\psi_2],[\psi_3])\,\delta_{\psi_1}.u \otimes \delta_{\psi_2}.v \otimes \delta_{\psi_3}.w \\
		&=\sum_{\psi_l \in \widehat{G}}\, \kappa(\psi_1\psi_2,\psi_3)\kappa(\psi_1,\psi_2)\,\delta_{\psi_1}.u \otimes \delta_{\psi_2}.v \otimes \delta_{\psi_3}.w \\
		&=\mathcal{F}(\alpha_{U,V,W})\circ \tau_{U\otimes V,W} \circ ( \tau_{U,V}\otimes \id_{\mathcal{F}(W)}).
	\end{align*} 
	This proves the associativity axiom. The unitality axiom follows from the fact that $\kappa$ is a normalized $2$-cochain. Hence, $\tau$ is a monoidal structure. We now show that $(\mathcal{F},\tau)$ preserves the braiding. For this, we first notice that
	\begin{align*}
		&(M\otimes M)(\bar{\Theta})\\
		&=\sum_{b \in \mathcal{B}}\, \left( M\left( \Gamma((b\#\delta_1)^*)\right) \otimes M\left( \iota(b)\right)\right) \cdot \left( \sum_{\chi,\psi \in \widehat{G}/T}\, \bar{\omega}(\chi[|\bar{b}|],[|b|],\psi)\,M(\delta_{\chi})\otimes M(\delta_{\psi})\right) \\
		&=\sum_{b \in \mathcal{B}}\, \left( \Gamma((S(b)\#\delta_1)^*) \otimes \iota(S(b))\right) \left( \sum_{\pi,\nu \in \widehat{G}}\, \frac{\kappa(|b|,\nu)}{\kappa(\pi|\bar{b}|,|b|)} \,  \bar{\omega}([\pi|\bar{b}|],[|b|],[\psi])\,\delta_\pi\otimes \delta_\nu \right) \\
		&=\sum_{b \in \mathcal{B}}\, \left( \Gamma((S(b)\#\delta_1)^*) \otimes \iota(S(b))\right) \left( \sum_{\pi,\nu \in \widehat{G}}\, \frac{\kappa(\pi,\nu)}{\kappa(\pi|\bar{b}|,\nu|b|)} \,\delta_\pi\otimes \delta_\nu \right) \\
		&=\left( \sum_{\chi,\psi \in \widehat{G}}\, \kappa(\chi,\psi)^{-1} \,\delta_\chi\otimes \delta_\psi \right)\left( \sum_{b \in \mathcal{B}}\, \left( \Gamma((S(b)\#\delta_1)^*) \otimes \iota(S(b))\right)\right)  \left( \sum_{\pi,\nu \in \widehat{G}}\, \kappa(\pi,\nu) \,\delta_\pi\otimes \delta_\nu \right) \\
		&=\left( \sum_{\chi,\psi \in \widehat{G}}\, \kappa(\chi,\psi)^{-1} \,\delta_\chi\otimes \delta_\psi \right)\Theta  \left( \sum_{\pi,\nu \in \widehat{G}}\, \kappa(\pi,\nu) \,\delta_\pi\otimes \delta_\nu \right).
	\end{align*}
	where $\bar{\Theta}=\bar{\Theta}^1\otimes \bar{\Theta}^2$ and $\Theta =\Theta^1 \otimes \Theta^2$ denote the quasi-$R$-matrices in $\bar{u}$ and $u$, as defined in Prop. \ref{prop: R-matrix}. The inverses are denoted by $\bar{\Theta}^{-1}=\bar{\Theta}^{(-1)}\otimes \bar{\Theta}^{(-2)}$ and $\Theta^{-1}=\Theta^{(-1)} \otimes \Theta^{(-2)}$, respectively.
	Now, let $\bar{c}_{X,Y}$ and $c_{V,W}$ denote the braiding in $\mathsf{Rep}_{\bar{u}}$ and $\mathsf{Rep}_u$, respectively. We set $\kappa:=\kappa^1\otimes \kappa^2=\sum_{\chi,\psi}\, \kappa(\chi,\psi)\, \delta_\chi \otimes \delta_\psi$. By $\bar{R}=\bar{R}^1\otimes \bar{R}^2=\bar{R}_0^1\bar{\Theta}^{(-1)} \otimes \bar{R}_0^2\bar{\Theta}^{(-2)}$ and $R=R^1\otimes R^2=R_0^1\Theta^{(-1)} \otimes R_0^2\Theta^{(-2)}$, we denote the $R$-matrices of $\bar{u}$ and $u$ as defined in Section \ref{sec:R-matrix}. Again, we choose an arbitrary element $v \otimes w \in \mathcal{F}(V) \otimes \mathcal{F}(W)$. We have
	\begin{align*}
		\tau_{W,V} \circ \bar{c}_{\mathcal{F}(V),\mathcal{F}(W)}(v\otimes w) &= \kappa^1M(\bar{R}^2).w\otimes \kappa^2M(\bar{R}^1).v \\
		&= \kappa^1M(\bar{R}_0^2)M(\bar{\Theta}^{(-2)}).w\otimes \kappa^2M(\bar{R}_0^1)M(\bar{\Theta}^{(-1)}).v \\
		&= \kappa^1M(\bar{R}_0^2)\tilde{\kappa}^{(-2)}\Theta^{(-2)}\tilde{\tilde{\kappa}}^{2}.w\otimes \kappa^2M(\bar{R}_0^1)\tilde{\kappa}^{(-1)}\Theta^{(-1)}\tilde{\tilde{\kappa}}^{1}.v \\
		&= R_0^2\Theta^{(-2)}\tilde{\tilde{\kappa}}^{2}.w\otimes R_0^1\Theta^{(-1)}\tilde{\tilde{\kappa}}^{1}.v \\
		&= R^2\tilde{\tilde{\kappa}}^{2}.w\otimes R^1\tilde{\tilde{\kappa}}^{1}.v \\
		&= \mathcal{F}(c_{V,W})\circ \tau_{V,W}(v\otimes w).
	\end{align*}
	Here, we used the notation $\kappa^1\otimes\kappa^2=\tilde{\kappa}^1\otimes\tilde{\kappa}^2=\tilde{\tilde{\kappa}}^1\otimes\tilde{\tilde{\kappa}}^2=\kappa$. The equality $\kappa^1M(\bar{R}_0^2)\tilde{\kappa}^{(-2)} \otimes \kappa^2M(\bar{R}_0^1)\tilde{\kappa}^{(-1)}=R_0^1\otimes R_0^2$ follows from $\pi^*\bar{\sigma}=\sigma\cdot \kappa/\kappa^T$. \\
	The fact that $\xi_V:\mathcal{F}(V^\vee) \to \mathcal{F}(V)^\vee$ is a natural $\bar{u}$-module isomorphism follows from the Remark \ref{rm: M is quasi-Hopf homomorphism to u^J } that $M$ is a homomorphism of quasi-Hopf algebras, if we replace $u$ by the twisted quasi-Hopf algebra $u^J$ for $J=\sum\,\kappa(\chi,\psi)\, \delta_{\chi}\otimes \delta_\psi$.\\
	We now show that the ribbon structure from $\mathsf{Rep}_u$ can be transported to a ribbon structure of $\mathsf{Rep}_{\bar{u}}$ so that $\mathcal{F}$ is a ribbon functor. \\
\newcommand{\fun}{\mathcal{F}}
\newcommand{\ribbon}{\boldsymbol{v}}
\newcommand{\pivot}{\boldsymbol{g}}
We now recall that $u$ is actually a ribbon Hopf algebra with the ribbon element $\ribbon=\pivot^{-1}\boldsymbol{u}$, where  $\boldsymbol{u}$ is the canonical Drinfeld element and $\pivot$ is the balancing group-like element, i.e. such that $S^2(a) = \pivot a \pivot^{-1}$, for $a\in u$. The element $\pivot$ can be constructed as a square root of the special group-like element, see details e.g.\ in~\cite{Radford2012}. 
Let $\theta$ denotes the corresponding twist in $\mathsf{Rep}_u$, in our convention it is given by the action with $\ribbon^{-1}$.
We now show that the ribbon structure from $\mathsf{Rep}_u$ can be transported to a ribbon structure of $\mathsf{Rep}_{\bar{u}}$ so that $\fun$ becomes a ribbon functor. 
By transporting the ribbon twist along $\fun\colon \mathsf{Rep}_u \to \mathsf{Rep}_{\bar{u}}$ we mean giving a natural family $\bar{\theta}_X$ such that for all $V \in \mathsf{Rep}_{u}$, 
\begin{equation}\label{eq:theta-def}
\bar{\theta}_{\fun(V)} = \fun(\theta_V)\ .
\end{equation}
We first recall that a twist, as a natural transformation of the identity functor, is uniquely determined by its values on projective objects. A proof of this fact is rather standard and is given e.g.\ in~\cite[Lem.\,B.1]{GainutdinovRunkel2017a}. Furthermore, as the twist respects direct sums it is enough to determine its values on a projective generator, e.g.\ on the regular representation $u$. 
We also note that $\fun(u)$  is projective and contains $\bar{u}$ as a direct summand and therefore it is  a projective generator in $\mathsf{Rep}_{\bar{u}}$. Therefore, a solution to~\eqref{eq:theta-def} is uniquely fixed by  $\bar{\theta}_{\fun(u)} = \fun(\theta_u)$.
Since $\fun$ is a restriction functor, it is faithful and such a family $\bar{\theta}$ indeed exists, while  uniqueness was shown above. Due to naturality of $\theta$, we also have that the family $\bar{\theta}_X$ is natural. 
By construction and the fact that $\fun$ is a braided functor, $\bar{\theta}$ satisfies the twist condition $\bar{\theta}_{X\otimes Y} = \bar{c}_{X,Y}\circ \bigl(\bar{\theta}_X \otimes \bar{\theta}_Y\bigr)$, and this automatically makes the functor $\fun$ ribbon. Therefore, the family $\bar{\theta}_X$ turns $\mathsf{Rep}_{\bar{u}}$ into a ribbon category (with braided monoidal structure as above).
Finally,  the corresponding  ribbon element $\bar{\ribbon}\in\bar{u}$ can be calculated as the inverse twist on the regular representation: $\bar{\ribbon}=(\bar{\theta}_{\bar{u}})^{-1}(1)$.
We however do not pursue an explicit calculation of this central element.

From Prop. \ref{rm: M is quasi-Hopf homomorphism to u^J }, we know that we have an exact sequence of quasi-Hopf algebras $\bar{u} \to u^J \to k^T$. By \cite[Prop.\,2.9]{BruguieresNatale2011}, this gives rise to an exact sequence of tensor categories $\mathsf{Rep}_{k^T} \to \mathsf{Rep}_{u^J} \to \mathsf{Rep}_{\bar{u}}$ in the sense of \cite{BruguieresNatale2011}. The composition of the second functor with the canonical equivalence $\mathsf{Rep}_{u} \cong \mathsf{Rep}_{u^J}$ is our functor $\mathcal{F}$. By Lemma \ref{lm: radical of hopf pairing for u(omega,sigma)}, we can identify $\mathsf{Rep}_{k^T}$ with the M\"uger center of $\mathsf{Rep}_{u}$
and hence  the radical of the pairing $\omega_{\mathsf{Rep}_u}$ is isomorphic to $kT$, i.e.\ a direct sum of all irreducible transparent $u$-modules. Since $\mathcal{F}$ preserves the Hopf pairings and thus maps 
the radical of $\omega_{\mathsf{Rep}_u}$ to the radical of $\omega_{\mathsf{Rep}_{\bar{u}}}$ and $\mathcal{F}(kT)$ is trivial, $\mathsf{Rep}_{\bar{u}}$ has the trivial radical and so a  modular category. As our functor $\mathcal{F}$ is a restriction functor it satisfies the conditions in Prop. \ref{prop: left adjoint with inverse allows for modularization} and hence it is a modularization in the sense of Def. \ref{def: non ssi modularization}.
\end{proof}

	\newcommand{\md}{\mathrm{-}}
\newcommand{\zem}{\mathfrak{Z}}
\renewcommand{\i}{\mathrm{i}}

\section{Application: Quasi-quantum groups and conformal field theory}\label{sec:CFT}
A main reason for us to study the quasi-quantum groups is their conjectured connection to logarithmic conformal field theory which we describe below.

 Let $\g$ be a semisimple finite-dimensional complex Lie algebra of rank $n$ with simple roots $\alpha_i,i=1,\ldots,n$ and Killing form $(\alpha_i,\alpha_j)$. Let $\ell=2p$ be an even natural number, which is  divisible by all $2d_i:=(\alpha_i,\alpha_i)$. Consider the following rescaled coroot lattice of $\g$ and its dual lattice, which is a rescaled weight lattice
$$\Lambda^\oplus:=\Lambda_R^\vee(\g)\sqrt{p}
\qquad\quad (\Lambda^\oplus)^*=\Lambda_W(\g)/\sqrt{p}\ .
$$ 
The divisibility condition ensures that $\Lambda^\oplus$ is an even integral lattice.\footnote{This divisibility condition is also present in other context, for example the unitarity of the modular tensor categories of affine Lie algebras at level $p$.}

To this algebraic data, one associates the so-called \textit{lattice vertex-operator algebra} $\V_{\Lambda^\oplus}$. Vertex algebras \cite{Kac1998,FrenkelBen-Zvi2004} are algebraic structures with an extra layer of analysis and appear in the description of $2$-dimensional conformal field theories. A vertex algebra has an associated category of representations, which under suitable conditions becomes a (modular) braided tensor category \cite{Huang2005}. For the lattice vertex algebra $\V_{\Lambda^\oplus}$, the representation category is a  semisimple modular tensor category with simple modules $\V_{[\lambda]}$ for each coset $[\lambda]\in (\Lambda^{\oplus})^*/\Lambda^{\oplus}$, in particular the unit object is $\V_{[0]}=\V_{\Lambda^\oplus}$ \cite{MooreSeiberg1989}. As a braided tensor category it is equivalent to the  tensor category $\Vect_{(\Lambda^{\oplus})^*/\Lambda^{\oplus}}$ of graded vector spaces with braiding and associator determined by the quadratic form $Q(\lambda)=q^{(\lambda,\lambda)}$ with $2p$-th root of unity $q=e^{\pi\i/p}$ (the ribbon twist being somewhat different, which we ignore in the following). 

Associated to the rescaled simple roots $-\alpha_i/\sqrt{p}$ there are so-called screening operators 
$$
\zem_i=\oint e^{-\alpha_i/\sqrt{p}}
$$
 acting on the space $\V_{(\Lambda^\oplus)^*}$.  The kernel of these operators forms a Vertex-Operator Algebra $\W(\g,p)\subset \V_{\Lambda^\oplus}$ of certain finite type\footnote{In what follows, we often omit the indication of $p$ when it is not necessary.}, so-called $C_2$-cofinite VOA. Its category $\Rep\,\W(\g)$ of representations is a finite braided tensor category~\cite{HuangLepowskyZhang}.  And conjecturally  it is a modular tensor category. There is a certain sequence of observations (discussed more below) suggesting  a relation of this category  to the small quantum group $u_q(\g)$. First of all,  it is proven in  general~\cite{Lentner2017} that the screening operators $\zem_i$ give a representation of the Borel part of the small quantum group $u_q(\g)$ on $\V_{(\Lambda^\oplus)^*}$, and this action suggests a certain equivalence of categories.
However, it is clear that   $\Rep\,\W(\g)$ and $\Rep \,u_q(\g)$  cannot be equivalent as modular tensor categories, because $\Rep \,u_q(\g)$ at an even root of unity $q$ does usually not posses a braiding, see e.g. \cite{KondoSaito2011} in the case of $\g=\sl_2$.  

We now briefly discuss the current state of the conjectural relation to $\Rep \,u_q(\g)$. Initially, \cite{Wakimoto1986,FeuiginFrenkelprime1988,Felder1989} have been able to realize affine Lie algebras as cohomology of screening operators. In \cite{FuchsHwangSemikhatovEtAl2004} for  the case $\g=\sl_2$ one considers  instead the kernel of screenings $\W$, which is a VOA and it gives a non-semisimple  tensor category. Here, $\W=\W(\sl_2)$ is the so-called \textit{triplet W-algebra} \cite{Kausch1991}; it is is proven in \cite{FeiginGainutdinovSemikhatovEtAl2006a,NagatomoTsuchiya2009,AdamovicMilas2008} that in this case the representation category $\Rep\,\W(\sl_2)$ as an abelian category is equivalent to the representation category of $u_q(\sl_2)$ (which is however not a modular tensor category, not even braided). To get the correct modular tensor category, \cite{GainutdinovRunkel2017,CreutzigGainutdinovRunkel2017} have constructed a quasi-Hopf algebra variant $\bar{u}_q(\sl_2)$ of the small quantum group, which has a \textsl{modular} tensor category of representations, and this category is highly expected to be equivalent to $\Rep\,\W(\sl_2)$. A second class of examples appearing in the literature is the case of root system $B_n$ for $\ell=4$, where the corresponding VOA is identified with the even part of the super VOA of $n$ pairs of symplectic fermions \cite{Kausch2000, Abe2007}. And here again, there is a quasi-Hopf algebra $u^{(\Phi)}$ at a fourth root of unity $q$ (at this small value there are certain degenerations) which is highly expected to give the correct modular tensor category \cite{GainutdinovRunkel2017,FarsadGainutdinovRunkel2017a,FlandoliLentner2017}. For arbitrary $\g$ the conjectures for abelian categories are formulated in \cite{FeiginTipunin2010,AdamovicMilas2014,FlandoliLentner2017}. 

One of the applications of our results in the previous sections  is a contruction, for every $(\g,p)$ with divisibility $d_i\mid p$, of a family of modular tensor categories as representations of quasi-quantum groups $\bar{u}_q(\g)$. And these categories are conjectured to be equivalent, as modular tensor categories, to the representation categories  of $\W(\g)$. 
A proof of the  equivalence is still not in sight, not even an equivalence of abelian categories. In the following, we discuss how our previous results on quasi-quantum groups give suitable modular tensor categories and justify our conjecture. We begin with

\begin{proposition}\label{prop_CFTcase}
	Let again $\g$ be a semisimple finite-dimensional complex Lie algebra of rank $n$ and $\ell=2p$ be an even natural number divisible by all $2d_i:=(\alpha_i,\alpha_i)$ and $q=e^{\pi\i/p}$ a primitive $\ell$-th root of unity. In this case the results of the previous section read as follows:
	\begin{enumerate}[a)]
		\item The quantum group $u_q(\g,\Lambda_W)$ associated in Sec.~\ref{Quantum groups uq(g,Lambda) and R-matrices} to the full weight lattice $\Lambda_W$, is a Hopf algebra with coradical $G=\Lambda_W/\Lambda'$ for $\Lambda'=2p\Lambda_{R}^\vee$, where $\Lambda_R^\vee$ denotes the co-root lattice, i.e. the $\Z$-span of the co-roots $\alpha_i^\vee:=\alpha_i/d_i$.
		
		\item This quantum group has an $R$-matrix associated to the trivial bimultiplicitive form $g(\bar{\lambda},\bar{\mu})=1$, so the $R_0$-matrix is defined by $f(\lambda,\mu)=q^{-(\lambda,\mu)}$.
		\item The braided tensor category $\mathcal{C}=\Rep\,u_q(\g,\Lambda_W)$ has the largest possible group of transparent objects $T\cong \Z_2^n$. More precisely the transparent objects are 
		$$\C_{q^{(\alpha,-)}}, \quad \alpha \in p\Lambda_{R}^\vee/\Lambda'\ .$$ 
		The respective dual sublattice $\bar{\Lambda}\subset\Lambda$ defined in App. \ref{sec: group theoretic part} is 
		$\bar{\Lambda}=2\Lambda_W$.
		\item $\Rep\, u_q(\g,\Lambda_W)$ is modularizable.
	\end{enumerate}
\end{proposition}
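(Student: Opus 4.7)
The plan is as follows. All four parts are direct applications of the machinery developed in Sections~\ref{Quantum groups uq(g,Lambda) and R-matrices} and~\ref{sec: modularization of Vec} under the divisibility hypothesis $2d_i\mid 2p$, equivalently $d_i\mid p$.

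For~(a), I would quote Example~\ref{exp:comp of Lambda'}, giving $\Lambda_W'=\langle\frac{\ell}{\gcd(\ell,d_i)}\alpha_i\rangle_\Z$. The divisibility yields $\gcd(2p,d_i)=d_i$, so each generator becomes $(2p/d_i)\alpha_i=2p\,\alpha_i^\vee$ and $\Lambda'=2p\Lambda_R^\vee$. Part~(b) is then immediate from Theorem~\ref{thm: thm from LO}: defining $\Lambda'$ as the centralizer of $\Lambda_R$ in $\Lambda_W$ forces $f(\lambda,\mu)=q^{-(\lambda,\mu)}$ to be non-degenerate on $G$, and the claimed $R$-matrix is produced directly by setting $g\equiv 1$.

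For~(c), my plan is to use the identification $\widehat{G}\cong G$ via $f$ and compute the radical of $f\cdot f^T(\mu,\nu)=q^{-2(\mu,\nu)}$: the condition $(\mu,\nu)\in p\Z$ for all $\nu\in\Lambda_W$ is equivalent to $\mu\in p\Lambda_R^\vee$, since the dual of $\Lambda_W$ under $(-,-)$ is $\Lambda_R^\vee$. This yields $\mathrm{Rad}(f\cdot f^T)=p\Lambda_R^\vee/(2p\Lambda_R^\vee)\cong\Z_2^n$, and translating back to $\widehat{G}$ recovers the transparent simples $\C_{q^{(\alpha,-)}}$ with $\alpha\in p\Lambda_R^\vee/\Lambda'$. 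To identify $\bar{\Lambda}$ I would compute the preimage of $\mathrm{Ann}(T)$ in $\Lambda_W$: the condition $q^{-(p\beta,\lambda)}=1$ for all $\beta\in\Lambda_R^\vee$ reduces, using that $q$ has order $2p$, to $(\beta,\lambda)\in 2\Z$ for every $\beta\in\Lambda_R^\vee$, which in the basis $\lambda=\sum_j m_j\lambda_j$ becomes $m_i\in 2\Z$, i.e. $\lambda\in 2\Lambda_W$.

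For~(d), Proposition~\ref{Modularizability of VectG} reduces modularizability to $Q|_T=1$ and $\eta|_T=1$; the latter can be arranged by a choice of ribbon structure, so the real work is the vanishing of the quadratic form. I would show that $p\Lambda_R^\vee$ is an even integral lattice under $(-,-)$: using $(\alpha_i^\vee,\alpha_i^\vee)=2/d_i$ and the cross terms $(\alpha_i^\vee,\alpha_j^\vee)=a_{ji}/d_j$, the divisibility $d_i\mid p$ implies $p(\lambda,\lambda)\in 2\Z$ for every $\lambda\in\Lambda_R^\vee$. Consequently for $\mu=p\lambda\in T$ one obtains $p^2(\lambda,\lambda)\in 2p\Z$, so $Q(\mu)=q^{\pm p^2(\lambda,\lambda)}=1$ since $q$ has order $2p$. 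The only technical point—verifying evenness of $p\Lambda_R^\vee$ uniformly over short and long roots—is essentially the same even-integrality assertion for the rescaled lattice $\Lambda^\oplus=\Lambda_R^\vee\sqrt{p}$ already flagged in the VOA discussion above, so the argument amounts to careful bookkeeping of the hypothesis rather than the introduction of new content.
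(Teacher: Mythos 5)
Your proposal is correct and follows essentially the same route as the paper's proof: part (a) via Example~\ref{exp:comp of Lambda'} and the divisibility $d_i\mid p$, part (b) from Theorem~\ref{thm: thm from LO} with $g\equiv 1$, part (c) via identifying the transparent objects with the $2$-torsion of $G$, and part (d) via the modularizability criterion of Proposition~\ref{Modularizability of VectG}. Two small points of divergence are worth noting. In (c), the paper invokes \cite[Cor.\,5.20]{LentnerOhrmann2017} to equate transparent objects with the $f$-transform of the $2$-torsion, whereas you compute $\mathrm{Rad}(f\cdot f^T)$ directly from $q^{-2(\mu,\nu)}=1$ and the duality $(\Lambda_W)^*=\Lambda_R^\vee$; the two coincide since $f$ is symmetric and non-degenerate, so your route is more self-contained at the cost of not highlighting the general principle. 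In (d), the paper verifies $Q=1$ only on the generators $p\alpha_i^\vee$ of $T$ (which suffices because $\Beta|_T$ is trivial, making $Q$ multiplicative on $T$), while you prove full even integrality of $p\Lambda_R^\vee$ including cross-terms; this is slightly more work but makes explicit the connection to the even lattice $\Lambda^\oplus$ used in the VOA picture, which is a nice conceptual bonus.
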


\begin{proof}~
	\begin{enumerate}[a)]
		\item The coradical $G=\Lambda_W/\Lambda'$ can be calculated from the centralizer formula \eqref{eq: centralizer formula}. The result is particularly simple due to the choice $\Lambda=\Lambda_W$ and the  divisibility condition in place, which ensures $2p\alpha_i^\vee\in\Lambda_R$:
		\begin{align*}
		\Lambda'&=\Cent_{\Lambda_R}(\Lambda_W)=\{\alpha\in \Lambda_R\mid  q^{(\alpha,\lambda)}=1\;\forall \lambda\in\Lambda_W\}
		=\left\langle 2p\alpha_i^\vee,\; i=1,\ldots, n  \right\rangle_\Z\ .
		\end{align*}
		\item If we set $g(\bar{\lambda},\bar{\mu})=1$ then the $R_0$-matrix is defined by $f(\lambda,\mu)=q^{-(\lambda,\mu)}$ as asserted. Then we can see directly from a) that this group pairing on $G=\Lambda/\Lambda'$ is nondegenerate, so we indeed get an $R$-matrix by Thm. \ref{thm: thm from LO}.
		\item By \cite[Cor.\,5.20]{LentnerOhrmann2017} all transparent objects are $1$-dimensional $\C_\chi$ with weights $\chi:\Lambda/\Lambda'\to\CC^\times$ the $f$-transform of the $2$-torsion subgroup of $\Lambda/\Lambda'$. The lattice $\Lambda_W$ contains the lattice $p\Lambda_{R^\vee}$ with $\Lambda/\Lambda'=\Z_2^n$. Since this is the maximal rank a subgroup of a rank $n$ abelian group can have, this is the $2$-torsion group of $G$ and the dual lattice is $\bar{\Lambda}=2\Lambda_W$ as asserted.
		\item The criterion in Prop. \ref{Modularizability of VectG} for modularizablility is that the quadratic form on $\widehat{G}$ defined by $R_0=\frac{1}{|G|}\sum_{\lambda,\mu\in G} q^{-(\lambda,\mu)}K_\lambda\otimes K_\mu$ is trivial on the group of transparent objects $\C_{q^{(p\alpha^\vee,-)}}$. This again follows from the divisibility condition $d_i|p_i$  
		$$q^{(p\alpha^\vee,p\alpha^\vee)}=q^{p^2\cdot 2d_i/d_i^2}=1\ .$$
	\end{enumerate}
\end{proof}

We can now formulate a precise conjecture (on the level of modular tensor categories) relating the small quasi-quantum groups to VOAs of logarithmic conformal field theories:

\begin{conjecture}
	For $(\g,p)$ as above, the modular tensor category $\bar{\CC}=\Rep\,\bar{u}_q(\g)$ is equivalent as a braided tensor category to the representation category of the VOA 
$\W(\g,p)$.
\end{conjecture}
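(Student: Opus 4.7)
The natural strategy is a ``free field realization'' argument that generalizes what is established for $\g=\sl_2$ in \cite{CreutzigGainutdinovRunkel2017} and for $\g=B_n$ at $\ell=4$ in \cite{FlandoliLentner2017}. The key input is the embedding $\W(\g,p)\hookrightarrow \V_{\Lambda^\oplus}$ as the joint kernel of the screenings $\zem_i=\oint e^{-\alpha_i/\sqrt{p}}$, combined with the result of \cite{Lentner2017} that the $\zem_i$ generate an action of the Borel part $u_q^+(\g)$ on the lattice module $\V_{(\Lambda^\oplus)^*}$. The plan is to use $\V_{(\Lambda^\oplus)^*}$ (or rather a suitable completion thereof) as a $(\W(\g,p),\bar u_q(\g))$-bimodule and to realize the desired equivalence as the functor $M\mapsto \Hom_{\W(\g,p)}\bigl(M,\V_{(\Lambda^\oplus)^*}\bigr)$, equipped with the residual $\bar u_q(\g)$-action.

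First, I would establish an equivalence of abelian categories. The lattice description of $\V_{\Lambda^\oplus}$-modules together with the Felder-type complexes built from the screenings should give BGG-type resolutions of simples of $\W(\g,p)$ that match the structure of baby Verma modules for $\bar u_q(\g)$. To turn this into a genuine equivalence one needs $C_2$-cofiniteness of $\W(\g,p)$ (which is conjectural for general $\g$, proven only in the $\sl_2$ case) and a direct identification of simple objects, blocks, and projective covers on both sides. The decomposition of $\V_{\Lambda^\oplus}$ under the quantum group action, as dictated by Proposition~\ref{prop_CFTcase}(c), is expected to match the decomposition under $\W(\g,p)\subset \V_{\Lambda^\oplus}$, with transparent summands on the quantum group side corresponding to the lattice part $(\Lambda^\oplus)^*/\Lambda^\oplus \cong T$ on the VOA side.

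Next, using the Huang--Lepowsky--Zhang vertex tensor category theory \cite{HuangLepowskyZhang}, the category $\Rep\,\W(\g,p)$ inherits a braided tensor structure from $P(z)$-intertwiners among $\V_{\Lambda^\oplus}$-modules. This structure should then be compared with the quasi-triangular quasi-Hopf structure on $\bar u_q(\g)$ constructed in Sections~\ref{sec: def of u(omega,sigma)}--\ref{sec:R-matrix}. The nontrivial associator $\bar\omega$ on $\widehat G/T$ that appears in our modularization (Proposition~\ref{Modularizability of VectG} and Theorem~\ref{thm:modularzation}) should match, on the VOA side, the nontrivial $2$-cocycle needed to define $\V_{\Lambda^\oplus}$ on the extended lattice $(\Lambda^\oplus)^*$; this is precisely what forces a quasi-Hopf (rather than Hopf) structure on the quantum side when $p$ is even.

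The hard part will be the explicit matching of associator, braiding, and ribbon data. Even in the $A_1$ case this required a delicate analysis of $P(z)$-intertwiners and the monodromy of Knizhnik--Zamolodchikov-type connections, and the quasi-Hopf coassociator had to be engineered to reproduce the conformal block monodromy on the four-punctured sphere. For general $\g$ a uniform approach would likely proceed through quantum Drinfeld--Sokolov reduction or the logarithmic Kazhdan--Lusztig correspondence, replacing case-by-case computations with a structural comparison. The other serious obstacle is representation-theoretic: outside $\sl_2$ and $B_n$ at $\ell=4$, even the basic representation theory of $\W(\g,p)$ (simples, blocks, fusion rules, $C_2$-cofiniteness) is largely open, so a significant part of the proof would have to be developed in parallel with the equivalence itself.
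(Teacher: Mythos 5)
The statement you were asked to prove is a \emph{conjecture}; the paper does not prove it and explicitly states that a proof ``is still not in sight, not even an equivalence of abelian categories.'' What the paper offers as evidence is (i) Proposition~\ref{prop_CFTcase}, showing that the quantum group $u_q(\g,\Lambda_W)$ with the chosen $R$-matrix is modularizable precisely under the stated divisibility hypothesis, producing $\bar{u}_q(\g)$, and (ii) the two worked examples: for $\g=\sl_2$ the resulting $\bar{u}_q$ recovers the quasi-Hopf algebra of \cite{CreutzigGainutdinovRunkel2017} (for which abelian-category equivalence with $\Rep\,\W(\sl_2,p)$ is established), and for $\g=B_n$, $p=2$ it recovers $u^{(\Phi)}$ from \cite{FarsadGainutdinovRunkel2017a}, conjecturally matching the symplectic-fermion category. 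You are right to frame your answer as a strategy rather than a proof, and the route you sketch---free-field realization, Felder complexes built from the screening action of \cite{Lentner2017}, matching of simples and projectives, then upgrading to a braided equivalence via Huang--Lepowsky--Zhang theory and an explicit comparison of associator and monodromy data---is the approach the community expects to succeed and is consistent with what the $\sl_2$ and $B_n$ verifications required. You also correctly flag the two genuine obstructions: the conjectural $C_2$-cofiniteness of $\W(\g,p)$ beyond $\sl_2$, and the lack of detailed representation theory of $\W(\g,p)$ for general $\g$.

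One concrete error in your sketch: you write ``with transparent summands on the quantum group side corresponding to the lattice part $(\Lambda^\oplus)^*/\Lambda^\oplus \cong T$.'' These two groups are not the same. By Proposition~\ref{prop_CFTcase}(c), $T\cong\Z_2^n$. The discriminant group $(\Lambda^\oplus)^*/\Lambda^\oplus \cong \Lambda_W/p\Lambda_R^\vee$ has order $p^n\cdot|\Lambda_W/\Lambda_R^\vee|$, which is much larger in general; it indexes the simple modules $\V_{[\lambda]}$ of the \emph{lattice} VOA $\V_{\Lambda^\oplus}$, not the M\"uger centre of either category in the conjecture. The group $T$ on the quantum-group side corresponds to the index-$2^n$ sublattice $\bar\Lambda=2\Lambda_W\subset\Lambda_W$ that survives modularization, and the corresponding phenomenon on the VOA side is the passage from $\V_{\Lambda^\oplus}$-modules to $\W(\g,p)$-modules, not the discriminant group of $\Lambda^\oplus$. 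The rest of your strategy is sensible, and your observation that the nontrivial associator $\bar\omega$ should match the $2$-cocycle arising in the lattice construction is precisely the kind of structural identification that the $\sl_2$ case hinges on.
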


We now give some indications why the conjecture might be true, that in particular explain why precisely this quasi-quantum group might appear:

\begin{example}
	For $\g=\sl_2$ our definition of $\bar{u}_q(\sl_2)$ is isomorphic (up to a twist) as a quasi-triangular quasi-Hopf algebra to $u^{(\Phi)}_q(\sl_2)$ defined in \cite{CreutzigGainutdinovRunkel2017}. In particular, in this case $\Rep\,\bar{u}_q(\sl_2)$ is equivalent as abelian category to $\Rep\,\W(\sl_2,p)$. It is highly expected that this is even an equivalence of modular tensor categories. For example, for $p=2$ and due to \cite[Conj.\,1.3]{DavydovRunkel2016} this agrees with $\Rep\,\W(\sl_2,2)$ which is the even subalgebra of the super VOA of one pair of symplectic fermions.
\end{example}
\begin{example}
	For $\g$ of type $B_n$ at $p=2$ it is shown in \cite[Sec.\,7.5]{FlandoliLentner2017} that the kernel of screenings  $\W(B_n,2)$ is as a vertex algebra isomorphic to the even part of the VOA of $n$ pairs of symplectic fermions. It contains $n$ copies of $\W(\sl_2,p=2)$ and has the derivation symmetry algebra given by the dual Lie algebra $\mathfrak{sp}_{2n}$. This degeneracy (a factorization on $n$ copies) and duality is also visible on the quantum group side, see~\cite[Sec.\,8]{FlandoliLentner2017}.  The algebra $\W(B_n,2)$ is known to be $C_2$-cofinite and thus its representation category $\Rep\,\W(B_n,2)$ is a finite braided tensor category.
	Following conjecture~\cite[Conj.\,1.3]{DavydovRunkel2016}, $\Rep\,\W(B_n,2)$ is expected to be equivalent to the braided tensor category $\mathcal{SF}_n$  constructed in~\cite{DR13,Runkel14}, which was also proven to be modular,
 and in~\cite{FarsadGainutdinovRunkel2017a} this modular tensor category is realized as representations of a ribbon quasi-Hopf algebra~$u^{(\Phi)}$.
This algebra contains $n$ copies of  $\bar{u}_q(\sl_2)$ from the previous example at $p=2$.\\

	We claim that our construction $\bar{u}_q(\g)$, where  $\g$ is for the  root system $B_n$ and  $\ell=2p=4$ in the setting of Proposition \ref{prop_CFTcase}, reproduces the family of quasi-triangular quasi-Hopf algebras~$u^{(\Phi)}$ from~\cite{FarsadGainutdinovRunkel2017a}. Hence, the modular tensor category $\Rep\,\bar{u}_q(\g)$ we construct in the present article is in this case equivalent to the modular tensor category $\mathcal{SF}_n$, and thus conjecturally to $\Rep\,\W(B_n,2)$.
	
	We start by applying the results of Proposition \ref{prop_CFTcase} to the case $\g=B_n,\;\ell=2p=4$.
	We first calculate the respective groups explicitly:
	\begin{align*}
	G
	=\Lambda_W(B_n)/4\Lambda_{R^\vee}(B_n)
	&=\langle \lambda_1,\ldots,\lambda_n \rangle_\Z / \langle 2\alpha_1,2\alpha_2\ldots,4\alpha_n \rangle_\Z \\
	\bar{G}
	= \Lambda_W(B_n)/2\Lambda_{R^\vee}(B_n) 
	&=\langle \lambda_1,\ldots,\lambda_n \rangle_\Z / \langle \alpha_1,\alpha_2\ldots,2\alpha_n \rangle_\Z \\	
	&=\begin{cases}
	   \langle \lambda_n \mid 4\lambda_n=0\rangle_\Z,\qquad & \text{for $n$ odd} \\
	   \langle \lambda_n,\alpha_n \mid 2\lambda_n=2\alpha_n=0\rangle_\Z,\qquad & \text{for $n$ even} \\ 
	  \end{cases}
	\end{align*}
which is isomorphic to the subgroup $\bar{\Lambda}/\Lambda'=2\Lambda_W(B_n)/4\Lambda_{R^\vee}(B_n)$ of $G$. Here, $\alpha_n$ is the short simple root of $B_n$ and $\lambda_n=\frac{1}{2}(\alpha_1+2\alpha_2+\cdots+n\alpha_n)$ is the respective fundamental weight. 

Proposition \ref{prop_CFTcase} ensures that  $f(\lambda,\mu):=q^{-(\lambda,\mu)}$ defines a non-degenerate pairing $G \times G \to \C^\times$, hence by Thm. \ref{thm: thm from LO} $u_q(\g)$ is quasi-triangular. Moreover, we can parametrize characters $\chi\in\hat{G}$ by $q^{(\lambda,-)}$ for $\lambda\in\Lambda_W/4\Lambda_R$ and thus we also parametrize the simple objects in $\Rep\,\C[G]\cong \Rep\,\C^{\hat{G}}$ by $\C_\psi=\C_{q^{(\lambda,-)}}$. Accordingly, we can parametrize characters $\chi\in \hat{\bar{G}}$ by $q^{(\lambda,-)}$ for $\lambda\in\Lambda_W/2\Lambda_{R^\vee}$ and thus we also parametrize simple objects in $\Rep\,\C[\bar{G}]\cong \Rep\,\C^{\hat{\bar{G}}}$ by $\C_\psi=\C_{q^{(\lambda,-)}}$. Here, $q$ is a primitive $4$th root of unity. In our case we denote the elements in $\hat{\bar{G}}$ by
\begin{align*}
 \psi^j,\quad j\in\{0,\ldots,3\},\qquad &\psi:=q^{-(\lambda_n,-)}\\
 \psi^j\chi^k,\quad j,k\in\{0,1\},\qquad &\psi:=q^{-(\lambda_n,-)},\;\chi:=q^{-(\alpha_n,-)}
\end{align*}
depending on the parity of the rank $n$ of $B_n$. In the first case we also define $\chi:=q^{-(\alpha_n,-)}=\psi^2$.
The quadratic form on $\hat{G}$ that factors to $\hat{\bar{G}}$ (in accordance with Proposition \ref{prop_CFTcase} is $Q(q^{\lambda,-})=q^{-(\lambda,\lambda)}$, or explicitly:
\begin{align*}
 Q(\psi^j)&=q^{-(j\lambda_n,j\lambda_n)}=(q^{n/2})^{(-j^{2})}\\
 Q(\psi^j\chi^k)&=q^{-((j\lambda_n+k\alpha_n,j\lambda_n+k\alpha_n))}
 =(q^{n/2})^{(-j^{2})}(q^{2})^{(-k^{2})}q^{(-jk)}
\end{align*}
depending on the parity of $n$, and the associated bilinear forms are as follows 
(in accordance with our results they are nondegenerate):
\begin{align*}
 B(\psi,\psi)&=q^{-n},\qquad \\
 B(\psi,\psi)&=q^{-n}=(-1)^{n/2},\quad B(\chi,\chi)=+1,\quad B(\psi,\chi)=q^{-2}=-1.
 \end{align*}
 Now we can choose any abelian $3$-cocycle $(\sigma,\omega)$ on $\hat{\bar{G}}$ associated to $Q$. For each choice, this defines the semisimple modular tensor category $\Rep\,\C^{\hat{\bar{G}},\sigma,\omega}$ with $4$ simple objects.
  Then results of Section 5 define a factorizable quasi-Hopf algebra $\bar{u}_q(\g)$ with coradical $\C^{\hat{\bar{G}}}$, and Section 6 states that $\bar{u}_q(\g)$ is a subalgebra of the quasi-triangular Hopf algebra ${u}_q(\g)$ with coradical $\C^{\hat{G}}$, which arises by modularization. Different choices of $(\sigma,\omega)$ define, of course, twist-equivalent quasi-Hopf algebras.\\
  
We claim that for a suitable choice of $(\sigma,\omega)$ the $\bar{u}_q(\g)$  is isomorphic to the factorizable quasi-Hopf algebra in \cite[Sec.3.1]{FarsadGainutdinovRunkel2017a}, and we from now on use their notation. Set $\beta=q^{-n/2}$ (technically, there are four choices), so $\beta^4=(-1)^n$, and let $q=\mathrm{i}$ be a primitive $4$th root of unity. Pick the following explicit $2$-cochain $\sigma$ on the group $\hat{\bar{G}}$ with explicit elements $1,\psi,\psi^2,\psi^3$ respectively $1,\psi,\chi,\psi\chi$:
$$\sigma:=\begin{pmatrix}
   1 & 1 & 1 & 1 \\
   1 & \beta & 1 & \beta \\
   1 & -1 & -1 & 1 \\
   1 & -\beta & -1 & \beta
  \end{pmatrix},
$$
then the assocate quadratic form takes the diagonal values $(1,\beta,-1,\beta)$, which coincides with the value of the quadratic form above, namely $(1,q^{-n/2},-1,q^{-n/2})$ for both parities of $n$. We remark that this is not a nice 2-cocycle in the sense of Def.~\ref{def: nice 3-cocycle}.

We claim that for this choice the quasi-triangular quasi-Hopf algebra $\C^{\hat{\bar{G}},\sigma,\omega}$ is isomorphic to the $4$-dimensional quasi-Hopf subalgebra in~\cite[Sec.3.1]{FarsadGainutdinovRunkel2017a} generated by $K$ with 
\begin{align*}
 K^4&=1\\
 \Delta(K)&=K\otimes K-(1+(-1)^n)(e_1\otimes e_1)(K\otimes K)
 \end{align*}
 and with the co-associator (recall that our conventions for coassociators are opposite, i.e.\ $\phi=\Phi^{-1}$ in notations of~\cite{FarsadGainutdinovRunkel2017a})
 \begin{equation*}
 \phi=1\otimes 1\otimes 1+e_1\otimes e_1\otimes \bigl((K^n-1)e_0+(\beta^2(-\mathrm{i}K)^n-1)e_1\bigr)
  \end{equation*}
  where we use the idempotents $e_0 = (1+K^2)/2$ and $e_1=(1-K^2)/2$.
 This algebra has also a quasi-triangular structure:
 \begin{equation*}
 R_0=\sum_{n,m\in\{0,1\}} \rho_{n,m}(e_n\otimes e_m)\ ,
  \end{equation*}
  with
  \begin{equation*}
\rho_{n,m} = \frac{1}{2} \sum_{i,j=0}^1 (-1)^{ij}\mathrm{i}^{-in+jm} K^i\otimes K^j \ ,
\qquad n,m \in \{0,1\}\ .
\end{equation*}
The isomorphism of quasi-triangular quasi-Hopf algebras, depending on the parity of $n$, is as follows:
 \begin{align*}
I\colon\;\delta_{\psi^j} 
&\mapsto \frac{1}{4}\sum_{l=0}^3 q^{-jl}K^l \ ,\\
I\colon\;\delta_{\psi^j\chi^k} 
&\mapsto \frac{1}{4}\sum_{k=0}^3 q^{-(j+2k)l} K^l\ .
\end{align*}
\begin{proof}
The respective images are the primitive idempotents of the algebra $\C[K]/(K^4)$, so this is clearly an algebra isomorphism. Moreover the simple $\C^{\hat{\bar{G}}}$-representations  
$\C_1,\C_\psi,\C_{\psi^2},\C_{\psi^3}$ respectively $\C_1,\C_{\psi},\C_{\chi},\C_{\psi\chi}$ are mapped to the simple $\C[K]/(K^4)$-representations $\C_{q^k}$ where $K$ acts by the scalar $q^k,\;k=0,\ldots 3$.

We check that this is a coalgebra morphism, where $\C^{\hat{\bar{G}}}$ has the familiar coproduct of the dual group ring. For $n$ odd this is clear, since $\hat{\bar{G}}=\Z_4$ and the element $K$ is grouplike. For $n$ even we have  $\hat{\bar{G}}=\Z_2\times\Z_2$ and we check that this agrees with the modified coproduct of $K$:
\begin{align*}
 &\Delta(I(\delta_{\psi^j\chi^k}))\\
 &=\frac{1}{4}\sum_{l=0}^3  q^{-(j+2k)l} K^l\otimes K^l \left(1-2 e_1 \otimes e_1\right)^l \\
 &=\frac{1}{4}\sum_{l=0}^3  q^{-(j+2k)l} K^l\otimes K^l \left(1+((-1)^l-1)(e_1 \otimes e_1)\right)\\
 &=\frac{1}{4}\sum_{l=0}^3  q^{-(j+2k)l} K^l\otimes K^l 
 -\frac{2}{4}\frac{1}{4}\sum_{l=1,3}  q^{-(j+2k)l} K^l\otimes K^l \left(1\otimes 1+K^2\otimes 1+1\otimes K^2+K^2\otimes K^2\right)\\
&=\frac{1}{4}\sum_{l=0}^3  q^{-(j+2k)l} K^l\otimes K^l 
 -\delta_{j=0}\frac{1}{4}\sum_{l'\neq l''\in\{1,3\}}  q^{-(j+2k)l''} K^{l'}\otimes K^{l''}\\
 &\sum_{j'+j''=j}\sum_{k'+k''=k}\Delta(I(\delta_{\psi^{j'}\chi^{k'}}))\otimes \Delta(I(\delta_{\psi^{j''}\chi^{k''}}))\\
 &=\frac{1}{4}\frac{1}{4} \sum_{l',l''=0}^3 K^{l'} \otimes  K^{l''} \left(\sum_{j'+j''=j}\sum_{k'+k''=k} q^{-(j'+2k')l'-(j''+2k'')l''} \right)\\
 &=\frac{1}{4}\frac{1}{4} \sum_{l',l''=0}^3 K^{l'} \otimes  K^{l''} \left(\sum_{j'}q^{-j'l'-(j-j'+2\delta_{j=0,j'=1})l''}\sum_{k'} q^{-2k'l'-2(k-k'+2\delta_{k=0,k'=1})l''} \right)\\
 &= \delta_{j=1} \frac{1}{4} \sum_{l=0}^3 q^{-(j+2k)l} K^{l} \otimes  K^{l} 
 +\delta_{j=0}\frac{1}{4}\sum_{l',l''=0}^3 K^{l'} \otimes  K^{l''}\left(q^{-(j+2k)l''}\delta_{2|l'+(l''-l')/2}\delta_{2|l'-l''} \right)\\
 \end{align*}
and the resp. last expressions agree. We also check that this isomorphism preserves the $R$-matrices
$$
I\bigl(\sum_{\psi',\psi''}\sigma(\psi',\psi'')\bigr)=R_0
$$
This is a short calculation (independent of the parity of $n$) of the action of $R_0$ on all $16$ pairs of simple modules of $\C_{k'}\otimes\C_{k''}$, and this is in fact how the table for $\sigma$ above was found. 
\end{proof}

We now consider the Nichols algebra generated by the \emph{short} root vectors $F_{\alpha_n+\cdots+\alpha_k}$ -- other root vectors are not in $\bar{u}_q(B_n)$ due to the degeneracy discussed above, see also~\cite[Sec.\,8]{FlandoliLentner2017}.
 The coaction of $\C^{\hat{\bar{G}}}$ is
$$F_{\alpha_n+\cdots+\alpha_k}\mapsto K_{q^{(\alpha_n+\cdots+\alpha_k,-)}}\otimes F_{\alpha_n+\cdots+\alpha_k}
=K_\chi \otimes F_{\alpha_n+\cdots+\alpha_k}\ .$$
The action is given by $\sigma$ and in particular the braiding is 
$$F_{\alpha_n+\cdots+\alpha_{k'}}\otimes F_{\alpha_n+\cdots+\alpha_{k''}}
\mapsto \sigma(\chi,\chi) \cdot F_{\alpha_n+\cdots+\alpha_{k''}}\otimes F_{\alpha_n+\cdots+\alpha_{k'}}\ .
\qquad  \sigma(\chi,\chi)=-1$$
Hence the Nichols algebra relations  match the relations in \cite[Sec.3.1]{FarsadGainutdinovRunkel2017a} as well:
$$F_{\alpha_n+\cdots+\alpha_k}^2=0\qquad
F_{\alpha_n+\cdots+\alpha_{k'}}F_{\alpha_n+\cdots+\alpha_{k''}}+F_{\alpha_n+\cdots+\alpha_{k''}}F_{\alpha_n+\cdots+\alpha_{k'}}=0\ .
$$
Thus we can extend the isomorphism to the Radford biproduct by sending $F_{\alpha_n+\cdots+\alpha_k}$ to an appropriate multiple of the fermionic generator $\mathsf{f}^+\in u^{(\Phi)}$, and finally to an isomorphism 
$I:\;\bar{u}_q(B_n)\to u^{(\Phi)}$. We omit the details on these calculations for brevity.
\end{example}

	\appendix	
	\section{Some tools in finite abelian groups}\label{sec: group theoretic part}

We start with a small quantum group $u:=u_q(\g,\Lambda)$ with $R$-matrix $R=R_0(f)\bar{\Theta}$ as in Section 4.
The Cartan part is given by $u_0=\C[\Lambda/\Lambda']$, where $\Lambda'=\Cent_{\Lambda_R}(\Lambda)$. The non-degenerate bihomomorphism $f:G_1 \times G_2 \to \C^\times$ defines a braiding on $\Vect_{\widehat{G}}$ given by:
\begin{align*}
	\sigma(\chi, \psi) := \chi|_{G_1}\left( f^{-1}\left( \bar{\psi}|_{G_2}\right) \right).
\end{align*}
From now on, we assume $G=G_1+G_2$. We set $Rad_0:=Rad\left( f\cdot f^T|_{G_1 \cap G_2} \right) \subseteq G_1\cap G_2 =:G_{12} $ and $T:=Rad\left( \Beta \right) \subseteq \widehat{G} $.

\begin{lemma}\label{lm: T= ad0}
	The following map is an isomorphism:
	\begin{align*}
		\Phi: Rad_0 & \longrightarrow T \\
		\mu & \longmapsto \left( \nu=\nu_1 + \nu_2 \mapsto \frac{f(\nu_1,\mu)}{f(\mu,\nu_2)} \right).
	\end{align*}
\end{lemma}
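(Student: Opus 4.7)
The plan is to verify that $\Phi$ is (i) well-defined as a map into $\widehat{G}$, (ii) a homomorphism landing in $T$, and (iii) a bijection, by a direct unwinding of the definitions. The central computation is that $\Phi(\mu)$ being independent of the decomposition $\nu=\nu_1+\nu_2$ is equivalent to $\mu \in \text{Rad}_0$, so ``well-definedness'' and ``membership in $\text{Rad}_0$'' are literally the same condition.

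First I would check well-definedness. If $\nu = \nu_1+\nu_2 = \nu_1'+\nu_2'$, then $\gamma := \nu_1-\nu_1' = \nu_2'-\nu_2$ lies in $G_{12} = G_1 \cap G_2$, and the quotient of the two candidate values equals $f(\gamma,\mu)f(\mu,\gamma)$. Hence $\Phi(\mu)$ is well-defined iff $\mu\in\text{Rad}_0$. Bilinearity of $f$ immediately makes $\Phi(\mu):G\to\C^\times$ a homomorphism, and $\Phi$ itself a group homomorphism $\text{Rad}_0\to\widehat{G}$.

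Second, I would show $\Phi(\mu)\in T$, i.e.\ $B(\Phi(\mu),\psi)=1$ for all $\psi\in\widehat{G}$. Writing $\psi$ via its restrictions $\psi|_{G_1},\psi|_{G_2}$ and using $\sigma(\chi,\psi)=\chi|_{G_1}(f^{-1}(\bar{\psi}|_{G_2}))$, one computes $\sigma(\Phi(\mu),\psi)\sigma(\psi,\Phi(\mu))$ as a product of four terms of the form $f(\cdot,\mu)^{\pm 1}f(\mu,\cdot)^{\pm 1}$, evaluated at elements of $G_1$ and $G_2$; the four terms pairwise cancel by the two defining identities of $\Phi(\mu)$ on $G_1$ and $G_2$ (namely $\Phi(\mu)(\nu_1) = f(\nu_1,\mu)$ and $\Phi(\mu)(\nu_2) = f(\mu,\nu_2)^{-1}$).

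For injectivity, $\Phi(\mu)=1$ forces $f(\nu_1,\mu)=1$ for all $\nu_1\in G_1$, and non-degeneracy of $f:G_1\times G_2 \to \C^\times$ together with $\mu\in G_{12}\subseteq G_2$ gives $\mu=0$. Surjectivity is the step I expect to be the main obstacle. Given $\chi\in T$, the non-degeneracy of $f$ gives isomorphisms $G_2\cong \widehat{G_1}$ and $G_1\cong \widehat{G_2}$, so there exist $\mu_2\in G_2$ with $f(\nu_1,\mu_2)=\chi(\nu_1)$ on $G_1$ and $\mu_1\in G_1$ with $f(\mu_1,\nu_2)=\bar{\chi}(\nu_2)$ on $G_2$. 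The claim is that the transparency condition $B(\chi,\psi)=1$ for all $\psi\in\widehat{G}$ forces $\mu_1=\mu_2\in G_{12}$; this I would extract by testing $B(\chi,\psi)=1$ against characters $\psi$ supported on $G_1$ and on $G_2$ separately, using $G=G_1+G_2$ to cover all of $\widehat{G}$. Finally, the condition $\mu\in\text{Rad}_0$ follows by testing $\chi$ on elements of $G_{12}$ via both decompositions $\nu=\nu+0$ and $\nu=0+\nu$ and comparing, which yields $f(\gamma,\mu)f(\mu,\gamma)=1$ for $\gamma\in G_{12}$.
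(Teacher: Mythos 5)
Your plan is correct and follows essentially the same route as the paper's proof: well-definedness of $\Phi(\mu)$ is the condition $\mu\in\mathrm{Rad}_0$, injectivity drops out of the non-degeneracy of $f$, and surjectivity is obtained by using non-degeneracy to produce $\mu_1\in G_1$, $\mu_2\in G_2$ from the restrictions $\chi|_{G_2},\chi|_{G_1}$ and then invoking transparency to force $\mu_1$ and $\mu_2$ to agree (up to a sign) inside $G_{12}$. Your version is in fact a bit more scrupulous than the paper's: you explicitly verify that $\Phi(\mu)$ lands in $T$ and that the $\mu$ constructed in the surjectivity step actually lies in $\mathrm{Rad}_0$ (both points the paper leaves implicit), and you note cleanly that well-definedness of $\Phi(\mu)$ on $G$ is literally equivalent to $\mu\in\mathrm{Rad}_0$; the only cosmetic remark is that the $T$-membership check reduces to just two terms, $\sigma(\Phi(\mu),\psi)=\psi(\mu)^{-1}$ and $\sigma(\psi,\Phi(\mu))=\psi(\mu)$, rather than four.
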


\begin{proof}
	By definition of $T$ the map $\Phi$ is well-defined. It is injective, since $\Phi(\mu)=1$ implies $f(\nu_1,\mu)=1$ for all $\nu_1 \in G_1$ and $f(\mu,\nu_2)=1$ for all $\nu_2 \in G_2$. By the non-degeneracy of $f$, we have $\mu=0$.\\
	Finally, we show that $\Phi$ is surjective. For $\chi \in T$, we have elements $\mu_1 \in G_1$, $\mu_2 \in G_2$, s.t. $\chi|_{G_2}=f(\mu_1, \, \_\,)$ and $\chi|_{G_1}=f( \, \_\,, \mu_2)$ by non-degeneracy of $f$.
	Since $\chi$ is in the radical of $\sigma \cdot \sigma^T$, we have
	\begin{align*}
		\sigma(\chi,\psi)\sigma(\psi,\chi) = f(f^{-1}(\psi|_{G_2}),\mu_2)\psi|_{G_1}(\mu_1) =  \psi|_{G_2}(\mu_2)\psi|_{G_1}(\mu_1)=1.
	\end{align*}
	Thus, $\psi(\mu_1)=\psi(-\mu_2)$ for all $\psi \in \widehat{G}$ and hence $\mu_1=-\mu_2=:\mu$. This implies
	\begin{align*}
		\chi = \chi|_{G_1}\chi|_{G_2}=f( \, \_\,, -\mu)(\mu, \, \_\,)=\Phi(\mu).
	\end{align*}
\end{proof}

We define two more important groups:
\begin{align*}
	\overline{G}:=Ann(T) \subseteq G \qquad \overline{G_1 \times G_2}:=\{\, (\mu_1,\mu_2) \in G_1 \times G_2 \, | \, f(\mu_1,\nu)=f(\nu,\mu_2) \quad \forall \nu \in G_{12} \,  \}.
\end{align*}

\begin{corollary} \label{cor: isomorphism of exact sequences}
	The isomorphism $\Phi:Rad_0 \to T$ induces an isomorphism of exact sequences:
	\begin{equation*}
		\begin{tikzcd}
		Rad_0 \arrow[r,"\iota",hook] \arrow[d, "\Phi"] & \overline{G_1 \times G_2} \arrow[r,"\pi", twoheadrightarrow] \arrow[d] & \overline{G} \arrow[d,"\Psi"] \\
		T \arrow[r,hook] &\widehat{G} \arrow[r, twoheadrightarrow] & \widehat{G}/T
		\end{tikzcd}
	\end{equation*}
	Here $\iota(\mu)=(\mu,-\mu)$ and $\pi(\mu_1,\mu_2)=\mu_1 + \mu_2$.
\end{corollary}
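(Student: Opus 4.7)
The plan is to verify the top row is a short exact sequence, define the middle and right vertical maps so both squares commute, and conclude via the five lemma (together with the obvious exactness of the bottom row and Lemma \ref{lm: T= ad0}).

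First I would check the top row is exact. Injectivity of $\iota$ is immediate, and $\iota(Rad_0) \subseteq \overline{G_1 \times G_2}$ because the condition $\mu \in Rad_0$ rewrites as $f(\mu,\nu) = f(\nu,-\mu)$ for every $\nu \in G_{12}$. Exactness in the middle amounts to: if $\mu_1 + \mu_2 = 0$ then $\mu_1 = -\mu_2 \in G_{12}$, and the defining condition of $\overline{G_1 \times G_2}$ then gives $f(\mu_1,\nu)f(\nu,\mu_1) = 1$ on $G_{12}$, i.e.\ $\mu_1 \in Rad_0$. For surjectivity of $\pi$, given $g \in \overline G = \mathrm{Ann}(T)$ one uses $G = G_1 + G_2$ to write $g = \mu_1 + \mu_2$ and then exploits Lemma \ref{lm: T= ad0} to recognize that the condition $\chi(g) = 1$ for all $\chi \in T$ forces the pair $(\mu_1,\mu_2)$ to satisfy the $\overline{G_1 \times G_2}$-condition on $Rad_0 \subseteq G_{12}$; a short argument extends this to all of $G_{12}$ by adjusting the lift if necessary.

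Next I would define the middle vertical by
\begin{align*}
\tilde\Phi(\mu_1,\mu_2)(\nu_1+\nu_2) \;:=\; \frac{f(\nu_1,-\mu_2)}{f(\mu_1,\nu_2)}\ ,
\end{align*}
chosen precisely so that $\tilde\Phi \circ \iota = \Phi$, which gives commutativity of the left square. Well-definedness under a change of decomposition $\nu_1 + \nu_2 = \nu_1' + \nu_2'$ reduces, by bilinearity of $f$, to the identity $f(\eta,\mu_2) = f(\mu_1,\eta)$ for $\eta := \nu_1 - \nu_1' \in G_{12}$, which is exactly the defining condition of $\overline{G_1 \times G_2}$. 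Bilinearity of $f$ also makes $\tilde\Phi(\mu_1,\mu_2)$ a character of $G$ and $\tilde\Phi$ a group homomorphism. I then define $\Psi$ by $\Psi(\pi(\mu_1,\mu_2)) := [\tilde\Phi(\mu_1,\mu_2)] \in \widehat G/T$; this is well-defined because two lifts of the same $g \in \overline G$ differ by an element of $\iota(Rad_0)$, on which $\tilde\Phi$ coincides with $\Phi$ and so lands in $T$. Commutativity of the right square is then tautological.

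Finally, the five lemma applied to the resulting morphism of short exact sequences, together with the fact that $\Phi$ is an isomorphism by Lemma \ref{lm: T= ad0}, gives that both $\tilde\Phi$ and $\Psi$ are isomorphisms. The main obstacle is really a bookkeeping one: fixing the precise formula for $\tilde\Phi$ (with the correct placement of inverses and signs in $f$) so that it restricts to $\Phi$ on $Rad_0$; once the definition is set, all remaining verifications are mechanical manipulations with the bilinearity of $f$ and the defining conditions of $\overline{G_1 \times G_2}$ and $\overline G$.
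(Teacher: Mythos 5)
Your route is essentially the paper's (define the middle vertical, check commutativity, prove surjectivity of $\pi$, pass to a conclusion about $\Psi$), but the two places you wave at are exactly where the real work is, and one of the waves is a genuine logical error.

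\textbf{Surjectivity of $\pi$.} The "short argument $\ldots$ by adjusting the lift if necessary" is the entire non-trivial content of the paper's proof, and it invokes something you never name. Starting from an arbitrary decomposition $g = \mu_1+\mu_2$, the function $\nu \mapsto f(\mu_1,\nu)/f(\nu,\mu_2)$ is a character on $G_{12}$ which, as you observed, is trivial on $\rad$ (this is precisely the $\mathrm{Ann}(T)$ condition read through $\Phi$); it therefore descends to a character on $G_{12}/\rad$. Because $f\cdot f^T$ pushes down to a \emph{non-degenerate} symmetric bihomomorphism $\overline{f\cdot f^T}$ on $G_{12}/\rad$, this descended character equals $\overline{f\cdot f^T}(x,-)$ for some $x\in G_{12}/\rad$; choosing a set-theoretic lift $\tilde s(x)\in G_{12}$ and replacing $(\mu_1,\mu_2)$ by $(\mu_1-\tilde s(x),\mu_2+\tilde s(x))$ lands the pair in $\overline{G_1\times G_2}$ without changing its image under $\pi$. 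Without explicitly invoking the non-degeneracy of $\overline{f\cdot f^T}$, "adjusting the lift" is not a proof.

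\textbf{The five lemma.} From a commuting ladder with exact rows, knowing that \emph{only} the left vertical $\Phi$ is an isomorphism does not let you conclude that the middle and right ones are isomorphisms --- you need two of the three. As written, your conclusion step is therefore unsound. The cleanest repair is to show directly that your $\tilde\Phi$ is bijective. Injectivity: if $\tilde\Phi(\mu_1,\mu_2)=1$, specialize to $\nu_2=0$ (resp.\ $\nu_1=0$) and use the non-degeneracy of $f$ on $G_1\times G_2$ to get $\mu_2=0$ (resp.\ $\mu_1=0$). Surjectivity: for $\chi\in\widehat G$, non-degeneracy of $f$ gives unique $\mu_2\in G_2$, $\mu_1\in G_1$ with $\chi|_{G_1}=f(\cdot,\mu_2)$ and $\chi|_{G_2}=f(\mu_1,\cdot)$; the defining condition of $\overline{G_1\times G_2}$ for $(\mu_1,\mu_2)$ holds because both sides equal $\chi(\nu)$ for $\nu\in G_{12}$, and one checks $\tilde\Phi(\mu_1,\mu_2)$ is $\chi$ up to an inverse. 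With $\Phi$ and $\tilde\Phi$ both isomorphisms, the five lemma (or a trivial chase) finishes.

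Both gaps are fixable, and your formula for $\tilde\Phi$ is indeed the one that makes the left square commute with $\Phi$ on the nose (the paper's printed $F$ differs from it by a sign, restricting to $\Phi$ composed with negation rather than $\Phi$), so the structural framing is fine. But as written, the proposal skips the single lemma-level fact --- non-degeneracy of $\overline{f\cdot f^T}$ --- that makes surjectivity of $\pi$ true, and misstates what the five lemma can deliver from one vertical isomorphism.
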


\begin{proof}
	The map $F:\overline{G_1 \times G_2} \to \widehat{G}$ is given by $(\mu_1,\mu_2) \mapsto (\nu=\nu_1 + \nu_2 \mapsto f(\nu_1,\mu_2)f(\mu_1,\nu_2))$ and does not depend on the splitting $\nu=\nu_1 + \nu_2$.\\
	We show that the map $\pi$ is surjective. Let $\mu = \mu_1 + \mu_2 \in \overline{G}$. We choose a set-theoretic section $\tilde{s}:G_{12}/Rad_0 \to G_{12}$. We can push $f\cdot f^T$ down to a non-degenerate symmetric bihomomorphism $\overline{f \cdot f^T}$ on $G_{12}/Rad_0$. Hence, there must be an element $x \in G_{12}/Rad_0$, s.t.
	\begin{align*}
		\frac{f(\mu_1, \, \_\,)}{f(\,\_\,,\mu_2)}=\overline{f\cdot f^T}(x, \, \_\,)= f\cdot f^T(\tilde{s}(x), \, \_\,)
	\end{align*}
	as characters on $G_{12}/Rad_0$. For $s(\mu)=(s(\mu)_1,s(\mu)_2):=(\mu_1-s(x),\mu_2+s(x)) \in \overline{G_1 \times G_2}$ we obtain $\pi(s(\mu))=\mu$. The map $\Psi$ is the well-defined isomorphism given by $\Psi(\mu)= [ F(s(\mu)) ]$.
\end{proof}

\begin{example}\label{exp:Gbar for sym f}
	Let $G_{12}=G$ and $f$ symmetric and $\mu \in \overline{G}$. We have
	\begin{align*}
		f(s(\mu)_1,\, \_ \,)=f(\, \_ \,,s(\mu)_2) = f(s(\mu)_2,\, \_ \,)
	\end{align*} 
	on $G$ and since $f$ is non-degenerate this implies $s(\mu)_2 = s(\mu)_1=:\tilde{\mu}$. Since $s: \overline{G} \to \overline{G_1 \times G_2}$ is a section, this implies $\mu = 2 \tilde{\mu}$. On the other hand, for $\nu = 2\tilde{\nu} \in 2G$, by Lemma \ref{lm: T= ad0} we have 
	\begin{align*}
		\chi(\nu)=\frac{f(\tilde{\nu},\mu)}{f(\mu,\tilde{\nu})}=\frac{f(\tilde{\nu},\mu)}{f(\tilde{\nu},\mu)}=1 \qquad \forall \chi \in T.
	\end{align*}
	Hence, $\overline{G} = 2G$ in this case.
\end{example}



\subsection*{A particular representative $(\bar{\omega},\bar{\sigma})\in Z^3(\widehat{G}/T)$}

Let $\sigma$ be a bihomomorphism on the dual of a finite abelian group $\widehat{G}$, such that the associated quadratic form $Q(\chi)=\sigma(\chi,\chi)$ vanishes on the radical $T=Rad(\Beta)$. Starting with an arbitrary set-theoretic section $s:\widehat{G}/T \to \widehat{G}$ for the quotient map $\pi: \widehat{G} \to \widehat{G}/T$, we want to define an abelian $3$-cocycle $(\bar{\omega},\bar{\sigma})$, such that 
\begin{align*}
	\pi^*\bar{\omega}=d\kappa^{-1}, \qquad \pi^*\bar{\sigma}=\kappa/\kappa^T.
\end{align*}

Before we define this abelian $3$-cocycle, we notice that $\sigma|_{T}$ is an alternating bihomomorphism and thus we have $\sigma|_{T}:=\eta/\eta^T$ for some $2$-cocycle $\eta:T \times T \to \C^\times$.

\begin{lemma}\label{lm: explicit 3 cocycle omegabar}
	Let $r(\chi,\psi):= s(\chi)s(\psi)s(\chi\psi)^{-1}$ denote the corresponding $2$-cocycle to the set-theoretic section $s:\widehat{G}/T \to \widehat{G}$. Moreover, for $\chi \in \widehat{G}$, we define $\tau_{\chi}:=\chi s(\pi(\chi))^{-1} \in T$. We set $\bar{\sigma}:=s^*\sigma$. Together with
	\begin{align*}
		\bar{\omega}(\chi,\psi,\xi):=\sigma(s(\xi),r(\chi,\psi))df(s(\chi),s(\psi),s(\xi)), \qquad f(\chi,\psi):=\eta(r(\pi(\chi),\pi(\psi)),\tau_{\chi}\tau_{\psi}),
	\end{align*}
	this defines an abelian $3$-cocycle. Explicitly, we have
	\begin{align*}
		df(s(\chi),s(\psi),s(\xi))= \frac{\eta(r(\chi,\psi\xi),r(\psi,\xi))}{\eta(r(\chi\psi,\xi),r(\chi,\psi))}.
	\end{align*}
	The $2$-cocochain $\kappa \in C^2(\widehat{G})$ satisfying $\pi^*(\bar{\omega},\bar{\sigma})=d_{ab}k\cdot (1,\sigma)$ is given by
	\begin{align*}
		\kappa(\chi,\psi)=\left( \sigma(\tau_\chi,\psi)\eta(\tau_\psi,\tau_\chi)f(\chi,\psi)\right) ^{-1}.
	\end{align*}
\end{lemma}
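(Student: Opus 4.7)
The plan is to establish the three assertions of the lemma in the following order: first, the closed form for $df(s(\chi),s(\psi),s(\xi))$; second, the cochain identity $\pi^*(\bar\omega,\bar\sigma) = d_{ab}\kappa\cdot (1,\sigma)$; third, the abelian $3$-cocycle property of $(\bar\omega,\bar\sigma)$ itself. Step three is essentially free once step two is in hand, because $(1,\sigma)$ is an abelian $3$-cocycle (since $\sigma$ is a bihomomorphism), $d_{ab}\kappa$ is an abelian coboundary, so the pullback $\pi^*(\bar\omega,\bar\sigma)$ is an abelian $3$-cocycle on $\widehat G$; surjectivity of $\pi:\widehat G \twoheadrightarrow \widehat G/T$ then transfers the abelian $3$-cocycle identities pointwise down to $\widehat G/T$.

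For step one I assume the section $s$ and the cocycle $\eta$ are normalized, so $\tau_{s(\chi)} = s(\chi)s(\chi)^{-1} = 1$ and $\eta(*,1) = \eta(1,*) = 1$. The relation $s(\chi)s(\psi) = r(\chi,\psi)s(\chi\psi)$ then gives $\tau_{s(\chi)s(\psi)} = r(\chi,\psi)$ together with $\pi(s(\chi)s(\psi)) = \chi\psi$. Substituting into the four factors
$$df(s(\chi),s(\psi),s(\xi)) = f(s(\psi),s(\xi))\,f(s(\chi)s(\psi),s(\xi))^{-1}\,f(s(\chi),s(\psi)s(\xi))\,f(s(\chi),s(\psi))^{-1},$$
the first and last evaluations collapse to $1$ by normalization, and the middle two produce the claimed quotient. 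For step two I use the canonical decomposition $\chi = \tau_\chi s(\bar\chi)$ with $\bar\chi := \pi(\chi)$, and bihomomorphy of $\sigma$, to expand
$$\sigma(\chi,\psi) = \sigma(\tau_\chi,\tau_\psi)\,\sigma(\tau_\chi,s(\bar\psi))\,\sigma(s(\bar\chi),\tau_\psi)\,\bar\sigma(\bar\chi,\bar\psi).$$
The identity $\sigma|_T = \eta/\eta^T$ rewrites $\sigma(\tau_\chi,\tau_\psi)$ as an $\eta$-ratio, and direct comparison with the formula defining $\kappa$ yields $\pi^*\bar\sigma = \sigma\cdot\kappa/\kappa^T$. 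The $\bar\omega$-half of step two proceeds by expanding $\pi^*\bar\omega(\chi,\psi,\xi) = \sigma(s(\bar\xi),r(\bar\chi,\bar\psi))\,df(s(\bar\chi),s(\bar\psi),s(\bar\xi))$ via step one, and $d\kappa^{-1}(\chi,\psi,\xi)$ by direct substitution of the formula for $\kappa$; after telescoping, the matching reduces to the $2$-cocycle identity $d\eta = 0$ applied to arguments built from the cocycle relation $r(\bar\chi,\bar\psi\bar\xi)r(\bar\psi,\bar\xi) = r(\bar\chi\bar\psi,\bar\xi)r(\bar\chi,\bar\psi)$ and from products of the $\tau$'s.

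The main obstacle will be the $\bar\omega$-matching in step two: the three-way split of $\chi,\psi,\xi$ into $\tau$- and $s$-parts generates many cross terms, and careful regrouping is needed to isolate the $r$-cocycle contribution from the $\sigma|_T$-contribution so that the $2$-cocycle identity for $\eta$ can be applied in $T\times T$. I do not expect any genuinely conceptual subtlety beyond this, but the verification will require sustained symbol-pushing. A useful intermediate sanity check is the case $\tau_\chi = \tau_\psi = \tau_\xi = 1$, where every $\sigma$-value on a pure $T$-argument is trivial and the identity reduces to step one alone; once that case is verified, the general case follows by tracking how the correction factors from $\tau_\chi,\tau_\psi,\tau_\xi$ enter linearly on both sides.
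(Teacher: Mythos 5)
Your proposed route is sound, and in fact streamlines the paper's argument in one place. The verification of the closed form for $df(s(\chi),s(\psi),s(\xi))$ by normalization of $\eta$ and the observation $\tau_{s(\chi)}=1$ is correct: the two factors $f(s(\chi),s(\psi))$ and $f(s(\psi),s(\xi))$ reduce to $\eta(r(-,-),1)=1$, while the remaining two factors produce precisely the asserted quotient. The decomposition $\chi=\tau_\chi s(\bar\chi)$ and the cancellation of the symmetric factor $f$ in $\kappa/\kappa^T$, together with $\sigma|_T=\eta/\eta^T$ and $\sigma(\tau,\cdot)\sigma(\cdot,\tau)=1$ for $\tau\in T$, do indeed produce $\pi^*\bar\sigma=\sigma\cdot\kappa/\kappa^T$; and the $\bar\omega$-matching reduces, as you anticipate, to applying the $2$-cocycle identity for $\eta$ and the cocycle relation for $r$, together with $\tau_{\chi\psi}=\tau_\chi\tau_\psi\, r(\pi(\chi),\pi(\psi))$, which is exactly what drives the computation.

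The genuinely different part is your Step 3. The paper establishes the cochain identity $\pi^*(\bar\omega,\bar\sigma)=d_{ab}\kappa\cdot(1,\sigma)$ first, but then proceeds to verify the $3$-cocycle condition for $\bar\omega$ \emph{directly} (by showing $d(s^*df)\cdot dm=1$, with $m(a,b,c)=\sigma(s(c),r(a,b))$) and the two hexagon identities separately — an additional page of computation. You instead observe that the cochain identity already exhibits $\pi^*(\bar\omega,\bar\sigma)$ as the product of an abelian $3$-coboundary with the abelian $3$-cocycle $(1,\sigma)$, hence it is an abelian $3$-cocycle on $\widehat G$; since the abelian cocycle conditions at any point of $\widehat G$ are \emph{verbatim} the cocycle conditions for $(\bar\omega,\bar\sigma)$ at the corresponding point of $\widehat G/T$, and $\pi$ is surjective, $(\bar\omega,\bar\sigma)$ is an abelian $3$-cocycle. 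This is a legitimate shortcut that makes the paper's direct verification of $d\bar\omega=1$ and the hexagons redundant. The price is that the burden of proof then concentrates entirely on the cochain identity, which you correctly flag as the place where the symbol-pushing (the $\tau_{\chi\psi}$-splitting and the $\eta$-cocycle telescoping) must be carried out in full; this is implicit but not executed in your write-up. Modulo that, your argument is complete and slightly more economical than the paper's.
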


\begin{proof}
	Before we check that $(\bar{\omega},\bar{\sigma})$ is an abelian $3$-cocycle, we show that $\pi^*(\bar{\omega},\bar{\sigma})=d_{ab}k\cdot (1,\sigma)$ holds. We have
	\begin{align*}
		\frac{\kappa(\chi,\psi)}{\kappa(\psi,\chi)}\sigma(\chi,\psi) =\frac{\eta(\tau_\chi,\tau_\psi)}{\sigma(\tau_\chi,\psi)}\frac{\sigma(\tau_\psi,\chi)}{\eta(\tau_\psi,\tau_\chi)} \sigma(\chi,\psi) 
		=\frac{\sigma(\tau_\chi,\tau_\psi)\sigma(\chi,\psi)}{\sigma(\tau_\chi,\psi)\sigma(\chi,\tau_\psi)} 
		= \sigma(\chi\bar{\tau}_{\chi},\psi\bar{\tau}_{\psi})=\bar{\sigma}(\pi(\chi),\pi(\psi)).
	\end{align*}
	Here, we used that $\sigma$ is a bihomomorphism satisfying $\sigma|_T=\eta/\eta^T$ and $\sigma(\tau,\chi)=\sigma(\chi,\tau)^{-1}$ for $\tau \in T$. For $\pi^*\bar{\omega}$, the following relations are not very hard to check:
	\begin{align*}
		df(\chi,\psi,\xi)&=s^*df(\pi(\chi),\pi(\psi),\pi(\xi))\sigma(r(\chi,\psi),\tau_{\xi})dg(\chi,\psi,\xi)^{-1} \\
		\sigma(\xi,r(\pi(\chi),\pi(\psi)))&=\sigma(s([\xi]),r(\pi(\chi),\pi(\psi)))\sigma(\tau_\xi,r(\pi(\chi),\pi(\psi))),
	\end{align*}
	where $g(\chi,\psi)=\eta(\tau_{\psi},\tau_{\chi})$. Thus, we have
	\begin{align*}
		\pi^*\bar{\omega}(\chi,\psi,\xi)=df(\chi,\psi,\xi)\sigma(\xi,r(\pi(\chi),\pi(\psi)))dg(\chi,\psi,\xi).
	\end{align*}
	Hence, $\pi^*\bar{\omega}=d\kappa^{-1}$.\\
	We now want to show that $\bar{\omega}$ as defined above is a $3$-cocycle. For this, we compute $d(s^*df)$ and $dm$, where $m(a,b,c):=\sigma(s(c),r(a,b))$. We start with $d(s^*df)$:
	\begin{align*}
		d(s^*df)(a,b,c,d)&= \frac{s^*df(a,b,c)s^*df(a,bc,d)s^*df(b,c,d)}{s^*df(ab,c,d)s^*df(a,b,cd)} \\
		&=\frac{\eta(r(a ,bc ),r(b ,c ))}{\eta(r(ab ,c ),r(a ,b ))} \frac{\eta(r(a ,bcd ),r(bc ,d ))}{\eta(r( abc,d ),r(a ,bc ))}\frac{\eta(r(b ,cd ),r(c ,d ))}{\eta(r(bc ,d ),r(b ,c ))} \\
		&\times \frac{\eta(r(abc ,d ),r(ab ,c ))}{\eta(r(ab ,cd ),r(c ,d ))}\frac{\eta(r(ab ,cd ),r(a ,b ))}{\eta(r(a ,bcd ),r(b ,cd ))} \\
		&= \frac{\eta(r(ab ,cd ),r(a ,b ))}{\eta(r(ab ,cd ),r(c ,d ))}\frac{\eta(r(ab ,cd )r(a,b ),r(c ,d ))}{\eta(r(ab ,cd )r(c,d),r(a ,b ))} \\
		&=\frac{\eta(r(a ,b ),r(c ,d ))}{\eta(r(c,d ),r(a ,b ))} \\
		&=\sigma(r(a,b),r(c,d)).
	\end{align*}
	Here, we only used the fact that $\eta $ is a $2$-cocycle and that $\sigma|_T=\eta/\eta^T$. On the other hand,
	\begin{align*}
		dm(a,b,c,d) &= \frac{\sigma(s(c),r(a,b))\sigma(s(d),r(a,bc))\sigma(s(d),r(b,c))}{\sigma(s(d),r(ab,c))\sigma(s(cd),r(a,b))} \\
		&=\frac{\sigma(s(c),r(a,b))\sigma(s(d),r(a,bc))\sigma(s(d),r(b,c))}{\sigma(s(d),r(ab,c))\sigma(s(cd),r(a,b))} \frac{\sigma(s(d),r(a,b))}{\sigma(s(d),r(a,b))} \\
		&=\sigma(s(d),dr(a,b,c))\frac{\sigma(s(c),r(a,b)\sigma(s(d),r(a,b)}{\sigma(s(cd),r(a,b)} \\
		&=\sigma(r(c,d),r(a,b))\\
		&=\sigma(r(a,b),r(c,d))^{-1},
	\end{align*}
	where we used that $\sigma$ is a bihomomorphism and $\eta$ is a $2$-cocycle. Combining both results, we see that $\bar{\omega}$ is a $3$-cocycle. We now want to show that $(\bar{\omega},\bar{\sigma})$ satifies the hexagon equations. We have
	\begin{align*}
		\frac{\bar{\omega}(a,b,c)\bar{\omega}(c,a,b)}{\bar{\omega}(a,c,b)} &= \frac{\sigma(s(c),r(a,b))\sigma(s(b),r(c,a))}{\sigma(s(b),r(a,c))} \\
		&\times \frac{\eta(r(a,bc),r(b,c))\eta(r(c,ab),r(a,b))\eta(r(ac,b),r(a,c))}{\eta(r(ab,c),r(a,b))\eta(r(ca,b),r(c,a))\eta(r(a,cb),r(c,b))} \\
		&=\sigma(s(c),r(a,b)) =\sigma(r(a,b),s(c))^{-1}=\frac{\bar{\sigma}(ab,c)}{\bar{\sigma}(a,c)\bar{\sigma}(b,c)}.
	\end{align*}
	The second hexagon equation follows from the fact that $\bar{\sigma}\bar{\sigma}^T$ is a bihomomorphism.
\end{proof}

\begin{remark}\label{remark:Qs coincide for different section}
	Note that for different sections $s,s':\widehat{G}/T \to \widehat{G}$, the associated abelian $3$-cocycles constructed in Lemma \ref{lm: explicit 3 cocycle omegabar} are cohomologous. This can be seen by comparing the associated quadratic forms on $\widehat{G}/T$: Since the quadratic form $Q$ on $\widehat{G}$ vanishes on $T$, they must coincide.
\end{remark}
	\section{Nichols algebras in braided monoidal categories}\label{app: quantum shuffle product}

In this section, we briefly recall the notion of a Nichols algebra in an abelian rigid braided monoidal category (see \cite{BazlovBerenstein2013} for details). Moreover, we give a categorical definition of the quantum shuffle product.\\
Let $V \in \mathcal{C}$ be an object in an abelian braided monoidal category $\mathcal{C}$ with associator $\alpha$ and braiding $c$. We define $V^{n}:=V^{n-1}\otimes V$ with $V^0=\mathbb{I}$. 
The tensor algebra $T(V):=\bigoplus_{i\geq 0}\, V^i$ has a free algebra structure in $\mathcal{C}$ induced by the multiplications $m_{i,n-i}:V^i \otimes V^{n-i} \to V^n$, given by 
\begin{align*}
m_{i,n-i}:= (\alpha_{V^i,V,V}^{-1} \otimes \id_{V^{\otimes(n-(i+2))}}) \circ \dots \circ \alpha_{V^i,V^{(n-(i+1))},V}^{-1}.
\end{align*}	
Let $d_1,d_2:V \to T(V)\otimes T(V)$ be the canonical inclusions and set $\Delta_1:=d_1+d_2:V\to T(V)\otimes T(V)$. Then there is a unique extension $\Delta:T(V) \to T(V)\otimes T(V)$ of $\Delta_1$, s.t. $\Delta$ is an algebra homomorphism in $\mathcal{C}$. Moreover, we define a counit on $T(V)$ by $\epsilon|_\mathbb{I}=id_\mathbb{I}$ and  $\epsilon_{V^n}=0$ for $n \geq 1$. Similarly to the coproduct, we can uniquely extend the map $S_1:=-id_V:V \to V \subseteq T(V)$ to an anti-algebra homomorphism $S:T(V) \to T(V)$, which turns $T(V)$ into a Hopf algebra in $\mathcal{C}$.\\
\\
It is clear that the braid group $B_n$ with generators $\sigma_i$ acts as automorphisms of $V^n$ via
\begin{align*}
\sigma_{i}.(v_1\otimes \dots \otimes v_n)&=\left(A^{i,n}_V \right)^{-1} \circ ((\id_{V^{\otimes (i-1)}}\otimes c_{V,V}) \otimes \id_{V^{\otimes (n-(i+1))}}) \circ A^{i,n}_V(v_1\otimes \dots \otimes v_n), \text{ where} \\
A^{i,n}_V:&=(A^i_V\otimes \id_{V^{\otimes (n-(i+2))}})\circ \dots \circ (A^1_V\otimes \id_{V^{\otimes (n-3)}}) \\
A_V^i:&=(\id_V\otimes A^{i-1})\circ \alpha_{V,V^{\otimes n},V}, \qquad A_V^1:=\alpha_{V,V,V}.
\end{align*} 
Here, the paranthesis of the tensor powers $V^{\otimes n}$ in the indices is understood. Let $\rho:S_n \to B_n$ be the Matsumoto section of the canonical epimorphism $B_n \twoheadrightarrow S_n$. We define the so-called Woronowicz symmetrizer:
\begin{align*}
	Wor(c)_n:=\sum_{\sigma \in S_n}\, \rho(\sigma) \in End(V^n) \qquad Wor(c):= \bigoplus_{n=1}^\infty Wor(c)_n \in End(T(V)).
\end{align*}

\begin{definition}\label{def: Nichols algebra}
	The Nichols algebra $B(V)$ of $V$ in $\mathcal{C}$ is defined as the quotient Hopf algebra $T(V)/ker(Wor(c))$.
\end{definition}

\begin{remark}\label{rem: equiv defs of Nichols algebras}
	The Nichols algebra $B(V)$ has two important equivalent characterizations:
	\begin{itemize}
		\item We can extend the evaluation map $ev_V:V^\vee \otimes V \to \mathbb{I}$ to a unique Hopf pairing $T(V^\vee) \otimes T(V) \to \mathbb{I}$. This Hopf pairing factors through a non-degenerate Hopf pairing $B(V^\vee) \otimes B(V) \to \mathbb{I}$.
		\item It can be shown that $B(V)$ is the unique quotient Hopf algebra of $T(V)$, s.t. $V \subseteq B(V)$ and $ker(\Delta_n-(1\otimes \id_{V^n}+\id_{V^n} \otimes 1))=0$ for $n>1$. 
	\end{itemize}
\end{remark}

In order to define the quantum shuffle-algebra, we define a different Hopf structure on $T(V)$. Similar to the free algebra structure from above, we can endow $T(V)$ with the cofree coalgebra structure. Moreover, we define a multiplication on $T(V)$ as follows:

\begin{definition}[Quasi-quantum shuffle product]	
	 A permutation $\sigma \in S_n$ is called an $i$-shuffle if
	\begin{align*}
		\sigma(1)< \dots < \sigma(i), \qquad \sigma(i+1)< \dots < \sigma(n).
	\end{align*}
	We define a multiplication $\mu_{i,n-i}:V^i \otimes V^{n-i} \to V^n $ by
		\begin{align}
			\mu_{i,n-i}:= \sum_{\sigma:\,i-\text{shuffle}}\, \rho(\sigma).m_{i,n-i}.
		\end{align}
	The induced product on $T(V)$ is denoted by $*:T(V)\otimes T(V) \to T(V)$. We call this the braided shuffle product. In the case $\mathcal{C}={}_H^H\mathcal{YD}$, where $H$ is a quasi-Hopf algebra, we call this the quasi-quantum shuffle product.
\end{definition}

Again, we can define a corresponding unit and antipode uniquely in order to turn $T(V)$ into a Hopf algebra in $\mathcal{C}$, which we now denote by $t(V)$. The Hopf algebras $B(V)$ and $t(V)$ are related as follows:

\begin{proposition}\label{prop: Nichols alg and shuffle algebra}
	The Woronowicz symmetrizer $Wor(c):B(V) \to t(V)$ is a monomorphism of Hopf algebras in $\mathcal{C}$. The image of $Wor(c)$ is simply the Hopf subalgebra of $t(V)$ generated by $V$ as an algebra.
\end{proposition}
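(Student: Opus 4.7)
The plan is to show that $Wor(c)$, viewed as a map from the tensor Hopf algebra $(T(V),m,\Delta_{\text{free}})$ to the shuffle Hopf algebra $t(V)$, is a morphism of braided Hopf algebras; then $B(V) = T(V)/\ker(Wor(c))$ inherits a braided Hopf algebra structure and the induced map $\overline{Wor}: B(V) \to t(V)$ is by construction an injection, with image equal to the sub-algebra of $t(V)$ generated by $V$.

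First I would establish the key algebraic identity: $Wor(c)$ intertwines the concatenation product on $T(V)$ with the quantum shuffle product on $t(V)$, namely $Wor(c)_n \circ m_{i,n-i} = \mu_{i,n-i} \circ (Wor(c)_i \otimes Wor(c)_{n-i})$. This rests on the well-known coset decomposition: every $\sigma \in S_n$ factors uniquely as $\sigma = \tau \cdot (\alpha \times \beta)$ with $\tau$ an $(i,n-i)$-shuffle and $(\alpha,\beta) \in S_i \times S_{n-i}$, the lengths adding, $\ell(\sigma) = \ell(\tau) + \ell(\alpha) + \ell(\beta)$. Since the Matsumoto section $\rho: S_n \to B_n$ is multiplicative on length-additive products, we get $\rho(\sigma) = \rho(\tau)\bigl(\rho(\alpha) \otimes \rho(\beta)\bigr)$ when acting on $V^i \otimes V^{n-i}$, and summing over $\sigma$ yields the identity. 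Together with $Wor(c)_1 = \mathrm{id}_V$, this shows that $Wor(c):(T(V),m) \to (t(V),\mu)$ is the unique algebra homomorphism extending the inclusion $V \hookrightarrow t(V)$, as guaranteed by the universal property of the free algebra.

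Next I would show $Wor(c)$ is a coalgebra map. The elements of $V \subset t(V)$ are primitive for the deconcatenation coproduct (the only splittings of a single letter are the trivial ones), and by the paper's definition $V \subset T(V)$ is primitive for $\Delta_{\text{free}}$. Therefore both compositions
\[
(Wor(c) \otimes Wor(c)) \circ \Delta_{\text{free}} \quad\text{and}\quad \Delta_{\text{dec}} \circ Wor(c)
\]
are algebra maps $T(V) \to t(V) \otimes t(V)$ (using the braided tensor product algebra structure in $\mathcal{C}$), and both send $v \in V$ to $v \otimes 1 + 1 \otimes v$. By the universal property of $T(V)$ as the free algebra on $V$ in $\mathcal{C}$, they agree everywhere. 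Hence $Wor(c)$ is a bialgebra map; since both source and target are $\N$-graded and connected, the antipode is automatically preserved and we have a Hopf algebra map.

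Finally I would close the argument: $\ker(Wor(c)) \subset T(V)$ is then a Hopf ideal, and $B(V) = T(V)/\ker(Wor(c))$ is a Hopf algebra in $\mathcal{C}$ with $Wor(c)$ descending to an injective Hopf algebra map $\overline{Wor}: B(V) \hookrightarrow t(V)$. Because $Wor(c)$ is an algebra map with $Wor(c)|_V = \mathrm{id}_V$ and $T(V)$ is generated as an algebra by $V$, the image equals the sub-algebra of $t(V)$ generated by $V$; being the image of a Hopf algebra map, this subalgebra is automatically closed under $\Delta_{\text{dec}}$ and the antipode, so it is the Hopf subalgebra generated by $V$, as claimed.

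The main obstacle is the shuffle/coset identity underlying Step 1, whose verification in the braided (rather than symmetric) setting requires careful tracking of associators in $\mathcal{C}$ and invocation of Matsumoto's length-additivity property for $\rho$; once that combinatorial input is in place, everything else reduces to formal applications of the universal property of $T(V)$.
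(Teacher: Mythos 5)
The paper does not actually prove this proposition --- it is recalled from the literature (the appendix points to Bazlov--Berenstein for details) --- so there is no in-text argument to compare against. Your proof is correct and is exactly the standard one: the length-additive coset factorization $\sigma=\tau\cdot(\alpha\times\beta)$ together with Matsumoto multiplicativity gives the intertwining of concatenation with the shuffle product, the coalgebra property follows from the universal property of $T(V)$ applied to two algebra maps agreeing on the primitives $V$, and the identification of the image and the Hopf-ideal property of $\ker(Wor(c))$ are then formal. The only point worth making explicit is the one you already flag: in the non-symmetric, non-strict setting one must check that $m_{i,n-i}\circ(\rho(\alpha)\otimes\rho(\beta))=\rho(\alpha\times\beta)\circ m_{i,n-i}$, i.e.\ that the parabolic subgroup $B_i\times B_{n-i}\subseteq B_n$ acts through the rebracketed factors; this follows from naturality of the associator and the hexagon axioms, which guarantee the $\sigma_i$ genuinely satisfy the braid relations on $V^n$.
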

	\section{Factorizable quasi-Hopf algebras}\label{app:Factorizability}

In \cite{BulacuTorrecillas2004}, the authors define the notion of factorizability for quasi-Hopf algebras:

\begin{definition}
	Let $(H,R)$ be a quasi-triangular quasi-Hopf algebra and $H^*$ the corresponding coquasi-Hopf algebra. We will say that $H$ is factorizable if the following linear map is bijective:
	\begin{align*}
		Q:H^* &\longrightarrow H \\
		f& \longmapsto f\left( S(X^2_{(2)}\tilde{p}^2)f^1R^2r^1g^1S(q^2)X^3\right) X^1S(X^2_{(1)}\tilde{p}^1)f^2R^1r^2g^2S(q^1).
	\end{align*}
	Here $R=R^1\otimes R^2=r^1\otimes r^2$, $f=f^1\otimes f^2$ and $f^{-1}=g^1\otimes g^2$ (see Sec. \ref{sec: Quasi-Hopf algebras}).
\end{definition}

They also defined braided Hopf structures on $H$ and $H^*$ in the braided monoidal category $\mathsf{Mod}_H$ (see \cite{BulacuTorrecillas2004}, Sec.4). To avoid confusion, they denoted these braided Hopf algebras by $\underline{H}$ and $\underline{H}^*$. They furthermore showed that $Q$ is a braided Hopf algebra homomorphism.\\
In \cite{FarsadGainutdinovRunkel2017}, the authors gave an alternative interpretation for $\underline{H}$, $\underline{H}^*$ and $Q$ by proving the following statements:

\begin{proposition}\label{prop: factorizability in terms of ends/coends}
	Let $H$ be a finite dimensional quasi-triangular quasi-Hopf algebra. Furthermore, let  $\underline{H}, \underline{H}^* \in \mathsf{Mod}_H$ be the braided Hopf algebras as defined in \cite{BulacuTorrecillas2004}, Sec.4. Then we have
	\begin{enumerate}
		\item $\underline{H}$ is the end over the functor $(\_)\otimes(\_)^\vee$ with dinatural transformation given by $\pi_X(h)=h.e_i \otimes e^i$.
		\item $\underline{H}^*$ is the coend over the functor $(\_)^\vee\otimes(\_)$ with dinatural transformation given by $\iota_X(f\otimes x):=f(\_.x)$.
		\item The morphism $Q:\underline{H}^* \to \underline{H}$ is uniquely determined by 
		\begin{align*}
			\pi_Y \circ Q \circ \iota_X = (ev_X \otimes \id_{Y \otimes Y^\vee})\circ (\id \otimes c^2_{X,Y} \otimes \id) \circ (\id_{X^\vee \otimes X} \otimes coev_Y),
		\end{align*}
		where we omitted the associators in $\mathsf{Mod}_H$.
	\end{enumerate}
\end{proposition}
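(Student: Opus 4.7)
The plan is to verify the three claims sequentially, making essential use of the fact that $\mathsf{Mod}_H$ is a finite tensor category (hence admits the (co)end over any additive functor) and of the explicit adjoint-type action defining $\underline{H}$ and $\underline{H}^\ast$ in \cite{BulacuTorrecillas2004}. The end and coend in question are the ``canonical'' Hopf algebra objects of Lyubashenko in $\mathsf{Mod}_H$, so what needs to be checked is that the specific dinatural transformations $\pi_X$ and $\iota_X$ coincide with the universal ones up to the claimed identifications.

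For part (1), I would first unpack the $H$-module structure of $\underline{H}$ from \cite{BulacuTorrecillas2004} and verify that the map $\pi_X(h)=h.e_i\otimes e^i$ is $H$-linear; this is where the formulas for the $H$-action involving $\phi$, $\alpha$, $\beta$, and the Drinfeld twist $f$ enter, and the check reduces to the standard identities for $p_R,q_R,\delta,\gamma$ from Section~\ref{sec: Quasi-Hopf algebras}. Next I check dinaturality: for any $f\colon X\to Y$ in $\mathsf{Mod}_H$, the equality $(f\otimes \id_{Y^\vee})\circ \pi_X=(\id_X\otimes f^\vee)\circ \pi_Y$ is the familiar rewriting of the identity element of $\text{End}(X)\cong X\otimes X^\vee$ under pre/post-composition with $f$, and does not see the quasi-coassociativity. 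Finally, to identify $\underline{H}$ with the end I would appeal to the universal property: any $H$-linear dinatural family $\tau_X\colon M\to X\otimes X^\vee$ is determined by $\tau_{\underline{H}}(\_)$ evaluated at a specific element (using that the regular module is a projective generator), and the explicit inverse $\text{End}_H(M,\underline{H})\leftarrow$ integrals produces the required factorization through $\pi$. Part (2) is entirely parallel: $\iota_X(f\otimes x)=f(\_.x)$ is $H$-linear and dinatural with respect to the opposite variance, and the universal property is again pinned down on the regular module.

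For part (3), once $\underline{H}$ and $\underline{H}^\ast$ have been identified with the end and coend, any morphism $\underline{H}^\ast\to\underline{H}$ is uniquely determined by the collection of composites $\pi_Y\circ Q\circ\iota_X\colon X^\vee\otimes X\to Y\otimes Y^\vee$ for all $X,Y$. So the claim is reduced to a direct calculation: starting from the explicit formula
\[
Q(f)=f\!\left(S(X^2_{(2)}\tilde p^2)f^1R^2r^1g^1S(q^2)X^3\right)X^1S(X^2_{(1)}\tilde p^1)f^2R^1r^2g^2S(q^1),
\]
one plugs in $f=\iota_X(\varphi\otimes x)$ and then applies $\pi_Y$. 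After rewriting using the hexagon identities for $R$, the defining identities for $f, p_R, q_R$ and the antipode axioms, the residual expression should collapse to the double braiding $c^2_{X,Y}$ suitably sandwiched between $\text{ev}_X$ and $\text{coev}_Y$, which is the stated right-hand side.

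The main obstacle is unambiguously the bookkeeping in part (3): the double braiding is a clean categorical object, but the formula for $Q$ carries five Hopf-algebraic ``decorations'' ($\phi, f, R, R, \phi$) whose manipulation into $R_{21}R$ requires repeatedly invoking both hexagon axioms and the quasi-antipode identities $X^1\beta S(X^2)\alpha X^3=1$ and $S(x^1)\alpha x^2\beta x^3=1$ to cancel the $p,q,\alpha,\beta$ pieces. A graphical calculus in $\mathsf{Mod}_H$ (using strings labelled by $R$ and associator nodes) would be the most efficient bookkeeping device, and I would adopt the conventions of \cite{FarsadGainutdinovRunkel2017} to streamline this step rather than repeat their diagrammatic argument in coordinates. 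Parts (1) and (2) I expect to be purely formal once the action formulas are written down.
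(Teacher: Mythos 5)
The paper does not actually prove this proposition. It is stated as a result of Farsad--Gainutdinov--Runkel (the sentence immediately preceding it reads ``In \cite{FarsadGainutdinovRunkel2017}, the authors gave an alternative interpretation\ldots by proving the following statements''), so there is no in-paper proof for your sketch to be compared against; the appendix merely records the result for use in Section~\ref{sec: Modularization}.

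Given that, your plan is the expected one and matches the route taken in the cited reference: identify $\underline{H}$ and $\underline{H}^*$ with the Lyubashenko end and coend by checking $H$-linearity and dinaturality of $\pi_X$ and $\iota_X$ and then invoking the universal property, and reduce (3) to verifying the composites $\pi_Y\circ Q\circ\iota_X$, which you correctly identify as a purely Hopf-algebraic calculation best done graphically. Two small cautions. First, your dinaturality condition for the end has the indices transposed --- it should read $(f\otimes\id_{X^\vee})\circ\pi_X=(\id_Y\otimes f^\vee)\circ\pi_Y$ for $f\colon X\to Y$, both sides landing in $Y\otimes X^\vee$. Second, your universal-property argument for part (1) hand-waves: you invoke the regular module as a projective \emph{generator}, which is the right tool for the coend (a colimit), but for the end (a limit) you want the dual statement, or else you should reduce the end to a coend using the canonical isomorphism $\int_X X\otimes X^\vee\cong\bigl(\int^X X^{\vee}\otimes X\bigr)^{\vee}$ that holds in a finite tensor category. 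Once that is tidied up, the plan is sound; and for (3) your decision to adopt the graphical conventions of \cite{FarsadGainutdinovRunkel2017} rather than manipulate the five-fold decorated formula for $Q$ by hand is exactly what makes this feasible.
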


\begin{remark}
	Note that in \cite{BulacuTorrecillas2004} the authors did not assume a ribbon structure on $H$ in order to define the braided Hopf structure on the coend $\underline{H}^*$. However, up to the antipode it is exactly the same Hopf structure as for example defined in \cite{KerlerLyubashenko2001}. One can show that the antipode in \cite{BulacuTorrecillas2004} is uniquely determined by
	\begin{align*}
		S\circ \iota_X = (\iota_{X^\vee}\otimes ev_X)\circ (c^{-1}_{X^{\vee},X^{\vee\vee}\otimes X^\vee}\otimes \id_{X})\circ(coev_{X^\vee}\otimes \id_{X^\vee \otimes X}),
	\end{align*}
	where we again omitted the associators.
\end{remark}

In \cite{BulacuPanaiteVanOystaeyen2000}, the authors give a more general interpretation for the underline $(\underline{\quad})$:

\begin{proposition}\label{prop: underline functor}
	Let $H$ be a quasi-Hopf algebra, $A$ an associative algebra and $f:H \to A$ an algebra homomorphism. Then we can define a new multiplication on $A$ via
	\begin{align*}
		a\underline{\cdot} b:= f(X^1)af(S(x^1X^2)\alpha x^2 X^3_1)bf(S(x^3X^3_2)).
	\end{align*}
	With this multiplication, unit given by $\beta$ and left $H$-action given by $h.a:=f(h_1)af(S(h_2))$, $A$ becomes a left $H$-module algebra, i.e. an algebra in $\mathsf{Rep}_H$, which we denote by $\underline{A}$.
\end{proposition}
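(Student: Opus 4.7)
The plan is to verify the three assertions of the proposition (left $H$-module structure, associativity of $\underline{\cdot}$ with unit $\beta$, and the module-algebra compatibility) directly from the quasi-Hopf axioms, exploiting crucially that $f\colon H\to A$ is an algebra homomorphism so that all nontrivial non-associativity is confined to the images of $\phi$ and $\alpha$.

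First I would check that $\rho(h\otimes a)=f(h_{(1)})\,a\,f(S(h_{(2)}))$ defines a left $H$-module structure on $A$. Unitality $1\cdot a=a$ is immediate from $\Delta(1)=1\otimes 1$, $S(1)=1$, and $f(1)=1$. The compatibility $(hk)\cdot a = h\cdot(k\cdot a)$ uses only that $\Delta$ is an algebra map (so $\Delta(hk)=\Delta(h)\Delta(k)$ as an element of $H\otimes H$, even without coassociativity), that $S$ is an anti-algebra homomorphism, and that $f$ is an algebra homomorphism into the \emph{associative} algebra $A$; no associators enter. Second, I would verify that $\beta$ is a two-sided unit for $\underline{\cdot}$. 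Unfolding $\beta\underline{\cdot} a=f(X^1)\beta f(S(x^1X^2)\alpha x^2X^3{}_{(1)})\,a\,f(S(x^3X^3{}_{(2)}))$ and invoking the quasi-Hopf identities $X^1\beta S(X^2)\alpha X^3=1$, $S(x^1)\alpha x^2\beta x^3=1$, together with the normalization $(\mathrm{id}\otimes\epsilon\otimes\mathrm{id})(\phi)=1\otimes 1$ and the antipode identity $S(h_{(1)})\alpha h_{(2)}=\epsilon(h)\alpha$, collapses the expression to $a$. The computation for $a\underline{\cdot}\beta$ is strictly parallel.

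The core step is associativity of $\underline{\cdot}$ in the quasi sense appropriate to $\mathsf{Rep}_H$, namely
\begin{align*}
(a\underline{\cdot}b)\underline{\cdot}c \;=\; (X^1\cdot a)\underline{\cdot}\bigl((X^2\cdot b)\underline{\cdot}(X^3\cdot c)\bigr).
\end{align*}
Expanding both sides produces strings of $f$-images of iterated coproducts of $\phi^{\pm 1}$ and $\alpha$, with $a$, $b$, $c$ sandwiched in. I would manipulate these strings using the pentagon axiom
\begin{align*}
(\mathrm{id}\otimes\mathrm{id}\otimes\Delta)(\phi)(\Delta\otimes\mathrm{id}\otimes\mathrm{id})(\phi) = (1\otimes\phi)(\mathrm{id}\otimes\Delta\otimes\mathrm{id})(\phi)(\phi\otimes 1),
\end{align*}
and then collapse the two $\alpha$-insertions into one using $S(h_{(1)})\alpha h_{(2)}=\epsilon(h)\alpha$ (applied to the middle leg of the appropriate tensor slot). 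The specific placement of $\alpha$ and of $X^3{}_{(1)},X^3{}_{(2)}$ in the definition of $\underline{\cdot}$ is exactly what makes this cancellation go through.

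Finally I would check that $\underline{\cdot}$ and $\eta=\beta$ are morphisms in $\mathsf{Rep}_H$, i.e.\ that $h\cdot(a\underline{\cdot}b)=(h_{(1)}\cdot a)\underline{\cdot}(h_{(2)}\cdot b)$ and $h\cdot\beta=\epsilon(h)\beta$. The latter is the quasi-Hopf identity $h_{(1)}\beta S(h_{(2)})=\epsilon(h)\beta$. The former reduces, after using that $\Delta$ is an algebra map and that $f$ is an algebra homomorphism, to an identity involving only $\phi$ and the iterated coproducts that appear in $\underline{\cdot}$. The main obstacle is of course the associativity calculation: while each individual identity invoked is elementary, organizing the pentagon and antipode reductions in the right order to make the two sides visibly agree is where the technicality lies.
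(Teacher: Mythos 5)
The paper does not prove this proposition; it is stated as a result imported from \cite{BulacuPanaiteVanOystaeyen2000}, so there is no internal proof to compare against, and your attempt has to be judged on its own merits. Your outline is structurally sound: you correctly reduce the claim to the four checks (associativity of the $H$-action, unitality with $f(\beta)$, the quasi-associativity $(a\underline{\cdot}b)\underline{\cdot}c=(X^1.a)\underline{\cdot}\bigl((X^2.b)\underline{\cdot}(X^3.c)\bigr)$, and equivariance of $\underline{\cdot}$ and of the unit morphism), and you are right that the $H$-module check uses no associator and that $h.\beta=\epsilon(h)\beta$ is an axiom.

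However, as written this is a proof plan rather than a proof: the two computations that carry essentially all the content --- the quasi-associativity of $\underline{\cdot}$ and the equivariance $h.(a\underline{\cdot}b)=(h_{(1)}.a)\underline{\cdot}(h_{(2)}.b)$ --- are not carried out, and you explicitly defer the "organization" of the pentagon and antipode reductions. More importantly, your account of what makes the equivariance work is off: the pentagon axiom does not enter there. What is actually used is the \emph{quasi-coassociativity} relation $\phi\,(\Delta\otimes\id)\Delta(h)=(\id\otimes\Delta)\Delta(h)\,\phi$ (applied both to $\phi$ and to $\phi^{-1}$), combined with the antipode identity $S(g_{(1)})\alpha g_{(2)}=\epsilon(g)\alpha$ to collapse the middle $h$-factors; after expanding $(h_{(1)}.a)\underline{\cdot}(h_{(2)}.b)$ and commuting the $\phi$- and $\phi^{-1}$-legs past the iterated coproducts of $h$, one lands on a term $S(h_{(2)(1)(1)})\alpha h_{(2)(1)(2)}$ which collapses to $\epsilon(h_{(2)(1)})\alpha$ and re-unites the remaining $h$-indices. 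You should spell out at least this intermediate identity in $H^{\otimes 4}$, and the analogous one for quasi-associativity (where the pentagon genuinely is the governing relation, together with $X^1\beta S(X^2)\alpha X^3=1$ and $S(x^1)\alpha x^2\beta x^3=1$), before declaring the proof complete.
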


It is easy to see that $f:\underline{H} \to \underline{A}$ then becomes a left $H$-module algebra homomorphism. Note that this is exactly our situation in Section \ref{sec: Modularization}, where we have an algebra homomorphism $M:\bar{u} \to u$.\\
Similarly, if $H$ and $A$ are quasitriangular quasi-Hopf algebras and $f:H \to A$ is a homomorphism of such, we can endow $A$ with a braided Hopf algebra structure in $\mathsf{Rep}_H$ with $H$-module algebra structure as above and comultiplication, counit and antipode given by
\begin{align*}
	\underline{\Delta}(a):&= f(x^1X^1)a_1f(g^1S(x^2R^2y^3X^3_2))\otimes x^3R^1.(f(y^1F^2)a_2f(g^2S(y^2X^3_1)))\\
	\underline{\epsilon}(a):&=\epsilon(a)\\
	\underline{S}(a):&=f(X^1R^2p^2)S(f(q^1)(X^2R^1p^1.a)S(f(q^2))f(X^3)).
\end{align*}

Conversely, if $f:A\to H^*$ is a homomorphism of coquasi-Hopf algebras, we can endow $A$ with the structure of a braided Hopf algebra, denoted by $\underline{A}$, in the category of right $H^*$-comodules (see Thm. 3.5 in \cite{BulacuTorrecillas2004}) which can be identified with the category of left $H$-modules $\mathsf{Rep}_H$.

\bibliography{C:/Users/bipol_000/OneDrive/Documents/PhD/Jabref/bibliography}
\bibliographystyle{halpha}

\end{document}